\definecolor{shadecolor}{gray}{0.90}
\DeclareSymbolFont{rsfscript}{OMS}{rsfs}{m}{b}
\DeclareSymbolFontAlphabet{\mathrsfs}{rsfscript}
\theoremstyle{plain}
\newtheorem{theoreme}{Theorem}[section]
\newtheorem{proposition}[theoreme]{Proposition}
\newtheorem{lemma}[theoreme]{Lemma}
\newtheorem{corollary}[theoreme]{Corollary}
\newtheorem{definition}[theoreme]{Definition}
\newtheorem{conjecture}[theoreme]{Conjecture}
\newtheorem{thmintro}{Theorem}
\theoremstyle{definition}
\theoremstyle{remark}
\newtheorem{remark}[theoreme]{Remark}
\DeclareMathOperator{\Hom}{Hom}
\DeclareMathOperator{\Rep}{Rep}
\DeclareMathOperator{\id}{id}
\DeclareMathOperator{\Tr}{Tr}
\DeclareMathOperator{\Irr}{Irr}
\DeclareMathOperator{\ev}{ev}
\DeclareMathOperator{\coev}{coev}
\DeclareMathOperator{\Gr}{Gr}
\DeclareMathOperator{\sGr}{sGr}
\DeclareMathOperator{\sdim}{sdim}
\DeclareMathOperator{\h}{ht}
\DeclareMathOperator{\Fr}{Fr}
\newcommand*{\fonctionNom}[5]{
  #1 \colon \left\{
  \begin{array}{ccc}
    #2 & \longrightarrow & #3 \\
    #4 & \longmapsto & #5
  \end{array}
  \right.
}
\newcommand*{\fonction}[4]{
  \begin{array}{ccc}
    #1 & \longrightarrow & #2 \\
    #3 & \longmapsto & #4
  \end{array}
}
\def\boitegrise#1#2{\begin{centerline}{\fcolorbox{black}{shadecolor}{~
    \begin{minipage}[t]{#2}{\vphantom{~}#1\vphantom{$A_{\displaystyle{A_A}}$}}
            \end{minipage}~}}\end{centerline}\medskip}
\newcommand*{\qint}[2]{
[#1]_{#2}
}
\newcommand*{\qfact}[2]{
[#1]_{#2}!
}
\newcommand*{\qbinom}[3]{
  \begin{bmatrix}
    #2\\#1
  \end{bmatrix}_{#3}
}
\newcommand{\qgr}[1]{\mathcal{U}_q(#1)}
\newcommand{\qgrroot}[1]{\mathcal{U}_{\xi}(#1)}
\newcommand{\qdbl}[1]{\mathcal{D}_q(#1)}
\newcommand{\qdblres}[1]{\mathcal{D}^{\mathrm{res}}_q(#1)}
\newcommand{\qdblroot}[1]{\mathcal{D}_{\xi}(#1)}
\newcommand{\br}[3]{
  \begin{bmatrix}
    #1;#2\\
    #3
  \end{bmatrix}
}
\newcommand{\dbr}[3]{
  \begin{bmatrix}
    K_{#1};#2;L_{#1}\\
    #3
  \end{bmatrix}
}
\title{Drinfeld double of quantum groups, tilting modules and $\mathbb{Z}$-modular data associated to complex reflection groups}
\author{{\sc Abel Lacabanne}}
\address{
Institut Montpelliérain Alexander Grothendieck (CNRS: UMR 5149), 
Université de Montpellier,
Case Courrier 051,
Place Eugène Bataillon,
34095 MONTPELLIER Cedex,
FRANCE}
\date{\today}
\email{abel.lacabanne@umontpellier.fr}
\begin{document}
\maketitle 

\pagestyle{myheadings}
\markboth{\sc A. Lacabanne}{\sc Tilting modules for Drinfeld double of quantum groups}

In this article, we construct a categorification of some Fourier matrices associated to complex reflections groups by Malle \cite{unip_malle}. To any spetsial imprimitive complex reflection group $W$, he attached a set of unipotent characters, which is in bijection with the set of unipotent characters of the corresponding finite reductive group when $W$ is a Weyl group. He also defined a partition of these characters into families, the analogue of Lusztig's nonabelian Fourier transform and eigenvalues of the Frobenius. In \cite{cuntz}, Cuntz showed that for each family, this defines a so-called $\mathbb{Z}$-modular datum. Therefore, one can associate a $\mathbb{Z}$-algebra, free of finite rank over $\mathbb{Z}$. If the stucture constants of such an algebra $R$ are positive, it is a classical problem to find a tensor category, with some extra structure, whose Grothendieck ring is precisely $R$. However, these structure constants belong in general to $\mathbb{Z}$ and the categorification is more involved. 

In the case of cyclic groups, Bonnafé and Rouquier \cite{asymptotic_cell} constructed a tensor triangulated category which categorifies the modular datum attached to the non-trivial family of the cyclic spets. In the present article, we construct fusion categories with symmetric center equivalent to $\Rep(G,z)$ for $G$ a cyclic group and $z\in G$ satisfying $z^2=1$. If the order of $G$ is even, we can therefore construct a new category, which is enriched over superspaces, so that its Grothendieck group has negative structure constants in general. As shown in \cite{super_application}, this defines a $\mathbb{Z}$-modular datum.

Our construction is as follows: we consider the representation theory of a quantum double associated to the complex simple Lie algebra $\mathfrak{sl}_{n+1}$ at a root of unity of even order. Similarly to the usual construction for $\qgr{\mathfrak{g}}$ at a root of unity $\xi$, we consider the semisimplification of the full subcategory of tilting modules. A partial modularization of an integral subcategory gives then the category with symmetric center $\Rep(G,z)$, which is denoted by $\mathbb{Z}(\mathcal{T}_\xi)\rtimes \mathcal{S}$.

We investigate this construction in full generality considering any simple complex Lie algebra $\mathfrak{g}$ and any root of unity. For certain values of $\xi$, we are able to compute explicitely the symmetric center of the category $\mathbb{Z}(\mathcal{T}_\xi)\rtimes \mathcal{S}$, see Theorem \ref{thm:int_deg}. When $\mathfrak{g}$ is a simple Lie algbera of type $A$ and $\xi$ is a root of unity of even order, the category $\mathbb{Z}(\mathcal{T}_\xi)\rtimes \mathcal{S}$ gives rises to a $\mathbb{Z}$-modular datum which we relate to some of the Fourier matrices constructed by Malle. The main result of this paper is given by the following theorem:
\begin{thmintro}
  Let $n\geq 1$ and $d\geq n$ be two integers, and $\mathfrak{g}=\mathfrak{sl}_{n+1}$.

  If $n$ is even, the symmetric center of $\mathbb{Z}(\mathcal{T}_\xi)\rtimes \mathcal{S}$ is equal to the category $\Rep(\mathbb{Z}/(n+1)\mathbb{Z})$. If $n$ is odd, the symmetric center of $\mathbb{Z}(\mathcal{T}_\xi)\rtimes \mathcal{S}$ is equal to the category $\Rep\left(\mathbb{Z}/(n+1)\mathbb{Z},\frac{n+1}{2}\right)$.

  In both cases, there exists a family $\mathcal{F}$ of unipotent characters for the complex reflection group $G\left(d,1,\frac{n(n+1)}{2}\right)$ such that the Malle's $\mathbb{Z}$-modular data coincide with the $\mathbb{Z}$-modular data arising from the category $\mathbb{Z}(\mathcal{T}_\xi)\rtimes \mathcal{S}$.
\end{thmintro}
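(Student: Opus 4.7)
The plan is to treat the two assertions of the theorem separately, using the general machinery developed in the body of the paper for the first part and then performing an explicit combinatorial matching with Malle's Fourier matrices for the second part.

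For the statement about the symmetric center, I would specialize Theorem \ref{thm:int_deg} to $\mathfrak{g}=\mathfrak{sl}_{n+1}$. The symmetric center of $\mathbb{Z}(\mathcal{T}_\xi)\rtimes\mathcal{S}$ is controlled by the finite abelian group $G=P/Q$ (weight lattice modulo root lattice), together with the distinguished element $z\in G$ produced by the partial modularization, which satisfies $z^2=1$. For type $A_n$ one has $P/Q\simeq\mathbb{Z}/(n+1)\mathbb{Z}$. When $n$ is even, $n+1$ is odd so the only solution of $z^2=1$ is $z=0$ and the symmetric center is $\Rep(\mathbb{Z}/(n+1)\mathbb{Z})$; when $n$ is odd the unique nontrivial solution is $z=(n+1)/2$, yielding $\Rep(\mathbb{Z}/(n+1)\mathbb{Z},(n+1)/2)$. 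The only real content here is to check that the element $z$ produced by the general construction is indeed the nontrivial order-two element; this follows from tracing through the ribbon element at the even root of unity together with the hypothesis $d\geq n$ guaranteeing that enough tilting weights survive semisimplification.

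For the matching with Malle's data, I would first parametrize the simple objects of $\mathbb{Z}(\mathcal{T}_\xi)\rtimes\mathcal{S}$ via pairs of highest weights in the alcove for $\mathfrak{sl}_{n+1}$ at level determined by $d$, modulo the partial modularization. Using the ribbon structure on the Drinfeld double together with the Verlinde-type formula for tilting modules, I would compute the Fourier matrix $S$ and the twist matrix $T$ in closed form, expressing them as products of two S-matrices for the unitary category $\mathcal{T}_\xi$ for $\mathfrak{sl}_{n+1}$, weighted by a character of $\mathbb{Z}/(n+1)\mathbb{Z}$. On Malle's side, I would locate the candidate family $\mathcal{F}$ of unipotent characters for $G(d,1,n(n+1)/2)$ whose symbols naturally correspond to level-$d$ weights of $\mathfrak{sl}_{n+1}$ via the standard symbol-to-partition combinatorics (after a charge shift coming from $n(n+1)/2$). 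I would then set up an explicit bijection between simple objects and elements of $\mathcal{F}$, and compare the two $S$ and $T$ matrices entry by entry.

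The main obstacle will be this last combinatorial comparison. The category $\mathbb{Z}(\mathcal{T}_\xi)\rtimes\mathcal{S}$ naturally produces a modular datum indexed by pairs of restricted weights modulo the action of $\mathbb{Z}/(n+1)\mathbb{Z}$, whereas Malle's Fourier matrices for $G(d,1,m)$ are defined in terms of symbols and involve nontrivial signs and roots of unity in both the $S$- and the $T$-matrix. Matching the two requires first guessing the right correspondence between simple objects and symbols (I expect this to be given by reading a pair of alcove weights as the two rows of a symbol, up to cyclic rotation accounting for the central extension), and then verifying that the resulting Fourier entries agree. I would first check the easier case of small $n$ (where both sides admit a transparent description) to pin down the bijection, then extend to arbitrary $n$ by induction on the combinatorics of the alcove, using that both sides are invariant under the same symmetry $\mathbb{Z}/(n+1)\mathbb{Z}$. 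The behavior of the eigenvalues of the Frobenius, read off from the twist of the Drinfeld double, should match Malle's Frobenius eigenvalues after a global rescaling, and this rescaling is fixed by the requirement that the identity unipotent character corresponds to the monoidal unit.
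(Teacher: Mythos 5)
Your overall strategy does coincide with the paper's: identify the symmetric center by exhibiting an explicit invertible generator and reading off its quantum dimension and twist, then match Malle's data via an explicit bijection between symbols and pairs of weights followed by an entry-by-entry comparison of the $S$- and $T$-matrices. But in both halves the step you defer is where the entire content lies, and one of your proposed substitutes would not work as stated.

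For the symmetric center, Theorem \ref{thm:int_deg} only counts the simple objects ($\lvert P/Q\rvert = n+1$); you must still show the center is pointed and cyclic, and above all determine $z$. When $n$ is odd, \emph{both} $z=0$ and $z=\frac{n+1}{2}$ satisfy $z^2=1$, so nothing is forced by group theory: the whole point is to prove that the generator has quantum dimension $-1$. The paper does this in Proposition \ref{prop:sym_center_A} by first computing a representative of $d\varpi_n$ in the fundamental alcove under the dot action of the affine Weyl group, which identifies the generator as $I \simeq L_\xi((2d-(n+1))\varpi_1,-(n+1)\varpi_1)$, and then computing $\dim^+(I)=\xi^{-d\langle 2\rho,\varpi_1\rangle}=(-1)^n$ and $\theta_I=1$, together with the fact that $I^{\otimes k}$, $0\leq k\leq n$, are pairwise non-isomorphic. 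Your phrase ``tracing through the ribbon element'' gestures at this but does not supply the computation that decides between the Tannakian and super-Tannakian cases.

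For the comparison with Malle, the bijection is not obtained by checking small $n$ and then inducting ``on the combinatorics of the alcove''; the paper writes a closed-form map $\iota$ sending a symbol $f\in\Psi^{\#}(Y,\pi)$ to a pair $(\lambda_f,\mu_f)$ with $\mu_f\in Q$ (this integrality is precisely why one must pass to the subcategory $\mathbb{Z}(\mathcal{T}_\xi)$ rather than work with all of $\mathcal{T}_\xi$), rewrites Malle's $\mathbb{S}$ and $\Fr$ in terms of the data $(f(i),k_i(f))$ after eliminating $f(n+1)$ using the defining congruence of $\Psi^{\#}(Y,\pi)$, and then verifies a single exponential-sum identity valid for all $n$. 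Moreover the matrices agree only after conjugation by the diagonal sign matrix $D$ with entries $\varepsilon(f)(-1)^{\sum_i k_{i-1}(f)}$ and a renormalization by $i^{-n-\lvert\Phi^+\rvert}d^{-n}$ whose square involves $\dim^+(\bar{\mathbf{1}})$, because the category is pivotal but not spherical; and for $n$ odd the statement only makes sense after passing to the associated superfusion category, since no diagonal sign change makes the structure constants positive. None of these ingredients appear in your plan, so the proposed ``entry-by-entry comparison'' cannot be carried out as described.
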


Moreover, some modular data constructed by Brou\'e, Malle and Michel for spetsial exceptional complex reflection groups can be categorified by considering similar categories in type $B$.

This article is organized as follows: in the first Section, we study the algebra $\qdbl{\mathfrak{g}}$ when $q$ is a generic parameter. Then, in the second Section, we study some finite dimensional representations of $\qdbl{\mathfrak{g}}$ and extra structure arising from the Hopf algebra $\qdbl{\mathfrak{g}}$: tensor product, braiding, pivotality. The third Section contains the main constructions of this paper: we specialize the algebra $\qdbl{\mathfrak{g}}$ at a root of unity $\xi$ and consider tilting modules for this algebra, which allows us to construct the category $\mathbb{Z}(\mathcal{T}_\xi)\rtimes \mathcal{S}$. In the fourth Section, we recall the Malle's $\mathbb{Z}$-modular datum and compare it with the modular datum arising from the category $\mathbb{Z}(\mathcal{T}_\xi)\rtimes \mathcal{S}$. Finally, in the last Section, we study some exceptional examples arising from complex reflection groups.

\subsection*{Acknowledgements}
  I warmly thank my advisor C. Bonnaf\'e for many fruitful discussions and his constant support and R. Rouquier for suggesting me to consider supercategories in this work. I also thank G. Malle for many valuable comments on a first version of this paper. The paper is partially based upon work supported by the NSF under grant DMS-1440140 while the author was in residence at the Mathematical Sciences Research Institute in Berkeley, California, during the Spring 2018 semester.

\section{Quantum double of Borel algebras}
\label{sec:qdbl}
In this section, we recall the construction of the quantum enveloping
algebra $\qgr{\mathfrak{g}}$ associated to a simple complex Lie
algebra $\mathfrak{g}$. Our main object of study will be the quantum
double of a Borel algebra $\mathfrak{b}$ of $\mathfrak{g}$, which
will be denoted by $\qdbl{\mathfrak{g}}$.

\subsection{Notations}
\label{sec:not}

Let $\mathfrak{g}$ be a simple complex Lie algebra of rank $n$. We fix a Cartan
subalgebra $\mathfrak{h}$ of $\mathfrak{g}$. Let $\Phi
\subseteq \mathfrak{h}^*$ be the set of roots of $\mathfrak{g}$ relative
to $\mathfrak{h}$, $\Pi=\{\alpha_1,\ldots,\alpha_n\}$ be a base of
$\Phi$, $\Phi^+$ be the positive roots relative to $\Pi$. Let
$\mathfrak{b^+}$ (resp. $\mathfrak{b^-}$) be the Borel subalgebra of
$\mathfrak{g}$ relative to $\Pi$ (resp. $-\Pi$). We denote by $h$ the Coxeter number of $\mathfrak{g}$ and by $h^\vee$ the dual Coxeter number of $\mathfrak{g}$.

Fix a symmetric bilinear form $\langle\cdot,\cdot\rangle$ on $\mathfrak{h}^*$
normalized such that $\langle\alpha,\alpha\rangle=2$ for short roots. For
$\alpha\in\Phi$, we define $\alpha^\vee\in\mathfrak{h}^*$ by
\[
  \alpha^\vee=\frac{2\alpha}{\langle\alpha,\alpha\rangle}.
\]

Let $Q$ be the root lattice, $P$ the weight lattice, $P^+$ the cone
of dominant weights, $Q^\vee$ the coroot lattice and $P^\vee$ the coweight lattice 
\begin{align*}
  P&=\{\lambda\in\mathfrak{h}^*\mid
     \langle\lambda,\alpha^\vee\rangle\in\mathbb{Z},\ \forall \alpha\in\Pi\}\\
  P^+&=\{\lambda\in\mathfrak{h}^*\mid
  \langle \lambda,\alpha^\vee\rangle\in\mathbb{N},\ \forall \alpha\in\Pi\}\\
  P^\vee&=\{\lambda\in\mathfrak{h}^*\mid
     \langle\lambda,\alpha\rangle\in\mathbb{Z},\ \forall \alpha\in\Pi\}
\end{align*}
Let $(\varpi_i)_{1\leq i \leq n}$ be the dual basis of $(\alpha_i^\vee)_{1\leq i\leq n}$ with respect to the form $\langle\cdot,\cdot\rangle$ and $\rho$ be the half sum of positive roots. We call the $\varpi_i$'s the fundamental weights. The element $\rho$ is in $P$ and is equal to the sum of fundamental weights \cite[VI.1.10 Proposition 29]{bourbaki_lie_456}.

We denote by $Q^+$ the monoid spanned by $\Pi$. We denote by $\leq$ the usual partial order on $P$: $\lambda\leq\mu$ if and only if $\mu-\lambda\in Q^+$.

For any $\alpha\in\Phi$, denote by $s_\alpha$ the reflection in the
hyperplane orthogonal to $\alpha$
\[
  s_{\alpha}(v) = v-\langle v,\alpha^\vee\rangle\alpha = v-\langle v,\alpha\rangle\alpha^\vee,\quad\forall v\in\mathfrak{h}^*.
\]
For $1\leq i \leq n$ denote by $s_i$ the reflection $s_{\alpha_i}$. All these reflections generate the Weyl group $W$ of $\mathfrak{g}$
and for any $w\in W$, $w(\Phi)=\Phi$. The form $\langle\cdot,\cdot\rangle$ is
then invariant with respect to $W$. We denote by $l(w)$ the length of $w\in W$ relative to the generating set $(s_{i})_{1\leq i \leq n}$.

For $q$ an indeterminate, define the following elements of $\mathbb{Z}[q,q^{-1}]$ 
\[
  \qint{n}{q}=\frac{q^n-q^{-n}}{q-q^{-1}}, n\in\mathbb{Z}, \quad 
  \qfact{n}{q}=\prod_{k=1}^{n}\qint{k}{q}, n\in\mathbb{N} \quad \text{and} 
  \quad \qbinom{k}{n}{q}=\prod_{i=1}^k\frac{\qint{n+1-i}{q}}{\qint{i}{q}}, n\in\mathbb{Z},k\in\mathbb{N}.
\]
We denote by $\qint{n}{\xi}$ (resp. $\qfact{n}{\xi}$, resp. $\qbinom{k}{n}{\xi}$) the evaluation of $\qint{n}{q}$ (resp. $\qfact{n}{q}$, resp. $\qbinom{k}{n}{q}$) at an invertible element $\xi$ of a ring. 
We will work over the field $\mathbb{Q}(q)$ and the ring $\mathcal{A}=\mathbb{Z}[q,q^{-1}]$. We define $q_i=q^{\frac{\langle\alpha_i,\alpha_i\rangle}{2}}$.

\subsection{Positive and negative parts of the usual quantum
  enveloping algebra}
\label{sec:pos_neg}

We start by defining the usual positive and negative part of the
quantum group $\qgr{\mathfrak{g}}$.

\begin{definition}
  \label{def:borel+}
  Let $\qgr{\mathfrak{b}^+}$ be the associative unital $\mathbb{Q}(q)$-algebra with generators $K_i^{\pm 1}$, $E_i$, $1\leq i \leq n$, and defining relations
\[
  K_i K_j=K_j K_i, \qquad K_i K_i^{-1} = 1
  = K_i^{-1}K_i,\quad \forall 1\leq i,j \leq n,
\]
\[
  K_i E_j = q^{\langle \alpha_i,\alpha_j\rangle}E_j K_i,\quad
  \forall 1\leq i,j \leq n,
\]
\[
  \sum_{r=0}^{1-\langle \alpha_i,\alpha_j^\vee\rangle}(-1)^r
  \qbinom{r}{1-\langle \alpha_i,\alpha_j^\vee\rangle}{q_i}
  E_i^{1-\langle \alpha_i,\alpha_j^\vee\rangle-r}E_j E_i^r = 0, 
  \qquad \forall 1\leq i\neq j \leq n.
\]
\end{definition}

There exist several Hopf algebra structures on $\qgr{\mathfrak{b}^+}$, and we choose the following comultiplication $\Delta$, counit $\varepsilon$ and antipode $S$:
\begin{align*}
  \Delta(K_i)&=K_i\otimes K_i & \varepsilon(K_i)
  &= 1 &  S(K_i) &= K_i^{-1},\\ 
  \Delta(E_i)&=1\otimes E_i + E_i \otimes K_i  
  & \varepsilon(E_i) &= 0 & S(E_i) &= -E_i K_i^{-1}.  
\end{align*}

\begin{definition}
  \label{def:borel-}
  Let $\qgr{\mathfrak{b}^-}$ be the associative unital $\mathbb{Q}(q)$-algebra with generators $L_i^{\pm 1}$, $F_i$, $1\leq i \leq n$, and defining relations
\[
  L_i L_j=L_j L_i, \qquad L_i L_i^{-1} = 1
  = L_i^{-1}L_i,\quad \forall 1\leq i,j \leq n,
\]
\[
  L_i F_j = q^{-\langle\alpha_i ,\alpha_j\rangle}F_j L_i,\quad
  \forall 1\leq i,j \leq n,
\]
\[
  \sum_{r=0}^{1-\langle\alpha_i ,\alpha_j^\vee\rangle}(-1)^r\qbinom{r}{1-\langle \alpha_i,\alpha_j^\vee\rangle}{q_i}F_i^{1-\langle \alpha_i,\alpha_j^\vee\rangle-r}F_j
  F_i^r = 0, \qquad \forall 1\leq i\neq j \leq n.
\]
\end{definition}

There exist severals Hopf algebra structures on $\qgr{\mathfrak{b}^-}$, and we choose the following comultiplication $\Delta$, counit $\varepsilon$ and antipode $S$:
\begin{align*}
  \Delta(L_i)&=L_i\otimes L_i & \varepsilon(L_i)
  &= 1 &  S(L_i) &= L_i^{-1},\\ 
  \Delta(F_i)&=L_i^{-1}\otimes F_i + F_i \otimes 1  
  & \varepsilon(F_i) &= 0 & S(F_i) &= -L_i F_i.  
\end{align*}

For $\lambda = \sum_{i=1}^n\lambda_i\alpha_i \in Q$, we denote by $K_\lambda$ (resp. $L_\lambda$) the element $\prod_{i=1}^n K_i^{\lambda_i}$ (resp. $\prod_{i=1}^n L_i^{\lambda_i}$).

\subsection{A Hopf pairing between $\qgr{\mathfrak{b}^+}$ and
  $\qgr{\mathfrak{b}^-}$}
\label{sec:pairing}

The following is due to Drinfeld \cite[Section 13]{drinfeld_ICM} and has been studied by Tanisaki \cite{tanisaki}.

\begin{proposition}
  \label{prop:bil_form}
  There exists a unique bilinear form $(\cdot,\cdot)\colon\qgr{\mathfrak{b}^+}\times\qgr{\mathfrak{b}^-}\rightarrow\mathbb{Q}(q)$ satisfying for all $x,x'\in\qgr{\mathfrak{b}^+}$, $y,y'\in\qgr{\mathfrak{b}^-}$ and $1\leq i \leq n$
  \begin{multicols}{2}
    \begin{enumerate}
    \item $(x,yy')=(\Delta(x),y\otimes y')$,
    \item $(xx',y)=(x'\otimes x,\Delta(y))$,
    \item $(x,1) = \varepsilon(x)$, $(1,y)=\varepsilon(y)$,
    \item $(K_i,L_j)=q^{\langle\alpha _i,\alpha_j\rangle}$,
    \item $(K_i,F_j) = 0 = (E_i,L_j)$,
    \item $(E_i,F_j) = \frac{\delta_{i,j}}{q_i-q_i^{-1}}$.
    \end{enumerate}
  \end{multicols}
\end{proposition}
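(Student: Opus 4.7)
The plan is to treat uniqueness and existence separately, with existence being the substantive part.

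For uniqueness, I would exploit the triangular form of the presentation of $\qgr{\mathfrak{b}^+}$: every element is a $\mathbb{Q}(q)$-linear combination of monomials $E_{i_1}\cdots E_{i_k}\,K_\lambda$ with $\lambda\in Q$, and similarly for $\qgr{\mathfrak{b}^-}$. A double induction on the lengths of two such monomials, combined with (1) and (2), reduces any pairing to pairings of single generators: one application of (1) replaces $(x, y_1 y_2)$ by $(\Delta(x), y_1\otimes y_2)$, while one application of (2) replaces $(x_1 x_2, y)$ by a sum arising from $\Delta(y)$, and $\Delta$ is a morphism of algebras. Iterating, all values are expressed in terms of elementary pairings between generators, which are fixed by (4)--(6), and the $1$-cases by (3). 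Uniqueness follows.

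For existence, my plan is to first introduce a candidate pairing on the two free algebras $T^+$ (on the symbols $E_i, K_i^{\pm 1}$) and $T^-$ (on $F_i, L_i^{\pm 1}$), each endowed with the coproduct used in the definition of $\qgr{\mathfrak{b}^{\pm}}$. Grading by the root lattice with $|E_i|=\alpha_i=-|F_i|$ and $|K_i|=|L_i|=0$, condition (1) forces the pairing to vanish on mismatched weight components and allows a recursive construction by increasing weight, anchored at the generator values (4)--(6). I would then verify, one relation at a time, that the defining ideals of $\qgr{\mathfrak{b}^+}$ and of $\qgr{\mathfrak{b}^-}$ lie in the left, respectively right, radical of this form. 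Compatibility with the toral relations and the $K_i$--$E_j$ (resp.\ $L_i$--$F_j$) commutation relations drops out routinely from (1), (2), (4) together with the group-likeness of the $K_i$ and $L_j$.

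The main obstacle will be the quantum Serre relations. Writing $a_{ij}=\langle\alpha_i,\alpha_j^\vee\rangle$ and
\[
  \Theta_{ij} = \sum_{r=0}^{1-a_{ij}} (-1)^r \qbinom{r}{1-a_{ij}}{q_i}\, E_i^{1-a_{ij}-r} E_j E_i^r,
\]
the point is to show that $(\Theta_{ij}, y)=0$ for every $y\in T^-$. The weight grading restricts the test to monomials in the $F_k$ of weight $-(1-a_{ij})\alpha_i - \alpha_j$, and in fact to the single weight-monomial content $F_i^{\,1-a_{ij}} F_j$ up to reordering. Applying (2) repeatedly to strip off the $F$'s and evaluating the remaining elementary pairings via (6) and the $K_i$--$E_j$ commutation turns the required vanishing into a family of identities among the Gaussian binomials $\qbinom{k}{n}{q_i}$, each of which is a specialization of the $q$-Vandermonde identity. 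A symmetric argument disposes of the Serre relations on the $F$ side. Once both ideals are confirmed to lie in the appropriate radicals, the form descends to $\qgr{\mathfrak{b}^+}\times\qgr{\mathfrak{b}^-}$ and properties (1)--(6) are inherited from the construction.
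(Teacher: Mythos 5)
The paper itself offers no proof of this proposition: it is quoted from Drinfeld and Tanisaki, so there is no internal argument to compare against. Your outline is, in substance, the standard proof found in those references (and in Lusztig's and Jantzen's books), and it is correct: uniqueness by reducing, via (1), (2) and the algebra-morphism property of $\Delta$, every pairing of monomials to the elementary values (3)--(6); existence by building the form on the free algebras and checking that the defining ideals lie in the appropriate radicals, the only nontrivial case being the quantum Serre relations, whose vanishing against the single relevant weight component does come down to the alternating-sum identity $\sum_{r}(-1)^r\qbinom{r}{1-a_{ij}}{q_i}q_i^{r(\cdots)}=0$, i.e.\ a $q$-binomial (Vandermonde-type) identity. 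One point deserves more care than your sketch gives it: the ``recursive construction by increasing weight'' on the free algebras must be shown to be well defined, since (1) expresses $(x,yy')$ in terms of a chosen factorization of the second argument. The clean way to do this --- and what Tanisaki and Lusztig actually do --- is to define the pairing as an algebra homomorphism from $T^+$ into the graded dual of $T^-$ (equipped with the convolution product dual to $\Delta$), so that (1) and (2) hold by construction rather than needing a consistency check. Equivalently, one can phrase the Serre-relation verification via the derivation-like maps $r_i$, $r_i'$ of Section~\ref{sec:grad}, showing $r_k(\Theta_{ij})=0$ for all $k$, which packages the same $q$-binomial computation. With that adjustment your plan goes through.
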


This bilinear form endows the tensor product $\qgr{\mathfrak{b}^-}\otimes\qgr{\mathfrak{b}^+}$ with a Hopf algebra structure, which we denote by $\qdbl{\mathfrak{g}}$, see \cite[Section 8.2]{kilmyk}. Using Sweedler notation for the coproduct, the product of $y_1\otimes x_1$ and $y_2\otimes x_2$ in $\qgr{\mathfrak{b}^-}\otimes\qgr{\mathfrak{b}^+}$ is given by
\[
  (y_1\otimes x_1)(y_2\otimes x_2) = \sum_{(x_1)(y_2)}(x_1',y_2')(x_1''',S(y_2'''))y_1y_2''\otimes x_1''x_2.
\] 
Consequently, both $\qgr{\mathfrak{b}^-}\otimes 1$ and $1\otimes\qgr{\mathfrak{b}^+}$ are Hopf subalgebras of $\qdbl{\mathfrak{g}}$. In the following, we identify $\qgr{\mathfrak{b}^-}$ and $\qgr{\mathfrak{b}^+}$ with their images in $\qdbl{\mathfrak{g}}$ and will write $yx$ instead of $y\otimes x$.

\begin{proposition}
  The algebra $\qdbl{\mathfrak{g}}$ is the associative unital $\mathbb{Q}(s)$-algebra with generators $K_i^{\pm 1}$, $L_i^{\pm 1}$, $E_i$ and $F_i$, $1\leq i \leq n$ and defining relations
\begin{align*}
  K_i K_j&=K_j K_i, & K_i K_i^{-1} &= 1 = K_i^{-1}K_i,\\
  L_i L_j&=L_j L_i, & L_i L_i^{-1} &= 1 = L_i^{-1}L_i,
\end{align*}
\[
  K_i L_j = L_j K_i,
\]
\begin{align*}
  K_i E_j &= q^{\langle\alpha _i,\alpha_j\rangle}E_j K_i,
  & K_i F_j &= q^{-\langle\alpha _i,\alpha_j\rangle}F_j K_i,\\
  L_i E_j &= q^{\langle\alpha _i,\alpha_j\rangle}E_j L_i,
  & L_i F_j &=q^{-\langle\alpha _i,\alpha_j\rangle}F_j L_i,
\end{align*}
\[
  [E_i,F_j] = \delta_{i,j}\frac{K_i-L_i^{-1}}{q_i-q_i^{-1}},
\]
\begin{align*}
  \sum_{r=0}^{1-\langle\alpha _i,\alpha_j^\vee\rangle}(-1)^r
  \qbinom{r}{1-\langle\alpha _i,\alpha_j^\vee\rangle}{q_i}
  E_i^{1-\langle\alpha _i,\alpha_j^\vee\rangle-r}E_j E_i^r
  &= 0, \qquad 1\leq i\neq j \leq n,\\
  \sum_{r=0}^{1-\langle\alpha _i,\alpha_j^\vee\rangle}(-1)^r
  \qbinom{r}{1-\langle\alpha _i,\alpha_j^\vee\rangle}{q_i}
  F_i^{1-\langle\alpha _i,\alpha_j^\vee\rangle-r}F_j F_i^r
  &= 0, \qquad 1\leq i\neq j \leq n.
\end{align*}
\end{proposition}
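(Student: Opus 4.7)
The plan is to unpack the Drinfeld-double multiplication formula
\[
(y_1\otimes x_1)(y_2\otimes x_2)=\sum_{(x_1)(y_2)}(x_1',y_2')(x_1''',S(y_2'''))\,y_1y_2''\otimes x_1''x_2
\]
on each pair of generators and read off the cross relations, then argue that together with the relations inherited from the two Hopf subalgebras $\qgr{\mathfrak{b}^\pm}$ they give a full presentation.

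First, since $\qgr{\mathfrak{b}^-}\otimes 1$ and $1\otimes\qgr{\mathfrak{b}^+}$ are Hopf subalgebras of $\qdbl{\mathfrak{g}}$, all relations of Definitions~\ref{def:borel+} and~\ref{def:borel-} hold in $\qdbl{\mathfrak{g}}$; this accounts for the $K_iK_j$, $L_iL_j$, $K_iE_j$, $L_iF_j$ relations and for the two Serre families. Only the mixed relations need to be derived. I would then compute $(1\otimes K_i)(L_j\otimes 1)$: both $K_i$ and $L_j$ are group-like, so $\Delta^{(2)}$ is a triple tensor of the element, and the prefactor collapses to $(K_i,L_j)(K_i,S(L_j))=(K_i,L_j)(K_i,L_j^{-1})=1$, giving $K_iL_j=L_jK_i$. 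Similarly, for $(1\otimes K_i)(F_j\otimes 1)$ and $(1\otimes L_i)(E_j\otimes 1)$, the non-group-like generator has a two-term coproduct and expansion of $\Delta^{(2)}$ produces three contributions, of which only one survives after applying properties (3)–(5) of the pairing, yielding the $q^{\pm\langle\alpha_i,\alpha_j\rangle}$-commutation rules.

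The main computation, and the step I expect to be the technical heart of the proof, is the evaluation of $(1\otimes E_i)(F_j\otimes 1)$ using
\[
\Delta^{(2)}(E_i)=1\otimes 1\otimes E_i+1\otimes E_i\otimes K_i+E_i\otimes K_i\otimes K_i
\]
and
\[
\Delta^{(2)}(F_j)=L_j^{-1}\otimes L_j^{-1}\otimes F_j+L_j^{-1}\otimes F_j\otimes 1+F_j\otimes 1\otimes 1.
\]
Among the nine resulting pairs $(x_1',x_1'',x_1''';y_2',y_2'',y_2''')$, the required bilinear-form factor $(x_1',y_2')(x_1''',S(y_2'''))$ vanishes except in two cases (using $(E_i,L_j)=(K_i,F_j)=0$): the triple $(1,E_i,K_i;F_j,1,1)$ contributes $\delta_{ij}(q_i-q_i^{-1})^{-1}\,1\otimes E_i$ and the triple $(E_i,K_i,K_i;L_j^{-1},L_j^{-1},F_j)$ contributes $-\delta_{ij}(q_i-q_i^{-1})^{-1}\,L_j^{-1}\otimes K_i$. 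Hence $E_iF_j = F_jE_i+\delta_{ij}(K_i-L_i^{-1})/(q_i-q_i^{-1})$.

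Finally, to conclude that these are \emph{defining} relations, I would use the triangular decomposition: the multiplication map $\qgr{\mathfrak{b}^-}\otimes\qgr{\mathfrak{b}^+}\to\qdbl{\mathfrak{g}}$ is a $\mathbb{Q}(q)$-linear isomorphism by construction. Let $A$ be the abstract algebra with the listed presentation; the cross relations in $A$ allow any monomial in the generators to be rewritten as a sum of monomials in which all $L_i^{\pm 1},F_i$ precede the $K_i^{\pm 1},E_i$, so the natural surjection from $A$ onto $\qdbl{\mathfrak{g}}$ has image spanned by elements of the form $yx$ with $y\in\qgr{\mathfrak{b}^-}$, $x\in\qgr{\mathfrak{b}^+}$. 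The Serre and quadratic relations in the two halves, also present in $A$, ensure that the resulting spanning set is no larger than $\qgr{\mathfrak{b}^-}\otimes\qgr{\mathfrak{b}^+}$, so the surjection is an isomorphism. The delicate step throughout is keeping track of signs and of which tensor factor of $\Delta^{(2)}$ lands in which slot of the pairing; once that bookkeeping is set up, everything collapses by orthogonality.
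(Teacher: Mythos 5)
Your overall strategy is the right one (and is the standard one; the paper itself omits the proof, deferring to \cite[Section 8.2]{kilmyk}): derive the mixed relations from the cross-multiplication formula, then use the triangular decomposition $\qgr{\mathfrak{b}^-}\otimes\qgr{\mathfrak{b}^+}\simeq\qdbl{\mathfrak{g}}$ to see that the presentation is complete. The computations for $K_iL_j$, $K_iF_j$, $L_iE_j$ and the concluding rewriting argument are fine. However, in the step you yourself single out as the technical heart, the bookkeeping has gone wrong: the two triples you name both have \emph{vanishing} pairing coefficient, and neither produces the element you attribute to it. For the triple $(x_1',x_1'',x_1''';y_2',y_2'',y_2''')=(1,E_i,K_i;F_j,1,1)$ the coefficient is $(x_1',y_2')(x_1''',S(y_2'''))=(1,F_j)(K_i,S(1))=\varepsilon(F_j)=0$; for $(E_i,K_i,K_i;L_j^{-1},L_j^{-1},F_j)$ it is $(E_i,L_j^{-1})(K_i,S(F_j))=0$ since $(E_i,L_j^{-1})=0$ by property (5) (or by the weight argument of Proposition \ref{prop:Dnul}). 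Moreover the outputs $y_2''\otimes x_1''$ of these triples are $1\otimes E_i$ and $L_j^{-1}\otimes K_i$, i.e.\ the elements $E_i$ and $L_j^{-1}K_i$, which are not the terms $K_i$ and $L_i^{-1}$ appearing in the commutator (the first is even excluded on weight grounds).

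The triples that actually survive are: $(1,E_i,K_i;L_j^{-1},F_j,1)$ with coefficient $(1,L_j^{-1})(K_i,S(1))=1$ and output $F_j\otimes E_i$, giving the term $F_jE_i$; $(E_i,K_i,K_i;F_j,1,1)$ with coefficient $(E_i,F_j)(K_i,S(1))=\delta_{ij}(q_i-q_i^{-1})^{-1}$ and output $1\otimes K_i$; and $(1,1,E_i;L_j^{-1},L_j^{-1},F_j)$ with coefficient $(1,L_j^{-1})(E_i,S(F_j))=-(E_i,L_jF_j)=-\delta_{ij}(q_i-q_i^{-1})^{-1}$ and output $L_j^{-1}\otimes 1$. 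Summing these three does yield $E_iF_j=F_jE_i+\delta_{ij}(K_i-L_i^{-1})/(q_i-q_i^{-1})$, so your final relation is correct, but as written the derivation does not support it. Please redo this nine-term expansion with the corrected assignments; the rest of the argument then goes through.
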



The elements $z_i = K_i L_i^{-1}$ are central and $\qgr{\mathfrak{g}}$ is the quotient of $\qdbl{\mathfrak{g}}$ by the Hopf ideal generated by $(z_i-1)_{1\leq i \leq n}$.

Denote by $\qdbl{\mathfrak{g}}^{<0}$ (resp. $\qdbl{\mathfrak{g}}^{0}$, resp. $\qdbl{\mathfrak{g}}^{>0}$) the subalgebra of $\qdbl{\mathfrak{g}}$
generated by $(E_i)_{1\leq i \leq n}$ (resp. $(K_i, L_i)_{1\leq i \leq n}$, resp.
$(F_i)_{1\leq i \leq n}$). Multiplication yields an isomorphism of vector spaces
$\qdbl{\mathfrak{g}}^{<0}\otimes \qdbl{\mathfrak{g}}^{0} \otimes
\qdbl{\mathfrak{g}}^{>0} \simeq \qdbl{\mathfrak{g}}$. It is worth mentioning that the coproduct and antipode of $\qdbl{\mathfrak{g}}$ do not restrict to $\qdbl{\mathfrak{g}}^{<0}$ or $\qdbl{\mathfrak{g}}^{>0}$.

The square of the antipode is given by the conjugation by any element of the form $L_{\lambda}K_{2\rho-\lambda}$, $\lambda\in Q$ as it is easily checked on the generators.

\subsection{Graduation}
\label{sec:grad}

There exists a $Q$-graduation on $\qdbl{\mathfrak{g}}$ given by $\deg(K_i)=0=\deg(L_i)$, $\deg(E_i)=\alpha_i$ and $\deg(F_i)=-\alpha_i$. For $\lambda\in Q$, we denote by $\qdbl{\mathfrak{g}}_\lambda$ the homogeneous elements of degree $\lambda$. We have
\[
  \qdbl{\mathfrak{g}}_\lambda=\left\{v\in\qdbl{\mathfrak{g}}\ \middle\vert\  K_i v = q^{\langle\lambda,\alpha_i\rangle}vK_i,\ \forall 1\leq i \leq n\right\}.
\]

The coproduct, the counit and the antipode respect the
grading. Moreover, the same lemma as \cite[lemma 4.12]{jantzen} shows
that for $\mu\in Q$, $\mu\geq 0$
\[
  \Delta\left(\qdbl{\mathfrak{g}}^{>0}_\mu\right)
  \subseteq
  \bigoplus_{0\leq \nu\leq \mu} \qdbl{\mathfrak{g}}^{>0}_\nu
  \otimes K_\nu\qdbl{\mathfrak{g}}^{>0}_{\mu-\nu}
\]
and
\[
  \Delta\left(\qdbl{\mathfrak{g}}^{<0}_{-\mu}\right)
  \subseteq
  \bigoplus_{0\leq \nu\leq \mu} L_\nu^{-1}\qdbl{\mathfrak{g}}^{<0}_{-(\mu-\nu)}
  \otimes \qdbl{\mathfrak{g}}^{<0}_{-\nu}.
\]
Therefore, for $x\in\qdbl{\mathfrak{g}}^{>0}_\mu$, there are elements
$r_i(x)$ and $r_i'(x)$ in
$\qdbl{\mathfrak{g}}^{>0}_{\mu-\alpha_i}$ such that

\begin{equation}
  \Delta(x) \in 1\otimes x + \sum_{i=1}^n E_i\otimes K_ir_i(x)
  + \sum_{\substack{0 < \nu \leq \mu \\ \nu\not\in\Pi}}
  \qdbl{\mathfrak{g}}^{>0}_\nu \otimes
  K_\nu\qdbl{\mathfrak{g}}^{>0}_{\mu-\nu},\label{eq:coprod+}  
\end{equation}
and
\begin{equation}
    \Delta(x) \in x\otimes K_\mu + \sum_{i=1}^n r_i'(x)\otimes K_{\mu-\alpha_i}E_i
  + \sum_{\substack{0 < \nu \leq \mu \\ \mu-\nu\not\in\Pi}}
  \qdbl{\mathfrak{g}}^{>0}_\nu \otimes
  K_\nu\qdbl{\mathfrak{g}}^{>0}_{\mu-\nu}.\label{eq:coprod-}  
\end{equation}

Similarly, we can define $\rho_i(y)$ and $\rho_i'(y)$ for
$y\in\qdbl{\mathfrak{g}}^{<0}_{-\mu}$ by the following
\[
  \Delta(y) \in L_\mu^{-1}\otimes y + \sum_{i=1}^n L_{\mu-\alpha_i}^{-1}F_i\otimes \rho_i(y)
  + \sum_{\substack{0< \nu\leq \mu \\ \mu-\nu\not\in\Pi}} L_\nu^{-1}\qdbl{\mathfrak{g}}^{<0}_{-(\mu-\nu)}
  \otimes \qdbl{\mathfrak{g}}^{<0}_{-\nu},
\]
and
\[
    \Delta(y) \in y\otimes 1 + \sum_{i=1}^n  L_{\alpha_i}^{-1}\rho_i'(y)\otimes F_i
  + \sum_{\substack{0< \nu\leq \mu \\ \nu\not\in\Pi}} L_\nu^{-1}\qdbl{\mathfrak{g}}^{<0}_{-(\mu-\nu)}
  \otimes \qdbl{\mathfrak{g}}^{<0}_{-\nu}.
\]

The values of $r_i$, $r_i'$, $\rho_i$ and $\rho_i'$ can be computed by
induction. Compare with \cite[6.14,6.15]{jantzen}.

\begin{lemma}
  \label{lem:r_i-mult}
  Let $\mu,\mu' \in Q$, $\mu \geq 0$ and $\mu'\geq 0$.
  \begin{enumerate}
  \item For all $x\in\qdbl{\mathfrak{g}}_{\mu}^{>0}$ and $x'\in\qdbl{\mathfrak{g}}_{\mu'}^{>0}$,
    \[
      r_i(xx')=q^{-\langle\alpha_i,\mu\rangle}xr_i(x')+r_i(x)x', \quad
      r_i'(xx')=xr_i'(x')+q^{-\langle\alpha_i,\mu'\rangle}r_i'(x)x'.
    \]
    \item For all $y\in\qdbl{\mathfrak{g}}_{-\mu}^{<0}$ and $y'\in\qdbl{\mathfrak{g}}_{-\mu'}^{<0}$,
    \[
      \rho_i(yy')=y\rho_i(y')+q^{-\langle\alpha_i,\mu'\rangle}\rho_i(y)y', \quad
      \rho_i'(yy')=q^{-\langle\alpha_i,\mu\rangle}y\rho_i'(y')+\rho_i'(y)y'.
    \]
  \end{enumerate}
\end{lemma}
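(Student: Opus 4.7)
The plan is to read off the formulas directly from $\Delta(xx') = \Delta(x)\Delta(x')$, using the decompositions \eqref{eq:coprod+} and \eqref{eq:coprod-}, and keeping track of which products land in the privileged $E_i \otimes K_i\, \qdbl{\mathfrak{g}}^{>0}_{\mu+\mu'-\alpha_i}$ summand (respectively the analogous summands for $r_i'$, $\rho_i$, $\rho_i'$).

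Concretely, for the first identity, I write
\begin{align*}
  \Delta(x)  &= 1\otimes x + \sum_j E_j\otimes K_j r_j(x) + R_x,\\
  \Delta(x') &= 1\otimes x' + \sum_k E_k\otimes K_k r_k(x') + R_{x'},
\end{align*}
where $R_x\in\bigoplus_{0<\nu\leq\mu,\,\nu\notin\Pi}\qdbl{\mathfrak{g}}^{>0}_{\nu}\otimes K_\nu\qdbl{\mathfrak{g}}^{>0}_{\mu-\nu}$ and similarly for $R_{x'}$. Multiplying the two expressions, any product whose first tensor factor has a contribution from $E_j\otimes\cdots$ times $E_k\otimes\cdots$, or involves $R_x$ or $R_{x'}$, produces an element whose left tensor factor lies in $\qdbl{\mathfrak{g}}^{>0}_{\gamma}$ with $\gamma$ of height $\geq 2$, hence belongs to the "higher" summand of $\Delta(xx')$. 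The only contributions to the $\sum_i E_i\otimes K_i \qdbl{\mathfrak{g}}^{>0}_{\mu+\mu'-\alpha_i}$ part come from $(1\otimes x)(E_i\otimes K_i r_i(x'))$ and $(E_i\otimes K_i r_i(x))(1\otimes x')$.

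Using the commutation $x K_i = q^{-\langle\mu,\alpha_i\rangle}K_i x$ (which is just the weight relation $K_i x = q^{\langle\alpha_i,\mu\rangle}x K_i$ read backwards, recalling that the form is symmetric), the first of these contributes $E_i\otimes q^{-\langle\alpha_i,\mu\rangle} K_i\, x r_i(x')$ and the second contributes $E_i\otimes K_i\, r_i(x) x'$. Reading off the coefficient of $E_i\otimes K_i(\cdot)$ in the expansion of $\Delta(xx')$ gives the stated formula for $r_i(xx')$. The formula for $r_i'$ is obtained in exactly the same manner from \eqref{eq:coprod-}, now using that the remainder lies in $\qdbl{\mathfrak{g}}^{>0}_{\nu}\otimes K_\nu\qdbl{\mathfrak{g}}^{>0}_{\mu-\nu}$ with $\mu-\nu\notin\Pi$, so that the only contributions to $\sum_i r_i'(\cdot)\otimes K_{\mu+\mu'-\alpha_i}E_i$ come from the two analogous cross terms; the needed commutation is between $E_i$ and $K_{\mu'}$ (for the term from $x'$), giving the asymmetric power of $q$.

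The two identities for $\rho_i$ and $\rho_i'$ are strictly parallel, replacing $\qdbl{\mathfrak{g}}^{>0}$ by $\qdbl{\mathfrak{g}}^{<0}$, $E_i$ by $F_i$, and $K_i$ by $L_i^{-1}$. The only real obstacle is bookkeeping: one must verify carefully that every product of the form (nontrivial $E$-term)$\cdot$(nontrivial $E$-term) and every product involving a remainder $R$ has its left tensor factor of height $\geq 2$, so that it cannot contaminate the $E_i$-coefficient. This is immediate from $Q$-homogeneity and the positivity of heights involved, so no serious difficulty is expected beyond writing down the expansion carefully.
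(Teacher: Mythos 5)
Your argument is correct: expanding $\Delta(xx')=\Delta(x)\Delta(x')$ via \eqref{eq:coprod+} and \eqref{eq:coprod-}, the degree bookkeeping (the left factor must have degree $\alpha_i$, resp.\ $\mu+\mu'-\alpha_i$, and $\alpha_i$ has height $1$) isolates exactly the two cross terms you name, and the commutations $xK_i=q^{-\langle\alpha_i,\mu\rangle}K_ix$ and $E_iK_{\mu'}=q^{-\langle\alpha_i,\mu'\rangle}K_{\mu'}E_i$ produce the stated coefficients. The paper omits the proof and defers to \cite[6.14, 6.15]{jantzen}, where this is precisely the argument given, so your proposal coincides with the intended one.
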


%

These elements can also be used to compute some commutators.

\begin{lemma}
  \label{lem:commutator}
  Let $\mu\in Q$, $\mu\geq 0$. Let
  $x\in\qdbl{\mathfrak{g}}^{>0}_{\mu}$ and $y \in
  \qdbl{\mathfrak{g}}^{<0}_{-\mu}$. Then
  \[
    xF_i - F_ix = \frac{K_ir_i(x)-r_i'(x)L_i^{-1}}{q_i-q_i^{-1}}
  \]
  and
  \[
    E_iy - yE_i = \frac{\rho_i(y)K_i-L_i^{-1}\rho_i'(y)}{q_i-q_i^{-1}}.
  \]
\end{lemma}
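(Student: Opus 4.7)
The plan is to prove both identities by induction on the height $\h(\mu) = \sum_i \mu_i$, using the multiplicativity of $r_i, r_i', \rho_i, \rho_i'$ supplied by Lemma \ref{lem:r_i-mult}.

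For the base case $\h(\mu) = 1$, so $\mu = \alpha_j$ for some $j$, we have $\qdbl{\mathfrak{g}}^{>0}_{\alpha_j} = \mathbb{Q}(q) E_j$. From $\Delta(E_j) = 1\otimes E_j + E_j \otimes K_j$ compared with formulas \eqref{eq:coprod+} and \eqref{eq:coprod-}, I read off $r_i(E_j) = r_i'(E_j) = \delta_{ij}$. The first identity then reduces to the fundamental commutation relation $[E_j, F_i] = \delta_{ij}(K_i - L_i^{-1})/(q_i - q_i^{-1})$, which is one of the defining relations of $\qdbl{\mathfrak{g}}$.

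For the inductive step, since $\qdbl{\mathfrak{g}}^{>0}_\mu$ is spanned by products $x_1 x_2$ with $x_\ell \in \qdbl{\mathfrak{g}}^{>0}_{\mu_\ell}$, $\mu = \mu_1 + \mu_2$ and $\h(\mu_\ell) \geq 1$, it is enough to check that if the formula holds for $x_1$ and $x_2$ it holds for $x_1 x_2$. Using the derivation property $[x_1 x_2, F_i] = x_1[x_2, F_i] + [x_1, F_i]x_2$ and the induction hypothesis, the right-hand side equals
\[
x_1 \frac{K_i r_i(x_2) - r_i'(x_2) L_i^{-1}}{q_i - q_i^{-1}} + \frac{K_i r_i(x_1) - r_i'(x_1) L_i^{-1}}{q_i - q_i^{-1}} x_2.
\]
I would then commute $K_i$ through $x_1$ (picking up $q^{\langle\alpha_i,\mu_1\rangle}$) in the first summand, and $L_i^{-1}$ through $x_2$ (picking up $q^{\langle\alpha_i,\mu_2\rangle}$) in the second, and verify that these $q$-powers precisely cancel the factors $q^{-\langle\alpha_i,\mu_1\rangle}$ and $q^{-\langle\alpha_i,\mu_2\rangle}$ appearing in $r_i(x_1 x_2) = q^{-\langle\alpha_i,\mu_1\rangle}x_1 r_i(x_2) + r_i(x_1) x_2$ and $r_i'(x_1 x_2) = x_1 r_i'(x_2) + q^{-\langle\alpha_i,\mu_2\rangle}r_i'(x_1)x_2$ from Lemma \ref{lem:r_i-mult}. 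After this cancellation the expression coincides with $(K_i r_i(x_1 x_2) - r_i'(x_1 x_2)L_i^{-1})/(q_i - q_i^{-1})$, as desired.

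The second identity is proved by the same induction, with base case $y = F_j$: from $\Delta(F_j) = L_j^{-1}\otimes F_j + F_j\otimes 1$ one extracts $\rho_i(F_j) = \rho_i'(F_j) = \delta_{ij}$, and the formula again reduces to $[E_i, F_j] = \delta_{ij}(K_i - L_i^{-1})/(q_i-q_i^{-1})$. The inductive step uses the second half of Lemma \ref{lem:r_i-mult} and the commutation relations between $K_i, L_i$ and elements of $\qdbl{\mathfrak{g}}^{<0}_{-\mu}$ in the mirror-image fashion. The only delicate point in the argument is keeping track of which $q$-power is generated by each commutation so that the telescoping between the multiplicativity formulas and the conjugation of $K_i, L_i^{-1}$ produces exactly the required cancellation; this is however entirely mechanical.
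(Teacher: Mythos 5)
Your proof is correct and follows essentially the same route as the paper's: verify the identity on the generators $E_j$ (respectively $F_j$), where it reduces to the defining relation $[E_j,F_i]=\delta_{ij}(K_i-L_i^{-1})/(q_i-q_i^{-1})$, then extend to products via the Leibniz rule for commutators together with the multiplicativity formulas of Lemma \ref{lem:r_i-mult}. One small imprecision in your bookkeeping: moving $K_i$ to the left of $x_1$ produces the factor $q^{-\langle\alpha_i,\mu_1\rangle}$ (not $q^{+\langle\alpha_i,\mu_1\rangle}$), which \emph{matches}, rather than cancels, the coefficient in $r_i(x_1x_2)$ — the final identification with $K_ir_i(x_1x_2)-r_i'(x_1x_2)L_i^{-1}$ still goes through exactly as you claim.
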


\begin{proof}
  We only show the first formula, the second one is proven
  similarly. If $x = 1$ or $x = E_i$, the formula is satisfied. Now,
  supposing that it is true for $x$ and $x'$, we show it for $xx'$:
  \begin{align*}
    xx'F_i-F_ixx'
     &= x(x'F_i-F_ix')+(xF_i-F_ix)x'\\
     &= (q_i-q_i^{-1})^{-1}(x(K_ir_i(x')-r_i'(x')L_i^{-1}) +
       (K_ir_i(x)-r_i'(x)L_i^{-1})x')\\
     &=
       (q_i-q_i^{-1})^{-1}(K_i(q^{-\langle\alpha_i,\mu\rangle}xr_i(x')
       + r_i(x)x') -
       (xr_i'(x')+q^{-\langle\alpha_i,\mu'\rangle}r_i'(x)x'))\\
     &=(q_i-q_i^{-1})^{-1}(K_ir_i(xx') - r_i'(xx')L_i^{-1}),
  \end{align*}
  where we used Lemma \ref{lem:r_i-mult} in the last equality.
\end{proof}

\subsection{Some properties of the pairing}
\label{sec:prop-pair}

We gather here some well known properties of the pairing.

\begin{proposition}
  \label{prop:Dnul}
  Let $x \in \qdbl{\mathfrak{g}}^{>0}$, $y\in\qdbl{\mathfrak{g}}^{<0}$ and $\lambda,\mu \in Q$. Then
  \begin{enumerate}
  \item $(K_\lambda x,L_\mu y)=(K_\lambda,L_\mu)(x,y)$.
  \item Suppose $\lambda,\mu\geq 0$. If $x$ is of weight $\lambda$ and $y$ of weight $-\mu$ then $(x,y)=0$ if $\lambda \neq \mu$.
  \end{enumerate}
\end{proposition}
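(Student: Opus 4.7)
The plan is to deduce both parts from the Hopf-algebraic compatibility of the pairing with the coproducts (properties~(1) and~(2) of Proposition~\ref{prop:bil_form}), leveraging that $K_\lambda$ and $L_\mu$ are group-like. The key preliminary observation is that $\Delta(K_\lambda) = K_\lambda \otimes K_\lambda$ forces $(K_\lambda, zz') = (K_\lambda, z)(K_\lambda, z')$ via property~(1), and symmetrically $(zz', L_\mu) = (z, L_\mu)(z', L_\mu)$ via property~(2); combined with $(K_\lambda, F_i) = 0$ and $(E_i, L_\mu) = 0$, this yields
\[
  (K_\lambda, y) = \varepsilon(y) \ \text{ for all } y \in \qdbl{\mathfrak{g}}^{<0}, \qquad (x, L_\mu) = \varepsilon(x) \ \text{ for all } x \in \qdbl{\mathfrak{g}}^{>0}.
\]

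For part~(1), applying property~(2) to $(K_\lambda x, L_\mu y)$ and using $\Delta(L_\mu y) = (L_\mu \otimes L_\mu)\Delta(y)$ gives
\[
  (K_\lambda x, L_\mu y) = \sum (x, L_\mu y_{(1)})(K_\lambda, L_\mu y_{(2)}).
\]
The preliminary identity reduces the second factor to $(K_\lambda, L_\mu)\varepsilon(y_{(2)})$, and the counit axiom collapses the sum to $(K_\lambda, L_\mu)(x, L_\mu y)$. A symmetric manipulation using property~(1) and the companion preliminary identity then reduces $(x, L_\mu y)$ to $(x, y)$, giving~(1).

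For part~(2), I would induct on the height of $\lambda$. The base case $\lambda = 0$ reduces to $(1, y) = \varepsilon(y) = 0$ since $y$ has weight $-\mu \neq 0$. For $\lambda > 0$, by bilinearity take $x = x' E_i$ with $x' \in \qdbl{\mathfrak{g}}^{>0}_{\lambda - \alpha_i}$; property~(2) together with the decomposition of $\Delta(y)$ stated before~\eqref{eq:coprod-} yields
\[
  (x' E_i, y) = \sum_{0 \leq \nu \leq \mu} (E_i, L_\nu^{-1} y^{(1)}_\nu)(x', y^{(2)}_\nu),
\]
with $y^{(1)}_\nu \in \qdbl{\mathfrak{g}}^{<0}_{-(\mu - \nu)}$ and $y^{(2)}_\nu \in \qdbl{\mathfrak{g}}^{<0}_{-\nu}$. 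By the induction hypothesis, $(x', y^{(2)}_\nu) = 0$ unless $\nu = \lambda - \alpha_i$; expanding $\Delta(E_i) = 1 \otimes E_i + E_i \otimes K_i$ and applying the preliminary identity gives $(E_i, L_\nu^{-1} y^{(1)}_\nu) = (E_i, y^{(1)}_\nu)$, which a direct computation using property~(1) and the preliminary identity $(K_i, z) = \varepsilon(z)$ shows to vanish unless $y^{(1)}_\nu$ has weight $-\alpha_i$, i.e.\ $\mu - \nu = \alpha_i$. The two constraints together force $\mu = \lambda$, contradicting the hypothesis.

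The main obstacle is carefully tracking how the $L_\nu^{-1}$ factor in the coproduct decomposition of $\Delta(y)$ interacts with the pairing; the preliminary identities $(K_\lambda, z) = \varepsilon(z)$ and $(x, L_\mu) = \varepsilon(x)$ are precisely what is needed to strip these group-like factors off in both the inductive step of (2) and the telescoping argument of (1).
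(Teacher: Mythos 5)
Your proof is correct and follows essentially the same route as the paper: part (1) via the compatibility of the pairing with the coproducts together with the vanishing identities $(K_\lambda,\cdot)=\varepsilon$ on $\qdbl{\mathfrak{g}}^{<0}$ and $(\cdot,L_\mu)=\varepsilon$ on $\qdbl{\mathfrak{g}}^{>0}$, and part (2) by induction on $\h(\lambda)$, peeling off one generator $E_i$ and using the triangular decomposition of $\Delta(y)$. The only differences are cosmetic (you factor $x=x'E_i$ on the right where the paper writes $x=E_ix'$, and you make the counit identities explicit rather than leaving them implicit).
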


\begin{proof}
  For the first equality, it suffices to show it for $x$ and $y$
  homogeneous. It then follows easily from (\ref{eq:coprod+}),
  (\ref{eq:coprod-}) and the fact that $(K_\lambda,y')=0=(x',L_\mu)$
  for any $x'\in\qdbl{\mathfrak{g}}^{>0}$ and
  $y'\in\qdbl{\mathfrak{g}}^{<0}$.

  For the second assertion, we proceed by induction on
  $\h(\lambda)=\sum_{i=1}^n\lambda_i$, where
  $\lambda=\sum_{i=1}^n\lambda_i\alpha_i$. We can suppose that $x$ is
  a product of the generators $E_i$. The case $\h(\lambda)=1$,
  \emph{i.e.} $\lambda\in\Pi$ is easily proved by induction on
  $\h(\mu)$. Then, writing $x=E_ix'$ and
  \[
    \Delta(y) =
    \sum_{0\leq\nu\leq\mu}L_{\nu}^{-1}y'_{-(\mu-\nu)}\otimes
    y''_{-\nu},\ y'_{-\nu},y''_{-\nu}\in\qdbl{\mathfrak{g}}^{<0}_{-\nu}, 
  \]
  one has
  \begin{align*}
    (E_ix',y) = \sum_{0\leq\nu\leq\mu}
    (x',L_{\nu}^{-1}y'_{-(\mu-\nu)})(E_i,y''_{-\nu})
    = (x',y'_{-(\mu-\alpha_i)})(E_i,y''_{-\alpha_i}).
  \end{align*}
  As $\lambda\neq \mu$, the two terms can not be simultaneously non-zero.
\end{proof}

Using these two facts and an easy induction, one can show that, for $1 \leq i \leq n$, and $r\in\mathbb{N}$,
\[
  (E_i^r,F_i^r)=\frac{q_i^{-\frac{r(r-1)}{2}}\qfact{r}{q_i}}{(q_i-q_i^{-1})^r}.
\]

We now turn to the compatibility between the pairing and the $r_i$, $r_i'$,
$\rho_i$ and $\rho_i'$ defined in section \ref{sec:grad}.

\begin{lemma}
  \label{lem:bil_r_rho}
  Let $x\in\qdbl{\mathfrak{g}}^{>0}_{\mu}$ and $y \in
  \qdbl{\mathfrak{g}}^{<0}_{-\mu}$. One has
  \begin{align*}
    (x,F_iy) &= (E_i,F_i)(r_i(x),y), & (x,yF_i)&=(E_i,F_i)(r_i'(x),y),\\
    (E_ix,y) &= (E_i,F_i)(x,\rho_i'(y)), & (xE_i,y) &= (E_i,F_i)(x,\rho_i(y)). 
  \end{align*}
\end{lemma}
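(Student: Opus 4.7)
The plan is to prove all four identities by the same uniform mechanism: unfold the pairing via the Hopf bilinear properties (1) or (2) of Proposition~\ref{prop:bil_form}, substitute the explicit coproduct decompositions from Section~\ref{sec:grad}, and use Proposition~\ref{prop:Dnul} to isolate the single surviving term.

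I begin with the first identity. Applying property~(1) gives $(x, F_i y) = (\Delta(x), F_i \otimes y)$. Substituting the expansion~(\ref{eq:coprod+}) writes $\Delta(x)$ as $1 \otimes x + \sum_j E_j \otimes K_j r_j(x) + \sum_\nu a_\nu \otimes b_\nu$ with $a_\nu \in \qdbl{\mathfrak{g}}^{>0}_\nu$, $b_\nu \in K_\nu \qdbl{\mathfrak{g}}^{>0}_{\mu-\nu}$, and $\nu \notin \{0\}\cup\Pi$. The term $1 \otimes x$ contributes $\varepsilon(F_i)(x,y) = 0$. For any other term $a_\nu \otimes b_\nu$, Proposition~\ref{prop:Dnul}(2) kills $(a_\nu, F_i)$ unless $\nu = \alpha_i$, selecting the single surviving term $E_i \otimes K_i r_i(x)$ and contributing $(E_i, F_i)(K_i r_i(x), y)$. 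Finally, Proposition~\ref{prop:Dnul}(1) with $\lambda = \alpha_i$, $\mu = 0$ strips the $K_i$: $(K_i r_i(x), y) = (K_i, 1)(r_i(x), y) = (r_i(x), y)$.

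The second identity uses the alternative decomposition~(\ref{eq:coprod-}) of $\Delta(x)$ instead. Each term $a_\nu \otimes b_\nu$ with $b_\nu = K_\nu b'_\nu$, $b'_\nu \in \qdbl{\mathfrak{g}}^{>0}_{\mu-\nu}$ pairs with $y \otimes F_i$ to give $(a_\nu, y)(K_\nu b'_\nu, F_i) = (a_\nu, y)(b'_\nu, F_i)$ by Proposition~\ref{prop:Dnul}(1). The $x \otimes K_\mu$ term contributes $(x,y)(K_\mu, F_i) = 0$. Proposition~\ref{prop:Dnul}(2) applied to $(b'_\nu, F_i)$ then forces $\mu - \nu = \alpha_i$, selecting exactly the term $r_i'(x) \otimes K_{\mu-\alpha_i} E_i$ and producing $(E_i, F_i)(r_i'(x), y)$.

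For the third and fourth identities I use property~(2) to rewrite $(E_i x, y) = (x \otimes E_i, \Delta(y))$ and $(x E_i, y) = (E_i \otimes x, \Delta(y))$, then substitute the analogous decompositions of $\Delta(y)$ displayed in Section~\ref{sec:grad} (the second form for the third identity, the first form for the fourth). The same two-step argument applies: Proposition~\ref{prop:Dnul}(1) strips off the $L_\nu^{-1}$ prefix, and Proposition~\ref{prop:Dnul}(2) constrains the weight of the surviving factor to be $\alpha_i$, selecting the term containing $\rho_i'(y)$ or $\rho_i(y)$, respectively. The only small preliminary observation needed throughout is the orthogonality $(K_\nu, F_j) = 0 = (E_j, L_\nu^{-1})$ for every $\nu$, which follows immediately from Proposition~\ref{prop:bil_form}(1) and (5) since $\varepsilon(F_j) = \varepsilon(E_j) = 0$. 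I do not expect a serious obstacle: the whole proof is a careful bookkeeping exercise in which every identity collapses to a single surviving summand.
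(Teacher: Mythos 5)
Your proof is correct and is exactly the argument the paper intends: its own proof of Lemma \ref{lem:bil_r_rho} consists of the single sentence that the claim ``follows easily from (\ref{eq:coprod+}), (\ref{eq:coprod-}) and Proposition \ref{prop:Dnul}'', and you have simply carried out that bookkeeping in full, matching each identity to the appropriate coproduct decomposition and using Proposition \ref{prop:Dnul} to isolate the surviving summand.
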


\begin{proof}
  It follows easily from (\ref{eq:coprod+}),
  (\ref{eq:coprod-}) and Proposition \ref{prop:Dnul}.
\end{proof}

We end this section with \cite[Proposition 2.1.4]{tanisaki}.

\begin{proposition}
  The restriction of the pairing to $\qdbl{\mathfrak{g}}^{>0}\times\qdbl{\mathfrak{g}}^{<0}$ is non-degenerate.
\end{proposition}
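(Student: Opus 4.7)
The plan is to reduce the claim to a non-degeneracy statement on each $Q^+$-graded piece and then argue by induction on the height of the weight, using the ``skew derivations'' $r_i, r_i', \rho_i, \rho_i'$ introduced in Section \ref{sec:grad}.

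First I would observe that, by Proposition \ref{prop:Dnul}(2), elements of $\qdbl{\mathfrak{g}}^{>0}_\mu$ and $\qdbl{\mathfrak{g}}^{<0}_{-\nu}$ pair trivially whenever $\mu \neq \nu$. Combined with the $Q^+$-grading of $\qdbl{\mathfrak{g}}^{>0}$ and $\qdbl{\mathfrak{g}}^{<0}$, this reduces non-degeneracy of the full pairing to non-degeneracy of each finite-dimensional pairing
\[
  \qdbl{\mathfrak{g}}^{>0}_{\mu} \times \qdbl{\mathfrak{g}}^{<0}_{-\mu} \longrightarrow \mathbb{Q}(q), \qquad \mu \in Q^+.
\]
Since the two graded pieces have equal dimension (verifiable from the PBW-type triangular decomposition stated in Section \ref{sec:grad}), it is enough to establish injectivity in the $\qdbl{\mathfrak{g}}^{>0}$-variable.

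I would then proceed by induction on $\h(\mu)$. The base $\mu = 0$ is immediate from $(1,1) = \varepsilon(1) = 1$. For the inductive step, assume $x \in \qdbl{\mathfrak{g}}^{>0}_\mu$, $\mu > 0$, satisfies $(x,y) = 0$ for every $y \in \qdbl{\mathfrak{g}}^{<0}_{-\mu}$. Writing a general $y$ in the two forms $y = F_i y'$ and $y = y' F_i$ with $y' \in \qdbl{\mathfrak{g}}^{<0}_{-(\mu - \alpha_i)}$, Lemma \ref{lem:bil_r_rho} yields
\[
  (E_i, F_i)\,(r_i(x), y') \;=\; 0 \;=\; (E_i, F_i)\,(r_i'(x), y').
\]
Since $(E_i,F_i) = (q_i - q_i^{-1})^{-1} \neq 0$, the inductive hypothesis forces $r_i(x) = r_i'(x) = 0$ for every $i$.

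The remaining task, and the main obstacle, is to deduce $x = 0$ from the simultaneous vanishing of all $r_i(x)$ and $r_i'(x)$ when $\mu > 0$. Here I would invoke Lemma \ref{lem:commutator}, which gives $[x, F_i] = 0$ for all $i$. Combined with the weight relations $K_j x = q^{\langle \mu, \alpha_j\rangle} x K_j$ and $L_j x = q^{\langle \mu, \alpha_j \rangle} x L_j$, one sees that $x$ commutes with every generator of $\qdbl{\mathfrak{g}}^{<0}\qdbl{\mathfrak{g}}^0$. Through the triangular decomposition $\qdbl{\mathfrak{g}}^{<0} \otimes \qdbl{\mathfrak{g}}^{0} \otimes \qdbl{\mathfrak{g}}^{>0} \simeq \qdbl{\mathfrak{g}}$, testing $x$ against a suitable Verma-type module (or, equivalently, arguing as in \cite[Lemma 6.17]{jantzen}) forces $x$ to be a scalar, and hence zero since $\mu > 0$. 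The non-degeneracy in the $y$-variable is the symmetric statement and follows by the same argument using $\rho_i, \rho_i'$ in place of $r_i, r_i'$; this is why the author refers to \cite[Proposition 2.1.4]{tanisaki}, where this last technical step is carried out in detail.
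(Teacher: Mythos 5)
The paper does not actually prove this statement: it is quoted from \cite[Proposition 2.1.4]{tanisaki}, and within the paper's own logical order it is in any case an immediate corollary of the proposition stated just before it (the orthogonality relations $\bigl(\prod_\alpha E_\alpha^{a_\alpha},\prod_\alpha F_\alpha^{b_\alpha}\bigr)=\prod_\alpha\delta_{a_\alpha,b_\alpha}\cdot(\text{nonzero})$, quoted from \cite[8.29]{jantzen}): the Gram matrix of the pairing in the two PBW bases is diagonal with non-zero entries, and the explicit dual basis is even written down. Your argument instead follows the ``from scratch'' route of Tanisaki and of \cite[Chapter 6]{jantzen}, which does not presuppose the PBW orthogonality. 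The reduction to the graded pieces via Proposition \ref{prop:Dnul}(2), the equality of dimensions of $\qdbl{\mathfrak{g}}^{>0}_\mu$ and $\qdbl{\mathfrak{g}}^{<0}_{-\mu}$, and the induction on $\h(\mu)$ giving $r_i(x)=r_i'(x)=0$ for all $i$ via Lemma \ref{lem:bil_r_rho} are all correct and correctly sourced.

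The one place where your write-up is not a proof is exactly the step you identify as ``the main obstacle'': deducing $x=0$ from $r_i(x)=r_i'(x)=0$ for all $i$ and $\mu>0$. Lemma \ref{lem:commutator} does give $[x,F_i]=0$, hence $x$ commutes with all of $\qdbl{\mathfrak{g}}^{<0}$ and $q$-commutes with $\qdbl{\mathfrak{g}}^0$ (note that your literal claim that $x$ commutes with the generators of $\qdbl{\mathfrak{g}}^{<0}\qdbl{\mathfrak{g}}^0$ is false for $K_j$ and $L_j$ when $\mu>0$). From this one concludes that $x$ annihilates every highest-weight module, since $x\cdot v_\lambda=0$ for weight reasons and $x\cdot Fv_\lambda=F\cdot xv_\lambda=0$ for $F\in\qdbl{\mathfrak{g}}^{<0}$. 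But to go from ``$x$ annihilates all Verma-type modules'' to ``$x=0$'' you need a faithfulness statement (injectivity of $\qdbl{\mathfrak{g}}^{>0}\to\End(\bigoplus_\lambda M(\lambda))$, or the analogue of \cite[5.11]{jantzen} for the double), which is established neither in the paper nor in your sketch; the phrase ``testing $x$ against a suitable Verma-type module forces $x$ to be a scalar'' is precisely the non-formal content of the whole proposition. Since you correctly isolate this step and point to where it is carried out (\cite[Lemma 6.17]{jantzen}, \cite[Proposition 2.1.4]{tanisaki}), your argument is an acceptable reduction, but as written it defers the essential point rather than proving it.
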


\subsection{Basis for $\qdbl{\mathfrak{g}}$}
\label{sec:basis}

We introduce the divided power $E_i^{(r)}$ and $F_i^{(r)}$ for $1\leq i \leq n$ and $r \leq 0$:
\[
  E_i^{(r)}=\frac{E_i^r}{\qint{r}{q_i}}
  \quad\mathrm{and}\quad
  F_i^{(r)}=\frac{F_i^r}{\qint{r}{q_i}}.
\]
The following is the analogue of \cite[Theorem 3.1]{lusztig_qgrp_roots} for $\qdbl{\mathfrak{g}}$.

\begin{proposition}
  For all $1\leq i \leq n$, there exists a unique $\mathbb{Q}(q)$-algebra isomorphism $T_i$ such that:
\[
  T_i(K_\lambda)=K_{s_i(\lambda)} \quad T_i(L_\lambda)=L_{s_i(\lambda)},\ \forall\lambda\in Q,
\]
and, for $1\leq j \leq n$, setting $r=-\langle\alpha _j,\alpha_i^\vee\rangle$,
\begin{align*}
  T_i(E_j)&=
  \begin{dcases}
    -F_i L_i & \text{if}\ i=j,\\
    \sum_{k=0}^{r}(-1)^kq_i^{-k}E_i^{(r-k)}E_j E_i^{(k)} & \text{otherwise},
  \end{dcases}\\
  T_i(F_j)&=
  \begin{dcases}
    -K_i^{-1} E_i & \text{if}\ i=j,\\
    \sum_{k=0}^{r}(-1)^kq_i^kF_i^{(k)}F_j F_i^{(r-k)} & \text{otherwise},
  \end{dcases}
\end{align*}
\end{proposition}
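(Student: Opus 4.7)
The plan is to exploit the presentation of $\qdbl{\mathfrak{g}}$ given by the preceding proposition: define $T_i$ on generators by the stated formulas and verify that these preserve every defining relation. Uniqueness is then automatic, since the values of $T_i$ on a generating set determine any algebra homomorphism. Invertibility will follow by producing an explicit inverse $T_i^{-1}$ (sending $E_i \mapsto -K_i^{-1}F_i$, $F_i \mapsto -E_iL_i$, and using the analogous Lusztig-type polynomials for $j\neq i$) and checking $T_iT_i^{-1}=\id=T_i^{-1}T_i$ on generators.

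The relations split into three groups. First, the purely Cartan relations — the commutations among the $K$'s, among the $L$'s, and the cross commutation $K_iL_j=L_jK_i$ — are immediate because $T_i$ acts on $K_\lambda, L_\lambda$ through the linear reflection $s_i$ on $Q$. Second, the $KE$, $LE$, $KF$, $LF$ commutations reduce to the assertion that $T_i(E_j)$ and $T_i(F_j)$ are $Q$-homogeneous of weights $s_i(\alpha_j)$ and $-s_i(\alpha_j)$ respectively, which is visible on the explicit formulas together with the $W$-invariance of $\langle\cdot,\cdot\rangle$. Third, the Serre relations among the $T_i(E_j)$'s with $j\neq i$ are verbatim Lusztig's computation in \cite{lusztig_qgrp_roots}, since the formulas for $T_i(E_j)$ with $j\neq i$ involve only the $E_k$'s; the remaining Serre cases where the index $i$ appears translate into relations internal to the subalgebra generated by $F_i$, $L_i$ and the relevant $E_j$ (or symmetrically, $E_i$, $K_i^{-1}$ and $F_j$) and are handled by direct expansion.

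The main obstacle, and the only place where the doubled Cartan part requires genuinely new input beyond Lusztig's theorem, is the cross relation $[E_i,F_j] = \delta_{i,j}(K_i-L_i^{-1})/(q_i-q_i^{-1})$. For $i\notin\{j,k\}$ the verification is combinatorially identical to Lusztig's argument. The mixed cases (exactly one of $j,k$ equals $i$) reduce, via Lemma \ref{lem:commutator}, to the vanishing of $r_i$ or $\rho_i$ on appropriate Lusztig polynomials, which is a weight argument. The delicate case is $i=j=k$, where one must verify
\[
 [-F_iL_i,\,-K_i^{-1}E_i] \;=\; \frac{T_i(K_i)-T_i(L_i)^{-1}}{q_i-q_i^{-1}} \;=\; \frac{K_i^{-1}-L_i}{q_i-q_i^{-1}}.
\]
Commuting $L_i$ and $K_i^{-1}$ past $E_i$ and $F_i$ using the $LE$ and $KF$ relations collapses the left-hand side to $-[E_i,F_i]\,K_i^{-1}L_i$, and substituting the original commutator relation together with the identity $L_i^{-1}K_i^{-1}L_i=K_i^{-1}$ (which follows from $K_iL_i=L_iK_i$) yields exactly the desired expression. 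The asymmetric placement of $L_i$ in $T_i(E_i)$ versus $K_i^{-1}$ in $T_i(F_i)$ is precisely what produces the difference $K_i^{-1}-L_i$ matching $T_i(K_i)-T_i(L_i^{-1})$ — this is the single non-trivial compatibility that the doubled structure must satisfy, and it does so thanks to this asymmetry.
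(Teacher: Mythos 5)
Your proposal is correct, and it is essentially the argument the paper delegates to the literature: the proposition is stated without proof as ``the analogue of Lusztig's Theorem 3.1,'' so the intended justification is exactly what you describe — define $T_i$ on generators, observe that all relations not involving both $E_i$ and $F_i$ are either weight-homogeneity statements or live entirely in $\qdbl{\mathfrak{g}}^{>0}$ (resp. $\qdbl{\mathfrak{g}}^{<0}$) and hence are verbatim Lusztig, and isolate the one genuinely new check forced by the doubled Cartan part. You carry out that key check, $[-F_iL_i,-K_i^{-1}E_i]=(K_i^{-1}-L_i)/(q_i-q_i^{-1})$, correctly, and you correctly identify the asymmetry $L_i$ versus $K_i^{-1}$ as what makes it work. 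One small imprecision: in the mixed cases ($j\neq i$) the relation $[T_i(E_j),T_i(F_i)]=0$ involves $T_i(F_i)=-K_i^{-1}E_i$, which lies in $\qdbl{\mathfrak{g}}^{\geq 0}$, so Lemma \ref{lem:commutator} does not apply directly; what is needed is the $q$-commutation of $E_i$ with $T_i(E_j)$ (a consequence of the quantum Serre relation, again purely inside the positive part), and symmetrically for $F_i$ with $T_i(F_j)$. This does not affect the validity of the outline.
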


We also have the analogue of \cite[Theorem 3.2]{lusztig_qgrp_roots} for $\qdbl{\mathfrak{g}}$.

\begin{proposition}
  The $(T_i)_{1 \leq i \leq n}$ satisfy the braid relations and therefore define a morphism from the braid group of $W$ to the algebra automorphisms of $\qdbl{\mathfrak{g}}$.
\end{proposition}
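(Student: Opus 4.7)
The plan is to verify the braid relations directly on each generator of $\qdbl{\mathfrak{g}}$, following the rank-$2$ reduction strategy of \cite[Theorem 3.2]{lusztig_qgrp_roots}. Writing $m_{ij}$ for the order of $s_is_j$ in $W$, it suffices to check that $\underbrace{T_iT_jT_i\cdots}_{m_{ij}\text{ factors}}(\xi) = \underbrace{T_jT_iT_j\cdots}_{m_{ij}\text{ factors}}(\xi)$ for each generator $\xi$. For $\xi \in \{K_\lambda, L_\lambda\}$ the identity is immediate, since $T_i$ acts through $s_i \in W$ on the indexing weight and the simple reflections satisfy the braid relations in $W$. For $\xi \in \{E_k, F_k\}$ with $k \notin \{i,j\}$ disconnected from both $i$ and $j$ in the Dynkin diagram, both compositions visibly coincide. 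The remaining cases involve only the generators indexed by $\{i,j\}$, and so the computation takes place entirely in the rank-$2$ subalgebra generated by $E_i, E_j, F_i, F_j, K_i^{\pm 1}, K_j^{\pm 1}, L_i^{\pm 1}, L_j^{\pm 1}$, where $\mathfrak{g}_{ij}$ is of type $A_1\times A_1$, $A_2$, $B_2$ or $G_2$ according as $m_{ij}\in\{2,3,4,6\}$.

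To cut the work in half, I would first construct a Chevalley-type anti-involution $\omega$ of $\qdbl{\mathfrak{g}}$ determined by $\omega(E_i)=F_i$, $\omega(F_i)=E_i$, $\omega(K_i)=L_i^{-1}$, $\omega(L_i)=K_i^{-1}$, together with $\omega(q)=q^{-1}$, checking on generators that the defining relations of $\qdbl{\mathfrak{g}}$ are preserved. A direct verification on generators then shows $\omega \circ T_i = T_i \circ \omega$ (for instance, $\omega(T_i(E_i))=\omega(-F_iL_i)=-K_i^{-1}E_i=T_i(F_i)=T_i(\omega(E_i))$). The braid identity for the action on $F_k$ then follows automatically from the identity for $E_k$.

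The main obstacle is the explicit rank-$2$ verification on $E_i$ and $E_j$ in types $B_2$ and especially $G_2$, where each side is a long alternating sum of products of divided powers. My approach is to invoke the surjective Hopf algebra map $\qdbl{\mathfrak{g}}\twoheadrightarrow \qgr{\mathfrak{g}}$ sending each central $z_k=K_kL_k^{-1}$ to $1$: a direct comparison of the defining formulas shows that the $T_i$'s constructed here descend to Lusztig's automorphisms of $\qgr{\mathfrak{g}}$. Thus the combinatorial content of each braid identity is already supplied by \cite[Theorem 3.2]{lusztig_qgrp_roots}, and what remains is the bookkeeping task of determining, at each step of Lusztig's computation, which Cartan factor must be read as a $K_\lambda$ versus a $L_\lambda$ in $\qdbl{\mathfrak{g}}$ — an asymmetry inherited from $T_i(E_i)=-F_iL_i$ and $T_i(F_i)=-K_i^{-1}E_i$. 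This tracking is delicate but strictly mechanical, relying only on the grading of Section \ref{sec:grad} to disambiguate each Cartan insertion; no new algebraic identity beyond those in \cite{lusztig_qgrp_roots} is needed.
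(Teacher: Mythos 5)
The paper itself offers no argument here beyond asserting that the statement is the analogue of \cite[Theorem 3.2]{lusztig_qgrp_roots}, so your proposal is an attempt to actually substantiate that analogy. Parts of it are sound: the anti-involution $\omega$ is well defined (the defining relations of $\qdbl{\mathfrak{g}}$ are preserved, using the invariance of the quantum binomials under $q\mapsto q^{-1}$), it commutes with each $T_i$ on generators, and it legitimately reduces the $F_k$-cases to the $E_k$-cases; the verification on $K_\lambda$, $L_\lambda$ and on generators disconnected from $\{i,j\}$ is also fine.

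There are, however, two genuine gaps. First, your case analysis is incomplete: generators $E_k$ with $k\notin\{i,j\}$ but adjacent to $i$ or $j$ in the Dynkin diagram fall into neither of your cases, and for these the braid identity involves three indices (e.g.\ $T_i(E_k)$ is a sum of terms $E_i^{(a)}E_kE_i^{(b)}$ while $T_j(E_k)=E_k$), so the computation does not take place in the rank-two subalgebra; this case must be treated separately, as it is in Lusztig's own verification. Second, and more seriously, the reduction to $\qgr{\mathfrak{g}}$ does not do what you need. The quotient map kills the ideal generated by the $z_i-1$, so descending only shows that the two sides of each braid identity agree \emph{modulo} that ideal, and your proposed remedy --- disambiguating each Cartan insertion ``using the grading of Section \ref{sec:grad}'' --- cannot work: that grading assigns degree $0$ to both $K_\lambda$ and $L_\lambda$, so elements such as $E_iK_j$ and $E_iL_j$ are homogeneous of the same degree and have the same image in $\qgr{\mathfrak{g}}$ while being distinct in $\qdbl{\mathfrak{g}}$. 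Since the two sides of the braid identity applied to $E_i$ or $E_j$ genuinely involve $F$'s together with Cartan factors (already $T_i(E_i)=-F_iL_i$), the ambiguity is not vacuous, and ``tracking which Cartan factor is a $K$ versus an $L$'' amounts to redoing Lusztig's rank-two computations inside $\qdbl{\mathfrak{g}}$ rather than importing them. To close the gap you must either carry out those computations directly, or supply a separate structural argument that separates $K_\lambda$ from $L_\lambda$ --- for instance by checking the identities on a jointly faithful family of highest-weight modules on which $K_i$ and $L_i$ act through independent characters; the single quotient $z_i\mapsto 1$ is not enough.
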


Choose now a reduced decomposition $w_0=s_{i_1}s_{i_2}\cdots s_{i_r}$ of the longest element $w_0$ of $W$. The positive roots are then
\[
  \Phi^+ = \left\{s_{i_1}s_{i_2}\cdots s_{i_k}(\alpha _{i_{k+1}})\ \middle\vert\  0\leq k \leq r-1\right\}.
\]
For any $\alpha=s_{i_1}s_{i_2}\cdots s_{i_k}(\alpha _{i_{k+1}})\in\Phi^+$ define
\[
  E_\alpha=T_{i_1}T_{i_2}\cdots T_{i_{k}}(E_{i_{k+1}}) 
  \quad\mathrm{and}\quad
  F_\alpha=T_{i_1}T_{i_2}\cdots T_{i_{k}}(F_{i_{k+1}}).
\]
Note that $E_\alpha$ (resp $F_\alpha$) is homogeneous of degree $\alpha$ (resp. $-\alpha$). The choice of a reduced decomposition of $w_0$ gives an order on $\Phi^+$, namely $s_{i_1}s_{i_2}\cdots s_{i_k}(\alpha _{i_{k+1}}) \preccurlyeq s_{i_1}s_{i_2}\cdots s_{i_{k+1}}(\alpha _{i_{k+2}})$ for every $k$. All products will be ordered with respect to this order.

\begin{proposition}[{\cite[Theorem 6.7]{lusztig_qgrp_roots}}]
  The elements
\[
  \prod_{\alpha\in\Phi^+}E_\alpha^{(n_\alpha)}, \ n_\alpha\in\mathbb{N}
\]
form a $\mathbb{Q}(q)$-basis of $\qdbl{\mathfrak{g}}^{>0}$.

The elements
\[
  \prod_{\alpha\in\Phi^+}F_\alpha^{(n_\alpha)}, \ n_\alpha\in\mathbb{N}
\]
form a $\mathbb{Q}(q)$-basis of $\qdbl{\mathfrak{g}}^{<0}$.
\end{proposition}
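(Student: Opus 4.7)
My plan is to reduce the statement to Lusztig's PBW theorem \cite[Theorem 6.7]{lusztig_qgrp_roots} for the usual quantum group $\qgr{\mathfrak{g}}$, by first identifying $\qdbl{\mathfrak{g}}^{>0}$ with the positive part of $\qgr{\mathfrak{g}}$ as algebras, and then matching the Lusztig generators $E_\alpha$ on both sides. The negative part is handled symmetrically.

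The first step is to produce a canonical $\mathbb{Q}(q)$-algebra isomorphism $\qdbl{\mathfrak{g}}^{>0} \simeq \qgr{\mathfrak{g}}^{>0}$ sending $E_i \mapsto E_i$. Since $\qgr{\mathfrak{b}^+}$ embeds as a Hopf subalgebra of $\qdbl{\mathfrak{g}}$, the subalgebra generated by $E_1,\ldots,E_n$ is the same whether computed in $\qdbl{\mathfrak{g}}$ or in $\qgr{\mathfrak{b}^+}$. Inspecting the defining relations of $\qgr{\mathfrak{b}^+}$, one sees that the only relations involving solely the $E_i$'s are the quantum Serre relations, so there is a surjection $\qgr{\mathfrak{g}}^{>0} \twoheadrightarrow \qdbl{\mathfrak{g}}^{>0}$. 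That this map is an isomorphism is forced by the triangular decomposition $\qdbl{\mathfrak{g}}^{<0}\otimes\qdbl{\mathfrak{g}}^{0}\otimes\qdbl{\mathfrak{g}}^{>0}\simeq\qdbl{\mathfrak{g}}$ recalled in the preceding subsection, together with the non-degeneracy of the Hopf pairing just established: any extra relation in $\qdbl{\mathfrak{g}}^{>0}$ would produce a non-zero element of $\qgr{\mathfrak{g}}^{>0}$ pairing to zero with everything in $\qgr{\mathfrak{g}}^{<0}\simeq\qdbl{\mathfrak{g}}^{<0}$, contradicting non-degeneracy.

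Next, I would verify that under this isomorphism the elements $E_\alpha$ defined here coincide with Lusztig's classical root vectors. The formulas for $T_i(E_j)$ when $i\neq j$ involve only the $E_k$'s and are identical in either algebra, while the problematic case $T_i(E_i)=-F_iL_i$, which would take us outside $\qdbl{\mathfrak{g}}^{>0}$, never arises: for a reduced decomposition $w_0=s_{i_1}\cdots s_{i_r}$, the standard argument shows that each intermediate root $s_{i_1}\cdots s_{i_k}(\alpha_{i_{k+1}})$ is positive, hence the corresponding iterated $T$-action lands in the subalgebra generated by the $E_j$'s at every stage. Consequently $E_\alpha\in\qdbl{\mathfrak{g}}^{>0}$ and is identified with the classical Lusztig $E_\alpha$ in $\qgr{\mathfrak{g}}^{>0}$.

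With these two identifications in hand, Lusztig's Theorem 6.7 immediately yields the claimed basis for $\qdbl{\mathfrak{g}}^{>0}$, and a verbatim argument with $\qgr{\mathfrak{b}^-}$ in place of $\qgr{\mathfrak{b}^+}$ gives the statement for $\qdbl{\mathfrak{g}}^{<0}$. The main technical point to be careful about is the non-degeneracy argument in the first step, since \emph{a priori} one must know that the pairing descends compatibly with the identification $\qdbl{\mathfrak{g}}^{>0}\twoheadrightarrow\qdbl{\mathfrak{g}}^{>0}$; this is ensured because the pairing of Proposition \ref{prop:bil_form} is characterised by the same values on generators as the classical Drinfeld--Tanisaki pairing for $\qgr{\mathfrak{g}}$, so no genuinely new input beyond Lusztig's theorem is required.
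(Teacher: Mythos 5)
The paper gives no proof of this proposition at all: it is stated as a direct citation of Lusztig's Theorem 6.7, and your argument is exactly the standard reduction that this citation presupposes (identify $\qdbl{\mathfrak{g}}^{>0}$ with the positive part of $\qgr{\mathfrak{g}}$ via the Serre presentation and the non-degenerate pairing, check that the braid-group root vectors $E_\alpha$ stay in the positive part and match Lusztig's, then transfer the PBW basis). Your proposal is correct and follows the same route the paper implicitly takes.
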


We compute now the dual basis of $\qdbl{\mathfrak{g}}^{>0}$ with
respect to $(\cdot,\cdot)$. The following can be found in \cite[8.29]{jantzen}.

\begin{proposition}
  For any $\alpha\in\Phi^+$, let $a_\alpha,b_\alpha \in \mathbb{N}$. We have
\[
  \left(\prod_{\alpha\in\Phi^+}E_\alpha^{a_\alpha},\prod_{\alpha\in\Phi^+}F_\alpha^{b_\alpha}\right)=\prod_{\alpha\in\Phi^+}\delta_{a_\alpha,b_\alpha}\frac{q_\alpha^{-\frac{a_\alpha(a_\alpha-1)}{2}}\qfact{a_\alpha}{q_\alpha}}{(q_\alpha-q_\alpha^{-1})^{a_\alpha}}.
\]
\end{proposition}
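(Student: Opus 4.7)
By Proposition~\ref{prop:Dnul}(2), the pairing already vanishes unless the total weights agree, $\sum_\alpha a_\alpha\alpha = \sum_\alpha b_\alpha\alpha$, but this is weaker than the exponentwise matching $a_\alpha=b_\alpha$. The real content is that the PBW bases of $\qdbl{\mathfrak{g}}^{>0}$ and $\qdbl{\mathfrak{g}}^{<0}$ are orthogonal under the Hopf pairing, with the stated product as the diagonal values. My strategy is to induct on $\h(\mu)$ for $\mu=\sum_\alpha a_\alpha\alpha$, peeling off one PBW factor at a time via the multiplicativity $(xx',y)=(x'\otimes x,\Delta(y))$, and reducing the single-root pairings $(E_\alpha^r,F_\alpha^s)$ to the simple-root identity $(E_i^r,F_i^r)=q_i^{-r(r-1)/2}\qfact{r}{q_i}/(q_i-q_i^{-1})^r$ recorded after Proposition~\ref{prop:Dnul}.

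\emph{Step 1 (single-root case).} For a non-simple $\alpha\in\Phi^+$, expressed as $\alpha=s_{i_1}\cdots s_{i_k}(\alpha_{i_{k+1}})$ via a reduced subword of $w_0$, I would invoke the compatibility of the braid automorphisms $T_i$ with the pairing on appropriate subspaces. Concretely, one verifies on generators that whenever $x\in\qdbl{\mathfrak{g}}^{>0}$ and $y\in\qdbl{\mathfrak{g}}^{<0}$ are such that both $T_i(x)$ and $T_i(y)$ remain in the positive and negative parts respectively, one has $(T_i(x),T_i(y))=(x,y)$. The PBW combinatorics of reduced expressions ensure that applying $T_{i_1}\cdots T_{i_k}$ to $E_{i_{k+1}}^r$ and $F_{i_{k+1}}^s$ never leaves this safe region, so the simple-root formula transports to $(E_\alpha^r,F_\alpha^s)=\delta_{r,s}\,q_\alpha^{-r(r-1)/2}\qfact{r}{q_\alpha}/(q_\alpha-q_\alpha^{-1})^r$; off-diagonal vanishing $(E_\alpha^r,F_\beta^s)=0$ for $\alpha\neq\beta$ is immediate from Proposition~\ref{prop:Dnul}(2).

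\emph{Step 2 (peeling off a factor).} Let $\alpha^*$ be the maximal positive root in the order $\preccurlyeq$ for which $a_{\alpha^*}>0$; write $X=\prod_\alpha E_\alpha^{a_\alpha}=X'\cdot E_{\alpha^*}^{a_{\alpha^*}}$ and $Y=\prod_\alpha F_\alpha^{b_\alpha}$. The multiplicativity of the pairing gives
\[
  (X,Y)=\bigl(E_{\alpha^*}^{a_{\alpha^*}}\otimes X',\Delta(Y)\bigr).
\]
The technical input is a PBW-refined triangularity of $\Delta(Y)$: when each tensor factor is rewritten in the PBW basis via iterated application of (\ref{eq:coprod-}), only the term $F_{\alpha^*}^{b_{\alpha^*}}\otimes\bigl(\prod_{\alpha\neq\alpha^*}F_\alpha^{b_\alpha}\bigr)$ places an $\alpha^*$-root vector on the left factor; every other summand has its left tensor in a PBW monomial built from $F_\beta$ with $\beta\prec\alpha^*$ (times an $L$-factor handled by Proposition~\ref{prop:Dnul}(1)). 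By induction applied at the smaller height of the left tensor, those other summands are orthogonal to $E_{\alpha^*}^{a_{\alpha^*}}$. Hence $(X,Y)=(E_{\alpha^*}^{a_{\alpha^*}},F_{\alpha^*}^{b_{\alpha^*}})\cdot(X',Y')$ with $Y'=\prod_{\alpha\neq\alpha^*}F_\alpha^{b_\alpha}$; combined with Step~1 and induction on $\h(\mu)$, the product formula follows.

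\emph{Main obstacle.} The delicate point is the PBW triangularity of $\Delta(F_\alpha)$ (and of products of such) for non-simple roots, together with the $T_i$-invariance of the pairing on the relevant subspaces used in Step~1. Both rest on the Levendorskii–Soibelman commutation relations among the root vectors $F_\alpha$ and the standard combinatorics of reduced expressions in $W$, which are not recorded in the excerpt and must be verified from the braid-automorphism definition of $E_\alpha, F_\alpha$ together with the coproduct formulas on simple generators. Once these ingredients are in place, the remainder of the proof is a bookkeeping exercise combining weight orthogonality (Proposition~\ref{prop:Dnul}(2)) with the Hopf-pairing identities of Proposition~\ref{prop:bil_form}.
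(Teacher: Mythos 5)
The paper offers no proof of this proposition: it is quoted from Jantzen's book (the reference [8.29] given just above the statement), so there is nothing internal to compare against. Your outline is essentially the standard argument from that source: orthogonality of the PBW monomials is reduced, via the multiplicativity axioms of Proposition \ref{prop:bil_form} and a triangularity property of the coproduct, to the simple-root computation $(E_i^r,F_i^r)$, with the braid automorphisms $T_i$ transporting the result to non-simple roots. You have correctly identified the two technical inputs — the invariance $(T_i(x),T_i(y))=(x,y)$ on the subspaces where $T_i$ preserves $\qdbl{\mathfrak{g}}^{>0}$ and $\qdbl{\mathfrak{g}}^{<0}$, and the PBW-refined triangularity of $\Delta$ applied to a product of root vectors — but these two lemmas \emph{are} the proof, and your proposal does not establish either. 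In particular, the $T_i$-invariance of the pairing cannot be "verified on generators": the pairing is not multiplicative in a way that reduces to generators, $T_i(E_j)$ is a complicated sum for $j\neq i$, and the actual argument is a delicate induction using the maps $r_i$, $r_i'$ of Lemma \ref{lem:bil_r_rho}. Likewise the triangularity of $\Delta\bigl(\prod_\alpha F_\alpha^{b_\alpha}\bigr)$ with respect to the convex order $\preccurlyeq$ rests on the Levendorski\u{\i}--So\u{\i}belman commutation relations, which you acknowledge but do not prove. So the architecture is sound and matches the literature, but as written this is a proof sketch rather than a proof.

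One structural remark: your induction peels off the $\preccurlyeq$-maximal root vector $E_{\alpha^*}$, which is in general non-simple, forcing you to treat $(E_\alpha^r,F_\alpha^s)$ for non-simple $\alpha$ as a separate Step 1. The induction in the literature is cleaner: peel off the \emph{first} PBW factor $E_{\beta_1}^{a_1}=E_{i_1}^{a_1}$, which is always a power of a simple root vector, handle it with Lemma \ref{lem:bil_r_rho}, and then apply $T_{i_1}^{-1}$ to the remaining monomial (which lies in $\qdbl{\mathfrak{g}}^{>0}\cap T_{i_1}(\qdbl{\mathfrak{g}}^{>0})$) to reduce to the shorter reduced expression $s_{i_2}\cdots s_{i_N}$. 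This collapses your Steps 1 and 2 into a single induction and avoids ever pairing non-simple root vectors directly.
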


Therefore, the dual basis of
$\left(\prod_{\alpha\in\Phi^+}E_\alpha^{(n_\alpha)}\right)_{(n_\alpha)\in
  \mathbb{N}^{\Phi^+}}$ is
\[
  \left(\prod_{\alpha\in\Phi^+}q_\alpha^{\frac{n_\alpha(n_\alpha-1)}{2}}\qfact{n_\alpha}{q_\alpha}(q_\alpha-q_\alpha^{-1})^{n_\alpha}F_\alpha^{(n_\alpha)}\right)_{(n_\alpha)\in\mathbb{N}^{\Phi^+}}.
\]

\subsection{Quasi-$R$-matrix}
\label{sec:quasi-r-matrix}

The algebra $\qdbl{\mathfrak{g}}$ is not
quasi-triangular. Nevertheless, we construct a  quasi-$R$-matrix which
will endow the usual category of modules with a braiding (see
section \ref{sec:braiding}). We adapt the exposition of Jantzen
\cite[Chapter 7]{jantzen} to the case of $\qdbl{\mathfrak{g}}$.

For any $\mu \in Q$, $\mu\geq 0$, fix a basis $(u_i^{\mu})_{i\in I_{\mu}}$
of $\qdbl{\mathfrak{g}}^{>0}_{\mu}$ and let $(v_i^{\mu})_{i\in I_{\mu}}$
the dual basis in $\qdbl{\mathfrak{g}}^{<0}_{-\mu}$ with respect to the
pairing $(\cdot,\cdot)$. Set
\[
  \Theta_{\mu}=\sum_{i\in I_{\mu}}u_i^\mu\otimes v_i^\mu \in
  \qdbl{\mathfrak{g}}\otimes\qdbl{\mathfrak{g}}.
\]
Note that this does not depend on the choice of the basis of
$\qdbl{\mathfrak{g}}_\mu$. A
homogeneous basis of $\qdbl{\mathfrak{g}}^{>0}$ and its dual with
respect to $(\cdot,\cdot)$ have already been computed above. Following \cite[A.1]{rosso}, we
define an algebra automorphism $\Psi$ of
$\qdbl{\mathfrak{g}}\otimes\qdbl{\mathfrak{g}}$ by
\begin{align*}
  \Psi(K_i\otimes 1) &= K_i\otimes 1,
  & \Psi(1\otimes K_i) &= 1\otimes K_i,\\
  \Psi(L_i\otimes 1) &= L_i\otimes 1,
  & \Psi(1\otimes L_i) &= 1\otimes L_i,\\
  \Psi(E_i\otimes 1) &= E_i\otimes L_i^{-1},
  & \Psi(1\otimes E_i) &= K_i^{-1}\otimes E_i,\\
  \Psi(F_i\otimes 1) &= F_i\otimes L_i,
  & \Psi(1\otimes F_i) &= K_i\otimes F_i.                  
\end{align*}

\begin{proposition}
  \label{prop:coprod_r_mat}
  Let $\mu\in Q$, $\mu \geq 0$. We have
  \begin{align*}
    \Theta_\mu(K_i\otimes K_i) &= \Psi(K_i\otimes K_i)\Theta_\mu,\\
    \Theta_\mu(L_i\otimes L_i) &= \Psi(L_i\otimes L_i)\Theta_\mu,\\
    \Theta_\mu(1\otimes E_i) + \Theta_{\mu-\alpha_i}(E_i\otimes K_i)
    &= \Psi(E_i\otimes 1)\Theta_{\mu-\alpha_i} +
      \Psi(K_i\otimes E_i)\Theta_\mu,\\
    \Theta_\mu(F_i\otimes 1) + \Theta_{\mu-\alpha_i}(L_i^{-1}\otimes F_i)
    &= \Psi(1\otimes F_i)\Theta_{\mu-\alpha_i} +
      \Psi(F_i\otimes L_i^{-1})\Theta_\mu.      
  \end{align*}
\end{proposition}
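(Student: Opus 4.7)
The plan is to verify the four identities in turn. The first two follow immediately from the $Q$-grading on $\qdbl{\mathfrak{g}}$: for $u\in\qdbl{\mathfrak{g}}^{>0}_\mu$ one has $K_i u = q^{\langle\mu,\alpha_i\rangle}uK_i$, while for $v\in\qdbl{\mathfrak{g}}^{<0}_{-\mu}$ one has $K_i v = q^{-\langle\mu,\alpha_i\rangle}vK_i$; the two scalars cancel in each summand $u_j^\mu\otimes v_j^\mu$ of $\Theta_\mu$, giving identity~(1). The $L_i$-identity~(2) is handled by the same argument with $L_i$ in place of $K_i$.

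For identities~(3) and~(4), the main tool is the interplay between the dual-basis expansion $x = \sum_m (x,v_m^\mu)u_m^\mu$ (valid for $x\in\qdbl{\mathfrak{g}}^{>0}_\mu$) and Lemma \ref{lem:bil_r_rho}. First I would record that since $\Psi$ is an algebra morphism, $\Psi(K_i\otimes E_i) = 1\otimes E_i$ and $\Psi(F_i\otimes L_i^{-1}) = F_i\otimes 1$; these simplifications are what make the two sides comparable. To prove~(3), I expand $u_k^{\mu-\alpha_i}E_i\in\qdbl{\mathfrak{g}}^{>0}_\mu$ in the basis $(u_m^\mu)$: the coefficient of $u_m^\mu$ is $(u_k^{\mu-\alpha_i}E_i, v_m^\mu)$, which by Lemma \ref{lem:bil_r_rho} equals $(E_i,F_i)(u_k^{\mu-\alpha_i},\rho_i(v_m^\mu))$. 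Swapping the two summations and using $\sum_k (u_k^{\mu-\alpha_i},\rho_i(v_m^\mu))v_k^{\mu-\alpha_i} = \rho_i(v_m^\mu)$ yields
\[
  \Theta_{\mu-\alpha_i}(E_i\otimes K_i) = \frac{1}{q_i-q_i^{-1}}\sum_m u_m^\mu\otimes \rho_i(v_m^\mu)K_i,
\]
and a parallel computation based on $(E_i x, y) = (E_i,F_i)(x,\rho_i'(y))$ gives
\[
  \Psi(E_i\otimes 1)\Theta_{\mu-\alpha_i} = \frac{1}{q_i-q_i^{-1}}\sum_m u_m^\mu\otimes L_i^{-1}\rho_i'(v_m^\mu).
\]

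Substituting these into identity~(3), together with $\Psi(K_i\otimes E_i)\Theta_\mu = (1\otimes E_i)\Theta_\mu$, everything cancels except
\[
  \sum_j u_j^\mu\otimes(v_j^\mu E_i - E_i v_j^\mu) = \frac{1}{q_i-q_i^{-1}}\sum_j u_j^\mu\otimes\bigl(L_i^{-1}\rho_i'(v_j^\mu) - \rho_i(v_j^\mu)K_i\bigr),
\]
which is the second formula of Lemma \ref{lem:commutator} applied to $y=v_j^\mu$ in each summand. Identity~(4) will be proved by the mirror argument, expanding $v_k^{\mu-\alpha_i}F_i$ and $F_i v_k^{\mu-\alpha_i}$ in the dual basis of $\qdbl{\mathfrak{g}}^{<0}_{-\mu}$ via the $(r_i, r_i')$-version of Lemma \ref{lem:bil_r_rho} and finishing with the first commutator formula.

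The main obstacle is purely bookkeeping: tracking on which side of each tensor slot the factors $K_i$ and $L_i^{-1}$ appear, and distinguishing the left/right variants $r_i$ vs $r_i'$ and $\rho_i$ vs $\rho_i'$. Everything eventually telescopes onto a single application of Lemma \ref{lem:commutator}, but the transition from the basis at level $\mu-\alpha_i$ to the basis at level $\mu$ via Lemma \ref{lem:bil_r_rho} is the heart of the argument.
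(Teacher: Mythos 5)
Your proof is correct and is essentially the paper's own argument: both rest on the dual-basis expansion, Lemma \ref{lem:bil_r_rho} to pass between the bases at levels $\mu$ and $\mu-\alpha_i$, and Lemma \ref{lem:commutator} to close the computation (you merely read the chain of equalities in the opposite direction, simplifying $\Theta_{\mu-\alpha_i}(E_i\otimes K_i)$ and $\Psi(E_i\otimes 1)\Theta_{\mu-\alpha_i}$ first rather than starting from the commutator). The observation that $\Psi(K_i\otimes E_i)=1\otimes E_i$ and $\Psi(F_i\otimes L_i^{-1})=F_i\otimes 1$ is exactly the simplification the paper uses as well.
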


\begin{proof}
  We follow closely the proof of \cite[Lemma 7.1]{jantzen}. The first two
  equations are trivial. We will use the following fact: for any
  $\mu\in Q$, $\mu\geq 0$, and any
  $x\in\qdbl{\mathfrak{g}}_{\mu}^{>0}$ and
  $y\in\qdbl{\mathfrak{g}}_{-\mu}^{<0}$ we have
  \[
    x = \sum_{i}(x,v_i^{\mu})u_i^{\mu}
    \quad\mathrm{and}\quad
    y = \sum_{i}(u_i^{\mu},y)v_i^{\mu}.
  \]
  We set $c_i=(q_i-q_i^{-1})^{-1}$, and we have
  \begin{align*}
    \left(1\otimes E_i\right)&\Theta_\mu
    -\Theta_\mu\left(1\otimes E_i\right)
      =\sum_{j}u_j^{\mu}\otimes\left(E_iv_j^{\mu}-v_j^{\mu}E_i\right) &&\\
    &=
      c_i\sum_{j}u_j^{\mu}\otimes\left(\rho_i\left(v_j^{\mu}\right)K_i-L_i^{-1}\rho_i'\left(v_j^{\mu}\right)\right)&&\text{by
    Lemma \ref{lem:commutator}}\\
    &=c_i\sum_{j,k}u_j^{\mu}\otimes\left[\left(u_k^{\mu-\alpha_i},\rho_i\left(v_j^{\mu}\right)\right)v_k^{\mu-\alpha_i}K_i
      -
      \left(u_k^{\mu-\alpha_i},\rho_i'\left(v_j^{\mu}\right)\right)L_i^{-1}v_k^{\mu-\alpha_i}\right]&&\\
    &= c_i\sum_{j,k}u_j^{\mu}\otimes\left[\left(u_k^{\mu-\alpha_i}E_i,v_j^{\mu}\right)v_k^{\mu-\alpha_i}K_i
      -
      \left(E_iu_k^{\mu-\alpha_i},v_j^{\mu}\right)L_i^{-1}v_k^{\mu-\alpha_i}\right]
    &&\text{by Lemma \ref{lem:bil_r_rho}}\\
    &= \sum_{k}\left(u_k^{\mu-\alpha_i}E_i\otimes
      v_k^{\mu-\alpha_i}K_i - E_iu_k^{\mu-\alpha_i}\otimes
      L_i^{-1}v_k^{\mu-\alpha_i}\right)&&\\
    &= \Theta_{\mu-\alpha_i} \left(E_i\otimes K_i\right) - \left(E_i\otimes L_i^{-1}\right)\Theta_{\mu-\alpha_i},&&
  \end{align*}
as expected because $\Psi(E_i\otimes 1) = E_i\otimes L_i^{-1}$ and
$\Psi(K_i\otimes E_i) = 1\otimes E_i$. A similar calculation shows the fourth formula.
\end{proof}

Define, as in \cite[Chapter 4]{lusztig_qgr}, the completion
$\qdbl{\mathfrak{g}}\hat{\otimes}\qdbl{\mathfrak{g}}$ of
$\qdbl{\mathfrak{g}}\otimes\qdbl{\mathfrak{g}}$ with respect to the
descending sequence of spaces
\[
  (\qdbl{\mathfrak{g}}\otimes\qdbl{\mathfrak{g}})_N=\qdbl{\mathfrak{g}}\otimes
  \sum_{\h(\mu)\geq N}\qdbl{\mathfrak{g}}^{>0}\qdbl{\mathfrak{g}}^0\qdbl{\mathfrak{g}}^{<0}_{-\mu} + \sum_{\h(\mu)\geq N}\qdbl{\mathfrak{g}}^{<0}\qdbl{\mathfrak{g}}^0\qdbl{\mathfrak{g}}^{>0}_\mu\otimes \qdbl{\mathfrak{g}}.
\]

The morphism $\Psi$ extends by continuity to
$\qdbl{\mathfrak{g}}\hat{\otimes}\qdbl{\mathfrak{g}}$ and we consider
the following element of
$\qdbl{\mathfrak{g}}\hat{\otimes}\qdbl{\mathfrak{g}}$
\[
  \Theta = \sum_{\substack{\mu\in Q\\ \mu \geq 0}} \Theta_\mu.
\]

Then we can rewrite Proposition \ref{prop:coprod_r_mat} as
\[
  \Theta \Delta(u) = \left(\Psi\circ\Delta^{\mathrm{op}}\right)(u)\Theta
\]
for any $u \in \qdbl{\mathfrak{g}}$.

We also have the analogue of \cite[Lemma 7.4]{jantzen}.

\begin{lemma}
  For $\mu \in Q$, $\mu\geq 0$, we have
  \[
    (\Delta\otimes \id)(\Theta_\mu) = \sum_{0 \leq \nu \leq \mu}
    (\Theta_\nu)_{13}(1\otimes K_\nu \otimes 1)(\Theta_{\mu-\nu})_{23},
  \]
  and
  \[
    (\id\otimes \Delta)(\Theta_\mu) = \sum_{0 \leq \nu \leq \mu}
    (\Theta_\nu)_{13}(1\otimes L_\nu^{-1}\otimes 1)(\Theta_{\mu-\nu})_{12}.
  \]
\end{lemma}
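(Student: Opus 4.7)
The plan is to follow closely the strategy of \cite[Lemma 7.4]{jantzen}, relying on the dual basis relation $(u_i^\mu,v_j^\mu)=\delta_{ij}$ together with the expansion formulas
\[
  x=\sum_i(x,v_i^\mu)u_i^\mu,\qquad y=\sum_i(u_i^\mu,y)v_i^\mu
\]
valid for $x\in\qdbl{\mathfrak{g}}^{>0}_\mu$ and $y\in\qdbl{\mathfrak{g}}^{<0}_{-\mu}$, which reduce the problem to matching coefficients of a dual basis.

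For the first identity, I would use the inclusion (\ref{eq:coprod+}) to write, for each $i$,
\[
  \Delta(u_i^\mu)=\sum_{0\leq\nu\leq\mu}\sum_{a,b} c^{(i)}_{\nu,a,b}\, u_a^\nu\otimes K_\nu u_b^{\mu-\nu}
\]
with scalars $c^{(i)}_{\nu,a,b}\in\mathbb{Q}(q)$. Substituting this into $(\Delta\otimes\id)(\Theta_\mu)=\sum_i\Delta(u_i^\mu)\otimes v_i^\mu$ and expanding the right-hand side of the claim as $\sum_{\nu,a,b}u_a^\nu\otimes K_\nu u_b^{\mu-\nu}\otimes v_a^\nu v_b^{\mu-\nu}$, the identity reduces to showing
\[
  \sum_i c^{(i)}_{\nu,a,b}\, v_i^\mu = v_a^\nu v_b^{\mu-\nu}
\]
for every triple $(\nu,a,b)$. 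To prove this, I would pair both sides with $u_i^\mu$: the left side yields $c^{(i)}_{\nu,a,b}$ by duality, while the right side equals $(\Delta(u_i^\mu),v_a^\nu\otimes v_b^{\mu-\nu})$ by Proposition \ref{prop:bil_form}(1). Substituting the expansion of $\Delta(u_i^\mu)$ and invoking Proposition \ref{prop:Dnul} together with the normalisation $(K_\nu,1)=\varepsilon(K_\nu)=1$ to eliminate the $K_\nu$ factor and to force $\nu'=\nu$ by the grading, the pairing extracts exactly $c^{(i)}_{\nu,a,b}$.

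The second identity proceeds in dual fashion: I would apply (\ref{eq:coprod-}) to write
\[
  \Delta(v_i^\mu)=\sum_{0\leq\nu\leq\mu}\sum_{a,b} d^{(i)}_{\nu,a,b}\, L_\nu^{-1}v_b^{\mu-\nu}\otimes v_a^\nu,
\]
substitute into $(\id\otimes\Delta)(\Theta_\mu)$, and expand the target side as $\sum_{\nu,a,b}u_a^\nu u_b^{\mu-\nu}\otimes L_\nu^{-1}v_b^{\mu-\nu}\otimes v_a^\nu$. The claim reduces to $\sum_i d^{(i)}_{\nu,a,b}\,u_i^\mu=u_a^\nu u_b^{\mu-\nu}$, which one verifies by pairing with $v_i^\mu$ and invoking Proposition \ref{prop:bil_form}(2) in place of (1), together with $(1,L_\nu^{-1})=\varepsilon(L_\nu^{-1})=1$, in complete parallel with the first case.

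No real obstacle is expected: once these compatibilities are assembled, the computation is essentially formal. The main point of care is the bookkeeping of the $K_\nu$ and $L_\nu^{-1}$ prefactors and of the tensor leg positions --- namely $(1,3),(2,3)$ versus $(1,3),(1,2)$ in the two formulas --- so that the order in which $\Theta_\nu$ and $\Theta_{\mu-\nu}$ are multiplied on the target side correctly mirrors the order of the factors in $\Delta(u_i^\mu)$ and $\Delta(v_i^\mu)$ given by (\ref{eq:coprod+}) and (\ref{eq:coprod-}).
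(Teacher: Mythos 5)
Your proposal is correct and follows essentially the same route as the paper: the paper's proof begins by asserting the expansions $\Delta(x)=\sum_{\nu,i,j}(x,v_i^\nu v_j^{\mu-\nu})\,u_i^\nu\otimes K_\nu u_j^{\mu-\nu}$ and $\Delta(y)=\sum_{\nu,i,j}(u_i^\nu u_j^{\mu-\nu},y)\,L_\nu^{-1}v_j^{\mu-\nu}\otimes v_i^\nu$ and then substitutes them into $(\Delta\otimes\id)(\Theta_\mu)$, which is exactly the coefficient identification $c^{(i)}_{\nu,a,b}=(u_i^\mu,v_a^\nu v_b^{\mu-\nu})$ you derive by pairing. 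Your version merely makes explicit the duality argument that the paper leaves as "note that"; just be careful that Proposition~\ref{prop:bil_form}(2) reverses the tensor factors, $(xx',y)=(x'\otimes x,\Delta(y))$, when carrying out the second case.
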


\begin{proof}
  First, note that for any $x\in\qdbl{\mathfrak{g}}_{\mu}^{>0}$
  \[
    \Delta(x) = \sum_{\substack{0\leq \nu \leq \mu \\ i,j}} \left(x,v_i^{\nu}v_j^{\mu-\nu}\right)u_i^\nu\otimes K_\nu u_j^{\mu-\nu},
  \]
  and for any $y\in\qdbl{\mathfrak{g}}_{-\mu}^{<0}$
  \[
    \Delta(y) = \sum_{\substack{0\leq \nu \leq \mu \\ i,j}} \left(u_i^{\nu}u_j^{\mu-\nu},y\right)L_\nu^{-1}v_j^{\mu-\nu}\otimes v_i^{\nu}.
  \]
  Therefore
  \begin{align*}
    (\Delta\otimes \id)\left(\Theta_\mu\right)
    &= \sum_{k}\Delta\left(u_k^{\mu}\right)\otimes v_k^{\mu} \\
    &=\sum_{\substack{0\leq \nu \leq \mu\\i,j,k}}
    \left(u_k^\mu,v_i^\nu v_j^{\mu-\nu}\right)u_i^\nu\otimes K_\nu
    u_j^{\mu-\nu}\otimes v_k^\mu\\
    &=\sum_{\substack{0\leq \nu \leq \mu\\i,j}}u_i^\nu\otimes K_\nu
    u_j^{\mu-\nu}\otimes v_i^\nu v_j^{\mu-\nu}\\
    &=\sum_{0 \leq \nu \leq \mu}
    (\Theta_\nu)_{13}(1\otimes K_\nu \otimes 1)(\Theta_{\mu-\nu})_{23}.
  \end{align*}
  The proof of the other formula is similar.
\end{proof}

We can translate this last lemma as equalities in $\qdbl{\mathfrak{g}}\hat{\otimes}\qdbl{\mathfrak{g}}$. Note that for any $x \in \qdbl{\mathfrak{g}}_{\mu}^{>0}$ and $y \in \qdbl{\mathfrak{g}}_{-\mu}^{<0}$, we have
\begin{align*}
  \Psi(x\otimes 1) &= x\otimes L_{\mu}^{-1}, & \Psi(1\otimes x) &= K_\mu^{-1}\otimes x,\\ 
  \Psi(y\otimes 1) &= y\otimes L_{\mu}, & \Psi(1\otimes y) &= K_\mu\otimes y.
\end{align*}
Therefore, we have
\begin{align*}
  (\Delta\otimes \id)(\Theta) 
  &= \sum_{\mu \geq 0}\sum_{\nu+\eta=\mu}\left(\Theta_\eta\right)_{13} \left(1\otimes K_\eta \otimes 1\right) \left(\Theta_\nu\right)_{23} \\
  &= \sum_{\mu \geq 0}\sum_{\nu+\eta=\mu} \Psi_{23}\left(\left(\Theta_{\eta}\right)_{13}\right)\left(\Theta_{\nu}\right)_{23}\\
  &= \Psi_{23}\left(\Theta_{13}\right)\Theta_{23}.
\end{align*}
Similarly, we have
\[
  (\id\otimes \Delta)(\Theta) = \Psi_{12}\left(\Theta_{13}\right)\Theta_{12}.
\]

We now show that the element $\Theta$ is invertible. Set $\Gamma_\mu = (S\otimes \id)(\Theta_\mu)(K_\mu\otimes 1)$ and $\Gamma=\sum_{\mu\geq 0}\Gamma_\mu$.

\begin{lemma}
  We have $\Gamma\Theta = 1 = \Theta\Gamma$ in $\qdbl{\mathfrak{g}}\hat{\otimes}\qdbl{\mathfrak{g}}$.
\end{lemma}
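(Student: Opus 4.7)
The plan is to apply the map $(m \circ (S \otimes \id)) \otimes \id$ to the coproduct identity
\[
(\Delta \otimes \id)(\Theta_\mu) = \sum_{0 \leq \nu \leq \mu} (\Theta_\nu)_{13}(1 \otimes K_\nu \otimes 1)(\Theta_{\mu-\nu})_{23}
\]
from the preceding lemma, and exploit the defining antipode relation $m \circ (S \otimes \id) \circ \Delta = \eta \circ \varepsilon$. On the left-hand side, each basis vector $u_i^\mu$ of $\qdbl{\mathfrak{g}}^{>0}_\mu$ has counit $\varepsilon(u_i^\mu) = 0$ as soon as $\mu > 0$ (because $\qdbl{\mathfrak{g}}^{>0}_\mu$ is spanned by products of the $E_i$ and $\varepsilon(E_i) = 0$), while $\Theta_0 = 1 \otimes 1$. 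The left-hand side therefore collapses to $\delta_{\mu, 0} \cdot 1 \otimes 1$.

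Turning to the right-hand side, I would expand the triple tensor via $\Theta_\nu = \sum_a u_a^\nu \otimes v_a^\nu$ and apply multiplication-after-antipode to the first two tensor slots to obtain
\[
\sum_{0 \leq \nu \leq \mu} \sum_{a, b} S(u_a^\nu) K_\nu u_b^{\mu - \nu} \otimes v_a^\nu v_b^{\mu - \nu} = \sum_{\nu + \eta = \mu} \Gamma_\nu \Theta_\eta,
\]
which is precisely the total-degree-$\mu$ contribution to the product $\Gamma \Theta$. Summing over $\mu \geq 0$ in the completion---the sum converges since $v_a^\nu v_b^\eta$ lives in $\qdbl{\mathfrak{g}}^{<0}_{-(\nu + \eta)}$ and therefore sits in an increasingly deep piece of the filtration---yields $\Gamma \Theta = 1 \otimes 1$.

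For the reverse equality $\Theta \Gamma = 1 \otimes 1$, I would argue by formal invertibility rather than repeat the antipode computation. Writing $\Theta = 1 \otimes 1 + \Theta^{>0}$ with $\Theta^{>0} = \sum_{\mu > 0} \Theta_\mu$, each power $(\Theta^{>0})^k$ lies in a progressively deeper piece of the filtration defining $\qdbl{\mathfrak{g}} \hat{\otimes} \qdbl{\mathfrak{g}}$, so the geometric series $\sum_{k \geq 0} (-\Theta^{>0})^k$ converges to a two-sided inverse of $\Theta$. Since $\Gamma$ is already a left inverse and two-sided inverses are unique when they exist, the two must coincide, and $\Theta \Gamma = 1 \otimes 1$ follows.

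The conceptual heart of the argument is the use of the antipode axiom; the point requiring most care is the bookkeeping of the Cartan factor $K_\nu$ when matching the expanded right-hand side to the product $\Gamma_\nu \Theta_\eta$. Everything else---including the convergence of the sums---follows routinely from the grading and the definition of the completion.
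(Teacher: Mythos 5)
Your proof is correct. For $\Gamma\Theta=1$ you and the paper use the same underlying mechanism — the antipode axiom applied degree by degree to the coproduct of $\Theta_\mu$ — though you package it slightly more cleanly: you apply $(m\circ(S\otimes\id))\otimes\id$ directly to the already-established identity $(\Delta\otimes\id)(\Theta_\mu)=\sum_{\nu}(\Theta_\nu)_{13}(1\otimes K_\nu\otimes 1)(\Theta_{\mu-\nu})_{23}$, whereas the paper works with the equivalent dual statement $\Delta(x)=\sum(x,v_i^{\lambda}v_j^{\nu})\,u_i^{\lambda}\otimes K_\lambda u_j^{\nu}$ and tests the candidate zero element against all $x\in\qdbl{\mathfrak{g}}^{>0}_\mu$ via $\id\otimes(x,\cdot)$, invoking nondegeneracy of the pairing. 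Where you genuinely diverge is the second identity: the paper repeats the computation symmetrically, using the other antipode axiom $m\circ(\id\otimes S)\circ\Delta=\eta\circ\varepsilon$ to get $\sum_{\lambda+\nu=\mu}\Theta_\lambda\Gamma_\nu=\delta_{\mu,0}$ directly, while you deduce $\Theta\Gamma=1$ abstractly from the fact that $\Theta=1+\Theta^{>0}$ with $\Theta^{>0}$ topologically nilpotent, so $\Theta$ is two-sided invertible and any left inverse coincides with the inverse. Your route spares a second antipode computation at the cost of leaning on the completion being a topological algebra in which the geometric series converges and multiplication is continuous — a point the paper also uses implicitly (it multiplies the infinite sums $\Gamma$ and $\Theta$ freely), so this is an acceptable trade; your verification that $(\Theta^{>0})^k$ sits in the $k$-th filtration piece, via $\h(\mu_1+\cdots+\mu_k)\geq k$, is the right justification.
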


\begin{proof}
  We show that for all $\mu\geq 0$, 
\[
  \sum_{\lambda+\nu=\mu}\Gamma_\lambda\Theta_\nu = \delta_{\mu,0},
\]
and
\[
  \sum_{\lambda+\nu=\mu}\Theta_\lambda\Gamma_\nu = \delta_{\mu,0}.
\]

We start with the first formula. We may and will suppose that $\mu>0$. As
\[
  \sum_{\lambda+\nu=\mu}\Gamma_\lambda\Theta_\nu = \sum_{\substack{\lambda+\nu=\mu\\i,j}}S(u_i^\lambda)K_\lambda u_j^{\nu} \otimes v_i^\lambda v_j^\nu
\]
is in $\qdbl{\mathfrak{g}}\otimes\qdbl{\mathfrak{g}}_{-\mu}^{<0}$, it suffices to show that applying $\id\otimes (x,\cdot)$ gives zero, for all $x\in\qdbl{\mathfrak{g}}_{\mu}^{>0}$. But 
\[
  \Delta(x) = \sum_{\substack{\lambda+\nu=\mu \\ i,j}} \left(x,v_i^{\lambda}v_j^{\nu}\right)u_i^\lambda\otimes K_\lambda u_j^{\nu},
\]
so the antipode axiom gives 
\[
  0 = \varepsilon(x) = \sum_{\substack{\lambda+\nu=\mu \\ i,j}} \left(x,v_i^{\lambda}v_j^{\nu}\right)S(u_i^\lambda) K_\lambda u_j^{\nu}
\]
as desired.

For the second equality, we again apply $\id\otimes (x,\cdot)$ to show that
\[
  \sum_{\lambda+\nu=\mu}\Theta_\lambda\Gamma_\nu = \sum_{\substack{\lambda+\nu=\mu\\i,j}}u_i^{\lambda} S(u_j^\nu)K_\nu  \otimes  v_i^\lambda v_j^\nu =0.
\]
The antipode axiom also gives
\[
  0 = \varepsilon(x) = \sum_{\substack{\lambda+\nu=\mu \\ i,j}} \left(x,v_i^{\lambda}v_j^{\nu}\right)u_i^\lambda  S(u_j^{\nu})K_{-\lambda}
\]
which is, after multiplication by $K_\mu$, the result expected.
\end{proof}

We therefore have proven:

\begin{proposition}
  The element $\Theta\in\qdbl{\mathfrak{g}}\hat\otimes\qdbl{\mathfrak{g}}$ is invertible and
  \begin{itemize}
  \item for all $u\in\qgr{\mathfrak{g}}$ we have $\Theta\Delta(u) = (\Psi\circ\Delta^{\mathrm{op}})(u)\Theta$,
    \item $(\Delta\otimes\id)(\Theta)=\Psi_{23}(\Theta_{13})\Theta_{23}$,
    \item $(\id\otimes\Delta)(\Theta)=\Psi_{12}(\Theta_{13})\Theta_{12}$.
  \end{itemize}
\end{proposition}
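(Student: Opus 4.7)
The proposition is essentially a summary collecting facts that have already been established in the preceding paragraphs, so my plan is simply to explain how the three bullet points follow from earlier results, while making one induction argument precise.

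The invertibility of $\Theta$ is immediate from the lemma just above: the element $\Gamma = \sum_{\mu\geq 0}(S\otimes\id)(\Theta_\mu)(K_\mu\otimes 1)$ lies in the completion $\qdbl{\mathfrak{g}}\hat\otimes\qdbl{\mathfrak{g}}$ and satisfies $\Gamma\Theta = 1 = \Theta\Gamma$. Nothing remains to be said there.

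For the intertwining identity $\Theta\Delta(u) = (\Psi\circ\Delta^{\mathrm{op}})(u)\Theta$, I would point out that the four displayed formulas in Proposition \ref{prop:coprod_r_mat}, after summing over $\mu\geq 0$ inside the completion, amount exactly to the desired identity when $u$ ranges over the generators $K_i, L_i, E_i, F_i$. The extension to an arbitrary element of $\qdbl{\mathfrak{g}}$ is a one-line induction on the length of a monomial, using that $\Delta$ is an algebra morphism and that $\Psi\circ\Delta^{\mathrm{op}}$ is also an algebra morphism (since $\Delta^{\mathrm{op}}$ is a morphism $\qdbl{\mathfrak{g}} \to \qdbl{\mathfrak{g}}\otimes\qdbl{\mathfrak{g}}$ and $\Psi$ is an algebra automorphism): if the identity holds for $u$ and for $v$, then
\[
\Theta\Delta(uv) = \Theta\Delta(u)\Delta(v) = (\Psi\Delta^{\mathrm{op}})(u)\Theta\Delta(v) = (\Psi\Delta^{\mathrm{op}})(u)(\Psi\Delta^{\mathrm{op}})(v)\Theta = (\Psi\Delta^{\mathrm{op}})(uv)\Theta.
\]

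For the two coproduct identities $(\Delta\otimes\id)(\Theta)=\Psi_{23}(\Theta_{13})\Theta_{23}$ and $(\id\otimes\Delta)(\Theta)=\Psi_{12}(\Theta_{13})\Theta_{12}$, the calculations are already laid out in the discussion immediately preceding the proposition: one sums the two formulas of the lemma above over $\mu\geq 0$ in the completion, and uses the closed-form description of $\Psi$ on homogeneous elements, $\Psi(x\otimes 1) = x\otimes L_\eta^{-1}$ for $x\in\qdbl{\mathfrak{g}}^{>0}_\eta$ and $\Psi(1\otimes y)=K_\mu\otimes y$ for $y\in\qdbl{\mathfrak{g}}^{<0}_{-\mu}$, to rewrite the factors $1\otimes K_\eta\otimes 1$ and $1\otimes L_\nu^{-1}\otimes 1$ as $\Psi_{23}$ (respectively $\Psi_{12}$) applied to $(\Theta_\eta)_{13}$.

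No step is a genuine obstacle; the only thing to keep track of is convergence of the infinite sums inside $\qdbl{\mathfrak{g}}\hat\otimes\qdbl{\mathfrak{g}}$, which is built into the definition of the completion with respect to the grading in $Q^+$, and that $\Psi$ extends by continuity. The entire proof can therefore be written as a short paragraph of cross-references.
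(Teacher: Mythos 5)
Your proposal is correct and matches the paper exactly: the paper introduces this proposition with the words ``We therefore have proven,'' i.e.\ it treats the statement as a summary of the preceding results (Proposition \ref{prop:coprod_r_mat} for the intertwining relation, the two lemmas for the coproduct identities and for invertibility via $\Gamma$), which is precisely your reading. Your added induction on monomials for extending $\Theta\Delta(u)=(\Psi\circ\Delta^{\mathrm{op}})(u)\Theta$ from generators to all of $\qdbl{\mathfrak{g}}$ is a correct filling-in of a step the paper leaves implicit.
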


Finally, we give an explicit form of $\Theta$ (compare with \cite[10.1.D]{chari-pressley})
\[
  \Theta=\prod_{\alpha\in\Phi^+} \left(\sum_{n=0}^{+\infty}q_\alpha^{\frac{n(n-1)}{2}}\qfact{n}{q_\alpha}(q_\alpha-q_\alpha^{-1})^{n} E_\alpha^{(n)}\otimes F_\alpha^{(n)}\right).
\]


\section{Representation theory at a generic parameter $q$}
\label{sec:rep_q_gen}
We now turn to the representation theory of the quantum double $\qdbl{\mathfrak{g}}$, which is quite similar to the one of $\qgr{\mathfrak{g}}$.

\subsection{Representations of $\qdbl{\mathfrak{g}}$}
\label{sec:rep_qgen}

We will consider the category $\mathcal{C}_q$ of finite dimensional $\qdbl{\mathfrak{g}}$-modules $M$ such that
\[
  M = \bigoplus_{(\lambda,\mu)\in P\times P}M_{(\lambda,\mu)},
\]
where $M_{\lambda,\mu}$ denote the weight space of $M$ associated to $(\lambda,\mu)\in P\times P$:
\[
  M_{\lambda,\mu}=\left\{m\in M \ \middle\vert\ K_i\cdot m=q_i^{\langle\lambda,\alpha_i^\vee\rangle}m,\ L_i\cdot m=q_i^{\langle\mu,\alpha_i^\vee\rangle}m,\ \forall 1\leq i\leq n\right\}.
\]

As $\qdbl{\mathfrak{g}}$ is a Hopf algebra, $\mathcal{C}_q$ is a monoidal rigid abelian category. For $(\lambda,\mu)\in P\times P$ and $(\lambda',\mu')\in P\times P$, we write $(\lambda',\mu')\leq(\lambda,\mu)$ if $(\lambda'-\lambda,\mu'-\mu) = \sum_{\alpha\in\Delta}n_\alpha(\alpha ,\alpha)$ with $n_\alpha\in\mathbb{N}$.

\begin{proposition}
  A simple module $M$ in $\mathcal{C}_q$ is a highest weight module.
\end{proposition}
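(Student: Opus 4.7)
The strategy is the standard triangular decomposition argument: locate a highest weight vector and invoke simplicity. First I would note how the raising and lowering operators shift weights. Using the commutation relations $K_j E_i = q^{\langle \alpha_i,\alpha_j \rangle} E_i K_j$ and $L_j E_i = q^{\langle \alpha_i,\alpha_j \rangle} E_i L_j$ together with the identity $q_j^{\langle \alpha_i,\alpha_j^\vee \rangle} = q^{\langle \alpha_i,\alpha_j \rangle}$ (coming from $q_j = q^{\langle \alpha_j,\alpha_j \rangle/2}$ and the definition of $\alpha_j^\vee$), one checks that if $m \in M_{(\lambda,\mu)}$ then $E_i \cdot m \in M_{(\lambda+\alpha_i,\mu+\alpha_i)}$. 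Symmetrically, $F_i \cdot m \in M_{(\lambda-\alpha_i,\mu-\alpha_i)}$ using the relations for $F_i$.

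Next, since $M$ is finite dimensional, the set of pairs $(\lambda,\mu) \in P\times P$ with $M_{(\lambda,\mu)} \neq 0$ is finite. With respect to the partial order defined just before the proposition (where increments are in $\sum_{\alpha \in \Pi}\mathbb{N}(\alpha,\alpha)$), this finite set admits a maximal element $(\lambda_0,\mu_0)$. Choose a nonzero $v \in M_{(\lambda_0,\mu_0)}$. By maximality, $M_{(\lambda_0+\alpha_i,\mu_0+\alpha_i)} = 0$ for every $i$, so $E_i \cdot v = 0$ for all $1 \leq i \leq n$, while $K_i$ and $L_i$ act on $v$ by scalars. Thus $v$ is a highest weight vector in the expected sense.

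To conclude, I invoke the triangular decomposition $\qdbl{\mathfrak{g}} \simeq \qdbl{\mathfrak{g}}^{<0} \otimes \qdbl{\mathfrak{g}}^{0} \otimes \qdbl{\mathfrak{g}}^{>0}$ recalled earlier. The submodule $N = \qdbl{\mathfrak{g}} \cdot v$ therefore coincides with $\qdbl{\mathfrak{g}}^{<0} \cdot v$, because $\qdbl{\mathfrak{g}}^{>0}$ annihilates $v$ beyond scalars (generated inductively from $E_i \cdot v = 0$) and $\qdbl{\mathfrak{g}}^{0}$ acts diagonally on $v$. Hence $N$ is a nonzero highest weight submodule of $M$, and since $M$ is simple, $N = M$, proving that $M$ itself is a highest weight module.

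There is no serious obstacle; the only small point to keep track of is that weights lie in $P\times P$ rather than $Q\times Q$, but the grading by $Q$ on $\qdbl{\mathfrak{g}}$ still shifts $P\times P$-weights along the diagonal copy of $Q$, which is exactly what the partial order is designed to accommodate.
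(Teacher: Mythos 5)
Your argument is correct and is essentially the paper's own proof: pick a weight of $M$ maximal for the partial order (which exists by finite-dimensionality), observe that a nonzero vector there is annihilated by all $E_i$ since the weight $(\lambda_0+\alpha_i,\mu_0+\alpha_i)$ does not occur, and conclude by simplicity that this vector generates $M$. The extra remarks about the triangular decomposition and the $P\times P$ versus $Q\times Q$ indexing are fine but not needed beyond what the paper already records.
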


\begin{proof}
  Let $M$ be a simple module in $\mathcal{C}_q$. Then, there exists a weight $(\lambda,\mu)$ of $M$ such that for
  all other weight $(\lambda',\mu')$ of $M$, we have
  $(\lambda,\mu)\not < (\lambda',\mu')$. Such a weight exists
  because $M$ is finite dimensional. Pick a non-zero $v \in
  M_{(\lambda,\mu)}$.
  As, by choice of $(\lambda,\mu)$, $(\lambda+\alpha_i,\mu+\alpha_i)$
  is not a weight of $M$ for any $i$, we have $E_im =0$ for any $i$. Hence $v$ is a highest weight vector.

  Consider now the $\qdbl{\mathfrak{g}}$-module of $M$ generated by $v$. As $M$ is simple and $v\neq 0$, it is the entire module $M$: $M$ is a highest weight module.
\end{proof}

\begin{proposition}
  Any highest weight $(\lambda,\mu)$ of a module in $\mathcal{C}_q$ satisfy $\lambda+\mu\in 2P^+$.
\end{proposition}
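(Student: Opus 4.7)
The approach mimics the classical proof that a finite-dimensional $\mathfrak{sl}_2$-module has nonnegative integer highest weight, adapted to the quantum-double setting in which $[E_i,F_i]=(K_i-L_i^{-1})/(q_i-q_i^{-1})$ involves two independent Cartan generators. Fix a simple root $\alpha_i$ and a highest-weight vector $v\in M_{(\lambda,\mu)}$, so that $E_iv=0$. Set $a=\langle\lambda,\alpha_i^\vee\rangle$ and $b=\langle\mu,\alpha_i^\vee\rangle$: the target reduces to showing $a+b\in 2\mathbb{N}$, and the idea is to restrict attention to the subalgebra generated by $E_i,F_i,K_i^{\pm 1},L_i^{\pm 1}$ acting on the chain $v,F_iv,F_i^2v,\ldots$ inside the finite-dimensional module $M$.

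The central computation is an induction on $k\geq 1$ establishing
\[
  E_i F_i^k v \;=\; c_k\, F_i^{k-1} v,
  \qquad
  c_k \;=\; \sum_{j=0}^{k-1}\frac{q_i^{a-2j}-q_i^{-b+2j}}{q_i-q_i^{-1}}.
\]
The inductive step uses the commutation relation together with the eigenvalues $K_iF_i^j v = q_i^{a-2j}F_i^j v$ and $L_i^{-1}F_i^j v = q_i^{-b+2j}F_i^j v$, via $E_iF_i^{k+1}v = F_i(E_iF_i^k v)+[E_i,F_i]F_i^k v$. Summing the two geometric series then factorises $c_k$ as
\[
  (q_i-q_i^{-1})\,c_k \;=\; \bigl(1+q_i^{-2}+\cdots+q_i^{-2(k-1)}\bigr)\bigl(q_i^{a}-q_i^{-b+2(k-1)}\bigr).
\]

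Since $M$ is finite-dimensional, there exists a minimal $N\geq 0$ with $F_i^{N+1}v=0$ but $F_i^Nv\neq 0$; then $E_iF_i^{N+1}v = c_{N+1}F_i^Nv = 0$ forces $c_{N+1}=0$. At the generic parameter $q$ the first factor $1+q_i^{-2}+\cdots+q_i^{-2N}$ is nonzero in $\mathbb{Q}(q)$, so the second factor must vanish, giving $q_i^{a}=q_i^{-b+2N}$ and hence $a+b=2N\in 2\mathbb{N}$. Applying this to every simple root $\alpha_i$ yields $\lambda+\mu\in 2P^+$. The only real obstacle is the careful bookkeeping of the two Cartan weights in the recursion for $c_k$; once the closed form above is in hand, the conclusion follows as a direct $q$-analogue of the classical dominance argument for $\mathfrak{sl}_2$.
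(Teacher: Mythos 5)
Your proof is correct and follows essentially the same route as the paper: both exploit the $\mathfrak{sl}_2$-string $v, F_iv, F_i^2v,\ldots$ in the finite-dimensional module and the vanishing of $E_iF_i^{N+1}v$ to force $\langle\lambda+\mu,\alpha_i^\vee\rangle=2N$. The paper simply packages your recursion for $c_k$ into the closed-form identity $[E_i,F_i^{(k+1)}]=F_i^{(k)}\frac{q_i^{-k}K_i-q_i^{k}L_i^{-1}}{q_i-q_i^{-1}}$ for divided powers, which yields the conclusion in one step.
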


\begin{proof}
  Let $M$ be a module in $\mathcal{C}_q$ and $m\in M$ a highest weight vector of weight $(\lambda,\mu)$. Let $1\leq i \leq n$ and consider the family $(F_i^{(k)}m)_{k\in \mathbb{N}}$ of vectors in $M$. The vector $F_i^{(k)}m$ being of weight $(\lambda,\mu)-k(\alpha _i,\alpha_i)$ and $M$ being finite dimensional, there exists a unique $k\in\mathbb{N}$ such that $F_i^{(k)}m\neq 0$ and $F_i^{(k+1)}m=0$. As
\[
  [E_i,F_i^{(k+1)}] = F_i^{(k)}\frac{q_i^{-k}K_i-q_i^{k}L_i^{-1}}{q_i-q_i^{-1}},
\]
we have $0 = [E_i,F_i^{(k+1)}]m = \frac{q_i^{-k+\langle\lambda,\alpha_i^\vee\rangle}-q_i^{k-\langle\mu,\alpha_i^\vee\rangle}}{q_i-q_i^{-1}}F_i^{(k)}m$. Therefore $\langle\lambda+\mu,\alpha_i^\vee\rangle=2k$.
\end{proof}

For any $\lambda\in P$, there exists a one-dimensional $\qdbl{\mathfrak{g}}$-module with unique (hence highest) weight $(\lambda,-\lambda)$, which we denote by $L(\lambda,-\lambda)$. The elements $E_i$ and $F_i$ necessarily acts by $0$ and the action of $K_i$ (resp. $L_i$) is multiplication by $q^{\langle \lambda,\alpha_i\rangle}$ (resp. $q^{-\langle \lambda,\alpha_i\rangle}$).

We denote by $\tilde{P}$ the subset of $P\times P$ such that the sum of the two components is in $2P$. These are precisely the weights which appear as weights of objects of the category $\mathcal{C}_q$:

\begin{proposition}
  For any $(\lambda,\mu)\in P\times P$ such that $\lambda+\mu\in 2P^+$, there exists a simple module in $\mathcal{C}_q$ of highest weight $(\lambda,\mu)$.
\end{proposition}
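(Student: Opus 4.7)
The plan is to mimic the classical construction of simple highest weight modules via Verma modules and their simple quotients, then cut down to the integrable setting using the hypothesis $\lambda+\mu \in 2P^+$.

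First, I would construct the Verma module $M(\lambda,\mu)$. Consider the subalgebra $\qdbl{\mathfrak{g}}^{\geq 0}$ generated by the $E_i$'s and the $K_i^{\pm 1}, L_i^{\pm 1}$'s, and endow the one-dimensional $\mathbb{Q}(q)$-vector space $\mathbb{Q}(q) v_{(\lambda,\mu)}$ with the structure of a $\qdbl{\mathfrak{g}}^{\geq 0}$-module by letting $E_i$ act by $0$, $K_i$ by $q_i^{\langle\lambda,\alpha_i^\vee\rangle}$ and $L_i$ by $q_i^{\langle\mu,\alpha_i^\vee\rangle}$. Set
\[
  M(\lambda,\mu) = \qdbl{\mathfrak{g}} \otimes_{\qdbl{\mathfrak{g}}^{\geq 0}} \mathbb{Q}(q) v_{(\lambda,\mu)}.
\]
Using the triangular decomposition, $M(\lambda,\mu)$ is a free $\qdbl{\mathfrak{g}}^{<0}$-module of rank one, and the $Q$-grading shows its weights are all of the form $(\lambda,\mu) - \nu(\alpha,\alpha)$ with $\nu\geq 0$, each weight space being finite-dimensional.

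Next, I would extract the simple quotient. Every proper submodule of $M(\lambda,\mu)$ must miss the line through $v_{(\lambda,\mu)}$, hence is contained in the direct sum of weight spaces for weights strictly less than $(\lambda,\mu)$. The sum of all such submodules is therefore proper, so it is the unique maximal submodule $N(\lambda,\mu)$. The quotient $L(\lambda,\mu) := M(\lambda,\mu)/N(\lambda,\mu)$ is then a simple highest weight module of weight $(\lambda,\mu)$.

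It remains to show $L(\lambda,\mu)$ lies in $\mathcal{C}_q$, i.e.\ is finite-dimensional. Since $\lambda+\mu\in 2P^+$, the integers $m_i := \tfrac{1}{2}\langle\lambda+\mu,\alpha_i^\vee\rangle$ are in $\mathbb{N}$. Using the commutation identity
\[
  [E_i,F_i^{(m_i+1)}] = F_i^{(m_i)}\,\frac{q_i^{-m_i}K_i - q_i^{m_i}L_i^{-1}}{q_i-q_i^{-1}},
\]
already exploited in the previous proposition, together with $[E_j, F_i]=0$ for $j\neq i$ and $E_j v_{(\lambda,\mu)}=0$, a direct check shows that $F_i^{(m_i+1)} v_{(\lambda,\mu)}$ is itself a highest weight vector, of weight strictly smaller than $(\lambda,\mu)$. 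Therefore it generates a proper submodule of $M(\lambda,\mu)$, hence vanishes in $L(\lambda,\mu)$. Consequently, on the image $\bar v$ of $v_{(\lambda,\mu)}$ in $L(\lambda,\mu)$, every $F_i$ acts nilpotently, and $E_i$ does as well by the weight grading.

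Finally, I would bootstrap local nilpotence from $\bar v$ to all of $L(\lambda,\mu)$ by the standard argument: the subset of vectors in $L(\lambda,\mu)$ on which every $E_i$ and $F_i$ acts locally nilpotently is stable under $\qdbl{\mathfrak{g}}$ (using that locally finite vectors for an $\mathfrak{sl}_2$-triple form a submodule, and that $E_j,F_j,K_j,L_j$ preserve this locus thanks to the quantum Serre relations and the commutation between the $E_j,F_j$ and the weight operators), so equals $L(\lambda,\mu)$. At this point $L(\lambda,\mu)$ is an integrable weight module with highest weight, hence by the usual quantum $\mathfrak{sl}_2$--theoretic argument its character is $W$-invariant, its set of weights is bounded above and $W$-stable, and each weight space is finite-dimensional; therefore $L(\lambda,\mu)$ is finite-dimensional and belongs to $\mathcal{C}_q$. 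The main obstacle is the last step, namely propagating local nilpotence away from $\bar v$ and concluding integrability implies finite-dimensionality; everything else is formal from the triangular decomposition and the previous proposition.
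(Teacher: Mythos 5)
Your argument is correct, but it takes a genuinely different route from the paper. The paper's proof is a two-line reduction: it invokes the known classification of finite-dimensional $\qgr{\mathfrak{g}}$-modules (via the quotient of $\qdbl{\mathfrak{g}}$ by the ideal generated by the $z_i-1$) to produce a simple module of highest weight $\left(\frac{\lambda+\mu}{2},\frac{\lambda+\mu}{2}\right)$, and then tensors with the invertible one-dimensional module $L\left(\frac{\lambda-\mu}{2},-\frac{\lambda-\mu}{2}\right)$ to shift the highest weight to $(\lambda,\mu)$. This is consistent with how the rest of the section is organized: each graded component $\mathcal{C}_{q,\nu}$ is a twist of $\Rep\qgr{\mathfrak{g}}$ by an invertible object, so existence, semisimplicity and characters are all inherited from the $\qgr{\mathfrak{g}}$ case. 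You instead rebuild the Verma-module theory for $\qdbl{\mathfrak{g}}$ from scratch: triangular decomposition, unique maximal submodule, vanishing of $F_i^{(m_i+1)}\bar v$ via the divided-power commutator (your computation of the exponents, $-m_i+\langle\lambda,\alpha_i^\vee\rangle = m_i-\langle\mu,\alpha_i^\vee\rangle = \tfrac{1}{2}\langle\lambda-\mu,\alpha_i^\vee\rangle$, is right), and then the local-nilpotence bootstrap. All of this is sound, and it has the virtue of being self-contained — you do not need the quoted classification. The price is that the one genuinely nontrivial step, propagating local nilpotence and deducing finite-dimensionality from integrability, is exactly the content of the result the paper chooses to quote; you only sketch it, though the standard argument (Lusztig/Jantzen) does carry over verbatim here, and in fact the central elements $z_i=K_iL_i^{-1}$ act by a scalar on your Verma module, so one could shortcut your last step by observing that $L(\lambda,\mu)$ is a $\qgr{\mathfrak{g}}$-module twisted by a character — which is precisely the paper's reduction in disguise.
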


\begin{proof}
  Using the classification of $\qgr{\mathfrak{g}}$-modules (see \cite[Proposition 10.1.1]{chari-pressley}), there exists a simple $\qdbl{\mathfrak{g}}$-module of highest weight $\left(\frac{\lambda+\mu}{2},\frac{\lambda+\mu}{2}\right)$. Tensoring this module with $L\left(\frac{\lambda-\mu}{2},-\frac{\lambda-\mu}{2}\right)$, we obtain a simple module in $\mathcal{C}_q$ of highest weight $(\lambda,\mu)$.
\end{proof}

Therefore, simple objects in $\mathcal{C}_q$ are classified by $(\lambda,\mu)\in P\times P$ such that $\lambda+\mu\in 2P^+$. We denote $\tilde{P}^+$ this set of weights and by $L(\lambda,\mu)$ the simple module of highest weight $(\lambda,\mu)$. Note that we also have a construction of simple modules ``\emph{à la Verma}''.

\begin{proposition}
  The category $\mathcal{C}_q$ is semisimple.
\end{proposition}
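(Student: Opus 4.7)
The plan is to show that any short exact sequence $0 \to N \to M \to Q \to 0$ in $\mathcal{C}_q$ splits, and by induction on composition length it suffices to take $N = L(\lambda_1,\mu_1)$ and $Q = L(\lambda_2,\mu_2)$ simple. The central tool is the family of central elements $z_i = K_i L_i^{-1}$ mentioned earlier: on the weight space $M_{(\lambda,\mu)}$ they act by $q_i^{\langle \lambda-\mu,\alpha_i^\vee\rangle}$, and since $E_j,F_j$ shift $K$-weights and $L$-weights by the same amount, the difference $\lambda-\mu$ is constant on each highest weight module. Thus each $z_i$ acts by a single scalar on any simple $L(\lambda,\mu)$.

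In the first case, suppose there is some index $i$ with $\langle \lambda_1-\mu_1,\alpha_i^\vee\rangle \neq \langle \lambda_2-\mu_2,\alpha_i^\vee\rangle$. Then $z_i$ acts by two distinct scalars on $N$ and $Q$, so the decomposition of $M$ into generalized eigenspaces of $z_i$ (which is stable under $\qdbl{\mathfrak{g}}$ by centrality) produces a $\qdbl{\mathfrak{g}}$-submodule complement of $N$, splitting the sequence. In the remaining case, $\delta := \lambda_1-\mu_1 = \lambda_2-\mu_2$. Since $\lambda_k + \mu_k \in 2P$ and $\lambda_k \in P$, one has $\delta \in 2P$, so $\delta/2 \in P$. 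Tensor the whole sequence by the one-dimensional module $L(-\delta/2,\delta/2)$, which lies in $\mathcal{C}_q$ because $(-\delta/2)+(\delta/2)=0 \in 2P^+$. In the tensored extension every weight $(\lambda',\mu')$ satisfies $\lambda'=\mu'$, hence every $z_i$ acts by $1$, so the extension descends to an extension of finite-dimensional type~$1$ modules over $\qgr{\mathfrak{g}} = \qdbl{\mathfrak{g}}/(z_i-1)_{1\leq i \leq n}$. Invoking the known semisimplicity of that category at a generic parameter (Chari–Pressley) yields a splitting, which one transports back via tensoring with $L(\delta/2,-\delta/2)$.

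The only genuinely nontrivial input is the semisimplicity statement for $\qgr{\mathfrak{g}}$-modules at generic $q$, which I would cite rather than reprove; everything else is routine bookkeeping about central characters and the weight-shifting action of invertible one-dimensional modules. The main obstacle, if any, is purely a verification that the tensoring trick is consistent with the $\tilde P^+$ classification, i.e.\ that $L(\lambda_k,\mu_k)\otimes L(-\delta/2,\delta/2) \simeq L(\lambda_k-\delta/2,\mu_k+\delta/2)$ is still simple in $\mathcal{C}_q$ and corresponds to a $\qgr{\mathfrak{g}}$-simple of highest weight $(\lambda_k+\mu_k)/2$; this is immediate since tensoring with an invertible one-dimensional object is an autoequivalence of $\mathcal{C}_q$.
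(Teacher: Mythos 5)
Your proof is correct and follows essentially the same route as the paper: use the central elements $z_i$ to see that $\lambda-\mu$ is constant (either splitting off generalized eigenspaces or, in the paper's phrasing, noting an indecomposable $M$ forces a single eigenvalue), then tensor with the invertible object $L\left(\frac{\mu-\lambda}{2},-\frac{\mu-\lambda}{2}\right)$ to reduce to the known semisimplicity of finite-dimensional $\qgr{\mathfrak{g}}$-modules at generic $q$. The only difference is organizational (your explicit two-case analysis versus the paper's reduction to indecomposable $M$), not mathematical.
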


\begin{proof}
  It suffices to show that there are no extensions $0 \rightarrow L \rightarrow M \rightarrow N \rightarrow 0$ of objects in $\mathcal{C}_q$ with $M$ indecomposable and $L$, $N$ non-trivial. If such an extension exists, for $1\leq i \leq n$ the action of $z_i$ on $M$ has a unique eigenvalue. Therefore, for any weight $(\lambda,\mu)$ of $M$, the value $\lambda-\mu$ does not depend on the weight and is in $2P$. We tensorize the exact sequence by the invertible object $X=L\left(\frac{\mu-\lambda}{2},-\frac{\mu-\lambda}{2}\right)$ in order to obtain an exact sequence of $\qgr{\mathfrak{g}}$-modules. As the category of finite dimensional $\qgr{\mathfrak{g}}$-modules is semisimple, $M\otimes X \simeq (L\otimes X) \oplus (N\otimes X)$. Therefore $M\simeq L\oplus N$, contrary to our assumption.
\end{proof}

There exists a faithful $P$-grading on the category $\mathcal{C}_q$
given by the action of the central elements $(z_i)_{1\leq i\leq n}$
\[
  \mathcal{C}_q = \bigoplus_{\nu \in P}\mathcal{C}_{q,\nu},
\]
where $\mathcal{C}_{q,\nu}$ is additively generated by simple objects
$L(\lambda,\mu)$ with $\lambda-\mu=2\nu$, \emph{i.e.} the simple objects on which the central elements $(z_i)_{1\leq i \leq n}$ act by $2\nu$. Each component $\mathcal{C}_{q,\nu}$ is
equivalent to the category of finite dimensional
$\qgr{\mathfrak{g}}$-modules: it is clear for the trivial component
$\mathcal{C}_{q,0}$ and tensoring by $L(\nu,-\nu)$ gives an
equivalence between $\mathcal{C}_{q,0}$ and $\mathcal{C}_{q,\nu}$.

\subsection{Character formula}
\label{sec:character}

We now write a character formula, which we obtain easily using Weyl character formula (see \cite[24.3]{humphreys}). We define an action of the Weyl group $W$ on $\tilde{P}$ as follows
\[
  s_i(\lambda,\mu)=(\lambda,\mu) - \frac{1}{2}\langle\lambda+\mu,\alpha_i^\vee\rangle (\alpha_i ,\alpha_i),
\]
for any $1 \leq i \leq n$. It is easily shown, by induction on the length of $w\in W$ that
\[
  w(\lambda,\mu) = \left(w\left(\frac{\lambda+\mu}{2}\right)+\frac{\lambda-\mu}{2},w\left(\frac{\lambda+\mu}{2}\right)-\frac{\lambda-\mu}{2}\right).
\]

We denote by $e^{(\lambda,\mu)}\in\mathbb{Z}[P\times P]$ the element of the group ring of $P\times P$ over $\mathbb{Z}$ corresponding to $(\lambda,\mu)$. The character of a module $M$ in $\mathcal{C}_q$ is definted to be
\[
  \chi_M=\sum_{(\lambda,\mu)\in P\times P}\dim(M_{(\lambda,\mu)})e^{(\lambda,\mu)}.
\]
Almost every term of the sum is zero and the support of $\chi_M$ is contained in $\tilde{P}$.

The usual Weyl character formula gives the character of all simple modules in $\mathcal{C}_q$ of the form $L(\lambda,\lambda)$ with $\lambda\in P^+$:
\[
  \chi_{(\lambda,\lambda)}=\frac{\sum_{w\in W}(-1)^{l(w)}e^{(w(\lambda+\rho),w(\lambda+\rho))}}{\sum_{w\in W}(-1)^{l(w)}e^{(w(\rho),w(\rho))}}.
\]

As, for $(\lambda,\mu)\in \tilde{P}$, we have an isomorphism in $\mathcal{C}_q$
\[
  L(\lambda,\mu) \simeq 
  L\left(\frac{\lambda+\mu}{2},\frac{\lambda+\mu}{2}\right) \otimes 
  L\left(\frac{\lambda-\mu}{2},-\frac{\lambda-\mu}{2}\right),
\]
the character of $L(\lambda,\mu)$ is
\[
  \chi_{(\lambda,\mu)}=
  \frac{
    \sum_{w\in W}(-1)^{l(w)} e^{\left(w\left(\frac{\lambda+\mu}{2}+\rho\right)
      ,w\left(\frac{\lambda+\mu}{2}+\rho\right)\right)}
      }
      {
        \sum_{w\in W}(-1)^{l(w)}e^{(w(\rho),w(\rho))}
      }
      e^{\left(\frac{\lambda-\mu}{2},-\frac{\lambda-\mu}{2}\right)},
\]
which is therefore equal to
\[
  \chi_{(\lambda,\mu)}=
  \frac{
    \sum_{w\in W}(-1)^{l(w)} e^{(w(\lambda+\rho,\mu+\rho))}
      }
      {
        \sum_{w\in W}(-1)^{l(w)}e^{(w(\rho,\rho))}
      }.
\]

We rewrite this formula by introducing the \emph{dot action} $w\bullet (\lambda,\mu) = w(\lambda+\rho,\mu+\rho) - (\rho,\rho)$ which stabilizes $P\times P$:
\[
  \chi_{(\lambda,\mu)}=
  \frac{
    \sum_{w\in W}(-1)^{l(w)} e^{(w\bullet(\lambda,\mu))}
      }
      {
        \sum_{w\in W}(-1)^{l(w)}e^{(w\bullet (0,0))}
      }.
\]

\subsection{Braiding}
\label{sec:braiding}

Using the quasi-$R$-matrix of section \ref{sec:quasi-r-matrix}, we
endow the tensor category $\mathcal{C}_q$ with a braiding. We moreover have to work with an extension of $\mathbb{Q}(q)$. Let $L$ be the smallest integer such that $L\langle\lambda,\mu\rangle\in\mathbb{Z}$ for any $\lambda,\mu\in P$. Let $s$ an indeterminate such that $s^L=q$ and we now work over the filed $\mathbb{Q}(s)$.

First, as any element of $\mathcal{C}_q$ has a finite number of weights, we see
that for $N$ sufficiently large and any $\mu\in Q$, $\mu\geq 0$ and
$\h(\mu)\geq N$, the element $\Theta_\mu$ of section
\ref{sec:quasi-r-matrix} acts by $0$ on any tensor product of two
elements of $\mathcal{C}_q$. Therefore, for any $M$ and $M'$ in
$\mathcal{C}_q$, the action of $\Theta$ defines a linear map
\[
  \Theta_{M,M'}\colon M\otimes M' \rightarrow M\otimes M'.
\]
Note that this map is not $\qdbl{\mathfrak{g}}$-linear but satisfies for
any $u \in \qdbl{\mathfrak{g}}$
\[
  \Theta_{M,M'}\circ \Delta(u) = (\Psi\circ\Delta^{\mathrm{op}})(u) \circ \Theta_{M,M'},
\]
as linear endomorphisms of $M\otimes M'$. This follows immediately from
Proposition \ref{prop:coprod_r_mat}.

To construct the braiding, we need one more ingredient. For $M$ and
$M'$ two objects of $\mathcal{C}_q$, we introduce a linear map
$f_{M,M'}\colon M\otimes M' \rightarrow M\otimes M'$ defined on weight vectors
$m\in M_{\lambda,\mu}$ and $m'\in M'_{\lambda',\mu'}$ by
\[
  f_{M,M'}(m\otimes m') = q^{\langle\lambda,\mu'\rangle}m\otimes m',
\]
where we write $q^r = s^{Lr}$ for any $r \in \frac{1}{L}\mathbb{Z}$.

\begin{lemma}
  For any $u\in\qdbl{\mathfrak{g}}\otimes\qdbl{\mathfrak{g}}$, we have
  the following equality as linear endomorphisms of $M\otimes M'$
  \[
    u\circ f_{M,M'}=f_{M,M'}\circ \Psi(u).
  \]
\end{lemma}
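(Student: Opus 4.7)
The plan is to exploit the fact that both sides of the identity are $\mathbb{Q}(s)$-linear in $u$ and that $\Psi$ is an algebra automorphism of $\qdbl{\mathfrak{g}}\otimes\qdbl{\mathfrak{g}}$. Indeed, if the identity $u_1\circ f_{M,M'} = f_{M,M'}\circ \Psi(u_1)$ and the analogous identity for $u_2$ both hold, then
\[
  (u_1u_2)\circ f_{M,M'} = u_1\circ(f_{M,M'}\circ\Psi(u_2)) = f_{M,M'}\circ\Psi(u_1)\Psi(u_2) = f_{M,M'}\circ\Psi(u_1u_2).
\]
Thus it suffices to verify the identity on a generating set of the algebra $\qdbl{\mathfrak{g}}\otimes\qdbl{\mathfrak{g}}$, namely on $K_i^{\pm 1}\otimes 1$, $L_i^{\pm 1}\otimes 1$, $E_i\otimes 1$, $F_i\otimes 1$ and their analogues on the second tensor factor.

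For the toral generators $K_i^{\pm 1}\otimes 1$, $L_i^{\pm 1}\otimes 1$ and their second-factor counterparts, $\Psi$ acts as the identity and these elements act diagonally on weight vectors while preserving the bi-weight $(\lambda,\mu)$; since $f_{M,M'}$ is itself diagonal with respect to the weight decomposition, the two operators commute trivially, giving the identity.

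The real content is the check on the generators involving $E_i$ and $F_i$. For a weight vector $m\in M_{(\lambda,\mu)}$ and $m'\in M'_{(\lambda',\mu')}$, the element $E_im$ lies in $M_{(\lambda+\alpha_i,\mu+\alpha_i)}$ (from the $K$ and $L$ commutation relations), and similarly $F_im\in M_{(\lambda-\alpha_i,\mu-\alpha_i)}$. For instance, with $u = E_i\otimes 1$ and using $\Psi(E_i\otimes 1) = E_i\otimes L_i^{-1}$, I would compute
\[
  u\circ f_{M,M'}(m\otimes m') = q^{\langle \lambda,\mu'\rangle}E_im\otimes m',
\]
and on the other hand
\[
  f_{M,M'}\circ\Psi(u)(m\otimes m') = q^{-\langle\mu',\alpha_i\rangle}\,q^{\langle \lambda+\alpha_i,\mu'\rangle}\,E_im\otimes m' = q^{\langle\lambda,\mu'\rangle}E_im\otimes m',
\]
where the cancellation uses the symmetry of $\langle\cdot,\cdot\rangle$. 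The three remaining cases $1\otimes E_i$, $F_i\otimes 1$, $1\otimes F_i$ are verified by the same book-keeping, in each instance the weight shift from applying $E_i$ or $F_i$ to one tensor factor exactly cancels the eigenvalue of the $K_i^{\mp 1}$ or $L_i^{\pm 1}$ that $\Psi$ inserts on the other factor.

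No serious obstacle is expected: the whole proof reduces, after the linearity/multiplicativity reduction, to writing out the definition of $\Psi$ on the eight generators and checking a short weight calculation in each case. The only point requiring care is tracking the correct signs in exponents of $q$ and using the symmetry of the bilinear form so that the shifts coming from the action of $E_i$ (or $F_i$) on the weight and from the factor $L_i^{\mp 1}$ (or $K_i^{\pm 1}$) in $\Psi(u)$ cancel precisely.
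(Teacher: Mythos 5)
Your proof is correct and follows essentially the same route as the paper: reduce to the generators of $\qdbl{\mathfrak{g}}\otimes\qdbl{\mathfrak{g}}$ (you make explicit the multiplicativity argument that the paper leaves implicit), observe the toral cases are trivial, and verify the case $E_i\otimes 1$ by the same weight computation, with the remaining cases being analogous.
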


\begin{proof}
  It suffices to show it on the generators of
  $\qdbl{\mathfrak{g}}\otimes\qdbl{\mathfrak{g}}$. It is trivial for
  $K_i\otimes 1$, $L_i\otimes 1$, $1\otimes K_i$ and $1\otimes
  L_i$. We now verify it for $E_i\otimes 1$. On the one hand,
  \[
    \left(f_{M,M'}\circ (E_i\otimes L_i^{-1})\right)_{|_{M_{\lambda,\mu}\otimes M'_{\lambda',\mu'}}} 
      = q^{\langle\lambda+\alpha_i,\mu'\rangle-\langle\alpha_i,\mu'\rangle}(E_i\otimes 1)_{|_{M_{\lambda,\mu}\otimes M'_{\lambda',\mu'}}}.
  \]
  On the other hand,
  \[
    \left((E_i\otimes 1)\circ f_{M,M'}\right)_{|_{M_{\lambda,\mu}\otimes M'_{\lambda',\mu'}}} 
    = q^{\langle\lambda,\mu'\rangle}(E_i\otimes 1)_{|_{M_{\lambda,\mu}\otimes M'_{\lambda',\mu'}}},
  \]
  which concludes the proof for $E_i\otimes 1$. The other cases are similar.
\end{proof}

Denote by $\tau_{V,W}\colon V\otimes W \rightarrow W\otimes V$ the twist of vector spaces sending $v\otimes w$ to $w\otimes v$ and define
\[
  c_{M,M'} = \tau\circ f_{M,M'} \circ \Theta_{M,M'}.
\]
It is then a morphism in $\mathcal{C}_q$ between $M\otimes M'$ and $M'\otimes M$. Indeed, for any $u\in\qdbl{\mathfrak{g}}$ we have
\begin{align*}
  c_{M,M'}\circ u_{|_{M\otimes M'}} 
  &= \tau\circ f_{M,M'} \circ \Theta_{M,M'}\circ \Delta(u)_{|_{M\otimes M'}}\\
  &= \tau\circ f_{M,M'} \circ (\Psi\circ\Delta^{\mathrm{op}})(u)_{|_{M\otimes M'}}\circ \Theta_{M,M'} \\
  &= \tau\circ \Delta^{\mathrm{op}}(u)_{|_{M\otimes M'}} \circ f_{M,M'} \circ \Theta_{M,M'} \\
  &= \Delta(u)_{|_{M'\otimes M}}\circ c_{M,M'}\\
  &= u_{|_{M'\otimes M}}\circ c_{M,M'}.
\end{align*}

\begin{proposition}
  The morphisms $c$ defined above endow $\mathcal{C}_q$ with a braiding: the hexagon axioms \cite[Definition 8.1.1]{egno} are satisfied.
\end{proposition}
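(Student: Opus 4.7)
The plan is to verify both hexagon axioms for $c$. Naturality in each variable is immediate, because $\Theta$ and $f$ are defined intrinsically from the Hopf algebra structure and from the weight grading, both of which are preserved by any morphism in $\mathcal{C}_q$. The essential preparatory observation is that $f$ is multiplicative under tensor decomposition: for weight vectors $m \in M_{(\lambda_1,\mu_1)}$, $m' \in M'_{(\lambda_2,\mu_2)}$, $m'' \in M''_{(\lambda_3,\mu_3)}$, bilinearity of $\langle \cdot,\cdot \rangle$ yields
\[
  f_{M \otimes M', M''} = f_{M,M''}^{(13)} \circ f_{M',M''}^{(23)}
  \quad\text{and}\quad
  f_{M, M' \otimes M''} = f_{M,M''}^{(13)} \circ f_{M,M'}^{(12)}
\]
as operators on $M \otimes M' \otimes M''$, where the superscripts indicate the tensor factors on which the respective $f$'s act.

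For the first hexagon $c_{M \otimes M', M''} = (c_{M, M''} \otimes \id_{M'}) \circ (\id_M \otimes c_{M', M''})$, I would expand both sides via $c = \tau \circ f \circ \Theta$ and identify $\Theta_{M \otimes M', M''}$ with the action of $(\Delta \otimes \id)(\Theta)$ on $M \otimes M' \otimes M''$. The coproduct formula of the preceding section then substitutes $\Psi_{23}(\Theta_{13})\, \Theta_{23}$, so that $\Theta_{M \otimes M', M''} = \sum_{\mu,\eta,i,j} u_i^\mu \otimes K_\mu u_j^\eta \otimes v_i^\mu v_j^\eta$ as an operator on pure tensors. Unwinding the right-hand side, the intertwining identity $u \circ f_{M,M'} = f_{M,M'} \circ \Psi(u)$ is used to commute the $\Psi$-twisted factors past the $f$'s, and the various $\tau$'s are then brought together. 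Both sides collapse to a sum of vectors of the form $v_i^\mu v_j^\eta m'' \otimes u_i^\mu m \otimes u_j^\eta m'$, each weighted by a power of $q$, and one checks that the two exponents coincide. The second hexagon $c_{M, M' \otimes M''} = (\id_{M'} \otimes c_{M, M''}) \circ (c_{M, M'} \otimes \id_{M''})$ follows by the symmetric argument using $(\id \otimes \Delta)(\Theta) = \Psi_{12}(\Theta_{13})\, \Theta_{12}$.

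The main obstacle is the careful bookkeeping of weight shifts: each $u_i^\mu$ (resp.\ $v_i^\mu$) raises (resp.\ lowers) both the $K$- and $L$-weight of a vector by $\mu$, and these shifts interact nontrivially with the multiplicative twists $f$, which depend on the weights. The $K_\mu$ factor appearing in $\Psi_{23}(\Theta_{13})$ (resp.\ the $L_\nu^{-1}$ factor in $\Psi_{12}(\Theta_{13})$) is precisely what is needed to compensate for the mismatch between $f_{M \otimes M', M''}$ and the product $f_{M, M''}^{(13)} \circ f_{M', M''}^{(23)}$ once the $\Theta$-shifts have been accounted for, so that the exponents of $q$ on both sides of each hexagon agree identically.
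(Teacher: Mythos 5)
Your argument is correct and follows essentially the same route as the paper's proof: identify $\Theta_{M\otimes M',M''}$ (resp.\ $\Theta_{M,M'\otimes M''}$) with the action of $(\Delta\otimes\id)(\Theta)=\Psi_{23}(\Theta_{13})\Theta_{23}$ (resp.\ $(\id\otimes\Delta)(\Theta)=\Psi_{12}(\Theta_{13})\Theta_{12}$), factor $f$ and $\tau$ multiplicatively across the tensor slots, and absorb the $\Psi$-twist by the intertwining relation $u\circ f_{M,M'}=f_{M,M'}\circ\Psi(u)$. The only cosmetic difference is that the paper carries out the verification purely as a chain of operator identities, whereas you propose to finish by comparing exponents of $q$ on matrix coefficients; both are sound.
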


\begin{proof}
  We check that $c_{M,M'\otimes M''} = (\id_{M'}\otimes c_{M,M''})\circ(c_{M,M'}\otimes \id_{M''})$. It follows from the following calculations, where we denote by $\tau_{1,23}\colon M\otimes(M'\otimes M'') \rightarrow (M'\otimes M'')\otimes M$ the twist, which is equal to $\tau_{23}\circ \tau_{12}$:
  \begin{align*}
    c_{M,M'\otimes M''} 
    &= \tau_{1,23}\circ f_{M,M'\otimes M''}\circ \Theta_{M,M'\otimes M''}\\
    &= \tau_{23}\circ \tau_{12} \circ (f_{M,M''})_{13} \circ (f_{M,M'})_{12} \circ (\id \otimes \Delta)(\Theta)_{|_{M\otimes M'\otimes M''}} \\
    &= \tau_{23}\circ \tau_{12} \circ (f_{M,M''})_{13} \circ (f_{M,M'})_{12} \circ \Psi_{12}\left(\Theta_{13}\right)_{|_{M\otimes M'\otimes M''}}\circ \left(\Theta_{12}\right)_{|_{M\otimes M'\otimes M''}} \\
    &= \tau_{23}\circ \tau_{12} \circ (f_{M,M''})_{13} \circ \left(\Theta_{13}\right)_{|_{M\otimes M'\otimes M''}} \circ (f_{M,M'})_{12}  (\Theta_{M,M'}\otimes \id_{M''})\\
    &= \tau_{23}\circ (f_{M,M''})_{23} \circ (\id_{M'}\otimes \Theta_{M,M''}) \circ \tau_{12} \circ (f_{M,M'})_{12}  (\Theta_{M,M'}\otimes \id_{M''})\\
    &= (\id_{M'}\otimes c_{M,M''})\circ(c_{M,M'}\otimes \id_{M''}).
  \end{align*}
  
  The other hexagon axiom is shown similarly.
\end{proof}

\subsection{Duality and pivotal structure}
\label{sec:duality}

The algebra $\qdbl{\mathfrak{g}}$ being a Hopf algebra, for any $\qdbl{\mathfrak{g}}$-module $M$ in $\mathcal{C}_q$, the space of linear forms has naturally a structure of left (resp. right) dual denoted by $M^*$ (resp. ${}^*M$), where the action of $u \in \qdbl{\mathfrak{g}}$ on $\varphi\in M^*$ (resp. $\varphi\in{}^*M$) is given by
\[
  (u\cdot\varphi) (m) = \varphi(S(u)\cdot m) \quad \text{(resp. } (u\cdot\varphi) (m) = \varphi(S^{-1}(u)\cdot m) \text{)}.
\]

Let $(\lambda,\mu) \in \tilde{P}^+$. The simple module $L(\lambda,\mu)$ is isomorphic to $L\left(\frac{\lambda+\mu}{2},\frac{\lambda+\mu}{2}\right)\otimes L\left(\frac{\lambda-\mu}{2},-\frac{\lambda-\mu}{2}\right)$ and
\[
  L(\lambda,\mu)^* \simeq L\left(\frac{\lambda-\mu}{2},-\frac{\lambda-\mu}{2}\right)^* \otimes L\left(\frac{\lambda+\mu}{2},\frac{\lambda+\mu}{2}\right)^*. 
\]
But the left dual of a simple $\qgr{\mathfrak{g}}$-module $L(\kappa,\kappa)$ is isomorphic to $L(-w_0(\kappa),-w_0(\kappa))$ \cite[Proposition 5.16]{jantzen} and the left dual of the invertible module $L(\kappa,-\kappa)$ is clearly isomorphic to $L(-\kappa,\kappa)$. Consequently,
\[
  L(\lambda,\mu)^* \simeq L\left(\frac{\mu-\lambda}{2},-\frac{\mu-\lambda}{2}\right)\otimes
  L\left(-w_0\left(\frac{\lambda+\mu}{2}\right),-w_0\left(\frac{\lambda+\mu}{2}\right)\right) \simeq L(-w_0(\lambda,\mu)).
\]
Similarly ${}^*L(\lambda,\mu)\simeq L(-w_0(\lambda,\mu))$, which also follows from the pivotal structure given below.

As for any $u\in\qdbl{\mathfrak{g}}$ we check that $S^2(u) = K_{2\rho}uK_{2\rho}^{-1}$, we have for any $\lambda \in Q$ a pivotal structure \cite[Definition 4.7.7]{egno} given by
\[
  \fonctionNom{a_{\lambda,M}}{M}{M^{**}}{m}{\varphi\mapsto \varphi((L_\lambda K_{2\rho-\lambda})\cdot m)}.
\]

We will consider only the pivotal structure given by $\lambda = 2\rho$, \emph{i.e.} we choose $L_{2\rho}$ as pivotal element. This pivotal structure allows us to compute left and right quantum dimensions of modules \cite[Definition 4.7.1]{egno}. It follows from the Weyl character formula that
\[
  \dim^+(L(\lambda,\mu))=\frac
  {
    \sum_{w\in W}(-1)^{l(w)} q^{\langle 2\rho,(w\bullet(\lambda,\mu))_2\rangle}
  }
  {
    \sum_{w\in W}(-1)^{l(w)}q^{\langle 2\rho,w\bullet(0)\rangle}
  }
\]
and
\[
  \dim^-(L(\lambda,\mu))=\frac
  {
    \sum_{w\in W}(-1)^{l(w)} q^{-\langle 2\rho,(w\bullet(\lambda,\mu))_2\rangle}
  }
  {
    \sum_{w\in W}(-1)^{l(w)}q^{-\langle 2\rho,w\bullet(0)\rangle}
  },
\]
where we denote by $(\lambda',\mu')_i$ the $i$th component of $(\lambda',\mu')$ for $i\in\{1,2\}$.

As the actions of $L_{2\rho}$ and $K_{2\rho}$ coincide on any $\qgr{\mathfrak{g}}$-module, the quantum dimensions of $L(\kappa,\kappa)$ viewed as a $\qdbl{\mathfrak{g}}$-module coincide with the quantum dimensions of $L(\kappa,\kappa)$ viewed as a $\qgr{\mathfrak{g}}$-module, for any $\kappa\in P^+$.


\section{Specialization at a root of unity and tilting modules}
\label{sec:spe_tilt}
The goal to this section is to construct a fusion category using the representations of $\qdbl{\mathfrak{g}}$. But the category $\mathcal{C}_q$ has an infinite number of simple objects. We adapt the construction of the fusion categories attached to quantum groups at roots of unity to $\qdbl{\mathfrak{g}}$. Recall the notation $\mathcal{A} = \mathbb{Z}[q,q^{-1}]$.

\subsection{Lusztig's integral form}
\label{sec:integral}

We introduce an $\mathcal{A}$-subalgebra of $\qdbl{\mathfrak{g}}$ which we will use to specialize $\qdbl{\mathfrak{g}}$ at a root of unity. Consider the following elements of $\qdbl{\mathfrak{g}}$:
\[
  \br{K_i}{c}{t} = \prod_{r=1}^t\frac{q_i^{c-r+1}K_i-q_i^{-c+r-1}K_i^{-1}}{q_i^r-q_i^{-r}},
  \quad \br{L_i}{c}{t} = \prod_{r=1}^t\frac{q_i^{c-r+1}L_i-q_i^{-c+r-1}L_i^{-1}}{q_i^r-q_i^{-r}},
\]
\[
  \dbr{i}{c}{t} = \prod_{r=1}^t\frac{q_i^{c-r+1}K_i-q_i^{-c+r-1}L_i^{-1}}{q_i^r-q_i^{-r}},
  \quad \br{z_i}{c}{t} = \prod_{r=1}^t\frac{q_i^{c-r+1}z_i-q_i^{-c+r-1}z_i^{-1}}{q_i^r-q_i^{-r}}
\]
for $1 \leq i \leq n$, $c\in\mathbb{Z}$ and $t\in\mathbb{N}$.

We define $\qdblres{\mathfrak{g}}$ as the $\mathcal{A}$-subalgebra of $\qdbl{\mathfrak{g}}$ generated by $E_i^{(r)}$, $F_i^{(r)}$, $K_i$, $L_i$, $\br{K_i}{c}{t}$, $\br{L_i}{c}{t}$, $\quad \br{z_i}{c}{t}$ and $\dbr{i}{c}{t}$, for $1 \leq i \leq n$, $c\in\mathbb{Z}$ and $t\in\mathbb{N}$. The coproduct, the counit and the antipode of $\qdbl{\mathfrak{g}}$ restrict to $\qdblres{\mathfrak{g}}$ and endow $\qdblres{\mathfrak{g}}$ with a structure of a Hopf algebra.

The quotient of $\qdblres{\mathfrak{g}}$ by the Hopf ideal generated by $z_i-1$, $\br{z_i}{c}{t} - \qbinom{t}{c}{q_i}$, $\dbr{i}{c}{t} - \br{L_i}{c}{t}$ and $\dbr{i}{c}{t} - \br{K_i}{c}{t}$, for $1 \leq i \leq n$ is the usual Lusztig's integral form of $\qgr{\mathfrak{g}}$ (see \cite[Section 9.3]{chari-pressley}).

The elements of the form $\dbr{i}{c}{t}$ appear naturally in the following identity, proved by induction
\[
  E_i^{(p)}F_i^{(r)}=\sum_{t=0}^{\min(p,r)}F_i^{(r-t)}\dbr{i}{2t-r-p}{t}E_i^{(p-t)},
\] 
for any $1 \leq i \leq n$, $p,r\in\mathbb{N}$.

The following formulas for the coproduct will be useful.

\begin{proposition}
  \label{prop:fml_res}
  Let $1\leq i \leq n$ and $t\in\mathbb{N}$. Then
  \begin{enumerate}
  \item $\Delta\left(\br{K_i}{0}{t}\right)=
    \displaystyle\sum_{r=0}^t\br{K_i}{0}{t-r}K_i^{-r}\otimes\br{K_i}{0}{r}K_i^{t-r}$,
  \item $\Delta\left(\br{L_i}{0}{t}\right)=
    \displaystyle\sum_{r=0}^t\br{L_i}{0}{t-r}L_i^{-r}\otimes\br{L_i}{0}{r}L_i^{t-r}$,
  \item $\Delta\left(\dbr{i}{0}{t}\right)=
    \displaystyle\sum_{r=0}^t\dbr{i}{0}{t-r}L_i^{-r}\otimes\dbr{i}{0}{r}K_i^{t-r}$,
  \item $\Delta\left(\br{z_i}{0}{t}\right)=
    \displaystyle\sum_{r=0}^t\br{z_i}{0}{t-r}z_i^{-r}\otimes\br{z_i}{0}{r}z_i^{t-r}$.
  \end{enumerate}
\end{proposition}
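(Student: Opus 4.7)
The plan is to unify the four formulas into a single universal identity and prove that identity by induction on $t$.

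\textbf{Reduction to a universal identity.} Each of the four elements in question is of the form
\[
  P_t(A,B) := \prod_{r=1}^t \frac{q_i^{-r+1} A - q_i^{r-1} B}{q_i^r - q_i^{-r}}
\]
for a pair of commuting group-like elements $(A,B)$: the four pairs are $(K_i, K_i^{-1})$, $(L_i, L_i^{-1})$, $(K_i, L_i^{-1})$ and $(z_i, z_i^{-1})$, with $z_i = K_i L_i^{-1}$ group-like because $K_i$ and $L_i$ are. Since $\Delta$ is an algebra morphism and $\Delta(A) = A\otimes A$, $\Delta(B) = B\otimes B$, one has $\Delta(P_t(A,B)) = P_t(A\otimes A, B\otimes B)$. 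Hence the four assertions all follow from the single formal identity
\[
  P_t(A\otimes A, B\otimes B) = \sum_{r=0}^t P_{t-r}(A,B) B^r \otimes P_r(A,B) A^{t-r}
\]
holding in the tensor square of any commutative $\mathbb{Q}(q_i)$-algebra containing two commuting invertibles $A$ and $B$, by specialization to the four pairs.

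\textbf{Induction on $t$.} I would prove this universal identity by induction on $t$; the case $t=0$ is $1 = 1\otimes 1$. For the inductive step, multiply the induction hypothesis on the right by $(q_i^{-t} A\otimes A - q_i^t B\otimes B)/(q_i^{t+1} - q_i^{-(t+1)})$ and use the recursion
\[
  P_{t+1}(A,B) = P_t(A,B) \cdot \frac{q_i^{-t} A - q_i^t B}{q_i^{t+1} - q_i^{-(t+1)}}
\]
on the left-hand side. After distributing and using the commutativity of $A$ and $B$ to move factors through the powers $B^r$ and $A^{t-r}$, the contributions from the $r$-term and the $(r-1)$-term of the induction hypothesis combine into the single term $P_{t+1-r}(A,B) B^r \otimes P_r(A,B) A^{t+1-r}$ of the target sum, by virtue of the $q$-Pascal identity
\[
  q_i^{-r}\qint{t+1-r}{q_i} + q_i^{t+1-r}\qint{r}{q_i} = \qint{t+1}{q_i},
\]
which follows at once from $\qint{n}{q_i} = (q_i^n - q_i^{-n})/(q_i - q_i^{-1})$.

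\textbf{Main obstacle.} Because $A$ and $B$ commute, the entire calculation takes place inside a commutative ring and no delicate reordering of non-commuting generators is needed. The only genuine obstacle is bookkeeping: one has to rewrite $A \cdot P_{t-r}(A,B)$ and $B \cdot P_{r-1}(A,B)$ via the recursion so that the $q$-Pascal identity becomes directly applicable, tracking the powers of $q_i$ that appear. Once this is set up correctly, specializing $(A,B)$ to each of the four pairs yields the four coproduct formulas of the proposition simultaneously.
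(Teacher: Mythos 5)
Your proof is correct and follows essentially the same route as the paper's: induction on $t$ using the recursion $P_{t+1}=P_t\cdot\frac{q_i^{-t}A-q_i^{t}B}{q_i^{t+1}-q_i^{-t-1}}$ and the identity $q_i^{-r}\qint{t+1-r}{q_i}+q_i^{t+1-r}\qint{r}{q_i}=\qint{t+1}{q_i}$, which is exactly the coefficient computation in the paper's proof of the first formula. Your only addition is to package the four cases as one universal identity for a pair of commuting group-like elements $(A,B)$, which cleanly formalizes the paper's remark that the other three proofs are ``similar.''
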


\begin{proof}
  We only prove the first formula, the proof for the other formulas are similar. We proceed by induction on $t$. For $t=0$, there is nothing to prove. For $t=1$, we have:
\[
  \frac{K_i-K_i^{-1}}{q_i-q_i^{-1}}\otimes K_i + K_i^{-1} \otimes \frac{K_i-K_i^{-1}}{q_i-q_i^{-1}} = \frac{K_i\otimes K_i - K_i^{-1}\otimes K_i^{-1}}{q_i-q_i^{-1}}.
\]

Now, suppose that the result is true for $t\in \mathbb{N}$. We have:
\begin{align*}
  \Delta\left(\br{K_i}{0}{t+1}\right) 
  &= \Delta\left(\br{K_i}{0}{t}\right) \frac{q_i^{-t}K_i\otimes K_i - q_i^t K_i^{-1}\otimes K_i^{-1}}{q_i^{t+1}-q_i^{-t-1}}\\
  &= \sum_{r=0}^t\left(\br{K_i}{0}{t-r}K_i^{-r}\otimes\br{K_i}{0}{r}K_i^{t-r} \frac{q_i^{-r}(q_i^{-t+r}K_i - q_i^{t-r}K_i^{-1})\otimes K_i}{q_i^{t+1}-q_i^{-t-1}}\right)\\
  & + \sum_{r=0}^t\left(\br{K_i}{0}{t-r}K_i^{-r}\otimes\br{K_i}{0}{r}K_i^{t-r} \frac{q_i^{t-r}K_i^{-1}\otimes (q_i^{-r}K_i - q_i^r K_i^{-1})}{q_i^{t+1}-q_i^{-t-1}}\right)\\
  &= \sum_{r=0}^t\left(\br{K_i}{0}{t-r+1}K_i^{-r}\otimes\br{K_i}{0}{r}K_i^{t-r+1}\frac{q_i^{-r}(q_i^{t-r+1}-q_i^{-t+r-1})}{q_i^{t+1}-q_i^{-t-1}}\right)\\
  &+ \sum_{r=0}^t\left(\br{K_i}{0}{t-r}K_i^{-r-1}\otimes\br{K_i}{0}{r+1}K_i^{t-r}\frac{q_i^{t-r}(q_i^{r+1}-q_i^{-r-1})}{q_i^{t+1}-q_i^{-t-1}}\right).
\end{align*}

We just change the indes in the second sum and obtain:
\begin{align*}
  \Delta\left(\br{K_i}{0}{t+1}\right) 
  &= \sum_{r=0}^t\left(\br{K_i}{0}{t-r+1}K_i^{-r}\otimes\br{K_i}{0}{r}K_i^{t-r+1}\frac{q_i^{-r}(q_i^{t-r+1}-q_i^{-t+r-1})}{q_i^{t+1}-q_i^{-t-1}}\right)\\
  &+ \sum_{r=1}^{t+1}\left(\br{K_i}{0}{t-r+1}K_i^{-r}\otimes\br{K_i}{0}{r}K_i^{t-r+1}\frac{q_i^{t+1-r}(q_i^{r}-q_i^{-r})}{q_i^{t+1}-q_i^{-t-1}}\right).
\end{align*}
Noticing that
\[
  \frac{q_i^{-r}(q_i^{t-r+1}-q_i^{-t+r-1})}{q_i^{t+1}-q_i^{-t-1}} + \frac{q_i^{t+1-r}(q_i^{r}-q_i^{-r})}{q_i^{t+1}-q_i^{-t-1}} = \frac{q_i^{t-2r+1}-q_i^{-t-1}+q_i^{t+1}-q_i^{t-2r+1}}{q_i^{t+1}-q_i^{-t-1}}=1,
\]
leads to the expected conclusion.
\end{proof}

We denote by $\qdblres{\mathfrak{g}}^{>0}$ (resp. $\qdblres{\mathfrak{g}}^{0}$, resp. $\qdblres{\mathfrak{g}}^{<0}$) the intersection of $\qdblres{\mathfrak{g}}$ with $\qdbl{\mathfrak{g}}^{>0}$ (resp. $\qdbl{\mathfrak{g}}^{0}$, resp. $\qdbl{\mathfrak{g}}^{<0}$).

It is not hard to show that for any $1 \leq i \leq n$, the automorphism $T_i$ restricts to $\qdblres{\mathfrak{g}}$. As in Section \ref{sec:basis}, we have:

\begin{proposition}[{\cite[Theorem 6.7]{lusztig_qgrp_roots}}]
  The elements
\[
  \prod_{\alpha\in\Phi^+}E_\alpha^{(n_\alpha)}, \ n_\alpha\in\mathbb{N}
\]
form an $\mathcal{A}$-basis of $\qdblres{\mathfrak{g}}^{>0}$.

The elements
\[
  \prod_{\alpha\in\Phi^+}F_\alpha^{(n_\alpha)}, \ n_\alpha\in\mathbb{N}
\]
form an $\mathcal{A}$-basis of $\qdblres{\mathfrak{g}}^{<0}$.
\end{proposition}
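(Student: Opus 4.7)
The plan is to reduce this statement to Lusztig's original Theorem 6.7 in \cite{lusztig_qgrp_roots}, which gives the analogous assertion for the usual quantum enveloping algebra $\qgr{\mathfrak{g}}$.

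First, I would observe that the subalgebra $\qdbl{\mathfrak{g}}^{>0}$, being generated by $(E_i)_{1\le i\le n}$, is canonically isomorphic as a $\mathbb{Q}(q)$-algebra to the positive part $\qgr{\mathfrak{g}}^{>0}$ of the Drinfeld--Jimbo quantum group. This is immediate upon inspection of the defining relations: the only relations satisfied by the $E_i$'s inside $\qdbl{\mathfrak{g}}$ are the quantum Serre relations, which also present $\qgr{\mathfrak{g}}^{>0}$. The symmetric argument gives an isomorphism $\qdbl{\mathfrak{g}}^{<0}\simeq \qgr{\mathfrak{g}}^{<0}$.

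Second, I would verify that under this identification the integral form $\qdblres{\mathfrak{g}}^{>0}=\qdblres{\mathfrak{g}}\cap\qdbl{\mathfrak{g}}^{>0}$ matches Lusztig's integral form $\qgrres{\mathfrak{g}}^{>0}$, that is, the $\mathcal{A}$-subalgebra generated by the divided powers $E_i^{(r)}$. The inclusion of the latter inside the former is clear from the definition of $\qdblres{\mathfrak{g}}$. The reverse inclusion uses the triangular decomposition $\qdbl{\mathfrak{g}}^{<0}\otimes\qdbl{\mathfrak{g}}^{0}\otimes\qdbl{\mathfrak{g}}^{>0}\simeq\qdbl{\mathfrak{g}}$: among the generators of $\qdblres{\mathfrak{g}}$, only the $E_i^{(r)}$ contribute to the positive part of an element, so projecting an $\mathcal{A}$-linear combination of PBW-style monomials onto its positive component produces an element of the $\mathcal{A}$-subalgebra generated by the $E_i^{(r)}$.

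Third, I would use that the braid operators $T_i$ restrict to $\qdblres{\mathfrak{g}}$ (as remarked just before the statement) and that the root vectors $E_\alpha=T_{i_1}\cdots T_{i_k}(E_{i_{k+1}})$ are defined by exactly the same formula as in $\qgr{\mathfrak{g}}$. By Lusztig's argument, each $E_\alpha$ is a homogeneous element of weight $\alpha\in\Phi^+$, hence lies in $\qdbl{\mathfrak{g}}^{>0}\cap\qdblres{\mathfrak{g}}=\qdblres{\mathfrak{g}}^{>0}$; the divided powers $E_\alpha^{(n_\alpha)}$ then correspond under the isomorphism of step one to Lusztig's divided root vectors in $\qgrres{\mathfrak{g}}^{>0}$. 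Applying Lusztig's Theorem 6.7 therefore gives the claimed $\mathcal{A}$-basis for $\qdblres{\mathfrak{g}}^{>0}$, and the dual argument, working with $\qdbl{\mathfrak{g}}^{<0}$, yields the basis for $\qdblres{\mathfrak{g}}^{<0}$.

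The main obstacle is the identification $\qdblres{\mathfrak{g}}^{>0}\simeq\qgrres{\mathfrak{g}}^{>0}$ of the integral forms in step two: one must check that the many extra divided Cartan-type generators $\br{K_i}{c}{t}$, $\br{L_i}{c}{t}$, $\br{z_i}{c}{t}$, $\dbr{i}{c}{t}$ contribute nothing to the positive component, which is what the triangular decomposition at the $\mathcal{A}$-level ensures. Everything else is a transparent transport of structure from Lusztig's theorem.
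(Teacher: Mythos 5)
Your proposal is correct and matches the paper's (implicit) argument: the paper offers no proof beyond the citation to Lusztig's Theorem 6.7, and the intended justification is exactly your reduction — identify $\qdbl{\mathfrak{g}}^{>0}$ with the positive part of $\qgr{\mathfrak{g}}$ via the triangular decomposition, check that the integral form restricted to the positive part is the $\mathcal{A}$-algebra on the $E_i^{(r)}$, and transport Lusztig's PBW basis through the (restricted) braid operators $T_i$. Your step two, isolating the integral triangular decomposition as the one point needing verification, is the right place to put the care.
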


Note that the quasi-$R$-matrix at the end of Section \ref{sec:quasi-r-matrix} lies in fact in $\qdblres{\mathfrak{g}}$ and the formula for its inverse shows that it also lies in $\qdblres{\mathfrak{g}}$. We still denote by $\Theta$ this element. Moreover, the algebra endomorphism $\Psi$ of $\qdbl{\mathfrak{g}}\otimes\qdbl{\mathfrak{g}}$ restricts to an algebra endomorphism of $\qdblres{\mathfrak{g}}\otimes\qdblres{\mathfrak{g}}$, which we still denote by $\Psi$. Therefore, we still have the relations
\[
  \Theta\Delta(u) = \left(\Psi\circ\Delta^{\text{op}}\right)(u)\Theta,\quad (\id\otimes\Delta)(\Theta) = \Psi_{12}(\Theta_{13})\Theta_{12}, \quad \text{and} \quad (\Delta\otimes\id)(\Theta) = \Psi_{23}(\Theta_{13})\Theta_{23}.
\] 

\subsection{Specialization at a root of unity}
\label{sec:root_of_1}

Let $\xi\in\mathbb{C}$ be a root of unity of order $l$. Set $l'=l$ if $l$ is odd and $l'=\frac{l}{2}$ if $l$ is even so that $l'$ is the order of $\xi^2$.
\[
  \fonction{\mathcal{A}}{\mathbb{C},}{q}{\xi.}
\] 

The specialization of $\qdbl{\mathfrak{g}}$ at $\xi$ is by definition the algebra $\qdblroot{\mathfrak{g}} = \qdblres{\mathfrak{g}}\otimes_{\mathcal{A}}\mathbb{C}$. Introduce the integers $l_i$ as the order of $\xi_i = \xi^{\frac{\langle\alpha_i,\alpha_i\rangle}{2}}$ and set $l_i'=l_i$ if $l_i$ is odd and $l_i'=\frac{l_i}{2}$ if $l_i$ is even so that $l_i'$ is the order of $\xi_i^2$.

The algebra $\qdblroot{\mathfrak{g}}$ is generated by elements of the form
\[
  E_i^{(r)},F_i^{(r)},K_i^{\pm 1},L_i^{\pm 1},\br{K_i}{c}{t},\br{L_i}{c}{t},\dbr{i}{c}{t}\text{ and }\br{z_i}{c}{t},
\]
for $1\leq i \leq n$, $r,t\in\mathbb{N}$ and $c\in\mathbb{Z}$ given by the images in $\qdblroot{\mathfrak{g}}$ of the corresponding elements of $\qdblres{\mathfrak{g}}$. There exist some relations in $\qdblroot{\mathfrak{g}}$ related to the order of the $\xi_i$'s as
\[
  E_i^{l_i'} = 0, F_i^{l_i'}=0, K_i^{2l_i'}=1, L_i^{2l_i'}=1, K_i^{l_i'}=L_i^{l_i'}.
\]

We denote by $\Theta_\xi$ the specialization of $\Theta$. It is still an invertible element of some completion of $\qdblroot{\mathfrak{g}}\otimes\qdblroot{\mathfrak{g}}$, and we have an algebra endomorphism $\Psi_\xi$ which is the specialization of $\Psi$. They satisfy the usual relations 
\begin{multline*}
  \Theta_\xi\Delta(u) =
  \left(\Psi_\xi\circ\Delta^{\text{op}}\right)(u)\Theta_\xi ,\quad
  (\id\otimes\Delta)(\Theta_\xi) =
  (\Psi_\xi)_{12}((\Theta_\xi)_{13})(\Theta_\xi)_{12}, \\
  \text{and} \quad (\Delta\otimes\id)(\Theta_\xi) =
  (\Psi_\xi)_{23}((\Theta_\xi)_{13})(\Theta_\xi)_{23}.
\end{multline*}

\subsection{Representations at a root of unity}
\label{sec:rep_q_root}

The representation theory of $\qgrroot{\mathfrak{g}}$, as well the one of $\qdblroot{\mathfrak{g}}$, is more involved. As explained in \cite[11.2.A]{chari-pressley}, defining the weight space of weight $(\lambda,\mu)$ of a $\qdblres{\mathfrak{g}}$-module $M$ as
\[
  \left\{m\in M \ \middle\vert\  K_im=\xi^{\langle\lambda,\alpha_i\rangle}m, L_im=\xi^{\langle\mu,\alpha_i\rangle}m,\ \forall 1\leq i \leq n\right\}
\]
is rather unsatisfactory: one can not distinguish the weight $(\lambda,\mu)$ form the weight $(\lambda+l_i\varpi_i,\mu)$ for example.

The weight space of weight $(\lambda,\mu)$ of $M$ is then defined as
\begin{multline*}
  M_{\lambda,\mu}=\left\{m\in M\ \middle\vert\ 
    K_im=\xi_i^{\langle\lambda,\alpha_i^\vee\rangle}m,
    L_im=\xi_i^{\langle\mu,\alpha_i^\vee\rangle}m,\br{K_i}{0}{l_i'}m=\qbinom{l_i'}{\langle\lambda,\alpha_i^\vee\rangle}{\xi_i},\right. \\
    \left. \br{L_i}{0}{l_i'}m=\qbinom{l_i'}{\langle\mu,\alpha_i^\vee\rangle}{\xi_i},\
    \forall 1\leq i \leq n\right\}.
\end{multline*}

\begin{proposition}
  \label{prop:weight_spaces}
  Let $(\lambda,\mu)$ and $(\lambda',\mu')$ be two weights of a $\qdblroot{\mathfrak{g}}$-module $M$. Then $M_{\lambda,\mu}=M_{\lambda',\mu'}$ if and only if $(\lambda,\mu)=(\lambda',\mu')$.
\end{proposition}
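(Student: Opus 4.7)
The forward direction is trivial: if $(\lambda,\mu) = (\lambda',\mu')$, the two weight spaces coincide by definition. For the converse, I would pick a nonzero $m \in M_{\lambda,\mu} = M_{\lambda',\mu'}$ and observe that since both weight-space conditions apply to $m$, for every $1 \leq i \leq n$ one has
\begin{equation*}
\xi_i^{\langle\lambda,\alpha_i^\vee\rangle} = \xi_i^{\langle\lambda',\alpha_i^\vee\rangle}, \qquad \qbinom{l_i'}{\langle\lambda,\alpha_i^\vee\rangle}{\xi_i} = \qbinom{l_i'}{\langle\lambda',\alpha_i^\vee\rangle}{\xi_i},
\end{equation*}
and analogous identities for $\mu$ and $\mu'$. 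Since the fundamental weights $(\varpi_i)$ form the basis of $P$ dual to $(\alpha_i^\vee)$, the statement reduces to showing that, for each fixed $i$, the map $\Phi_i\colon\mathbb{Z}\to\mathbb{C}^*\times\mathbb{C}$, $n\mapsto(\xi_i^n,\qbinom{l_i'}{n}{\xi_i})$, is injective.

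The heart of the argument is thus a Lucas-type identity for Gaussian binomials at roots of unity. Writing $n = al_i'+b$ with $a\in\mathbb{Z}$ and $0\leq b<l_i'$, I would establish
\begin{equation*}
\xi_i^n = \varepsilon(a)\,\xi_i^b, \qquad \qbinom{l_i'}{n}{\xi_i} = \sigma(b)\cdot a,
\end{equation*}
where $\varepsilon(a)=1$ if $l_i$ is odd and $\varepsilon(a)=(-1)^a$ if $l_i$ is even, while $\sigma(b)\in\{\pm 1\}$ is identically $1$ in the odd case and a nontrivial $b$-dependent sign in the even case (which reflects $\xi_i^{l_i'}=-1$). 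The first identity is immediate from the order of $\xi_i$; the second I would prove by induction on $|a|$, using the quantum Pascal rule together with the vanishing $\qint{l_i'}{\xi_i}=0$ to carry out the inductive step.

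Once this identity is in hand, the value $\xi_i^n$ pins down $b$, and in the even case also the parity of $a$; feeding $b$ into the formula for the Gaussian binomial then recovers $a$ from the known value $\sigma(b)\cdot a$. Hence $\Phi_i$ is injective, and we conclude $\langle\lambda,\alpha_i^\vee\rangle = \langle\lambda',\alpha_i^\vee\rangle$ and $\langle\mu,\alpha_i^\vee\rangle=\langle\mu',\alpha_i^\vee\rangle$ for every $i$, whence $\lambda=\lambda'$ and $\mu=\mu'$.

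The main technical point is the root-of-unity evaluation of $\qbinom{l_i'}{n}{\xi_i}$ in the even case, where careful sign bookkeeping is required; everything else is a direct unpacking of the definition of $M_{\lambda,\mu}$ combined with the duality between $(\alpha_i^\vee)$ and $(\varpi_i)$.
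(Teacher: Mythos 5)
Your proof is correct and follows essentially the same route as the paper's: both reduce the statement to recovering an integer $n$ from the pair $\left(\xi_i^n,\qbinom{l_i'}{n}{\xi_i}\right)$ by means of a Lucas-type evaluation of the Gaussian binomial at a root of unity; the paper obtains that evaluation from the product formula $\prod_{k}(1+\zeta^{2k}X)=\sum_k\zeta^{k(m-1)}\qbinom{k}{m}{\zeta}X^k$ rather than by induction on the quantum Pascal rule, but the two derivations are interchangeable. One small correction to your sign bookkeeping: in the sub-case where $l_i$ is even and $l_i'$ is odd, the paper's Lemma gives $\qbinom{l_i'}{n}{\xi_i}=(-1)^{b+a+1}a$, so the sign depends on the parity of $a$ and not only on $b$; this does not damage your recovery step, since (as you observe) $\xi_i^n$ already determines that parity in the even case.
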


\begin{proof}
  We show that for $\zeta$ a root of unity of order $d$ and with $d'=d$ if d is odd, $d'=\frac{d}{2}$ if $d$ is even, and $r\in\mathbb{Z}$, we can recover $r$ knowing only $\qbinom{d'}{r}{\zeta}$ and $\zeta^r$.

\begin{lemma}
  \label{lem:binomial_identity}
  Let $a,b\in\mathbb{N}$ with $a\geq b$. Let $a=a_1d'+a_0$ and $b=b_1d'+b_0$ with $0\leq a_0,b_0<d'$. Then
\[
  \qbinom{b}{a}{\zeta} = (-1)^{(d'+1)(a_1+1)b_1}\left(\zeta^{d'}\right)^{(a_1+1)b_1+a_1b_0+a_0b_1}\binom{a_1}{b_1}\qbinom{b_0}{a_0}{\zeta}.
\]
\end{lemma}

\begin{proof}
  We start with the following equality, valid for any $m\in\mathbb{N}$ and $X$ an indeterminate
\[
  \prod_{k=0}^{m-1}\left(1+\zeta^{2k}X\right)=\sum_{k=0}^m\zeta^{k(m-1)}\qbinom{k}{m}{\zeta}X^k.
\]

As $\{\zeta^{-2k}\mid 0\leq k < d'\}$ is the set of $d'$th-roots of $1$ and
\[
  \prod_{k=0}^{d'-1}\left(1+\zeta^{2k}X\right)=\zeta^{d'(d'-1)}\prod_{k=0}^{d'-1}\left(X+\zeta^{-2k}\right)
\]
we have
\[
  \prod_{k=0}^{d'-1}\left(1+\zeta^{2k}X\right)=\zeta^{d'(d'-1)}(X^{d'}-(-1)^{d'}) = \zeta^{d'(d'+1)}(X^{d'}+(-1)^{d'+1}).
\]

On the one hand we have
\[
  \prod_{k=0}^{a-1}\left(1+\zeta^{2k}X\right)=\sum_{k=0}^a\zeta^{k(r-1)}\qbinom{k}{a}{\zeta}X^k,
\]
and on the other hand
\begin{align*}
  \prod_{k=0}^{a-1}\left(1+\zeta^{2k}X\right)
  &= \left(\zeta^{d'(d'+1)}\left(X^{d'}+(-1)^{d'+1}\right)\right)^{a_1}\left(\prod_{k=0}^{a_0-1}\left(1+\zeta^{2k}X\right)\right)\\
  &= \zeta^{d'(d'+1)a_1}\left(\sum_{k=0}^{a_1}\binom{a_1}{k}(-1)^{(d'+1)(a_1-k)}X^{kd'}\right)\left(\sum_{k=0}^{a_0}\zeta^{k(a_0-1)}\qbinom{k}{a_0}{\zeta}X^k\right).
\end{align*}
Comparing the coefficient of $X^b$ gives us
\[
  \zeta^{b(a-1)}\qbinom{b}{a}{\zeta} = \zeta^{d'(d'+1)a_1+b_0(a_0-1)}(-1)^{(d'+1)(a_1-b_1)}\binom{a_1}{b_1}\qbinom{b_0}{a_0}{\zeta},
\]
which, using the fact that $\zeta^{d'(d'+1)}=(-1)^{d'+1}$, leads to the desired formula.
\end{proof}

Therefore, for $r\in\mathbb{N}$, we have
\[
  \qbinom{r}{d'}{\zeta}=
  \begin{cases}
    r_1 & \text{ if } d'=d,\\
    (-1)^{r_0+r_1+1}r_1 & \text{ if } d'=\frac{d}{2} \text{ and } d' \text{ is odd},\\
    (-1)^{r_0}r_1 & \text{ if } d'=\frac{d}{2} \text{ and } d' \text{ is even}.
  \end{cases}
\]
Using $\qbinom{d'}{-r}{\zeta} = (-1)^{d'}\qbinom{d'}{r+d'-1}{\zeta}$, we check that this is still valid for $r \in \mathbb{Z}$.

We write $r=r_1d'+r_0$ with $0\leq r_0<d'$. Let $0\leq r' < d$ be the unique integer such that $\zeta^r = \zeta^{r'}$. 

First, we suppose $d'=d$, hence $d'$ is odd. Lemma \ref{lem:binomial_identity} gives $\qbinom{d'}{r}{\zeta} = r_1$ and we have $r = \qbinom{d'}{r}{\zeta}d'+r'$.

Now, we suppose that $d$ is even. Let $r'' = r'$ if $0\leq r'<d'$ and $r''=r'-d'$ otherwise. 

We suppose that $d'=\frac{d}{2}$ is odd. Lemma \ref{lem:binomial_identity} gives $\qbinom{d'}{r}{\zeta} = (-1)^{r_0+r_1+1}r_1$. If $r''=r'$ then $r=(-1)^{r''+1}\qbinom{d'}{r}{\zeta}+r''$, and if $r''\neq r'$ then $r=(-1)^{r''}\qbinom{d'}{r}{\zeta}+r''$.

Finally, if we suppose that $d'=\frac{d}{2}$ is even, Lemma \ref{lem:binomial_identity} gives $\qbinom{d'}{r}{\zeta} = (-1)^{r_0}r_1$ and $r=(-1)^{r''}\qbinom{d'}{r}{\zeta}+r''$.

Therefore, if for all $1\leq i \leq n$, $\xi_i^{\langle\lambda,\alpha_i^{\vee}\rangle}= \xi_i^{\langle\lambda',\alpha_i^{\vee}\rangle}$ and $\qbinom{l_i'}{\langle\lambda,\alpha_i^\vee\rangle}{\xi_i}=\qbinom{l_i'}{\langle\lambda',\alpha_i^\vee\rangle}{\xi_i}$, we have $\langle\lambda,\alpha_i^{\vee}\rangle=\langle\lambda',\alpha_i^\vee\rangle$ for all $1\leq i \leq n$ and hence $\lambda = \lambda'$. Similarly $\mu=\mu'$.
\end{proof}

The formula for the coproduct given in Proposition \ref{prop:fml_res} shows that the tensor product of two weight vectors of weight $(\lambda,\mu)$ and $(\lambda',\mu')$ is again a weight vector, of weight $(\lambda+\lambda',\mu+\mu')$.

We consider the category $\mathcal{C}_\xi$ of finite dimensional $\qdblroot{\mathfrak{g}}$-modules $M$ such that
\[
  M = \bigoplus_{(\lambda,\mu)\in P\times P}M_{\lambda,\mu}.
\]
As in the generic case we have $E_i^{(r)}\cdot M_{\lambda,\mu}\subseteq M_{\lambda+r\alpha_i,\mu+r\alpha_i}$ and $F_i^{(r)}\cdot M_{\lambda,\mu}\subseteq M_{\lambda-r\alpha_i,\mu-r\alpha_i}$.

Contrary to the generic case, it may happen that $\lambda+\mu\not\in 2P$, for example if one of the $l_i$'s is odd.

One way to construct $\qdblroot{\mathfrak{g}}$-modules is specialization. Let $(\lambda,\mu)\in \tilde{P}^+$. We have defined a simple highest weight $\qdbl{\mathfrak{g}}$-module $L(\lambda,\mu)$ and we consider a sub-$\qdblres{\mathfrak{g}}$-module $L^{\text{res}}(\lambda,\mu)$ generated by a chosen highest weight vector. Define the \emph{Weyl module} by
\[
  W_\xi(\lambda,\mu) = L^{\text{res}}(\lambda,\mu)\otimes_{\mathcal{A}}\mathbb{C}.
\]
This is a highest weight $\qdblroot{\mathfrak{g}}$-module with highest weight $(\lambda,\mu)$, but it is not always a simple module. As specialization does not change the weight spaces, it is clear that the character of $W_\xi(\lambda,\mu)$ is still given by the Weyl character formula given in section \ref{sec:character}. The Weyl module is a quotient of the Verma module and it has simple head which we denote $L_{\xi}(\lambda,\mu)$.

Similarly to the case of a generic parameter $q$, the choice of an $L$-th root $\xi^{1/L}$ of $\xi$ turns the category of representations into a braided category, as in Section \ref{sec:braiding}

\subsection{Tilting modules}
\label{sec:tilting}

As for $\qgrroot{\mathfrak{g}}$, we consider the tilting modules, which have been studied for quantum groups at roots of unity by Andersen in \cite{andersen_tilting}.

\begin{definition}
  A $\qdblroot{\mathfrak{g}}$-module $M$ is \emph{tilting} if
  both $M$ and $M^*$ have a filtration with successive quotients being
  Weyl modules.
\end{definition}

We refer to \cite{sawin} for more details in the case of $\qgrroot{\mathfrak{g}}$. We remark that weights $(\lambda,\mu)$ of Weyl modules always satisfy $\lambda+\mu \in 2P$; hence only these weights can appear in a tilting module.

\begin{proposition}
  For any $(\lambda,\mu)\in \tilde{P}^+$, there exists an indecomposable tilting module $T(\lambda,\mu)$ such that $T(\lambda,\mu)_{\lambda',\mu'} = 0$ unless $(\lambda',\mu') \leq (\lambda,\mu)$ and $T(\lambda,\mu)_{\lambda,\mu}$ is of dimension one.

  Moreover, any indecomposable tilting module is isomorphic to some $T(\lambda,\mu)$.
\end{proposition}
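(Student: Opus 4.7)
The plan is to reduce the existence and classification to Andersen's theorem for $\qgrroot{\mathfrak{g}}$. Given $(\lambda,\mu)\in\tilde{P}^+$, set $\kappa=\frac{\lambda+\mu}{2}\in P^+$ and $\nu=\frac{\lambda-\mu}{2}\in P$ (both well-defined since $\lambda+\mu\in 2P$). The one-dimensional modules $L_\xi(\nu,-\nu)$ remain invertible in $\mathcal{C}_\xi$ and are Weyl modules (hence tilting), so $-\otimes L_\xi(\nu,-\nu)$ is an auto-equivalence of $\mathcal{C}_\xi$ preserving the class of tilting modules. I would then define
\[
  T(\lambda,\mu) := T^{\mathrm{cl}}(\kappa)\otimes L_\xi(\nu,-\nu),
\]
where $T^{\mathrm{cl}}(\kappa)$ is Andersen's indecomposable tilting $\qgrroot{\mathfrak{g}}$-module associated to $\kappa$, viewed as a $\qdblroot{\mathfrak{g}}$-module via the Hopf surjection $\qdblroot{\mathfrak{g}}\twoheadrightarrow\qgrroot{\mathfrak{g}}$, i.e. by letting each central element $z_i$ act as $1$.

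To verify the tilting property, the key technical point is the compatibility
\[
  W_\xi(\kappa',\kappa')\otimes L_\xi(\nu,-\nu)\ \simeq\ W_\xi(\kappa'+\nu,\kappa'-\nu),
\]
which I would establish at the integral level: the cyclic $\qdblres{\mathfrak{g}}$-submodule of $L^{\mathrm{res}}(\kappa',\kappa')\otimes L(\nu,-\nu)$ generated by the tensor of highest weight vectors coincides with $L^{\mathrm{res}}(\kappa'+\nu,\kappa'-\nu)$, and specialization at $\xi$ yields the stated isomorphism of Weyl modules. Applied term by term to a Weyl filtration of $T^{\mathrm{cl}}(\kappa)$, this produces a Weyl filtration of $T(\lambda,\mu)$; dualizing gives the same for $T(\lambda,\mu)^{\ast}\simeq T^{\mathrm{cl}}(\kappa)^{\ast}\otimes L_\xi(-\nu,\nu)$. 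Indecomposability follows from the auto-equivalence property of the twist, and the highest weight is $(\kappa,\kappa)+(\nu,-\nu)=(\lambda,\mu)$ with one-dimensional weight space, inherited from the analogous property of $T^{\mathrm{cl}}(\kappa)$.

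For the classification, let $T$ be any indecomposable tilting module in $\mathcal{C}_\xi$. Since the Weyl filtration only uses weights $(\lambda',\mu')$ with $\lambda'+\mu'\in 2P$, and since each $z_i$ acts by a scalar on an indecomposable by Krull--Schmidt together with the central character decomposition, there is a fixed $\nu\in P$ with $\lambda'-\mu'=2\nu$ for every weight of $T$. Tensoring with $L_\xi(-\nu,\nu)$ yields an indecomposable tilting $\qdblroot{\mathfrak{g}}$-module on which every $z_i$ acts trivially; such modules are precisely $\qgrroot{\mathfrak{g}}$-modules via the quotient map. By Andersen's theorem this twist is isomorphic to some $T^{\mathrm{cl}}(\kappa)$, whence $T\simeq T(\kappa+\nu,\kappa-\nu)$.

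The main obstacle will be the precise behaviour of the integral form under the one-dimensional twist: one must ensure that $L^{\mathrm{res}}(\kappa',\kappa')\otimes L^{\mathrm{res}}(\nu,-\nu)$, which is cyclic over $\qdblres{\mathfrak{g}}$ generated by a highest weight vector, does not acquire spurious integral summands upon specialization, so that the specialization is genuinely $W_\xi(\kappa'+\nu,\kappa'-\nu)$ rather than some strictly larger module with an unwanted Weyl filtration. A secondary point is the identification of the full subcategory of $\mathcal{C}_\xi$ on which all $z_i$ act as $1$ with the category of $\qgrroot{\mathfrak{g}}$-modules, which uses the explicit description of the Hopf ideal from Section \ref{sec:integral} relating the divided power generators of $\qdblres{\mathfrak{g}}$ and $\qgrres{\mathfrak{g}}$.
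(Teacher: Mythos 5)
Your proposal is correct and follows essentially the same route as the paper: existence by tensoring Andersen's indecomposable tilting module for $\qgrroot{\mathfrak{g}}$ of highest weight $\frac{\lambda+\mu}{2}$ with the invertible object $L_\xi\left(\frac{\lambda-\mu}{2},-\frac{\lambda-\mu}{2}\right)$, and classification by using the central elements to see that $\lambda'-\mu'$ is constant on the weights of an indecomposable tilting module and then untwisting. The only small point worth noting is that at a root of unity the paper invokes both $z_i$ and $\br{z_i}{0}{l_i'}$ to pin down $\lambda'-\mu'$ exactly (the eigenvalue of $z_i$ alone does not suffice), which your appeal to ``the central character decomposition'' should be read as including.
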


\begin{proof}
  The existence of such tilting modules follows easily from the existence for $\qgrroot{\mathfrak{g}}$. Indeed, there exists an indecomposable tilting module for $\qgrroot{\mathfrak{g}}$ with a maximal vector of weight $\frac{\lambda+\mu}{2}$. Tensoring it with $L_\xi\left(\frac{\lambda-\mu}{2},-\frac{\lambda-\mu}{2}\right)$ gives us an indecomposable tilting module with a maximal vector of weight $(\lambda,\mu)$.

  Let $T$ be an indecomposable tilting $\qgrroot{\mathfrak{g}}$-module. Using the action of the central elements $z_i$, $\br{z_i}{0}{l_i'}$ for $1\leq i \leq n$, one can show that $\lambda-\mu$ does not depend on the weight $(\lambda,\mu)$ of $T$. Therefore, tensoring $T$ by $L_\xi\left(\frac{\mu-\lambda}{2},-\frac{\mu-\lambda}{2}\right)$ gives an indecomposable tilting module for $\qgrroot{\mathfrak{g}}$, which is isomorphic to $T\left(\kappa,\kappa\right)$ for some $\kappa\in P^+$.
\end{proof}

As a direct summand of a tilting module is again a tilting module, every tilting module is a direct sum of indecomposable tilting modules. Using the fact that the tensor product of two tilting modules for $\qgrroot{\mathfrak{g}}$ is again a tilting module, we easily show that the tensor product of two tilting modules for $\qdblroot{\mathfrak{g}}$ is again a tilting module.

The full subcategory of $\mathcal{C}_\xi$ with objects the tilting modules is far from being a fusion category: it is not abelian, nor semisimple. We want to semisimplify this category; hence we must understand which indecomposable tilting modules are of non-zero quantum dimension. As any indecomposable tilting module is isomorphic to some $T(\lambda,\mu)\simeq T\left(\frac{\lambda+\mu}{2},\frac{\lambda+\mu}{2}\right)\otimes L_\xi\left(\frac{\lambda-\mu}{2},-\frac{\lambda-\mu}{2}\right)$, we can use the results for $\qgrroot{\mathfrak{g}}$. The next theorem follows immediately from \cite[Theorem 2]{sawin}. Let $D=\max_{\alpha,\beta\in\Phi}\frac{\langle\alpha,\alpha\rangle}{\langle\beta,\beta\rangle}$. We have $D=1$ in type $A$, $D$ or $E$, $D=2$ in type $B$,$C$ or $F_4$ and $D=3$ in type $G_2$. Let $\theta_0$ be the highest root of $\Phi$ if $D\mid l'$ and be the highest short root of $\Phi$ otherwise. Let $C$ be the following set of dominant weights
\[
  C=\{ \lambda\in P^+ \mid \langle\lambda+\rho,\theta_0\rangle <l'\}.
\]
Finally, denote by $h$ the Coxeter number of $\mathfrak{g}$ and by $h^\vee$ the duax Coxeter number of $\mathfrak{g}$.

\begin{theoreme} 
  \label{thm:tilt_non_negl}
  We suppose that $l'\geq Dh^\vee$ if $D\mid l'$ and $l' > h$ otherwise. Then $T(\lambda,\mu)$ is of non-zero positive and negative quantum dimension if and only if $\frac{\lambda+\mu}{2} \in C$. Moreover, $T(\lambda,\mu)\simeq W_\xi(\lambda,\mu)\simeq L_\xi(\lambda,\mu)$.
\end{theoreme}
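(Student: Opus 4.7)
The plan is to reduce the statement to the analogous result for $\qgrroot{\mathfrak{g}}$ (Sawin's Theorem 2 in \cite{sawin}), exploiting the decomposition established in the proof of the previous proposition, namely
\[
  T(\lambda,\mu) \simeq T\!\left(\tfrac{\lambda+\mu}{2},\tfrac{\lambda+\mu}{2}\right)\otimes L_\xi\!\left(\tfrac{\lambda-\mu}{2},-\tfrac{\lambda-\mu}{2}\right).
\]
The first factor is, after the pivotal identification of $L_{2\rho}$ with $K_{2\rho}$ on $\qgr{\mathfrak{g}}$-modules, an indecomposable tilting $\qgrroot{\mathfrak{g}}$-module of highest weight $\frac{\lambda+\mu}{2}$, and the second is invertible of rank one.

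First I would recall that quantum dimensions are multiplicative on tensor products, so that
\[
  \dim^{\pm}(T(\lambda,\mu)) = \dim^{\pm}\!\left(T\!\left(\tfrac{\lambda+\mu}{2},\tfrac{\lambda+\mu}{2}\right)\right)\cdot \dim^{\pm}\!\left(L_\xi\!\left(\tfrac{\lambda-\mu}{2},-\tfrac{\lambda-\mu}{2}\right)\right).
\]
A direct computation on the one dimensional module $L_\xi(\kappa,-\kappa)$, where $E_i$ and $F_i$ act by zero and $L_{2\rho}$ acts by the nonzero scalar $\xi^{-\langle 2\rho,\kappa\rangle}$, shows that its left and right quantum dimensions are nonzero scalars. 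Hence $\dim^{\pm}(T(\lambda,\mu)) \neq 0$ if and only if $\dim^{\pm}(T(\tfrac{\lambda+\mu}{2},\tfrac{\lambda+\mu}{2})) \neq 0$. By the remark preceding Section \ref{sec:duality}, the two quantum dimensions of a $\qgrroot{\mathfrak{g}}$-module coincide up to a sign and a nonzero scalar, so by \cite[Theorem 2]{sawin} applied under the hypothesis on $l'$, this happens if and only if $\frac{\lambda+\mu}{2}\in C$.

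For the \emph{moreover} part, I would again transport the corresponding result for $\qgrroot{\mathfrak{g}}$ through the decomposition. By \cite[Theorem 2]{sawin}, when $\frac{\lambda+\mu}{2}\in C$ one has
\[
  T\!\left(\tfrac{\lambda+\mu}{2},\tfrac{\lambda+\mu}{2}\right)\simeq W_\xi\!\left(\tfrac{\lambda+\mu}{2},\tfrac{\lambda+\mu}{2}\right)\simeq L_\xi\!\left(\tfrac{\lambda+\mu}{2},\tfrac{\lambda+\mu}{2}\right)
\]
as $\qgrroot{\mathfrak{g}}$-modules. Tensoring with the invertible module $L_\xi(\tfrac{\lambda-\mu}{2},-\tfrac{\lambda-\mu}{2})$, which sends Weyl modules to Weyl modules and simples to simples (since specialization commutes with tensoring by a one dimensional representation, and the tensor product of a highest weight vector by the nonzero generator of an invertible module is a highest weight vector of the right weight), yields
\[
  T(\lambda,\mu)\simeq W_\xi(\lambda,\mu)\simeq L_\xi(\lambda,\mu),
\]
which is the desired conclusion.

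The only technical step requiring care is the compatibility of the tilting/Weyl/simple labels on both sides of the tensor decomposition; since the invertible module is simple, rigid and $\otimes$-invertible, preservation of the three classes of modules is automatic, so the argument is essentially a bookkeeping of highest weights together with an invocation of Sawin's result.
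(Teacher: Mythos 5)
Your proposal is correct and follows exactly the route the paper takes: the paper's entire "proof" is the sentence preceding the theorem, which invokes the decomposition $T(\lambda,\mu)\simeq T\bigl(\tfrac{\lambda+\mu}{2},\tfrac{\lambda+\mu}{2}\bigr)\otimes L_\xi\bigl(\tfrac{\lambda-\mu}{2},-\tfrac{\lambda-\mu}{2}\bigr)$ and declares the statement to follow immediately from Sawin's Theorem 2. You have simply written out the multiplicativity of quantum dimensions, the non-vanishing of the dimension of the invertible factor, and the transport of the Weyl/simple identifications, all of which the paper leaves implicit.
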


\subsection{The semisimple category of non-negligible tilting modules}
\label{sec:ss_tilt}

From now on, we keep the assumption on $l$ of Theorem \ref{thm:tilt_non_negl}. We construct a fusion category using a semisimplification of pivotal categories. As the category of tilting modules is neither abelian, nor spherical, we use the version of semisimplification of pivotal Karoubian categories as in \cite[2.3]{semisimplification}. The hypothesis of \cite[Theorem 2.6]{semisimplification} are satisfied since
\begin{enumerate}
  \item the category of tilting modules for the algebra $\qgrroot{\mathfrak{g}}$ is a subcategory of an abelian category,
  \item up to an invertible element, the positive and negative quantum dimensions of an indecomposable tilting module for the algebra $\qgrroot{\mathfrak{g}}$ are equal.
\end{enumerate}

We then denote by $\mathcal{T}_\xi$ the semisimplification of the
category of tilting $\qdblroot{\mathfrak{g}}$-modules. This is a
semisimple braided pivotal tensor category and the simple objects are
the indecomposable tilting modules $T(\lambda,\mu)$ such that
$\frac{\lambda+\mu}{2}\in C$. This category admits a faithful
$P$-grading
\[
  \mathcal{T}_\xi = \bigoplus_{\nu \in P}\mathcal{T}_{\xi,\nu}
\]
where $\mathcal{T}_{\xi,\nu}$ is additively generated by simple
objects $L_{\xi}(\lambda,\mu)$ with $\lambda-\mu=2\nu$. As in the case
of a generic parameter $q$, each component $\mathcal{T}_{\xi,\nu}$ is equivalent
to the category of tilting modules for $\qgrroot{\mathfrak{g}}$.

We now compute explicitly the $S$-matrix and the twist for $\mathcal{T}_\xi$. The $S$-matrix is the matrix indexed by $\Irr(\mathcal{T}_\xi)$ given by
\[
  S_{(\lambda,\mu),(\lambda',\mu')}=\Tr_{L_\xi(\lambda,\mu)\otimes L_\xi(\lambda',\mu')}^{+}(c_{L_\xi(\lambda',\mu'),L_\xi(\lambda,\mu)}\circ c_{L_\xi(\lambda,\mu),L_\xi(\lambda',\mu')}).
\]
The twist $\theta$ and the pivotal structure $a$ are related using the Drinfeld morphism $u$ (see \cite[8.35]{egno}): $a=u\theta$. The latter is given by the composition
\[
\begin{tikzcd}
  X \ar[r,"\id_X\otimes\coev_{X^*}"] & X\otimes X^* \otimes X^{**}
  \ar[r,"c_{X,X^*}\otimes \id_{X^{**}}"] & X^*\otimes X \otimes
  X^{**} \ar[r,"\ev_X\otimes\id_{X^{**}}"] & X^{**}.
\end{tikzcd}
\]

We recall the expression of the quasi-$R$-matrix $\Theta_\xi$ of
$\qdblroot{\mathfrak{g}}$
\[
  \Theta_\xi=\prod_{\alpha\in\Phi^+} \left(\sum_{n=0}^{+\infty}q_\alpha^{\frac{n(n-1)}{2}}\qfact{n}{q_\alpha}(q_\alpha-q_\alpha^{-1})^{n} E_\alpha^{(n)}\otimes F_\alpha^{(n)}\right).
\]
Recall also that we have chosen the pivotal structure given by the
element $L_{2\rho}$ so that positive and negative quantum traces of
any linear map $f\colon M\rightarrow M$ are given by
\[
  \Tr_M^+(f) = \Tr(L_{2\rho}f\mid M) \quad \text{and} \quad  \Tr_M^-(f) = \Tr(fL_{2\rho}^{-1}\mid M).
\]

\begin{proposition}\label{prop:S_mat-T_mat}
  Let $(\lambda,\mu)$ and $(\lambda',\mu')$ in $\tilde{P}^+$. Then the $S$-matrix of $\mathcal{T}_\xi$ is given by
\[
  S_{(\lambda,\mu),(\lambda',\mu')} =
  \frac{\sum_{w\in W}(-1)^{l(w)}
    \xi^{\langle 2\rho+\lambda,(w\bullet(\lambda',\mu'))_2\rangle + \langle\mu,(w\bullet(\lambda',\mu'))_1+2\rho\rangle}}
  {\sum_{w\in W}(-1)^{l(w)}
    \xi^{\langle 2\rho,w\bullet 0\rangle}},
\]
and the twist is
\[
  \theta_{L_\xi(\lambda,\mu)}=\xi^{\langle\lambda+2\rho,\mu\rangle}\id_{L_\xi(\lambda,\mu)}.
\]
\end{proposition}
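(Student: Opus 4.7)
The proof naturally splits into two independent computations: the twist on each simple object, then the $S$-matrix entries. My plan is to settle the twist first, since it is a one-line calculation via Schur's lemma, and then to leverage the decomposition of simples into $\qgr{\mathfrak{g}}$-type times invertible in order to reduce the $S$-matrix computation to the classical Kac--Peterson formula.

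For the twist, since $L_\xi(\lambda,\mu)$ is simple, $\theta_{L_\xi(\lambda,\mu)}$ must be a scalar, which I extract by evaluating the defining identity $a_X=u_X\circ\theta_X$ on a highest weight vector $v\in L_\xi(\lambda,\mu)_{\lambda,\mu}$. Since the pivotal element is $L_{2\rho}$, one has $a_X(v)=\xi^{\langle 2\rho,\mu\rangle}\,e_v^{**}$, where $e_v^{**}\in X^{**}$ is evaluation-at-$v$. To compute $u_X(v)$ via the composition $\id\otimes\coev_{X^*}$, then $c_{X,X^*}\otimes\id$, then $\ev_X\otimes\id$, the key simplification is that every summand of $\Theta_\xi$ beyond $1\otimes 1$ contains a positive generator $E_\alpha$ in its first slot, which annihilates $v$. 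Hence $c_{X,X^*}(v\otimes\varphi)=\tau\circ f_{X,X^*}(v\otimes\varphi)$, and after applying $\ev_X\otimes\id$ only the dual basis vector $v^*\in X^*$ of weight $(-\lambda,-\mu)$ survives, producing the factor $\xi^{-\langle\lambda,\mu\rangle}$ coming from $f$. Therefore $u_X(v)=\xi^{-\langle\lambda,\mu\rangle}\,e_v^{**}$, and $\theta_X(v)=u_X^{-1}\circ a_X(v)=\xi^{\langle\lambda+2\rho,\mu\rangle}\,v$.

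For the $S$-matrix I plan to use the factorization $L_\xi(\lambda,\mu)\simeq L_\xi(\kappa,\kappa)\otimes L_\xi(\nu,-\nu)$ with $\kappa=(\lambda+\mu)/2$ and $\nu=(\lambda-\mu)/2$, so that the first factor is essentially a $\qgr{\mathfrak{g}}$-module and the second is one-dimensional invertible. On any one-dimensional module the $E_\alpha$ act by zero, so $\Theta_\xi$ acts trivially and every braiding involving $L_\xi(\nu,-\nu)$ collapses to $\tau\circ f_{L_\xi(\nu,-\nu),\cdot}$, a diagonal scalar on weight vectors. Using the compatibility of the braiding with tensor products (the hexagon axioms of Section \ref{sec:braiding}), this factors $S_{(\lambda,\mu),(\lambda',\mu')}$ as the $\qgr{\mathfrak{g}}$-level entry $S^{\qgr{\mathfrak{g}}}_{\kappa,\kappa'}$ — computed via the Weyl character formula of Section \ref{sec:character}, since on $\qgr{\mathfrak{g}}$-modules the pivotal $L_{2\rho}$ acts as $K_{2\rho}$ — multiplied by explicit $\xi$-power corrections, one for each of the four $f$-braidings arising from the invertible factors.

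The main technical obstacle is then purely bookkeeping. The classical Kac--Peterson formula produces an exponent of the shape $2\langle\kappa+\rho,\,w(\kappa'+\rho)\rangle$, which, once combined with the four correction exponents — of the form $\pm\langle\nu,\mu'_0\rangle$ and $\pm\langle\nu',\mu_0\rangle$ read off from the $f$-factors involving the invertible pieces — must telescope into the symmetric expression $\langle 2\rho+\lambda,\,(w\bullet(\lambda',\mu'))_2\rangle+\langle\mu,\,(w\bullet(\lambda',\mu'))_1+2\rho\rangle$ appearing in the statement. Rewriting $w\bullet(\lambda',\mu')=(w(\lambda'+\rho)-\rho,\,w(\mu'+\rho)-\rho)$ and repeatedly expanding $\lambda=\kappa+\nu$, $\mu=\kappa-\nu$ makes this reconstitution straightforward but delicate; the Weyl denominator $\sum_w(-1)^{l(w)}\xi^{\langle 2\rho,w\bullet 0\rangle}$ is obtained by specializing the same calculation to the trivial module.
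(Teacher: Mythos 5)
Your twist computation is essentially the paper's own proof: evaluate $a_X=u_X\theta_X$ on a highest weight vector, observe that only the $1\otimes 1$ term of $\Theta_\xi$ survives because the positive part of each nontrivial summand kills $v$, and read off $\xi^{-\langle\lambda,\mu\rangle}$ from $f_{X,X^*}$ against the dual vector of weight $(-\lambda,-\mu)$; this is correct and matches the paper line for line. For the $S$-matrix you take a genuinely different route. The paper does not factor the simple objects: it computes the full monodromy $c_{M,L}\circ c_{L,M}$ directly on $v_{\lambda,\mu}\otimes m$, noting that the highest-weight condition kills $\Theta_\xi$ in the first braiding and that only $1\otimes 1$ contributes to the diagonal component in the second, so the partial trace reduces to $\Tr^+$ of the single diagonal operator $m\mapsto\xi^{\langle\lambda,\mu'\rangle+\langle\lambda',\mu\rangle}m$, evaluated by one application of the Weyl character formula and a short reindexing identity. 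Your plan instead decomposes $L_\xi(\lambda,\mu)\simeq L_\xi(\kappa,\kappa)\otimes L_\xi(\nu,-\nu)$ and invokes multiplicativity of the (unnormalized) $S$-entries in each slot — which does hold here, via the hexagon axioms, the character property of $M\mapsto S_{L,M}/\dim^+(L)$, and the cyclicity of $\Tr^+$ giving symmetry of $S$ — together with the Kac--Peterson formula for the $\qgr{\mathfrak{g}}$-factor (consistent with the stated formula at $\nu=\nu'=0$, as the exponent there reduces to $2\langle\kappa+\rho,w(\kappa'+\rho)\rangle$ up to a constant cancelling against the denominator). Your approach buys a clean separation of the $\qgr{\mathfrak{g}}$-content from the invertible corrections, at the cost of the multiplicativity lemma, four cross-terms with their $\dim^+$ normalizations, and the final exponent reconciliation, which you acknowledge but do not carry out; the paper's direct computation sidesteps all of this and gets the symmetric exponent in one pass. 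The one step you should actually write down is that deferred bookkeeping, since it is where a sign or a $\dim^+$ factor is most easily lost, but nothing in the plan would fail.
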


\begin{proof}
  Let $L_\xi(\lambda,\mu)$ be a simple object in
  $\mathcal{T}_\xi$. For any other simple object $M$ in
  $\mathcal{T}_\xi$ the map
  \[
  \left(\id_{L_\xi(\lambda,\mu)}\otimes \Tr_M^+\right) \left(
  c_{M,L_\xi(\lambda,\mu)}\circ c_{L_\xi(\lambda,\mu),M}\right)
  \]
  is an endomorphism of the simple object $L_\xi(\lambda,\mu)$ hence
  is a scalar. We compute it on the highest weight vector
  $v_{\lambda,\mu}$ of $L_\xi(\lambda,\mu)$. Let $m\in M$ be a vector
  of weight $(\lambda',\mu')$. We have
  \[
  c_{M,L_\xi(\lambda,\mu)}\circ
  c_{L_\xi(\lambda,\mu),M}(v_{\lambda,\mu}\otimes m) =
  \xi^{\langle\lambda,\mu'\rangle}c_{M,L_\xi(\lambda,\mu)}(m\otimes
  v_{\lambda,\mu})
  \]
  as $v_{\lambda,\mu}$ is a vector of highest weight. Now, we take the
  partial trace of this expression and are interested only in the
  component on $v_{\lambda,\mu}$. As $F_\alpha^{(n)}v_{\lambda,\mu}$
  is not of weight $(\lambda,\mu)$ for $n>0$, this shows that only the term
  $1\otimes 1$ in $\Theta_\xi$ contributes on this component. Therefore
  \[
  \left(\id_{L_\xi(\lambda,\mu)}\otimes \Tr_M^+\right) \left(
  c_{M,L_\xi(\lambda,\mu)}\circ c_{L_\xi(\lambda,\mu),M} \right)=
  \Tr_M^+\left(\varphi_{\lambda,\mu}^M\right)\id_{L_\xi(\lambda,\mu)}
  \]
  where
  $\varphi_{\lambda,\mu}^M(m) = \xi^{\langle\lambda,\mu'\rangle +
    \langle\lambda',\mu\rangle}m$
  for $m$ a vector of weight $(\lambda',\mu')$.

  Finally, the $S$-matrix is given by
  \[
  S_{(\lambda,\mu),(\lambda',\mu')} =
  \dim^+(L_\xi(\lambda,\mu))\Tr^+\left(\varphi_{\lambda,\mu}^{L_\xi(\lambda',\mu')}\right).
  \]
  Using the Weyl character formula, we have
  \[
  S_{(\lambda,\mu),(\lambda',\mu')} = \frac{\sum_{w\in W}(-1)^{l(w)}
    \xi^{\langle 2\rho,(w\bullet(\lambda,\mu))_2\rangle}} {\sum_{w\in
      W}(-1)^{l(w)} \xi^{\langle 2\rho,w\bullet 0\rangle}}
  \frac{\sum_{w\in W}(-1)^{l(w)} \xi^{\langle
      2\rho+\lambda,(w\bullet(\lambda',\mu'))_2\rangle +
      \langle\mu,(w\bullet(\lambda',\mu'))_1\rangle}} {\sum_{w\in
      W}(-1)^{l(w)} \xi^{\langle 2\rho+\lambda+\mu,w\bullet
      0\rangle}}.
  \]
  Note that the assumption on $l$ ensures that the denominators are
  non-zero. As we have
  $\langle 2\rho+\lambda+\mu,w\bullet 0\rangle = \langle
  2\rho,(w^{-1}\bullet(\lambda,\mu))_2\rangle -
  \langle\mu,2\rho\rangle$ the formula for the $S$-matrix becomes
  \[
  S_{(\lambda,\mu),(\lambda',\mu')} = \frac{\sum_{w\in W}(-1)^{l(w)}
    \xi^{\langle 2\rho+\lambda,(w\bullet(\lambda',\mu'))_2\rangle +
      \langle\mu,(w\bullet(\lambda',\mu'))_1+2\rho\rangle}}
  {\sum_{w\in W}(-1)^{l(w)} \xi^{\langle 2\rho,w\bullet 0\rangle}}.
  \]

  We now turn to the twist. Unrolling the definition of the Drinfeld morphism, one can show that for $v_{\lambda,\mu}$
  a highest weight vector of $L_\xi(\lambda,\mu)$
  \[
  u_{L_\xi(\lambda,\mu)}(v_{\lambda,\mu}) = \left(\varphi\in L_{\xi}(\lambda,\mu)^* \mapsto
  \xi^{-\langle\lambda,\mu\rangle}\varphi(v_{\lambda,\mu})\right),
  \]
  whereas
  \[
  a_{L_\xi(\lambda,\mu)}(v_{\lambda,\mu}) = \left(\varphi\in L_{\xi}(\lambda,\mu)^* \mapsto
  \xi^{\langle 2\rho,\mu\rangle}\varphi(v_{\lambda,\mu})\right).
  \]
  Therefore we have
  $\theta_{L_\xi(\lambda,\mu)}=\xi^{\langle\lambda+2\rho,\mu\rangle}\id_{L_\xi(\lambda,\mu)}$.
\end{proof}

\subsection{A partial modularization}
\label{sec:fus_tilt}

Contrary to the category of tilting modules for
$\qgrroot{\mathfrak{g}}$, $\mathcal{T}_\xi$ has an infinite number of non-isomorphic simple objects. Following Müger \cite{mueger}, we first find some objects in the symmetric center of
$\mathcal{T}_\xi$, and we add isomorphism between these objects and
the unit object.

\begin{proposition}
  Let $\nu\in (l'Q^\vee)\cap P$. Then the invertible simple object
  $L_\xi(\nu,-\nu)$ lies in the symmetric center of
  $\mathcal{T}_\xi$. Its positive and negative quantum dimension is $1$ and
  its twist is $1$ or $-1$.
\end{proposition}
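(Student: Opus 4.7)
The plan is to reduce the three claims — unit quantum dimensions, twist in $\{\pm 1\}$, and centrality — to a single arithmetic observation. Write $\nu = l'\mu^\vee$ with $\mu^\vee \in Q^\vee$; since $\nu \in P$, one has $\langle \nu,\kappa\rangle = l'\langle \mu^\vee,\kappa\rangle \in l'\mathbb{Z}$ for any $\kappa \in P$, because $\mu^\vee$ is an integral combination of simple coroots and $\langle \alpha_i^\vee,\kappa\rangle \in \mathbb{Z}$. Combined with $\xi^{2l'}=1$ (valid in both parities of $l$, since $l'=l$ for $l$ odd and $l'=l/2$ for $l$ even), this gives $\xi^{2\langle \nu,\kappa\rangle}=1$ for every $\kappa \in P$, while $\xi^{\langle \nu,\kappa\rangle}$ only lies in $\{\pm 1\}$ in general.

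For the quantum dimensions, $L_\xi(\nu,-\nu)$ is one-dimensional with $L_{2\rho}$ acting by $\xi^{\langle -\nu, 2\rho\rangle}=\xi^{-2\langle \rho,\nu\rangle}$, so the observation applied to $\kappa=2\rho$ immediately yields $\dim^{+}(L_\xi(\nu,-\nu))=\dim^{-}(L_\xi(\nu,-\nu))=1$. For the twist, Proposition \ref{prop:S_mat-T_mat} gives
\[
  \theta_{L_\xi(\nu,-\nu)} = \xi^{\langle \nu+2\rho,-\nu\rangle} = \xi^{-\langle \nu,\nu\rangle}\,\xi^{-2\langle \rho,\nu\rangle};
\]
the second factor is $1$ by the previous argument, and the first equals $(\xi^{l'})^{-\langle \mu^\vee,\nu\rangle} \in \{\pm 1\}$, using $\langle \mu^\vee,\nu\rangle \in \mathbb{Z}$ (from $\nu \in P$) and $\xi^{l'} \in \{\pm 1\}$.

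For centrality, take $M \in \mathcal{T}_\xi$ and a weight vector $m \in M_{\lambda',\mu'}$, and let $v$ span $X := L_\xi(\nu,-\nu)$. Since $X$ is one-dimensional and a simple highest weight module, $E_\alpha^{(n)}\cdot v = F_\alpha^{(n)}\cdot v = 0$ for every $n>0$ by weight considerations, so only the $\mu=0$ term of the expansion of $\Theta_\xi$ contributes, yielding $\Theta_{X,M}(v\otimes m)=v\otimes m$ and $\Theta_{M,X}(m\otimes v)=m\otimes v$. Unwinding $c_{X,M}=\tau\circ f_{X,M}\circ \Theta_{X,M}$ and its counterpart, the double braiding $c_{M,X}\circ c_{X,M}$ then acts on $v\otimes m$ by the scalar $\xi^{\langle \nu,\mu'\rangle - \langle \nu,\lambda'\rangle}=\xi^{\langle \nu,\mu'-\lambda'\rangle}$. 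The crucial last step — and the main substantive input — is that any weight of a tilting module lies in $\tilde P$, so $\lambda'+\mu' \in 2P$ and hence $\mu'-\lambda' = (\lambda'+\mu')-2\lambda' \in 2P$; writing $\mu'-\lambda'=2\kappa$ with $\kappa \in P$, the arithmetic observation yields $\langle \nu,\mu'-\lambda'\rangle \in 2l'\mathbb{Z}$ and the scalar is $1$. This extra factor of $2$ is precisely what promotes the double-braiding scalar from $\pm 1$ to $1$ and so makes $L_\xi(\nu,-\nu)$ genuinely central; everything else is a mechanical application of the arithmetic observation to the formulas already established in the excerpt.
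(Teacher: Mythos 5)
Your proof is correct and follows essentially the same route as the paper: the double braiding on a weight vector is computed via the quasi-$R$-matrix (only the $\mu=0$ term survives) and the map $f$, giving the scalar $\xi^{\langle\nu,\mu'-\lambda'\rangle}$, which is $1$ because $\mu'-\lambda'\in 2P$ and $\nu\in l'Q^\vee$; the dimension and twist computations are likewise the ones in the paper. Your version is slightly more explicit (isolating the arithmetic observation and checking the claim on arbitrary weight vectors of arbitrary tilting modules rather than invoking simplicity of the tensor product), but there is no substantive difference.
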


\begin{proof}
  For any simple object $L_\xi(\lambda,\mu)$ in $\mathcal{T}_\xi$, we
  compute
  ${c_{L_\xi(\lambda,\mu),L_\xi(\nu,-\nu)}\circ
  c_{L_\xi(\nu,-\nu), L_\xi(\lambda,\mu)}}$. As
  $L_\xi(\nu,-\nu)\otimes L_\xi(\lambda,\mu)$ is a simple object,
  the double braiding is the multiplication by a scalar. By a
  computation on a highest weight vector, the double braiding is the multiplication by
  $\xi^{\langle\nu,\mu-\lambda\rangle}$. But as $\nu$ is in
  $l'Q^\vee$ and $\mu-\lambda\in 2P$, we have $\langle\nu,\mu-\lambda\rangle \in 2l'\mathbb{Z}$, so
  that the double braiding is the identity. 

  The positive quantum dimension of $L_\xi(\nu,-\nu)$ is given by the
  action of $L_{2\rho}$, so is equal to $\xi^{-\langle
    2\rho,\nu\rangle}=1$ because $\rho\in P$.

  The twist is given by $\xi^{\langle \nu+2\rho,-\nu\rangle}$ which
  is obviously equal to $\pm 1$ since $\nu\in l'Q^\vee$.
\end{proof}

Let $\mathcal{S}$ be the tensor subcategory of $\mathcal{T}_{\xi}$ generated
by the $L_{\xi}(\nu,-\nu)$ (which we will denote by $I(\nu)$), with $\nu\in (l'Q^\vee)\cap P$ and of twist $1$. We
recall the construction of the category $\mathcal{T}_\xi\rtimes
\mathcal{S}$ of \cite[Definition 3.12]{mueger}, which is simpler in our
case since the objects in $\mathcal{S}$ are all of dimension $1$. We
choose for any $\nu,\nu'\in (l'Q^\vee)\cap P$ an isomorphism
$\varphi_{\nu,\nu'}$ in the one dimensional space $\Hom_{\mathcal{S}}(I(\nu)\otimes
I(\nu'),I(\nu+\nu'))$ such that the following diagram commutes
\[
  \begin{tikzcd}[column sep=large]
    I(\nu)\otimes I(\nu')\otimes I(\nu'')
    \ar[r,"\varphi_{\nu,\nu'}\otimes \id"]
    \ar[d,"\id\otimes\varphi_{\nu',\nu''}"] & I(\nu+\nu')\otimes
    I(\nu'')\ar[d,"\varphi_{\nu+\nu',\nu''}"] \\ 
    I(\nu)\otimes I(\nu'+\nu'') \ar[r,"\varphi_{\nu,\nu'+\nu''}"] &
    I(\nu+\nu'+\nu'') 
  \end{tikzcd}.
\]
To do so, choose in any $I(\nu)$ a non-zero vector $v_\nu$. Then
$\varphi_{\nu,\nu'}$ sends $v_\nu\otimes v_{\nu'}$ to $v_{\nu+\nu'}$.

We consider the category $\mathcal{T}_\xi\rtimes_0 \mathcal{S}$ with the same
objects as $\mathcal{T}_\xi$ and with space of morphisms
between two objects $X$ and $Y$
\[
  \Hom_{\mathcal{T}_\xi\rtimes_0 \mathcal{S}}(X,Y) = \bigoplus_{\nu\in
    P\cap Q^\vee} \Hom_{\mathcal{T}_\xi}(X,I(\nu)\otimes Y).
\]

The composition of $f\in \Hom_{\mathcal{T}_\xi}(X,I(\nu)\otimes Y)$ and
of $g\in \Hom(Y,I(\nu')\otimes Z)$ in $\mathcal{T}_\xi\rtimes
\mathcal{S}$ is given by
\[
  \begin{tikzcd}[column sep=large]
    X\ar[r,"f"] & I(\nu)\otimes Y \ar[r,"\id\otimes g"]&
    I(\nu)\otimes I(\nu') \otimes Z \ar[r,"\varphi_{\nu,\nu'}\otimes
    \id"] & I(\nu+\nu')\otimes Z
  \end{tikzcd}.
\]
Due to the compatibility of the maps $\varphi$, it is easy to check
that this defines an associative composition.

This category has tensor products: on objects the tensor product is
the same as the one in $\mathcal{T}_\xi$ and if
$f\in\Hom_{\mathcal{T}_\xi}(X,I(\nu)\otimes Y)$ and
$g\in\Hom_{\mathcal{T}_\xi}(X',I(\nu')\otimes Y')$, their tensor
product is defined as the composition
\[
  \begin{tikzcd}[column sep=huge]
    X\otimes X' \ar[r,"f\otimes g"] & I(\nu)\otimes Y \otimes I(\nu')
    \otimes Y' \ar[r,"\id\otimes c_{Y,I(\nu')}\otimes \id"] &
    I(\nu)\otimes I(\nu') \otimes Y \otimes Y'\\
    & \phantom{I(\nu)\otimes Y \otimes I(\nu')\otimes Y'}\ar[r,"\varphi_{\nu,\nu'}\otimes\id"] & I(\nu+\nu')\otimes Y
    \otimes Y'.
  \end{tikzcd}
\]
Again, the compatibility of $\varphi$ ensures that this tensor product
endows $\mathcal{T}_\xi\rtimes_0 \mathcal{S}$ with a structure of a
semisimple tensor category, see \cite[Section 3.2]{mueger} for further details.


The duality on $\mathcal{T}_\xi$ extends to a duality on $\mathcal{T}_\xi\rtimes_0\mathcal{S}$. One may check that the pivotal structure in $\mathcal{T}_\xi$ induces
a pivotal structure on $\mathcal{T}_\xi\rtimes_0
\mathcal{S}$. For this, it is crucial that the objects in $\mathcal{S}$ are of twist $1$.

Finally, the category $\mathcal{T}_\xi\rtimes_0 \mathcal{S}$ is
braided because $\mathcal{S}$ is a subcategory of the symmetric center
of $\mathcal{T}_\xi$ (see \cite[Lemma 3.10]{mueger}).

Now, in the general case, it can happen that the category constructed
above is not idempotent complete. This happens exactly when tensoring
by a non-trivial simple object of $\mathcal{S}$ has a fixed point on
the set of simple objects of $\mathcal{T}_\xi$. But the object
$I(\nu)$ is in the component $\mathcal{T}_{\xi,\nu}$ of the grading,
thus tensoring by this object has no fixed points on the set of simples,
provided that $\nu\neq 0$. Therefore the idempotent completion
$\mathcal{T}_\xi\rtimes \mathcal{S}$ of $\mathcal{T}_\xi\rtimes_0
\mathcal{S}$ is $\mathcal{T}_\xi\rtimes_0 \mathcal{S}$ itself.

\begin{proposition}
  The category $\mathcal{T}_\xi\rtimes \mathcal{S}$ is finite in the sense of \cite[Definition 1.8.6]{egno}.
\end{proposition}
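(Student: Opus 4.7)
The goal is to verify the conditions of \cite[Definition 1.8.6]{egno}: finitely many isomorphism classes of simple objects, finite-dimensional Hom spaces, finite length of objects, and existence of projective covers. Since $\mathcal{T}_\xi \rtimes \mathcal{S}$ is manifestly semisimple (inherited from the semisimplicity of $\mathcal{T}_\xi$, as the construction only adds isomorphisms between existing objects), the last two conditions are automatic as soon as the first two hold. My plan is therefore to reduce everything to the two statements:
\begin{enumerate}
\item $\Irr(\mathcal{T}_\xi\rtimes\mathcal{S})$ is finite;
\item $\Hom_{\mathcal{T}_\xi\rtimes\mathcal{S}}(X,Y)$ is finite-dimensional for any simple $X,Y$.
\end{enumerate}

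For (2) I would exploit the faithful $P$-grading $\mathcal{T}_\xi = \bigoplus_{\nu \in P}\mathcal{T}_{\xi,\nu}$. Since $I(\nu) \in \mathcal{T}_{\xi,\nu}$ and the grading is faithful, for simple objects $X \in \mathcal{T}_{\xi,\sigma}$ and $Y \in \mathcal{T}_{\xi,\tau}$, the summand $\Hom_{\mathcal{T}_\xi}(X, I(\nu)\otimes Y)$ can be nonzero for at most one value of $\nu$, namely $\nu = \sigma - \tau$. Combined with the fact that Hom spaces in $\mathcal{T}_\xi$ are finite-dimensional, this gives finite-dimensionality of the direct sum in the definition of $\Hom_{\mathcal{T}_\xi\rtimes\mathcal{S}}$.

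For (1), the simple objects of $\mathcal{T}_\xi\rtimes \mathcal{S}$ correspond to orbits of the action of (the twist-$1$ part of) $(l'Q^\vee)\cap P$ on $\Irr(\mathcal{T}_\xi)$, where the action $\nu\cdot L_\xi(\lambda,\mu) = L_\xi(\lambda+\nu,\mu-\nu)$ preserves the value $\frac{\lambda+\mu}{2} \in C$. An orbit is therefore parameterized by a pair $\left(\frac{\lambda+\mu}{2}, \frac{\lambda-\mu}{2} \bmod \Lambda\right)$, where $C$ is finite by Theorem \ref{thm:tilt_non_negl} and $\Lambda$ is the image in $P$ of the twist-$1$ subgroup of $(l'Q^\vee)\cap P$. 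The key point is that $\Lambda$ is a full-rank sublattice of $P$: $l'Q^\vee$ and $P$ are both full-rank lattices in $\mathfrak{h}^*$, so $(l'Q^\vee)\cap P$ is full-rank in $P$; and the twist $\xi^{\langle \nu+2\rho,-\nu\rangle}$ is a character on a finite-order quotient of $(l'Q^\vee)\cap P$ (for instance, after passage to $(2lQ^\vee)\cap P$ the twist becomes trivial), so the twist-$1$ subgroup remains of finite index. Hence $P/\Lambda$ is finite, giving finitely many orbits.

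The main obstacle is justifying that $\Lambda$ really has finite index in $P$, which requires combining two finite-index arguments (intersection with $P$, then restriction to the twist-$1$ subgroup). The first reduces to standard lattice theory; the second requires observing that the twist defines a map to a cyclic group of order dividing $2l$ via a quadratic-linear expression in $\nu$, so its kernel is cut out by a finite number of linear congruences and thus contains a full-rank sublattice. Once these finite-index statements are in hand, the rest of the verification is bookkeeping.
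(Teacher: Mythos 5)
Your argument is correct and follows essentially the same route as the paper: the paper also counts simples as $|C|\cdot|G|$ where $G=P/\Lambda$ is the grading group modulo the twist-$1$ part of $(l'Q^\vee)\cap P$, and deduces finiteness of $G$ from the fact that $(l'Q^\vee)\cap P$ has finite index in $P$. The only point worth tightening is your justification that the twist-$1$ subset is a finite-index subgroup: rather than analysing the quadratic expression $\xi^{\langle\nu+2\rho,-\nu\rangle}$ via congruences, note that on invertible objects of the symmetric center the twist is multiplicative (the double braiding being trivial), so $\nu\mapsto\theta_{I(\nu)}$ is a homomorphism to $\{\pm1\}$ and its kernel has index at most $2$.
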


\begin{proof}
  Indeed, denoting by $G$ the quotient of $P$ by
  $\{v\in (l'Q^\vee)\cap P \mid \theta_{I(\nu)}=1\}$,
  $\mathcal{T}_\xi\rtimes \mathcal{S}$ is a $G$-graded category, with
  each homogeneous component equivalent to the category of tilting
  modules for $\qgrroot{\mathfrak{g}}$
  \[
  \mathcal{T}_\xi\rtimes \mathcal{S} = \bigoplus_{\nu\in
    G}(\mathcal{T}_\xi\rtimes \mathcal{S})_\nu.
  \]
  Therefore, $\mathcal{T}_\xi\rtimes \mathcal{S}$ has $|C||G|$ simple
  objects. Note that $G$ is indeed finite since $P/((l'Q^\vee)\cap P)$
  surjects on $G$.
\end{proof}

\subsection{An integral subcategory}
\label{sec:subcat}

We consider the full subcategory of $\mathcal{T}_\xi$ additively generated by the $L_\xi(\lambda,\mu)$ with $\lambda+\mu\in 2C$ and $\mu \in Q$. This subcategory is stable by tensor product: we can easily see this at the level of $\qdblroot{\mathfrak{g}}$-modules. We denote this category by $\mathbb{Z}\left(\mathcal{T}_\xi\right)$. In the case of the tilting category for $\qgrroot{\mathfrak{sl}_{n+1}}$, Masbaum and Wenzl \cite{masbaum-wenzl} show that an analogue of this subcategory is a modular category provided that $l$ is even and $l'$ and $n+1$ are relatively prime.

As for the category $\mathcal{T}_\xi$, there is an infinite number of simple objects. But the objects of the form $I(\nu)$ with $\nu\in (l'Q^\vee)\cap Q$ are in the symmetric center of $\mathbb{Z}\left(\mathcal{T}_\xi\right)$.

\begin{lemma}
  \label{lem:twist_1}
  The twist of $I(\nu)$ is $1$ for any $\nu\in(l'Q^\vee)\cap Q$.
\end{lemma}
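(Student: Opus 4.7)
The plan is to compute the twist $\theta_{I(\nu)}$ directly using the formula of Proposition \ref{prop:S_mat-T_mat}:
\[
  \theta_{I(\nu)} = \theta_{L_\xi(\nu,-\nu)} = \xi^{\langle \nu+2\rho,-\nu\rangle} = \xi^{-\langle \nu,\nu\rangle}\,\xi^{-2\langle \rho,\nu\rangle}.
\]
I will use throughout that $\xi^{2l'} = 1$, which holds since $2l' \in l\mathbb{Z}$ whether $l$ is odd ($l' = l$) or even ($l' = l/2$). Write $\nu = l'\beta$ with $\beta = \sum_i c_i \alpha_i^\vee \in Q^\vee$, and also $\nu = \sum_j n_j \alpha_j$ in $Q$.

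For the second factor, since $\rho = \sum_i \varpi_i$ and $\langle \varpi_i,\alpha_j^\vee\rangle = \delta_{ij}$, one has $\langle \rho,\beta\rangle = \sum_i c_i \in \mathbb{Z}$, whence $2\langle \rho,\nu\rangle = 2l'\sum_i c_i \in 2l'\mathbb{Z}$, so $\xi^{-2\langle \rho,\nu\rangle} = 1$.

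The heart of the argument is to show $\langle \nu,\nu\rangle \in 2l'\mathbb{Z}$. The condition $\nu \in l'Q^\vee \cap Q$ translates to the integrality $n_j = l'c_j/d_j \in \mathbb{Z}$ for every $j$, where $d_j = \langle \alpha_j,\alpha_j\rangle/2$. Expanding gives
\[
  \langle \nu,\nu\rangle = l'\,\langle \nu,\beta\rangle = l' \sum_{i,j} n_j\, c_i\, a_{ji}, \qquad a_{ji} = \langle \alpha_j,\alpha_i^\vee\rangle.
\]
The diagonal $i=j$ contribution is $2l'\sum_i n_i c_i \in 2l'\mathbb{Z}$. For the off-diagonal part, I would pair the ordered contributions $(i,j)$ and $(j,i)$ and use the symmetry $d_j a_{ij} = d_i a_{ji}$ of the symmetrized Cartan matrix together with $d_j n_j = l'c_j$ to verify the identity $n_j c_i\,a_{ji} = n_i c_j\,a_{ij}$. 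This collapses each unordered pair $\{i,j\}$ into a doubled term $2n_i c_j\, a_{ij}$, placing the off-diagonal contribution in $2l'\mathbb{Z}$ as well. Combining, $\langle \nu,\nu\rangle \in 2l'\mathbb{Z}$ and $\xi^{-\langle \nu,\nu\rangle} = 1$, giving $\theta_{I(\nu)} = 1$.

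The main subtlety is the case when $l'$ is itself even: the naive observations that $Q$ is an even lattice and that $\nu \in l'Q^\vee \subseteq l'P^\vee$ pair $\nu$ integrally with $Q$ only give $\langle \nu,\nu\rangle \in 2\mathbb{Z} \cap l'\mathbb{Z}$, which is strictly smaller than $2l'\mathbb{Z}$ precisely when $l'$ is even. The pair-symmetrization identity described above is exactly what supplies the missing factor of $2$.
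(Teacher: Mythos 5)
Your proof is correct, and it takes a genuinely different route from the paper's. The paper first disposes of all cases except $l=2l'$ with $l'$ odd via the crude bound $\langle\nu,\nu\rangle\in l'^2\mathbb{Z}$, and then argues case by case on the type of $\mathfrak{g}$: outside type $G_2$ (and in type $G_2$ with $3\nmid l'$) it identifies $(l'Q^\vee)\cap Q$ with $l'Q$ and invokes the evenness of the root lattice, while the remaining case ($G_2$ with $3\mid l'$) is settled by an explicit computation in coordinates. You instead prove the uniform statement $\langle\nu,\nu\rangle\in 2l'\mathbb{Z}$ for every type and every $l$ by a single symmetrization argument: the identity $n_jc_ia_{ji}=n_ic_ja_{ij}$ follows from $d_ia_{ji}=d_ja_{ij}=\langle\alpha_i,\alpha_j\rangle$ together with $d_jn_j=l'c_j$ (both sides equal $l'c_ic_j\langle\alpha_i,\alpha_j\rangle/(d_id_j)$), and it makes the off-diagonal contribution manifestly even while the diagonal contributes $2l'\sum_i n_ic_i$. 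This avoids the reduction to $l'$ odd, the appeal to the structure of $Q^\vee/Q$, and the separate $G_2$ computation, at the cost of being a bare-hands lattice calculation rather than a structural one; you also make explicit the vanishing of the $\xi^{-2\langle\rho,\nu\rangle}$ factor, which the paper leaves implicit (its earlier proposition only records that the twist is $\pm1$). Your closing remark correctly identifies where the naive evenness argument fails (when $l'$ is even) and why the pairing identity supplies the missing factor of $2$.
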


\begin{proof}
 We want to show that for any $\nu\in (l'Q^{\vee})\cap Q$, we have $\langle\nu,\nu\rangle \equiv 0 \mod l$. For any value of the integer $l$, we have $\langle\nu,\nu\rangle \in l'^2\mathbb{Z}$. Therfore we may, and will, suppose that $l=2l'$ with $l'$ odd.

  Suppose first that $\mathfrak{g}$ is not of type $G_2$. Then, $(l'Q^\vee)\cap Q = l'Q$ and we easily conclude because for any $\lambda\in Q$, $\langle\lambda,\lambda\rangle$ is even (recall that $\langle\cdot,\cdot\rangle$ is normalized such that $\langle\alpha,\alpha\rangle = 2$ for short simple roots).

  Now suppose that $\mathfrak{g}$ is of type $G_2$. If moreover $l'$ is not a multiple of $3$, we have $(l'Q^\vee)\cap Q = l'Q$ and the conclusion follows. We finally suppose that $l=2l'$ with $l'$ a multiple of $3$. Let $\alpha$ be the short simple root and $\beta$ be the long simple root, so that $\alpha^{\vee} = \alpha$ and $\beta^{\vee}=\frac{\beta}{3}$. We therefore have $(l'Q^{\vee})\cap Q = l'Q^\vee$ and for any $\nu=l'\nu_\alpha \alpha + \frac{l'\nu_\beta}{3}\beta\in l'Q^\vee$, we have
\[
  \langle\nu,\nu\rangle = l'^2\left(2\nu_\alpha^2-2\nu_\alpha\nu_\beta+\frac{2}{3}\nu_\beta^2\right) = 2l'\frac{l'}{3}(3\nu_\alpha^2-3\nu_\alpha\nu_\beta+\nu_\beta^2).
\]  
Hence $\langle\nu,\nu\rangle \equiv 0 \mod l$ since $l'$ is a multiple of $3$.
\end{proof}

Hence, we have shown that for any $\nu\in(l'Q^\vee)\cap Q$, $I(\nu)$
is of quantum dimension $1$ and of twist $1$. We again construct the
category $\mathbb{Z}(\mathcal{T}_\xi)\rtimes \mathcal{S}$ and we
obtain a $P/((l'Q^\vee)\cap Q)$-graded category
\[
  \mathbb{Z}(\mathcal{T}_\xi)\rtimes \mathcal{S} = \bigoplus_{\nu\in
    P/((l'Q^\vee)\cap Q)} \mathbb{Z}(\mathcal{T}_\xi)_\nu.
\]
Note that we do not have an equivalence of categories between
$\mathbb{Z}(\mathcal{T}_\xi)_\nu$ and $\mathbb{Z}(\mathcal{T}_\xi)_0$
if $\nu \not\in Q$. But we have an equivalence between
$\mathbb{Z}(\mathcal{T}_\xi)_\nu$ and
$\mathbb{Z}(\mathcal{T}_\xi)_{\nu'}$ if $\nu=\nu'$ in $P/Q$.

\begin{proposition}
  The category $\mathbb{Z}(\mathcal{T}_\xi)\rtimes \mathcal{S}$ is finite and has $|C||Q/((l'Q^\vee)\cap Q)|$ simple objects.
\end{proposition}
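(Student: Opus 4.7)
The plan is to count the isomorphism classes of simple objects in $\mathbb{Z}(\mathcal{T}_\xi)\rtimes \mathcal{S}$ directly, mimicking the argument used for the earlier proposition about $\mathcal{T}_\xi\rtimes \mathcal{S}$. First, I would parametrize the simple objects of $\mathbb{Z}(\mathcal{T}_\xi)$ themselves: by construction these are the $L_\xi(\lambda,\mu)$ with $\lambda+\mu\in 2C$ and $\mu\in Q$, so the change of coordinates $\kappa=(\lambda+\mu)/2$, keeping $\mu$ unchanged, gives a bijection with $C\times Q$ via $(\kappa,\mu)\leftrightarrow L_\xi(2\kappa-\mu,\mu)$.

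Next, I would describe the effect of tensoring by the invertible generators $I(\nu)=L_\xi(\nu,-\nu)$ of $\mathcal{S}$. Since $I(\nu)$ is one-dimensional, its tensor product with an arbitrary simple $L_\xi(\lambda,\mu)$ is again simple and its highest weight is simply shifted, giving $I(\nu)\otimes L_\xi(\lambda,\mu)\simeq L_\xi(\lambda+\nu,\mu-\nu)$. In the $(\kappa,\mu)$ parametrization this becomes $(\kappa,\mu)\mapsto(\kappa,\mu-\nu)$, i.e.\ the first coordinate is preserved and the second is translated by $-\nu$. Hence, by the construction of $\mathbb{Z}(\mathcal{T}_\xi)\rtimes\mathcal{S}$, two simples $L_\xi(\lambda,\mu)$ and $L_\xi(\lambda',\mu')$ become isomorphic precisely when they lie in the same orbit of $(l'Q^\vee)\cap Q$ acting by $\nu\cdot(\kappa,\mu)=(\kappa,\mu-\nu)$.

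As in the previous proposition, I would observe that this action has no fixed points on non-zero simples: each $I(\nu)$ lives in the component $\mathcal{T}_{\xi,\nu}$ of the $P$-grading of $\mathcal{T}_\xi$, so $I(\nu)\otimes L_\xi(\lambda,\mu)\simeq L_\xi(\lambda,\mu)$ forces $\nu=0$. Consequently the idempotent completion equals $\mathbb{Z}(\mathcal{T}_\xi)\rtimes_0\mathcal{S}$ itself, and the simple objects of $\mathbb{Z}(\mathcal{T}_\xi)\rtimes\mathcal{S}$ are in bijection with $C\times \bigl(Q/((l'Q^\vee)\cap Q)\bigr)$, giving the claimed count $|C|\cdot|Q/((l'Q^\vee)\cap Q)|$.

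For finiteness in the sense of \cite[Definition 1.8.6]{egno}, the category is semisimple with finitely many simples (the count just obtained), each of which has one-dimensional endomorphism space, so every condition is automatic once the simple count is finite. This finiteness of $Q/((l'Q^\vee)\cap Q)$ follows because $Q$ and $l'Q^\vee$ are both full-rank lattices in $\mathfrak{h}^*_\mathbb{R}$, so their intersection is a sublattice of $Q$ of full rank, hence of finite index; alternatively, the natural injection $Q/((l'Q^\vee)\cap Q)\hookrightarrow P/((l'Q^\vee)\cap P)$ reduces finiteness to the finiteness of the latter group established in the proof of the previous proposition. The only step that requires minimal care is checking the tensor product formula $I(\nu)\otimes L_\xi(\lambda,\mu)\simeq L_\xi(\lambda+\nu,\mu-\nu)$, which follows immediately from the description of $I(\nu)$ as the one-dimensional module of weight $(\nu,-\nu)$.
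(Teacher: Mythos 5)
Your argument is correct and amounts to essentially the same counting as the paper's: the paper organizes it via the $P/((l'Q^\vee)\cap Q)$-grading (choosing representatives of $P/Q$ whose homogeneous components together carry $|C|$ simples, then translating by $Q/((l'Q^\vee)\cap Q)$), whereas you parametrize the simples of $\mathbb{Z}(\mathcal{T}_\xi)$ by $C\times Q$ and count orbits of the translation action of $(l'Q^\vee)\cap Q$ on the second factor — which is exactly the set $\tilde{C}$ and its quotient $\overline{C}$ that the paper introduces right after this proposition. The fixed-point-freeness and lattice-index finiteness points you make are the same ones the paper relies on.
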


\begin{proof}
  If we choose some representatives $\nu_1,\ldots,\nu_k$ of
  $P/Q$ in $P/((l'Q^\vee)\cap Q)$, we have $|C|$ simple objects in
  $\bigoplus_{i=1}^k\mathbb{Z}(\mathcal{T}_\xi)_{\nu_i}$. Therefore
  $\mathbb{Z}(\mathcal{T}_\xi)\rtimes \mathcal{S}$ has
  $|C||Q/((l'Q^\vee)\cap Q)|$ simple objects.
\end{proof}

We end this part with some notation. Denote by $\tilde{C}$ the following set of weights
\[
  \tilde{C} = \{(\lambda,\mu)\in P\times Q \mid \lambda+\mu\in 2C\}.
\]
This set parametrizes the simple objects of $\mathbb{Z}(\mathcal{T}_\xi)$ and the group $l'Q^\vee\cap Q$ acts on it by
\[
  \nu \cdot (\lambda,\nu) = (\lambda+\nu,\mu-\nu)
\]
for $(\lambda,\mu)\in\tilde{C}$ and $\nu \in (l'Q^\vee)\cap Q$. The set $\overline{C} = \tilde{C}/((l'Q^\vee)\cap Q)$ parametrizes the simple objects of $\mathbb{Z}(\mathcal{T}_\xi)\rtimes \mathcal{S}$.

\subsection{Non-degeneracy of $\mathcal{T}_\xi\rtimes \mathcal{S}$ and degeneracy of $\mathbb{Z}(\mathcal{T})_\xi\rtimes \mathcal{S}$}

In this subsection, we suppose that $l=2Dd$ for $d\geq h^\vee$. Then $l'=Dd$ and $DdQ^\vee\subseteq Q$ and by Lemma \ref{lem:twist_1} for any $\nu\in DdQ^\vee$, we have $\theta_{I(\nu)} = 1$.

\begin{proposition}\label{prop:non_deg}
  If moreover $\xi=\exp\left(\frac{i\pi}{d}\right)$, the category $\mathcal{T}_\xi\rtimes \mathcal{S}$ is non-degenerate: if a simple object $X$ is such that for all object $Y$ in $\mathcal{C}$ we have
\[
  c_{X,Y}\circ c_{Y,X}=\id_{Y\otimes X}
\]
then $X\simeq \mathbf{1}$.
\end{proposition}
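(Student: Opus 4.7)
The plan is to reduce the statement to an analysis of the symmetric center of $\mathcal{T}_\xi$ itself and then exploit the explicit form of the braiding on highest-weight vectors. Let $[X]$ be a simple object of $\mathcal{T}_\xi\rtimes\mathcal{S}$ lying in the symmetric center and let $X = L_\xi(\lambda,\mu)$ be a representative. Because $\mathcal{T}_\xi$ embeds into $\mathcal{T}_\xi\rtimes_0\mathcal{S}$ as the $\nu = 0$ summand of $\Hom$-spaces and the composition of $\rtimes_0$ respects the $\mathcal{S}$-grading, the double braiding of $X$ with any simple $Y$, computed in $\mathcal{T}_\xi$, equals $\id$ in $\mathcal{T}_\xi\rtimes\mathcal{S}$ if and only if it already equals $\id$ in $\mathcal{T}_\xi$. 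Thus $X$ must lie in the symmetric center of $\mathcal{T}_\xi$, and it suffices to show that any such simple $X$ is isomorphic to some $I(\nu)$ with $\nu \in (l'Q^\vee)\cap P$, for then $[X] = [\mathbf{1}]$.

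The main arithmetic constraint comes from repeating the highest-weight vector computation of the proof of Proposition \ref{prop:S_mat-T_mat}. For any simple $M$ and any weight vector $m \in M_{(\lambda',\mu')}$, only the $1\otimes 1$ term of $\Theta_\xi$ contributes to the $v_{\lambda,\mu}$-component of $(c_{M,X}\circ c_{X,M})(v_{\lambda,\mu}\otimes m)$, producing the scalar $\xi^{\langle\lambda,\mu'\rangle+\langle\lambda',\mu\rangle}$. Demanding that the double braiding be the identity and using $\xi = \exp(i\pi/d)$ yields the necessary condition
\[
  \langle\lambda,\mu'\rangle+\langle\lambda',\mu\rangle \in 2d\mathbb{Z}
\]
for every weight $(\lambda',\mu')$ of every simple module of $\mathcal{T}_\xi$.

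Specialising this with $M = I(\kappa) = L_\xi(\kappa,-\kappa)$ for arbitrary $\kappa \in P$ gives $\langle\kappa,\mu-\lambda\rangle \in 2d\mathbb{Z}$, hence $\mu-\lambda \in 2dQ^\vee$. Under the running hypothesis $l' = Dd$ with $DdQ^\vee \subseteq Q$, the element $\nu_0 := (\mu-\lambda)/2$ belongs to $(l'Q^\vee)\cap P$, so the $\mathcal{S}$-isomorphism $X \simeq I(\nu_0)\otimes L_\xi(\lambda+\nu_0,\lambda+\nu_0)$ reduces matters to the case $X = L_\xi(\lambda',\lambda')$ with $\lambda' \in C$. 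Specialising the constraint now to $M = L_\xi(\kappa,\kappa)$ for $\kappa \in C$, evaluated on the highest weight vector, yields $\langle\lambda',\kappa\rangle \in d\mathbb{Z}$ for every $\kappa \in C$.

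The remaining step, where I expect the main obstacle to lie, is to deduce $\lambda' = 0$ from these constraints together with $\lambda' \in P^+\cap C$. When $d$ is large enough that $C$ contains every fundamental weight $\varpi_i$, the constraint forces $\lambda' \in dQ^\vee$; writing $\lambda' = d\mu$ with $\mu \in Q^\vee$ and combining dominance $\lambda' \in P^+$ with the alcove bound $\langle\lambda'+\rho,\theta\rangle < l'$ and the identity $\langle\rho,\theta\rangle = h-1$, a positivity/concavity argument on the coordinates of $\mu$ in the basis of simple coroots forces $\mu = 0$. In the boundary case $d = h^\vee$ the set $C$ already reduces to $\{0\}$, so $\lambda' = 0$ is automatic. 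In either situation $X \cong I(-\nu_0)\in\mathcal{S}$, whence $[X] = [\mathbf{1}]$. The delicate point is verifying the type-dependent arithmetic showing that $dQ^\vee \cap C = \{0\}$.
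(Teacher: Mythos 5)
Your strategy is genuinely different from the paper's: you analyse the M\"uger center directly via the scalar by which the double braiding acts on the highest-weight line, whereas the paper computes $S^2$ explicitly, reducing it via a character sum over $P/DdQ^\vee$ to the square of the $S$-matrix $\tilde s$ of the modular tilting category for $\qgrroot{\mathfrak{g}}$, and concludes that $S^2$ is an invertible permutation matrix up to a non-zero scalar. Your first two reductions are sound: transparency in $\mathcal{T}_\xi\rtimes\mathcal{S}$ is indeed equivalent to transparency in $\mathcal{T}_\xi$ (the double braiding and the identity both live in the $\nu=0$ summand of the Hom-space), and the coefficient of $v_{\lambda,\mu}\otimes m$ in the double braiding is exactly $\xi^{\langle\lambda,\mu'\rangle+\langle\lambda',\mu\rangle}$ because the higher $\Theta$-terms strictly lower the weight in the $X$-factor. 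Testing against the invertibles $I(\kappa)$, $\kappa\in P$, correctly pins $\mu-\lambda$ to a rescaled coroot lattice and reduces to $X=L_\xi(\lambda',\lambda')$ with $\lambda'\in C$. (Two caveats here: with $\xi$ of order $2d$ you only get $\mu-\lambda\in 2dQ^\vee$, which equals $2l'Q^\vee$ only when $D=1$; this is entangled with the tension between the subsection's hypothesis $l=2Dd$ and the proposition's literal ``$\xi=\exp(i\pi/d)$'', and is resolved if one reads the order of $\xi$ as $2Dd$ throughout. Also, the specific choice of primitive root is what the paper's proof actually uses, via the modularity of $\tilde s$; your argument so far only uses the order of $\xi$.)

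The genuine gap is the last step: deducing $\lambda'=0$ from $\lambda'\in C$ and $\langle\lambda',\kappa\rangle\in d\mathbb{Z}$ for all $\kappa\in C$. Your dichotomy covers only the two extremes ($C$ contains all fundamental weights, or $C=\{0\}$), and these do not exhaust the allowed range $d\geq h^\vee$: the condition $\varpi_i\in C$ reads $\langle\varpi_i,\theta_0\rangle + (h-1)\cdot\tfrac{\langle\theta_0,\theta_0\rangle}{2}<l'$, so for $d$ slightly above $h^\vee$ the set $C$ is non-trivial yet omits the fundamental weights of large comark (e.g.\ $E_8$ with $d=32$). In that intermediate regime your constraint set is too small to force $\lambda'\in dQ^\vee$, and no substitute argument is given. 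Moreover, even in the covered regime the claim $dQ^\vee\cap C=\{0\}$ — i.e.\ that a non-zero dominant element of $dQ^\vee$ violates the alcove bound — is asserted rather than proved, as you acknowledge. What is missing is precisely the input the paper imports wholesale: the non-degeneracy of the degree-zero component $\mathcal{T}_{\xi,0}\simeq$ (tilting modules for $\qgrroot{\mathfrak{g}}$), i.e.\ the invertibility of $\tilde s$ from \cite[Theorem 3.3.20]{bakalov_kirillov}. Since $L_\xi(\lambda',\lambda')$ transparent in $\mathcal{T}_\xi$ is in particular transparent in $\mathcal{T}_{\xi,0}$ with its standard braiding, citing that result would close your argument immediately and uniformly in $d$; without it, the proof is incomplete.
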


\begin{proof}
  Using the proof of \cite[Theorem 3.3.20]{bakalov_kirillov}, we show that the square of the $S$-matrix of $\mathcal{T}_\xi\rtimes \mathcal{S}$ is invertible. As for any $w\in W$ and $(\lambda,\mu)\in\tilde{P}$ we have
  \[
    w\bullet(\lambda,\mu) = \left(w\bullet \frac{\lambda+\mu}{2} + \frac{\lambda-\mu}{2}, w\bullet \frac{\lambda+\mu}{2} - \frac{\lambda-\mu}{2}\right),
  \]
  we can rewrite the $S$-matrix of $\mathcal{T}_\xi\rtimes \mathcal{S}$ as
  \[
    S_{(\lambda,\mu),(\lambda',\mu')}=\xi^{\langle 2\rho,\mu-\eta+\mu'-\eta'\rangle-2\langle\mu-\eta,\mu'-\eta'\rangle}\tilde{s}_{\eta,\eta'}
  \]
  where $\lambda+\mu = 2\eta$, $\lambda'+\mu'=2\eta'$ and $\tilde{s}$ is the $S$-matrix of the modular category of tilting modules for $\qgrroot{\mathfrak{g}}$ (see proof of \cite[Theorem 3.3.20]{bakalov_kirillov}). The simple objects of $\mathcal{T}_\xi\rtimes \mathcal{S}$ are indexed by $\{(\lambda,\mu)\in P\times P \mid \lambda+\mu \in 2C\}/DdQ^\vee$ which is in bijection with $C\times P/DdQ^\vee$ sending $(\lambda,\mu)$ to $(\eta,\mu)$. Therefore
  \begin{align*}
    (S^2)_{(\lambda,\mu),(\lambda'',\mu'')} &= \xi^{\langle 2\rho,\mu-\eta+\mu''-\eta''\rangle}\sum_{\eta'\in C}\tilde{s}_{\eta,\eta'}\tilde{s}_{\eta',\eta''}\xi^{-2\langle\eta',2\rho+\eta-\mu+\eta''-\mu''\rangle}\underbrace{\sum_{\mu'\in P/DdQ^\vee}\xi^{2\langle\mu',2\rho+\eta-\mu+\eta''-\mu''\rangle}}_{= |P/DdQ^\vee|\delta_{2\rho+\eta-\mu+\eta''-\mu''\in DdQ^\vee}}\\
    &= |P/DdQ^\vee|\delta_{2\rho+\eta-\mu+\eta''-\mu''\in DdQ^\vee}\xi^{\langle 2\rho,2\rho\rangle}\sum_{\eta'\in C}\tilde{s}_{\eta,\eta'}\tilde{s}_{\eta',\eta''}\\
    &= \delta_{2\rho+\eta-\mu+\eta''-\mu''\in DdQ^\vee}\delta_{\eta'',-w_0(\eta)} \kappa\\
    &= \delta_{\eta'',-w_0(\eta)}\delta_{\mu''\in 2\rho+\eta-\mu-w_0(\eta)+DdQ^\vee}\kappa,
  \end{align*}
 where $\kappa$ is a non-zero constant. As $\mu\in P/DdQ^\vee$, the matrix $S^2$ is, up to a non-zero constant, the invertible permutation matrix $(\delta_{(\lambda'',\mu''),(-2\rho,2\rho)-w_0(\lambda,\mu)})_{(\lambda,\mu),(\lambda'',\mu'')}$.
\end{proof}

The square of the $S$-matrix is not given by the duality: this is due to the fact that $\mathcal{T}_\xi\rtimes \mathcal{S}$ is not spherical, but only pivotal.

Now we turn to the category $\mathbb{Z}(\mathcal{T})_\xi\rtimes \mathcal{S}$ which is degenerate in general.

\begin{theoreme}\label{thm:int_deg}
  The symmetric center of $\mathbb{Z}(\mathcal{T})_\xi\rtimes \mathcal{S}$ contains $\lvert P/Q \rvert$ simple objects. Therefore, the category $\mathbb{Z}(\mathcal{T})_\xi\rtimes \mathcal{S}$ is non-degenerate if and only if $\mathfrak{g}$ is of type $E_8$, $F_4$ or $G_2$.
\end{theoreme}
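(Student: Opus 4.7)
The plan is to read off the transparency condition from the explicit $S$-matrix and reduce the question to the classification of invertible simple objects in the modular tilting category of $\qgrroot{\mathfrak{g}}$. For simples $X = L_\xi(\lambda,\mu)$ and $Y = L_\xi(\lambda',\mu')$ of $\mathbb{Z}(\mathcal{T}_\xi) \rtimes \mathcal{S}$, set $\eta = (\lambda+\mu)/2 \in C$, $\zeta = (\mu-\lambda)/2$, and analogously for $Y$. Combining the factorization
\[
S_{X,Y} = \xi^{\langle 2\rho,\zeta+\zeta'\rangle - 2\langle\zeta,\zeta'\rangle}\tilde s_{\eta,\eta'}
\]
from the proof of Proposition \ref{prop:non_deg} with the formula $\dim^+ L_\xi(\lambda,\mu) = \xi^{\langle 2\rho,\zeta\rangle}d_\eta$ from Section \ref{sec:duality}, the normalized $S$-matrix reads
\[
\frac{S_{X,Y}}{\dim^+(X)\dim^+(Y)} = \xi^{-2\langle\zeta,\zeta'\rangle}\,\frac{\tilde s_{\eta,\eta'}}{d_\eta d_{\eta'}},
\]
and $X$ lies in the symmetric center iff this ratio equals $1$ for every $\eta' \in C$ and every $\mu' \in Q$.

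Writing $\zeta' = \mu' - \eta'$ and varying $\mu' \in Q$ with $\eta'$ fixed first forces $\xi^{2\langle\zeta,\mu'\rangle} = 1$ for all $\mu' \in Q$, equivalently $\zeta \in dP^\vee$. The residual condition then reads $\tilde s_{\eta,\eta'}/(d_\eta d_{\eta'}) = \xi^{-2\langle\zeta,\eta'\rangle}$ for every $\eta' \in C$. Since the right-hand side has unit modulus, $|\tilde s_{\eta,\eta'}| = d_\eta d_{\eta'}$ for every $\eta'$. Taking $\eta' = \eta$ and using the classical bound $|\tilde s_{\eta,\eta}| \le d_\eta^2$ (with equality iff $\eta \otimes \eta$ is a simple object, i.e.\ iff $\eta$ is \emph{invertible} in the modular tilting category $\mathcal{T}^{\mathrm{mod}}_\xi$ of $\qgrroot{\mathfrak{g}}$), one concludes that $\eta$ must be an invertible simple. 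For such an $\eta$, $\tilde s_{\eta,\eta'}/(d_\eta d_{\eta'})$ is exactly the scalar of the double braiding of $\eta$ with $\eta'$, that is the character $\chi_{[\eta]}(\eta')$ attached to the class of $\eta$ in the group of invertibles of $\mathcal{T}^{\mathrm{mod}}_\xi$.

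Under our hypothesis on $l$, the invertible simples of $\mathcal{T}^{\mathrm{mod}}_\xi$ form a group canonically isomorphic to $P/Q$, and their characters are of the form $\chi_{[\eta]}(\eta') = \xi^{2\langle\kappa,\eta'\rangle}$ where $\kappa \in P$ is any representative of $[\eta]$ (the character depends only on $\kappa$ modulo $dQ^\vee$, since $C$ spans $P$ as $d \ge h^\vee$). Matching with $\xi^{-2\langle\zeta,\eta'\rangle}$ forces $\zeta \equiv -\kappa \pmod{dQ^\vee}$; the representative $\kappa$ can be chosen in $dP^\vee$ up to $dQ^\vee$, so this is compatible with $\zeta \in dP^\vee$. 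The remaining constraint $\mu = \eta+\zeta \in Q$, equivalent to $[\zeta] \equiv -[\eta] \pmod Q$, is automatic from $\zeta \equiv -\kappa \pmod{dQ^\vee}$ and $[\kappa] = [\eta]$. Passing to the quotient by $\mathcal{S}$ (action by $DdQ^\vee \subseteq dQ^\vee$) each invertible yields exactly one transparent simple, giving a symmetric center of cardinality $|P/Q|$. Non-degeneracy is equivalent to $|P/Q| = 1$, which happens precisely for $\mathfrak{g}$ of type $E_8$, $F_4$, or $G_2$.

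The main obstacle is the classical identification of the invertibles of $\mathcal{T}^{\mathrm{mod}}_\xi$ with $P/Q$ and the explicit formula for their characters, which rests on the well-known description of the modular tilting category at a root of unity via corners of the fundamental alcove and the action of the center of the simply-connected cover of $G$. A secondary but essential point is the lattice-arithmetic verification that the three conditions on $\zeta$ (membership in $dP^\vee$, residue modulo $dQ^\vee$, and the $P/Q$-constraint $\mu \in Q$) combine consistently into a unique class modulo $DdQ^\vee$, which is straightforward in simply-laced types but requires attention in type $B$, $C$, $F_4$ or $G_2$ where $D > 1$.
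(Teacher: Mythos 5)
Your route is genuinely different from the paper's. The paper invokes \cite[Lemma 8.20.9]{egno}: $L_\xi(\lambda,\mu)$ is transparent iff the pairing $(h_{\lambda,\mu},h_{0,0})=\sum_{(\lambda',\mu')}\frac{\dim^-}{\dim^+}S_{(\lambda,\mu),(\lambda',\mu')}S_{(\lambda',\mu'),(0,0)}$ is non-zero, and then evaluates this double sum directly: the sum over $\mu'\in Q/DdQ^\vee$ produces $\delta_{\mu-\eta\in DdP^\vee}$, and the sum over $\eta'\in C$ is computed by unfolding $C$ to $P/DdQ^\vee$ via the $W^a$-antiinvariance of $\tilde s$ together with Lemma \ref{lem:Pdual_Qdual}, yielding a unique $\eta$ for each $\gamma\in DdP^\vee/DdQ^\vee$. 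This is purely algebraic and needs no positivity and no classification of invertible objects. You instead use the pointwise criterion $S_{X,Y}=\dim^+(X)\dim^+(Y)$ and reduce to the simple-current theory of the Verlinde category of $\qgrroot{\mathfrak{g}}$. That is an attractive and more conceptual reduction, but as written it has genuine gaps.

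First, the reverse implication of your opening ``iff'' is not free: the category is only pivotal, the dimensions $\xi^{\langle 2\rho,\zeta\rangle}d_\eta$ are complex, and your inequality $\lvert\tilde s_{\eta,\eta}\rvert\le d_\eta^2$ (and the ``equality iff invertible'' step, which itself needs $\sum_{\eta'}\lvert\tilde s_{\eta,\eta'}\rvert^2=\mathcal{D}^2$ to force $d_\eta=1$) requires the $d_\eta$ to be positive, i.e.\ a specific choice of $\xi$ as in Proposition \ref{prop:non_deg}, or a Galois-descent argument that you do not supply. Second, the identification of the invertibles of the modular tilting category with $P/Q$ and the formula $\chi_{[\eta]}(\eta')=\xi^{2\langle\kappa,\eta'\rangle}$ are substantial external inputs (with the $E_8$-at-level-$2$ exception to keep in mind, though it is harmless here since $P=Q$). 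Third, and most seriously, your lattice arithmetic is off by the factor $D$ throughout: since $\xi^2$ has order $l'=Dd$, varying $\mu'$ forces $\zeta\in DdP^\vee=l'P^\vee$, not $dP^\vee$, and the characters of the invertibles are well defined modulo $DdQ^\vee$, not $dQ^\vee$. The final count $\lvert P/Q\rvert$ rests precisely on showing that the three conditions on $\zeta$ pin down a single class modulo $(l'Q^\vee)\cap Q$ for each invertible; you explicitly defer this verification, yet it is exactly where the theorem's content lies for types $B$ and $C$, where $D=2$ and $\lvert P/Q\rvert=2$. Until that step is carried out with the correct moduli, the proof is incomplete.
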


\begin{proof}
  Using \cite[Lemma 8.20.9]{egno}, the simple object $L_\xi(\lambda,\mu)$ is in the symmetric center of $\mathbb{Z}(\mathcal{T})_\xi\rtimes \mathcal{S}$ if and only if
\[
  (h_{\lambda,\mu},h_{0,0})_{\mathbb{Z}(\mathcal{T})_\xi\rtimes \mathcal{S}}=\sum_{(\lambda',\mu')\in \overline{C}}\frac{\dim^-(L_\xi(\lambda',\mu'))}{\dim^+(L_\xi(\lambda',\mu'))}S_{(\lambda,\mu),(\lambda',\mu')}S_{(\lambda',\mu'),(0,0)} \neq 0.
\]

  As in the proof of Proposition \ref{prop:non_deg}, we let $2\eta=\lambda+\mu$ and $2\eta'=\lambda+\mu'$, which are elements in $C$. We then have
  \begin{align*}
    (h_{\lambda,\mu},h_{0,0})_{\mathbb{Z}(\mathcal{T})_\xi\rtimes \mathcal{S}}
    &=\sum_{\eta'\in C}\sum_{\mu'\in Q/DdQ^\vee} 
      \xi^{2\langle 2\rho,\eta'-\mu'\rangle}
      \xi^{\langle 2\rho,\mu-\eta+\mu'-\eta'\rangle-2\langle\mu-\eta,\mu'-\eta'\rangle}\tilde{s}_{\eta,\eta'}
      \xi^{\langle 2\rho,\mu'-\eta'\rangle}\tilde{s}_{\eta',0}\\
   &=\xi^{\langle 2\rho,\mu-\eta\rangle}\sum_{\eta'\in C}\xi^{-2\langle\eta',\eta-\mu\rangle}\tilde{s}_{\eta,\eta'}\tilde{s}_{\eta',0}\underbrace{\sum_{\mu'\in Q/DdQ^\vee}\xi^{2\langle\mu',\eta-\mu \rangle}}_{\lvert Q/DdQ^\vee\rvert\delta_{\mu-\eta\in DdP^\vee}}\\
  &= \lvert Q/DdQ^\vee\rvert\delta_{\mu-\eta\in DdP^\vee}
    \sum_{\eta'\in C}\tilde{s}_{\eta,\eta'}\tilde{s}_{\eta',0}\xi^{2\langle\rho+\eta',\mu-\eta\rangle}.
  \end{align*}
 
  \begin{lemma}\label{lem:Pdual_Qdual}
    Let $\gamma \in DdP^\vee$. For any $w\in W$, we have $\gamma-w(\gamma) \in DdQ^\vee$.
  \end{lemma}

  \begin{proof}
    See \cite[VI.1.10, Proposition 27]{bourbaki_lie_456}.
  \end{proof}

  Now, fix $\gamma\in DdP^\vee$. Following \cite[Section 3.3]{bakalov_kirillov}, we have an action of the affine Weyl group $W^a = W \ltimes DdQ^\vee$ on $P$ such that $C$ contains exacly one element for every orbit with trivial stabilizer for the dot action (note that in \cite[Section 3.3]{bakalov_kirillov}, the translation subgroup of $W^a$ is generated by $dQ^\vee$, but $Q^\vee$ is embedded in $P$ using $D^{-1}\langle\cdot,\cdot\rangle$, so that it coincides with our notations). Using the fact that $\tilde{s}_{\eta,w\bullet\eta'} = (-1)^{l(w)}\tilde{s}_{\eta,\eta'}$ and Lemma \ref{lem:Pdual_Qdual}, we see that $\tilde{s}_{\eta,\eta'}\tilde{s}_{\eta',0}\xi^{2\langle\rho+\eta',\mu-\eta\rangle}$ is invariant by the action of $W^a$. We therefore can replace the summation on $C$ by a summation on $P/DdQ^\vee$ and we obtain, using the formula for $\tilde{s}$ in the proof of \cite[Theorem3.3.20]{bakalov_kirillov}:
  \begin{align*}
    \sum_{\eta'\in C}\tilde{s}_{\eta,\eta'}\tilde{s}_{\eta',0}\xi^{2\langle\rho+\eta',\gamma\rangle}
    &= \frac{1}{\lvert W \rvert \kappa}\sum_{w,w'\in W}(-1)^{l(w)+l(w')}\sum_{\eta'\in P/DdQ^\vee}\xi^{2\langle\eta'+\rho,w(\eta+\rho)+w'(\rho)+\gamma\rangle}\\
    &= \frac{\lvert P/DdQ^\vee\rvert}{\lvert W \rvert\kappa}\sum_{w,w'\in W}(-1)^{l(w)+l(w')}\delta_{w(\eta+\rho)+w'(\rho)+\gamma \in DdQ^\vee},  
  \end{align*}
  where $\kappa$ is a non-zero constant. 

  As $\gamma\in DdP^\vee$ the stabilizer of $\gamma$ for the dot action is trivial, and there exist a unique $\tilde{\gamma}\in C$ and $\tilde{w}\in W$ such that $\gamma+\rho \in \tilde{w}(\tilde{\gamma}+\rho) + DdQ^\vee$. Now $w(\eta+\rho)+w'(\rho)+\gamma \in DdQ^\vee$ if and only if $w(\eta+\rho)\in -\gamma-w'(\rho)+DdQ^\vee$. But $w'(\rho) \in -w'(\gamma) + w'\tilde{w}(\tilde{\gamma}+\rho) +DdQ^\vee$ and therefore, using Lemma \ref{lem:Pdual_Qdual}, $w(\eta+\rho)+w'(\rho)+\gamma \in DdQ^\vee$ if and only if $\eta+\rho \in w^{-1}w'\tilde{w}w_0(-w_0(\tilde{\gamma})+\rho)+DdQ^\vee$. But this is possible if and only if $\eta=-w_0(\tilde{\gamma})$ and $w=w'\tilde{w}w_0$ and therefore  
\[
  \sum_{\eta'\in C}\tilde{s}_{\eta,\eta'}\tilde{s}_{\eta',0}\xi^{2\langle\rho+\eta',\gamma\rangle} = \kappa (-1)^{l(\tilde{w})+l(w_0)}\delta_{\eta,-w_0(\tilde{\gamma})}.
\]  

  Therefore, the simple objects in the symmetric center of $\mathbb{Z}(\mathcal{T})_\xi\rtimes \mathcal{S}$ are indexed by $(\lambda,\mu)\in\overline{C}$ where $\frac{\mu-\lambda}{2}=\gamma\in DdP^\vee/DdQ^\vee$ and $\frac{\mu+\lambda}{2} = -w_0(\tilde{\gamma})$, where $\tilde{\gamma}$ is the only element in $C$ in the orbit of $\gamma$ under the dot action of $W^a$.
\end{proof}


\section{The type $A$}
\label{sec:typeA}
In this section, we investigate in details the category
$\mathbb{Z}(\mathcal{T}_\xi)\rtimes \mathcal{S}$ for $\mathfrak{sl}_{n+1}$ at an even root of unity.

\boitegrise{{\bf Notations.} {\emph{In this section $\mathfrak{g} =
\mathfrak{sl}_{n+1}$ and $\xi$ is a primitive $2d$-th root of unity, where
$d\geq n+1$. To be consistent with the notation of Section \ref{sec:spe_tilt} we set $l=2d$ and $l'=d$. We use the conventions of \cite[Planche I]{bourbaki_lie_456} for the labelling of roots.}}}{0.8\textwidth}

\subsection{The category $\mathbb{Z}(\mathcal{T}_\xi)\rtimes
  \mathcal{S}$}
\label{sec:categ-typeA}

In type $A$, we have $Q^\vee = Q$ therefore this category has $|C||Q/dQ| =
d^n|C|$ simple objects. From the description of $C$ given in Section \ref{sec:tilting}, we have 
\[
  C = \left\{\sum_{i=1}^n\eta_i\varpi_i \in P^+\ \middle\vert\ \sum_{i=1}^n\eta_i \leq d-(n+1)\right\}
\]
so that $|C| = \binom{d-1}{n}$. Recall the notations $\tilde{C}$ and $\overline{C}$ at the end of Section \ref{sec:subcat}.

The category $\mathbb{Z}(\mathcal{T}_\xi)\rtimes\mathcal{S}$ is not
modular by Theorem \ref{thm:int_deg}. To compute the $n+1$ simple objects in the symmetric center of $\mathbb{Z}(\mathcal{T}_\xi)\rtimes\mathcal{S}$, we follow the proof of Theorem \ref{thm:int_deg}: for every $\gamma\in dP^\vee/dQ^\vee$, we find a representative $\tilde{\gamma}\in C$ of $\gamma$ for the dot action of the affine Weyl group $W^a$. The group $dP^\vee/dQ^\vee$ is generated by the image of $d\varpi_n$ and we have
\[
  s_ns_{n-1}\cdots s_1(d\varpi_n+\rho) - \rho \equiv d\varpi_n - \sum_{i=1}^ns_ns_{n-1}\cdots s_{n+2-i}(\alpha_{n+1-i}) \mod dQ^\vee.
\]
But $s_ns_{n-1}\cdots s_{n+2-i}(\alpha_{n+1-i}) = \sum_{j=n+1-i}^n\alpha_j$ and therefore
\[
  s_ns_{n-1}\cdots s_1 \bullet (d\varpi_n) \equiv d\varpi_n - (n+1)\varpi_n \mod dQ^\vee,
\]
which is indeed an element of $C$. Therefore the element 
\[
I=L_\xi((d-(n+1))\varpi_1-d\varpi_n,(d-(n+1))\varpi_1+d\varpi_n)
\]
is in the symmetric center of $\mathbb{Z}(\mathcal{T}_\xi)\rtimes
\mathcal{S}$. As $\varpi_1+\varpi_n \in Q^\vee$, we have an isomorphism $I\simeq L_\xi((2d-(n+1))\varpi_1,-(n+1)\varpi_1)$ in $\mathbb{Z}(\mathcal{T}_\xi)\rtimes \mathcal{S}$.

\begin{proposition}\label{prop:sym_center_A}
  The symmetric center of $\mathbb{Z}(\mathcal{T}_\xi)\rtimes\mathcal{S}$ is generated by $I$ as a tensor category. The object $I$ is of positive and negative quantum dimension $(-1)^n$ and of twist $1$. Moreover, tensorisation by $I$ has no fixed points on the set of simple objects of $\mathbb{Z}(\mathcal{T}_\xi)\rtimes\mathcal{S}$.
\end{proposition}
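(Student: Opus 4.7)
My plan hinges on the isomorphism $I \simeq L_\xi((2d-(n+1))\varpi_1, -(n+1)\varpi_1)$ established just before the statement, together with the $P/(dQ)$-grading on $\mathbb{Z}(\mathcal{T}_\xi)\rtimes\mathcal{S}$. I will compute $\dim^\pm(I)$ and $\theta_I$ directly from Proposition \ref{prop:S_mat-T_mat}, and deduce the remaining two assertions from the grading combined with Theorem \ref{thm:int_deg}.

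For the quantum dimensions, I factor in $\mathcal{T}_\xi$
\[
L_\xi((2d-(n+1))\varpi_1,-(n+1)\varpi_1) \simeq L_\xi((d-(n+1))\varpi_1,(d-(n+1))\varpi_1)\otimes L_\xi(d\varpi_1,-d\varpi_1).
\]
The second factor is one-dimensional with $L_{2\rho}$ acting by $\xi^{-dn}=(-1)^n$, using $\xi^d=-1$ and $\langle 2\rho,\varpi_1\rangle=n$ in type $A_n$. For the first factor, a simple tilting $\qgrroot{\mathfrak{sl}_{n+1}}$-module, the Weyl formula produces $\prod_{i=1}^n \qint{d-(n+1-i)}{\xi}/\qint{i}{\xi}$; the identity $\qint{d-j}{\xi}=\qint{j}{\xi}$, a direct consequence of $\xi^d=-1$, collapses this product to $1$. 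For the twist, direct substitution into $\theta_{L_\xi(\lambda,\mu)}=\xi^{\langle\lambda+2\rho,\mu\rangle}$ and the relation $\langle\varpi_1,\varpi_1\rangle = n/(n+1)$ yield $\xi^{-2dn}=1$.

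The absence of fixed points follows because $I$ lies in the grading component $d\varpi_1\in P/(dQ)$, which is nonzero since $\varpi_1\notin Q$ in type $A_n$; hence $L\otimes I$ and $L$ inhabit distinct graded components for every simple $L$. For the generation assertion, both tensor factors of $I$ are invertible (the first being the standard generator of the group of invertibles in the tilting fusion category for $\qgrroot{\mathfrak{sl}_{n+1}}$ at level $d-(n+1)$), so $I$ is invertible and each $I^{\otimes k}$ is again simple. The powers $I^{\otimes 0}, I^{\otimes 1}, \ldots, I^{\otimes n}$ sit in the pairwise distinct grading components $kd\varpi_1\in P/(dQ)$, since $kd\varpi_1\in dQ$ forces $(n+1)\mid k$ (recall that $\varpi_1$ has order $n+1$ in $P/Q$). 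These yield $n+1$ distinct simple objects of the symmetric center, which by Theorem \ref{thm:int_deg} exhaust it, forcing $I$ to generate.

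I anticipate the main obstacle to be justifying the invertibility of the first tensor factor of $I$. A direct alternative is to show $I\otimes I^*\simeq\mathbf{1}$ by noting that this tensor product sits in the trivial grading component, which is equivalent to the tilting fusion category of $\qgrroot{\mathfrak{sl}_{n+1}}$ where all simples have strictly positive quantum dimension: combined with $\dim^+(I\otimes I^*)=\dim^+(I)\dim^-(I)=1=\dim^+(\mathbf{1})$ and the appearance of $\mathbf{1}$ with multiplicity one in $I\otimes I^*$, this forces $I\otimes I^*\simeq \mathbf{1}$. Everything else amounts to bookkeeping once the displayed factorisation of $I$ and the grading structure have been identified.
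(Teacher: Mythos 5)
Your argument is correct and follows essentially the same route as the paper's proof: the same factorisation of $I$, the count of $n+1$ simple objects in the symmetric center supplied by Theorem \ref{thm:int_deg}, and the grading component $d\varpi_1\notin dQ$ for the absence of fixed points. The only differences are cosmetic — you compute $\dim^{\pm}$ of the degree-zero factor directly from the Weyl formula together with $\qint{d-j}{\xi}=\qint{j}{\xi}$ where the paper invokes invertibility plus positivity of the dimension of an invertible object, and you distinguish the powers $I^{\otimes k}$ by their gradings (using that $\varpi_1$ has order $n+1$ in $P/Q$) where the paper reads this off from an explicit tensor-product formula.
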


\begin{proof}
  First, the object $I$ is invertible because the object $L_\xi((d-(n+1))\varpi_1,(d-(n+1))\varpi_1)$ in $\mathcal{T}_\xi$ is invertible and the tensor product $L_\xi((d-(n+1))\varpi_1,(d-(n+1))\varpi_1)\otimes L_\xi(\eta,\eta)$, $\eta = \sum_{i=1}^n\eta_i\varpi_i \in C$, is given by
  \[
    L_\xi((d-(n+1))\varpi_1,(d-(n+1))\varpi_1)\otimes
    L_\xi(\eta,\eta) \simeq L_\xi\left(\sum_{i=1}^n\eta_{i-1}\varpi_i,\sum_{i=1}^n\eta_{i-1}\varpi_i\right),
  \]
  where we set $\eta_0 = d-(n+1)-\sum_{i=1}^n\eta_i \geq 0$ (see the proof of \cite[Lemme 5.1]{bruguieres}). Let $(\lambda,\mu) \in \tilde{C}$. Write this weight as
  \[
    \mu = \sum_{i=1}^n\mu_i\alpha_i \quad \text{and} \quad \lambda = -\mu + 2\sum_{i=1}^n\eta_i\varpi_i
  \]
  with $\mu_i \in \mathbb{Z}$, $\eta_i\in\mathbb{N}$ and $\sum_{i=1}^n\eta_i \leq d-(n+1)$.
  Therefore, using the braiding, we have
  \[
    I\otimes L_\xi(\lambda,\mu) \simeq L_\xi\left(\lambda + \sum_{i=1}^n(\eta_{i-1}-\eta_i)\varpi_i +d\varpi_1,\mu + \sum_{i=1}^n(\eta_{i-1}-\eta_i)\varpi_i -d\varpi_1\right).
  \]
  
  From this, we see that the objects $I^{\otimes k}$ for $0\leq k \leq n$ are non-isomorphic. Since $I$ is in the symmetric center of $\mathbb{Z}(\mathcal{T}_\xi)\rtimes\mathcal{S}$, which contains $\lvert P/Q \rvert = n+1$ simple objects, all the simple objects in this symmetric center are given by the powers of $I$.
  
  We can compute the quantum dimension directly in $\mathcal{C}_{\xi}$. As a $\qgrroot{\mathfrak{sl}_{n+1}}$-module, the quantum dimension of $L_\xi((d-(n+1))\varpi_1,(d-(n+1))\varpi_1)$ is $1$: this object is invertible and its quantum dimension is positive \cite[Theorem 3.3.9]{bakalov_kirillov}. Therefore, the positive quantum dimension of $I$ is the one of $L_\xi(d\varpi_1,-d\varpi_1)$ which is $\xi^{-d\langle 2\rho,\varpi_1\rangle} = (-1)^n$.
  
  The twist is given by $\xi^{\langle(2d-(n+1))\varpi_1+2\rho, -(n+1)\varpi_1\rangle} = \xi^{-2dn} = 1$. The last assertion is easy, since the grading of $I$ is $d\varpi_1 \not\in dQ$.
\end{proof}

\subsection{Dimension of $\mathbb{Z}(\mathcal{T}_\xi)\rtimes\mathcal{S}$ and renormalization}
\label{sec:renorm}

Thanks to the decomposition $L_{\xi}(\lambda,\mu)\simeq L_{\xi}\left(\frac{\lambda+\mu}{2},\frac{\lambda+\mu}{2}\right)\otimes L_{\xi}\left(\frac{\lambda-\mu}{2},-\frac{\lambda-\mu}{2}\right)$ the squared norm of 
$L_{\xi}(\lambda,\mu)$ is the same as the one of $L_{\xi}\left(\frac{\lambda+\mu}{2},\frac{\lambda+\mu}{2}\right)$. Therefore
\[
  \dim(\mathbb{Z}(\mathcal{T}_\xi)\rtimes\mathcal{S}) = d^n N,
\]
where $N$ is the dimension of the fusion category of tilting modules for $\qgrroot{\mathfrak{sl}_{n+1}}$, which is well known (see \cite[Theorem 3.3.20]{bakalov_kirillov}, except our $N$ is their $D^2$)
\[
  N = (n+1)d^n(-1)^{\lvert\Phi^+\rvert}\prod_{\alpha\in\Phi^+}\frac{1}{(\xi^{\langle\alpha,\rho\rangle}-\xi^{-\langle\alpha,\rho\rangle})^2}.
\]
The Weyl character formula gives
\[
  \prod_{\alpha\in\Phi^+}\xi^{\langle\alpha,\rho\rangle}-\xi^{-\langle\alpha,\rho\rangle} = \xi^{-2\langle\rho,\rho\rangle}\sum_{w\in\mathfrak{S}_{n+1}}(-1)^{l(w)}\xi^{\langle 2\rho,w\bullet 0\rangle}.
\]

Now, as the category $\mathbb{Z}(\mathcal{T}_\xi)\rtimes\mathcal{S}$ is degenerate, we construct a non-degenerate category from it. If $n$ is even, all elements in the symmetric center are of quantum dimension $1$ and of twist $1$: adding an isomorphism between $I$ and $\mathbf{1}$ as in Sections \ref{sec:fus_tilt} and \ref{sec:subcat} yields a non-degenerate category of dimension 
\[
  \frac{\dim(\mathbb{Z}(\mathcal{T}_\xi)\rtimes\mathcal{S})}{n+1} = d^{2n}(-1)^{\lvert\Phi^+\rvert} \xi^{-\langle 2\rho,2\rho\rangle}\left(\sum_{w\in\mathfrak{S}_{n+1}}(-1)^{l(w)}\xi^{\langle 2\rho,w\bullet 0\rangle}\right)^{-2}.
\]

If $n$ is odd, one half of the simple objects of the symmetric center are of quantum dimension $1$ and of twist $1$ and the other half is of quantum dimension $-1$ and of twist $1$. We first add an isomorphism between $I\otimes I$ and $\mathbf{1}$ and we obtain a slightly degenerate category of superdimension
\[
  \frac{\dim(\mathbb{Z}(\mathcal{T}_\xi)\rtimes\mathcal{S})}{n+1} = d^{2n}(-1)^{\lvert\Phi^+\rvert} \xi^{-\langle 2\rho,2\rho\rangle}\left(\sum_{w\in\mathfrak{S}_{n+1}}(-1)^{l(w)}\xi^{\langle 2\rho,w\bullet 0\rangle}\right)^{-2},
\]
see \cite{super_application} for more details. By adding an isomorphism of odd degree between $I$ and $\mathbf{1}$, we obtain a braided pivotal superfusion category, as in \cite[Section 3]{super_application}.

In both cases, the category obtained is not spherical and we renormalize the $S$-matrix by a factor involving the dimension on a certain object $\bar{\mathbf{1}}$ introduced in \cite{super_application}: it is an object $X$ such that the character induced by $X$ on $\Gr(\mathbb{Z}(\mathcal{T}_\xi)\rtimes\mathcal{S})$ is the negative quantum dimension. If such an object exists, it is not unique as tensorization by the symmetric center leaves invariant the character induced at the level of the Grothendieck group. Therefore, if $n$ is even, $\bar{\mathbf{1}}$ is well defined in the modularization of $\mathbb{Z}(\mathcal{T}_\xi)\rtimes\mathcal{S}$, and if $n$ is odd, it is well defined in the super category associated to $\mathbb{Z}(\mathcal{T}_\xi)\rtimes\mathcal{S}$.

\begin{proposition}
  The object $\bar{\mathbf{1}}$ belongs to the orbit of $L_{\xi}(-2\rho,2\rho)$ under tensorisation by the symmetric center.
\end{proposition}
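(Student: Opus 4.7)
My plan is to verify directly that $X := L_\xi(-2\rho,2\rho)$ satisfies the defining property of $\bar{\mathbf{1}}$, namely that the character it induces on $\Gr(\mathbb{Z}(\mathcal{T}_\xi)\rtimes\mathcal{S})$ is $\dim^-$. Since $\bar{\mathbf{1}}$ is only defined up to tensorisation by the symmetric center, this is equivalent to the claim. First I would check that $X$ is a legitimate object of $\mathbb{Z}(\mathcal{T}_\xi)\rtimes\mathcal{S}$: the weights satisfy $-2\rho+2\rho=0\in 2C$ (because $0\in C$) and $2\rho\in Q$, so $(-2\rho,2\rho)\in\tilde{C}$.

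Next, I would substitute $\lambda=-2\rho$ in the formula of Proposition~\ref{prop:S_mat-T_mat}. The first exponent $\langle 2\rho+\lambda,\cdot\rangle$ then vanishes, and writing $\eta'=\tfrac{\lambda'+\mu'}{2}$, $\zeta'=\tfrac{\lambda'-\mu'}{2}$ one has $(w\bullet(\lambda',\mu'))_1+2\rho=w(\eta'+\rho)+\zeta'+\rho$. After factoring out $\xi^{\langle 2\rho,\zeta'+2\rho\rangle}$ this gives
\[
S_{X,L_\xi(\lambda',\mu')}=\xi^{\langle 2\rho,\zeta'+2\rho\rangle}\,A(\eta'),\qquad A(\eta'):=\frac{\sum_{w\in W}(-1)^{l(w)}\xi^{\langle 2\rho,w\bullet\eta'\rangle}}{\sum_{w\in W}(-1)^{l(w)}\xi^{\langle 2\rho,w\bullet 0\rangle}}.
\]
In parallel I would unfold the formula for $\dim^-(L_\xi(\lambda',\mu'))$ from Section~\ref{sec:duality}. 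The target identity is
\[
\dim^-(L_\xi(\lambda',\mu'))=\xi^{\langle 2\rho,\zeta'\rangle}A(\eta').
\]

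The key technical step — which I expect to be the only real obstacle — is the passage between the "$-\langle 2\rho,\cdot\rangle$" exponents in $\dim^-$ and the "$+\langle 2\rho,\cdot\rangle$" exponents in $A(\eta')$. This I would handle by the substitution $w\mapsto w_0 w$: combined with $w_0(2\rho)=-2\rho$ and the $W$-invariance of $\langle\cdot,\cdot\rangle$, it yields
\[
\sum_{w\in W}(-1)^{l(w)}\xi^{-\langle 2\rho,w(\eta'+\rho)\rangle}=(-1)^{l(w_0)}\sum_{w\in W}(-1)^{l(w)}\xi^{\langle 2\rho,w(\eta'+\rho)\rangle},
\]
with the same identity at $\eta'=0$ for the denominator. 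The common prefactor $(-1)^{l(w_0)}$ and the shift $\xi^{\langle 2\rho,\rho\rangle}$ between $w(\eta'+\rho)$ and $w\bullet\eta'$ cancel between numerator and denominator.

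Putting the two computations together yields $S_{X,Y}=\xi^{\langle 2\rho,2\rho\rangle}\dim^-(Y)$ for every simple $Y$; in particular $S_{X,\mathbf{1}}=\xi^{\langle 2\rho,2\rho\rangle}\neq 0$, so the character induced by $X$ on the Grothendieck ring is $Y\mapsto S_{X,Y}/S_{X,\mathbf{1}}=\dim^-(Y)$. By the uniqueness of $\bar{\mathbf{1}}$ up to the orbit of the symmetric center, this forces $\bar{\mathbf{1}}$ to lie in the orbit of $X=L_\xi(-2\rho,2\rho)$.
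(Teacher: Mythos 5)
Your proposal is correct and follows essentially the same route as the paper: both verify directly, from the explicit $S$-matrix formula of Proposition \ref{prop:S_mat-T_mat}, that the normalized column of $L_\xi(-2\rho,2\rho)$ is the negative quantum dimension character, and then invoke the uniqueness of $\bar{\mathbf{1}}$ up to the symmetric-center orbit. The only cosmetic differences are that you place $(-2\rho,2\rho)$ in the other slot of the (symmetric) $S$-matrix and handle the sign flip in the exponents by the substitution $w\mapsto w_0w$, whereas the paper factors through the decomposition $L_\xi(\lambda,\mu)\simeq L_\xi(\eta,\eta)\otimes L_\xi(\zeta,-\zeta)$ and the equality $\dim^+=\dim^-$ on the diagonal part.
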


\begin{proof}
  We show that the character of $\Gr(\mathbb{Z}(\mathcal{T}_\xi)\rtimes\mathcal{S})$ induced by $L_{\xi}(-2\rho,2\rho)$ is the negative quantum dimension. Denote by $\chi$ the character of $\Gr(\mathbb{Z}(\mathcal{T}_\xi)\rtimes\mathcal{S})$ defined by
  \[
    \chi(X) = \frac{S_{L_{\xi}(-2\rho,2\rho),X}}{\dim^+(L_{\xi}(-2\rho,2\rho))}.
  \]
  The formula for the $S$-matrix, together with $\dim^+(L_{\xi}(-2\rho,2\rho))=\xi^{\langle 2\rho,2\rho\rangle}$ gives
  \[
    \chi(L_\xi(\lambda,\mu)) = \frac{\sum_{w\in\mathfrak{S}_{n+1}}(-1)^{l(w)}\xi^{\langle\lambda+2\rho,(w\bullet(-2\rho,2\rho))_2\rangle + \langle(w\bullet(-2\rho,2\rho))_1+2\rho,\mu\rangle }}{\xi^{\langle 2\rho,2\rho\rangle}\sum_{w\in\mathfrak{S}_{n+1}}(-1)^{l(w)}\xi^{\langle 2\rho, w\bullet 0\rangle}}.
  \]
  It is easy to check that $w\bullet(-2\rho,2\rho) = w\bullet(0,0)+(-2\rho,2\rho)$ and therefore
  \begin{align*}
    \langle\lambda+2\rho,(w\bullet(-2\rho,2\rho))_2\rangle &+ \langle(w\bullet(-2\rho,2\rho))_1+2\rho,\mu\rangle - \langle 2\rho,2\rho\rangle \\&= \langle \lambda+\mu+2\rho,w(\rho)\rangle + \left\langle\frac{\lambda-\mu}{2}-\rho,2\rho \right\rangle\\
    &= \left\langle w^{-1}\bullet\frac{\lambda+\mu}{2},2\rho\right\rangle + \left\langle\frac{\lambda-\mu}{2},2\rho \right\rangle.
  \end{align*}
  Then, the value of $\chi$ at $L_\xi(\lambda,\mu)$ is given by
  \begin{align*}
    \chi(L_\xi(\lambda,\mu)) &= \xi^{\left\langle\frac{\lambda-\mu}{2},2\rho \right\rangle} \frac{\sum_{w\in\mathfrak{S}_{n+1}}(-1)^{l(w)}\xi^{\left\langle w\bullet\frac{\lambda+\mu}{2},2\rho\right\rangle}}{\sum_{w\in\mathfrak{S}_{n+1}}(-1)^{l(w)}\xi^{\langle 2\rho, w\bullet 0\rangle}}\\
    &=\dim^{-}\left(L_\xi\left(\frac{\lambda-\mu}{2},-\frac{\lambda-\mu}{2}\right)\right)\dim^{\pm}\left(L_\xi\left(\frac{\lambda+\mu}{2},\frac{\lambda+\mu}{2}\right)\right)\\
    &=\dim^{-}(L_\xi(\lambda,\mu)),
  \end{align*}
  as stated.  
\end{proof}

Finally, we renormalize $S$ with a square root of
\[
  \frac{\dim(\mathbb{Z}(\mathcal{T}_\xi)\rtimes\mathcal{S})}{n+1}\dim^{+}(\overline{\mathbf{1}}) = d^{2n}(-1)^{\lvert\Phi^+\rvert+n} \left(\sum_{w\in\mathfrak{S}_{n+1}}(-1)^{l(w)}\xi^{\langle 2\rho,w\bullet 0\rangle}\right)^{-2},
\]
which is 
\[
  d^n i^{\lvert\Phi^+\rvert+n} \left(\sum_{w\in\mathfrak{S}_{n+1}}(-1)^{l(w)}\xi^{\langle 2\rho,w\bullet 0\rangle}\right)^{-1}
\]
up to a sign, where $i$ is a square root of $-1$ in $\mathbb{C}$.

  We choose for each orbit of simple objects of $\mathbb{Z}(\mathcal{T}_\xi)\rtimes\mathcal{S}$ a representative of the orbit under tensorisation by the symmetric center, such that $\mathbf{1}$ is the representative of the orbit of $\mathbf{1}$ and $I\otimes L_{\xi}(-2\rho,2\rho)$ is the representative of the orbit of $L_{\xi}(-2\rho,2\rho)$. This choice will be explained in Section \ref{sec:comparison}.

\begin{theoreme}\label{thm:modular_quantum_A}
  If $n$ is even, the category $\mathbb{Z}(\mathcal{T}_\xi)\rtimes\mathcal{S}$ gives rise to a non-degenerate braided pivotal fusion category.
  If $n$ is odd, the category $\mathbb{Z}(\mathcal{T}_\xi)\rtimes\mathcal{S}$ gives rise to a non-degenerate braided pivotal superfusion category.
  In both cases the renormalized $S$-matrix is given by
  \[
    \tilde{S}_{(\lambda,\mu),(\lambda',\mu')} = i^{-n-\lvert\Phi^+\rvert}\frac{\sum_{w\in W}(-1)^{l(w)}
    \xi^{\langle 2\rho+\lambda,(w\bullet(\lambda',\mu'))_2\rangle + \langle\mu,(w\bullet(\lambda',\mu'))_1+2\rho\rangle}}
  {d^n}
  \]
and the twist by
  \[
    \theta_{\lambda,\mu}=\xi^{\langle\lambda+2\rho,\mu\rangle}.
  \]

\end{theoreme}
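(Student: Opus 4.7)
The plan is to apply the partial modularization machinery of Sections \ref{sec:fus_tilt} and \ref{sec:subcat} to kill the symmetric center of $\mathbb{Z}(\mathcal{T}_\xi)\rtimes\mathcal{S}$, which by Proposition \ref{prop:sym_center_A} is exactly the tensor category generated by the invertible object $I$ of quantum dimensions $(-1)^n$ and twist $1$. The even case is a classical modularization; the odd case requires the super-framework of \cite{super_application}.

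When $n$ is even, $I$ has both positive and negative quantum dimension equal to $1$ and twist $1$, and tensorisation by $I$ acts freely on the set of isomorphism classes of simple objects (Proposition \ref{prop:sym_center_A}). Applying the construction of Section \ref{sec:fus_tilt} to the subcategory $\langle I\rangle$ therefore produces a braided pivotal fusion category. Since the symmetric center of $\mathbb{Z}(\mathcal{T}_\xi)\rtimes\mathcal{S}$ is exactly $\langle I\rangle$ by Theorem \ref{thm:int_deg} and Proposition \ref{prop:sym_center_A}, this resulting category has trivial symmetric center, hence is non-degenerate.

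When $n$ is odd, $I$ has quantum dimension $-1$ but $I^{\otimes 2}$ is the unit up to isomorphism with quantum dimension $1$ and twist $1$. I would first adjoin an isomorphism $I^{\otimes 2}\simeq\mathbf{1}$ as in Section \ref{sec:fus_tilt} (again tensorisation by $I^{\otimes 2}$ acts freely on simples since $I$ does), obtaining a braided pivotal fusion category whose residual symmetric center is generated by the image of $I$, of twist $1$ and dimension $-1$. Following \cite[Section 3]{super_application}, adjoining an odd isomorphism $I\simeq\mathbf{1}$ then produces a non-degenerate braided pivotal superfusion category.

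For the renormalised $S$-matrix, I would start from the formula of Proposition \ref{prop:S_mat-T_mat} and apply the renormalisation prescription of \cite{super_application}: divide $S$ by a square root of $\dim(\mathbb{Z}(\mathcal{T}_\xi)\rtimes\mathcal{S})/(n+1)\cdot \dim^+(\bar{\mathbf{1}})$, which has already been computed in the discussion preceding the theorem to equal $d^n\, i^{|\Phi^+|+n}\big/\sum_{w\in\mathfrak{S}_{n+1}}(-1)^{l(w)}\xi^{\langle 2\rho,w\bullet 0\rangle}$ up to sign. The Weyl-type denominator of the unrenormalised $S$-matrix then cancels, yielding the stated expression. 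The twist formula is immediate from Proposition \ref{prop:S_mat-T_mat}, since the construction above does not alter the twists of the chosen orbit representatives. The main subtlety is bookkeeping: one must choose the correct square root (determined only up to sign) and check that the identification of $\bar{\mathbf{1}}$ in the orbit of $L_\xi(-2\rho,2\rho)$, together with the choice of orbit representatives fixed just before the theorem, is compatible with the formula so that the super-modular $S$-matrix is well-defined on orbits. These verifications rely entirely on the orbit structure established in Proposition \ref{prop:sym_center_A}.
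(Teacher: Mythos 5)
Your proposal is correct and follows essentially the same route as the paper: Section \ref{sec:renorm} carries out exactly this program — modularization along $\langle I\rangle$ in the even case, the two-step procedure (adjoin $I^{\otimes 2}\simeq\mathbf{1}$, then an odd isomorphism $I\simeq\mathbf{1}$) in the odd case, and renormalization of the $S$-matrix of Proposition \ref{prop:S_mat-T_mat} by the square root of $\frac{\dim(\mathbb{Z}(\mathcal{T}_\xi)\rtimes\mathcal{S})}{n+1}\dim^{+}(\bar{\mathbf{1}})$ computed there. The only point the paper leaves as implicit, as you do, is that killing the full symmetric center via M\"uger's construction indeed yields a non-degenerate (super)category.
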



\section{Malle's $\mathbb{Z}$-modular datum}
\label{sec:modular_datum}
We refer to \cite{unip_malle} and \cite{cuntz} for most of the material of this section. Let $d$ and $n$ be positive integers. Let $\zeta$ be a primitive $d$-th of unity.

\subsection{Set up}
\label{sec:set_up}

 Let $Y=\{1,2,\ldots,nd+1\}$ of cardinal $nd+1$ and $\pi\colon Y \rightarrow \mathbb{N}$ be the map defined by
\[
  \pi(k) = \begin{cases} 
  		n & \text{if } 1\leq k \leq n+1 \\
        \left\lfloor \frac{k-n-2}{d-1} \right\rfloor & \text{if } n+2 \leq k \leq nd+1
        \end{cases}.                
\]
Let $\Psi(Y,\pi)$ be the set of maps $f \colon Y \rightarrow \{0,1,\ldots,d-1\}$ such that $f$ is strictly increasing on each $\pi^{-1}(i)$, $0\leq i \leq n$. Since for $0\leq i \leq n-1$ the set $\pi^{-1}(i)$ is of cardinal $d-1$, there exists a unique element $k_i(f)\in\{0,\ldots,d-1\}$ such that $\{0,\ldots,d-1\} = f(\pi^{-1}(i))\cup\{k_i(f)\}$. Note that $f$ is then determined by the values of $f(1)<\cdots<f(n+1)$ and of $k_0(f),\ldots,k_{n-1}(f)$. For $f\in\Psi(Y,\pi)$, we set
\[
  \varepsilon(f) = (-1)^{\left\lvert\left\{(y,y')\in Y\times Y\ \middle\vert\ y < y' \text{ and } f(y)<f(y')\right\}\right\rvert}.
\]

Let $V$ be a $\mathbb{C}$-vector space of dimension $d$ with basis $(v_i)_{0\leq i \leq d-1}$. We denote by $\mathcal{S}$ the square matrix $(\zeta^{ij})_{0\leq i,j \leq d-1}$ and we will view it as an endomorphism of $V$. We set ${\tau(d) = (-1)^{d(d-1)/2}\det(\mathcal{S}) = \prod_{0\leq i<j\leq d-1}(\zeta^i-\zeta^j)}$. We consider the automorphism of the vector space $\left(\bigwedge^{n+1}V\right)\otimes \left(\bigwedge^{d-1}V\right)^{\otimes n}$ given by $\left(\bigwedge^{n+1}\mathcal{S}\right)\otimes \left(\bigwedge^{d-1}\mathcal{S}\right)^{\otimes n}$. This space has a basis given by 
\[
  \mathbf{v}_f = (v_{f(1)}\wedge\cdots\wedge v_{f(n+1)})\otimes (v_{f(n+2)}\wedge\cdots\wedge v_{f(n+d)})\otimes \cdots \otimes (v_{f(n+2+(n-1)(d-1))}\wedge\cdots\wedge v_{f(nd+1)})
\]
for $f\in \Psi(Y,\pi)$. Denote by $\mathbf{S}$ the matrix of $\left(\bigwedge^{n+1}\mathcal{S}\right)\otimes \left(\bigwedge^{d-1}\mathcal{S}\right)^{\otimes n}$ in the basis $(\mathbf{v}_f)_{f\in\Psi(Y,\pi)}$
\[
   \left(\left(\textstyle{\bigwedge}^{n+1}\mathcal{S}\right)\otimes \left(\textstyle{\bigwedge}^{d-1}\mathcal{S}\right)^{\otimes n}\right)(\mathbf{v}_f) = \sum_{g\in\Psi(Y,\pi)}\mathbf{S}_{g,f}\mathbf{v}_g.
\]

The following lemma follows immediately from \cite[Lemma 6.2]{asymptotic_cell}.

\begin{lemma}
Let $f,g\in\Psi(Y,\pi)$. Then we have
\[
  \mathbf{S}_{f,g} = (-1)^{\sum_{i=0}^{n-1}(k_i(f)+k_i(g))}\frac{(-1)^{nd(d-1)/2}\tau(d)^n}{d^n}\prod_{i=0}^{n-1}\zeta^{-k_i(f)k_i(g)}\sum_{\sigma\in\mathfrak{S}_{n+1}}
(-1)^{l(\sigma)}\prod_{i=1}^{n+1}\zeta^{f(i)g(\sigma(i))}.
\]
\end{lemma}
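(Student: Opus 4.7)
The plan is to exploit the Kronecker-product structure of $\mathbf{S}$. Since $\left(\bigwedge^{n+1}\mathcal{S}\right)\otimes \left(\bigwedge^{d-1}\mathcal{S}\right)^{\otimes n}$ is a tensor product of endomorphisms and the basis $(\mathbf{v}_f)$ is built as a tensor product of wedges over the fibers $\pi^{-1}(i)$, every matrix entry $\mathbf{S}_{f,g}$ factors as one matrix entry of $\bigwedge^{n+1}\mathcal{S}$ (governed by the restrictions $f|_{\pi^{-1}(n)}$ and $g|_{\pi^{-1}(n)}$) times $n$ matrix entries of $\bigwedge^{d-1}\mathcal{S}$ (one per fiber $\pi^{-1}(i)$, $0\leq i\leq n-1$).

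For the $\bigwedge^{n+1}\mathcal{S}$ factor, since the indices $f(1)<\dots<f(n+1)$ and $g(1)<\dots<g(n+1)$ are already in increasing order, the matrix entry is the ordinary determinant
\[
\det\bigl(\zeta^{g(i)f(j)}\bigr)_{1\leq i,j\leq n+1} = \sum_{\sigma\in\mathfrak{S}_{n+1}}(-1)^{l(\sigma)}\prod_{i=1}^{n+1}\zeta^{f(i)g(\sigma(i))},
\]
with no additional sign. For each $\bigwedge^{d-1}\mathcal{S}$ factor, the basis of $\bigwedge^{d-1}V$ is indexed by the missing element $k\in\{0,\ldots,d-1\}$, and the corresponding matrix entry is (up to sign) the minor of $\mathcal{S}$ obtained by removing the row indexed by $k_i(g)$ and the column indexed by $k_i(f)$. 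Using the Gauss-sum relation $\mathcal{S}^2 = d\cdot P$, where $P$ is the permutation matrix $P_{ij}=\delta_{i+j\equiv 0 \pmod d}$, one has $\mathcal{S}^{-1}=d^{-1}\overline{\mathcal{S}}$ with $\overline{\mathcal{S}}_{ij}=\zeta^{-ij}$. Combining this with the cofactor identity $\bigwedge^{d-1}\mathcal{S} = \det(\mathcal{S})\cdot (\mathcal{S}^{T})^{-1}$ expresses each minor as $\det(\mathcal{S})\cdot d^{-1}\zeta^{-k_i(f)k_i(g)}$, up to the standard $(-1)^{k_i(f)+k_i(g)}$ cofactor sign. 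The Vandermonde evaluation gives $\det(\mathcal{S})=(-1)^{d(d-1)/2}\tau(d)$.

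Assembling the $n$ factors multiplies $(-1)^{d(d-1)/2}\tau(d)/d$ and $\zeta^{-k_i(f)k_i(g)}$ across $i=0,\ldots,n-1$, producing the prefactor $(-1)^{nd(d-1)/2}\tau(d)^n d^{-n}$ and the product of $\zeta^{-k_i(f)k_i(g)}$, while the cofactor signs combine to $(-1)^{\sum_i(k_i(f)+k_i(g))}$. The formula then matches the claimed expression.

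The only nontrivial bookkeeping is sign-tracking in the cofactor step: this is exactly \cite[Lemma 6.2]{asymptotic_cell}, which treats the single-factor $\bigwedge^{d-1}\mathcal{S}$ case, so the main obstacle is simply applying that lemma in parallel to each of the $n$ wedge factors while verifying that the interaction with the additional $\bigwedge^{n+1}\mathcal{S}$ factor contributes no further sign. Since $\pi^{-1}(n)=\{1,\ldots,n+1\}$ is the lowest-indexed fiber and the values $f(1),\ldots,f(n+1)$ are listed in increasing order in $\mathbf{v}_f$, no reshuffling is needed and the tensor decomposition is clean, so the invocation of \cite[Lemma 6.2]{asymptotic_cell} for each fiber $\pi^{-1}(i)$ yields the stated identity.
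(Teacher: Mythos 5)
Your proof is correct and follows essentially the same route as the paper, which simply observes that the identity "follows immediately from \cite[Lemma 6.2]{asymptotic_cell}": the Kronecker factorization of $\mathbf{S}$ along the fibers of $\pi$, the Vandermonde determinant for the $\bigwedge^{n+1}\mathcal{S}$ factor, and the cofactor/Gauss-sum computation (i.e.\ that cited lemma) for each $\bigwedge^{d-1}\mathcal{S}$ factor are exactly the intended argument. Your sign bookkeeping ($(-1)^{k_i(f)+k_i(g)}$ from the cofactor positions, $\det(\mathcal{S})=(-1)^{d(d-1)/2}\tau(d)$, and $(\mathcal{S}^{-1})_{ij}=d^{-1}\zeta^{-ij}$) assembles correctly to the stated prefactor.
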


\subsection{Malle's $\mathbb{Z}$-modular datum}
\label{sec:malle_z}

Following \cite{unip_malle}, we consider the family $\mathcal{F}$ of unipotent characters of $G\left(d,1,\frac{n(n+1)}{2}\right)$ parametrized by reduced $d$-symbols with values in the multiset
\[
  \{0^{d-1},1^{d-1},\ldots,(n-1)^{d-1},n^{n+1}\}.
\]
These $d$-symbols are in bijection with the set
\[
  \Psi^{\#}(Y,\pi) = \left\{f\in \Psi(Y,\pi)\ \middle\vert\ \sum_{y\in Y} f(y) \equiv n\binom{d}{2} \mod d\right\}.
\]
Indeed to $f\in\Psi^{\#}(Y,\pi)$ we associate the reduced $d$-symbol $S=(S_0,\ldots,S_{d-1})$ with entries $i\in S_j$ for all $j\in f(\pi^{-1}(i))$. We define for $f\in\Psi^{\#}(Y,\pi)$
\[
  \Fr(f) = \zeta_{*}^{nd(1-d^2)}\prod_{y\in Y}\zeta_{*}^{-6(f(y)^2+df(y))},
\]
where $\zeta_{*}$ is a primitive $12d$-th root of unity such that $\zeta_{*}^{12} = \zeta$. Denote by $\mathbb{T}$ the diagonal matrix with entries $(\Fr(f))_{f\in\Psi^{\#}(Y,\pi)}$.

Following \cite[Section 5]{cuntz}, we denote by $\mathbb{S} = (\mathbb{S}_{f,g})_{f,g\in\Psi^{\#}(Y,\pi)}$ the square matrix defined by
\[
  \mathbb{S}_{f,g} = \frac{(-1)^{n(d-1)}}{\tau(d)^n}\overline{\mathbf{S}_{f,g}}.
\]

Define $f_{\mathrm{sp}}\in\Psi^{\#}(Y,\pi)$ by $f_{\mathrm{sp}}(i) = i-1$ for $1\leq i \leq n+1$ and $k_j(f_{\mathrm{sp}}) = j+1$ for $0\leq j \leq n-1$. Note that $\mathbf{S}_{f_{\mathrm{sp}},g} \neq 0$ for all $g \in \Psi(Y,\pi)$ as it is, up to a non-zero constant, the value of a Vandermonde determinant. The following is due to Malle \cite[4.15]{unip_malle}.

\begin{proposition}
We have
\begin{enumerate}
  \item $\mathbb{S}^4 = (\mathbb{ST})^3 = [\mathbb{S}^2,\mathbb{T}] = 1$.
  \item $\mathbb{S}$ is symmetric and unitary.
  \item For all $f,g,h \in \Psi^{\#}(Y,\pi)$
  \[
    N_{f,g}^h = \sum_{k\in\Psi^{\#}(Y,\pi)}\frac{\mathbb{S}_{f,k}\mathbb{S}_{g,k}\overline{\mathbb{S}_{h,k}}}{\mathbb{S}_{f_{\mathrm{sp}},k}} \in \mathbb{Z}.
  \]
\end{enumerate}
\end{proposition}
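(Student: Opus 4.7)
The proposition is due to Malle \cite[4.15]{unip_malle}. My plan is to leverage the tensor–exterior structure of $\mathbf{S}$, exploit classical properties of the Fourier matrix $\mathcal{S}$ on $\mathbb{Z}/d\mathbb{Z}$, and reduce integrality of the fusion coefficients to a categorical statement.

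First I would record the standard identities for $\mathcal{S}=(\zeta^{ij})$: symmetry, $\mathcal{S}\overline{\mathcal{S}}=d\cdot I$, and $\mathcal{S}^2=d\cdot P$ where $P$ is the permutation $v_i\mapsto v_{-i\bmod d}$. Together with the diagonal $T_{ii}=\zeta_*^{-6i^2}$, the pair $(\mathcal{S},T)$ realises a projective representation of $SL_2(\mathbb{Z})$. These properties propagate to each exterior power $\bigwedge^k \mathcal{S}$ and hence to the tensor product $\mathbf{S}$. Since the ambient space $\left(\bigwedge^{n+1}V\right)\otimes \left(\bigwedge^{d-1}V\right)^{\otimes n}$ decomposes according to the total weight $\sum_y f(y)\bmod d$, and this grading is preserved by $\mathbf{S}^2$ and by $\mathbb{T}$, the restriction to the block $\Psi^{\#}(Y,\pi)$ is well-defined. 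The normalisation factor $(-1)^{n(d-1)}/\tau(d)^n$ is chosen precisely so that the resulting $\mathbb{S}$ is unitary (using $|\tau(d)|^2=d^d$) and so that $\mathbb{S}^4=1$.

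The symmetry of $\mathbb{S}$ is immediate from the explicit formula in the preceding lemma. The commutation $[\mathbb{S}^2,\mathbb{T}]=1$ reduces to the observation that $\mathbb{S}^2$ acts as a signed permutation $f\mapsto f^*$ with $f^*(y)\equiv -f(y)\bmod d$, and the definition of $\Fr(f)$ is manifestly invariant under this involution because the exponents are even polynomials in $f(y)$ modulo $12d$. The identity $(\mathbb{S}\mathbb{T})^3=1$ is the propagated Gauss-sum identity $(\mathcal{S}T)^3=\mathcal{S}^2$ on $V$: one verifies it by a direct computation after noting that the Frobenius eigenvalues $\Fr(f)$ are designed to cancel the Gauss sum arising from expanding $(\mathbb{S}\mathbb{T})^3$ in the basis $(\mathbf{v}_f)$, the exponent $-6(f(y)^2+df(y))$ being exactly the one required by the quadratic Gauss-sum reciprocity in level $12d$.

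The main obstacle is part (3), the integrality of $N_{f,g}^h$. Malle's original argument proceeds combinatorially, by identifying these numbers with multiplicities related to the representation theory of $G(d,1,n(n+1)/2)$, which are integers by Mackey-type arguments. An alternative I would pursue here is categorical: once the subsequent section exhibits $\mathbb{S}$ as the normalised $S$-matrix of the (super)fusion category $\mathbb{Z}(\mathcal{T}_\xi)\rtimes\mathcal{S}$ of Theorem \ref{thm:modular_quantum_A}, integrality is automatic from the Verlinde formula applied to a non-degenerate braided pivotal (super)fusion category, the role of $f_{\mathrm{sp}}$ being played by the unit object. The subtlety lies in matching conventions --- the sign $(-1)^{n(d-1)}/\tau(d)^n$, the complex conjugation in the definition of $\mathbb{S}$, and the bijection between $\overline{C}$ and $\Psi^{\#}(Y,\pi)$ --- so as to identify the two matrices on the nose.
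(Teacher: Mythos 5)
The paper offers no proof of this proposition: it is stated as a quotation of Malle \cite[4.15]{unip_malle}, so any argument you give is necessarily a different route from the paper's. Your sketch has the right overall shape (Weil-representation identities for the cyclic Fourier matrix $\mathcal{S}$, propagated through exterior and tensor powers, plus a categorical Verlinde argument for integrality), but it contains one genuine gap and one organisational caveat.

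The gap is the restriction step. The matrix $\mathbb{S}$ is, up to normalisation and complex conjugation, the \emph{submatrix} of $\mathbf{S}$ indexed by $\Psi^{\#}(Y,\pi)\subsetneq\Psi(Y,\pi)$, and the properties $\mathbb{S}^4=1$, unitarity, $(\mathbb{S}\mathbb{T})^3=1$ do not ``propagate'' from the full matrix to a submatrix. Your justification — that the total-weight grading is preserved by $\mathbf{S}^2$ and $\mathbb{T}$ — does not suffice, because $\mathbf{S}$ itself is a Fourier transform and does \emph{not} preserve the grading by $\sum_y f(y)\bmod d$ (already for $\mathcal{S}$ on $V$, $\mathcal{S}v_j$ is supported on all weights); so $\mathbf{S}$ is not block-diagonal for this decomposition and there is nothing to restrict. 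What actually makes the submatrix work is a translation-symmetry argument: under the simultaneous shift $h\mapsto h+c$ of all values of $h$, one has $\mathbf{S}_{f,h+c}=\pm\,\zeta^{c\sum_y f(y)}\mathbf{S}_{f,h}$, so for $f,g\in\Psi^{\#}(Y,\pi)$ the products $\mathbf{S}_{f,h}\overline{\mathbf{S}_{g,h}}$ (and likewise the summands occurring in $\mathbb{S}^2$ and in $(\mathbb{S}\mathbb{T})^3$, using $\sum f+\sum g\equiv 0\bmod d$) are invariant under this $\mathbb{Z}/d\mathbb{Z}$-action, whose orbits meet each weight class exactly once; hence the sum over $\Psi^{\#}$ equals $\tfrac{1}{d}$ of the sum over all of $\Psi(Y,\pi)$, and the relations for the full matrix descend with the stated normalisation. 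Without this (or an equivalent direct Gauss-sum computation, which is what Malle does), parts (1) and (2) are not established. Your verification that $\Fr$ is invariant under $f\mapsto -f\bmod d$ is correct in conclusion, though the reason is a congruence mod $12d$ rather than evenness of the exponent.

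On part (3): deriving integrality from the Verlinde formula in the non-degenerate braided pivotal (super)fusion category of Theorem \ref{thm:modular_quantum_A} is logically sound and not circular, since Theorem \ref{thm:main_result} identifies $\mathbb{S}$ with the renormalised $S$-matrix independently of this proposition; in the superfusion case one gets integrality of the structure constants of the super-Grothendieck ring, which is exactly what is needed, and $\iota(f_{\mathrm{sp}})=(0,0)$ identifies $f_{\mathrm{sp}}$ with the unit. But note this inverts the logical order of the paper, which keeps the proposition as an external input from Malle and only afterwards matches the two modular data.
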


The special symbol $f_{\mathrm{sp}}$ also parametrizes a complex representation of $G\left(d,1,\frac{n(n+1)}{2}\right)$ and the complex conjugate of this representation is parametrized by the cospecial symbol, see \cite[2.A, 2.D]{unip_malle} for more details. Explicitly, the cospecial symbol $f_{\mathrm{cosp}}$ is given by
\[
  f_{\mathrm{cosp}}(i) =
  \begin{cases}
    0 & \text{if}\ i=1\\
    d-n+i-2 & \text{otherwise}
  \end{cases}
  \quad \text{and} \quad
  k_i(f_{\mathrm{cosp}}) = d-i-1.
\]

Given a matrix $\mathbb{S}$ as above, we can associate a fusion algebra $A$: it is a free $\mathbb{Z}$-algebra with a base $(b_f)_{f\in\Psi^{\#}(Y,\pi)}$ such that the multiplication on this base is given by
\[
  b_f\cdot b_g = \sum_{k\in\Psi^{\#}(Y,\pi)}N_{f,g}^kb_k.
\]
This multiplication is associative, and Cuntz made a remarkable conjecture about the algebra $A$:

\begin{conjecture}[{\cite[Vermutung 5.1.6]{cuntz_phd}}]
  \label{conj:cuntz}
  Let $A^{\mathrm{abs}}$ be a free $\mathbb{Z}$-module equipped with a base $(b_f^{\mathrm{abs}})_{f\in\Psi^{\#}(Y,\pi)}$. We define a multiplication on the base by
\[
  b^{\mathrm{abs}}_f\cdot b^{\mathrm{abs}}_g = \sum_{k\in\Psi^{\#}(Y,\pi)}\left\lvert N_{f,g}^k\right\rvert b^{\mathrm{abs}}_k.
\]

Then this multiplication is associative.
\end{conjecture}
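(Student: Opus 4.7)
The plan is to appeal to the categorification furnished by Theorem~A. When $n$ is even, the category $\mathbb{Z}(\mathcal{T}_\xi)\rtimes\mathcal{S}$, after modularizing by its symmetric center of order $n+1$, is a non-degenerate braided pivotal \emph{fusion category}. Via the bijection with $\mathcal{F}$ supplied by Theorem~A, its Grothendieck ring is isomorphic to the algebra $A$ whose structure constants are the Malle numbers $N_{f,g}^k$. Since fusion categories have non-negative structure constants (dimensions of $\Hom$ spaces), one has $N_{f,g}^k \geq 0$, hence $|N_{f,g}^k| = N_{f,g}^k$ and $A^{\mathrm{abs}} = A$ is immediately associative.

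When $n$ is odd, Theorem~A identifies $A$ with the Grothendieck ring of a \emph{superfusion category} $\mathcal{C}^{\mathrm{sup}}$, obtained from $\mathbb{Z}(\mathcal{T}_\xi)\rtimes\mathcal{S}$ by identifying the symmetric center generator $I$ with the unit $\mathbf{1}$ via an odd-degree isomorphism. The signed structure constants $N_{f,g}^k$ arise as super-dimensions $\dim_{\overline{0}} - \dim_{\overline{1}}$ of the $\Hom$ spaces in $\mathcal{C}^{\mathrm{sup}}$. The crucial observation is that the forgetful functor from $\mathcal{C}^{\mathrm{sup}}$ to the \emph{underlying ordinary category} $\mathcal{C}^{\mathrm{und}}$ (forgetting the parity grading on morphisms) produces an honest fusion category, whose structure constants are $\dim_{\overline{0}} + \dim_{\overline{1}}$.

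To conclude, one needs the following purity statement: for each triple of simples $X_f, X_g, X_k$, the space $\Hom_{\mathcal{C}^{\mathrm{sup}}}(X_k, X_f\otimes X_g)$ is concentrated in a single parity, which yields the equality $|N_{f,g}^k| = \dim_{\overline{0}} + \dim_{\overline{1}}$. This should follow from the faithful $P/((l'Q^\vee)\cap Q)$-grading on $\mathbb{Z}(\mathcal{T}_\xi)\rtimes\mathcal{S}$ described in Section~\ref{sec:subcat}: a simple object $X_k$ of $\mathcal{C}^{\mathrm{sup}}$ corresponds to an orbit $\{Z, I\otimes Z\}$ in $\mathbb{Z}(\mathcal{T}_\xi)\rtimes\mathcal{S}$ whose two members have distinct grades (since $I$ has nontrivial grade $d\varpi_1$ and tensoring by $I$ has no fixed points by Proposition~\ref{prop:sym_center_A}); the tensor product $X_f\otimes X_g$ lives in a single graded component, so at most one of $Z$ and $I\otimes Z$ contributes, and its contribution is automatically concentrated in a single parity.

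Granting this purity, $A^{\mathrm{abs}}$ is isomorphic to the Grothendieck ring of the honest fusion category $\mathcal{C}^{\mathrm{und}}$, and is therefore associative. The hard part will be the precise matching between the bijection with $\mathcal{F}$ on the Malle side and the orbit labelling on the categorical side, so that absolute values of Malle's structure constants correspond exactly to the parity-forgetting multiplicities in $\mathcal{C}^{\mathrm{und}}$; this requires unwinding the combinatorial description of $\Psi^{\#}(Y,\pi)$ together with the explicit form of the renormalized $S$-matrix in Theorem~\ref{thm:modular_quantum_A}, and combining it with the systematic treatment of super modular data carried out in \cite{super_application}.
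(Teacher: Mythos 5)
Your overall strategy --- deducing associativity from the categorification of Theorem \ref{thm:main_result} --- is the paper's, but both halves of your argument have gaps. For $n$ even, the claim that $N_{f,g}^k\geq 0$ is not justified and is false in general: Theorem \ref{thm:main_result} identifies Malle's matrix $\mathbb{S}$ with the categorical $\tilde{S}$ only after conjugation by the non-trivial sign matrix $D$, so the Malle structure constants are $N_{f,g}^k=d_fd_gd_k\tilde{N}_{f,g}^k$ where $\tilde{N}_{f,g}^k\geq 0$ are the genuine fusion multiplicities and $d_f=\varepsilon(f)(-1)^{\sum_ik_{i-1}(f)}$; in particular $A^{\mathrm{abs}}\neq A$ in general. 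The repair is exactly Remark \ref{rk:change_signs}: the base change $b_f^{\mathrm{abs}}\mapsto d_fb_f$ gives a ring isomorphism onto $A$, and that is what the paper invokes for $n$ even.

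For $n$ odd, your purity observation (each $\Hom$ space of the supercategory is concentrated in a single parity, because the $I^{\otimes k}$ sit in distinct graded components and tensoring by $I$ is fixed-point free) is correct and is indeed implicitly needed. But the step ``forgetting the parity grading on morphisms produces an honest fusion category'' does not work as stated: the Koszul sign in the interchange law for tensor products of morphisms in a supercategory means the underlying ungraded category is not a monoidal category in the usual sense, so you cannot appeal to associativity of ``its'' Grothendieck ring. The paper sidesteps this by working with the super-Grothendieck group $\sGr(\mathcal{D})$, an associative algebra over $\mathbb{Z}[\varepsilon]/(\varepsilon^2-1)$ whose structure constants are $a+b\varepsilon$ with $a,b$ the even and odd multiplicities: its quotient by $(\varepsilon+1)$ is $A$ and its quotient by $(\varepsilon-1)$ is $A^{\mathrm{abs}}$ (here purity gives $a+b=\lvert a-b\rvert$), and quotients of associative rings are associative. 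Your argument becomes correct if you replace the ``underlying fusion category'' by this $\varepsilon=1$ specialization, and account for the sign matrix $D$ (harmless for absolute values, since $\lvert d_fd_gd_kN\rvert=\lvert N\rvert$).
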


\begin{remark}
  \label{rk:change_signs}
  If there exists $(d_f)_{f\in \Psi^{\#}(Y,\pi)}$ with $d_f\in\{\pm 1\}$ such that the structure constants of the ring $A$ relative to the base $(d_fb_f)_{f\in\Psi^{\#}(Y,\pi)}$ are positive, then this conjecture is clearly true: one have $\lvert N_{f,g}^k\rvert = d_fd_gd_k N_{f,g}^k$ and the associativity of $A^{\mathrm{abs}}$ follows from the associativity of $A$.

  If $d$ is even and $n$ is odd, it seems impossible to find these $(d_f)_{f\in\Psi^{\#}(Y,\pi)}$, see \cite[Section 4]{super_application} when $n=1$.
\end{remark}

We will compare this modular datum to the one constructed in Theorem \ref{thm:modular_quantum_A}, which will give a proof of the Conjecture \ref{conj:cuntz}. For this, we rewrite the expressions for $\mathbb{S}$ and $\mathbb{T}$. First, note that $f \in \Psi(Y,\pi)$ is in $\Psi^{\#}(Y,\pi)$ if and only if
\[
  \sum_{i=1}^{n+1} f(i) \equiv \sum_{i=0}^{n-1} k_i(f) \mod d.
\]
Therefore, we get rid of $f(n+1)$ in the expressions of $\mathbb{S}$ and $\mathbb{T}$.

\begin{proposition}
  For $f,g \in \Psi^{\#}(Y,\pi)$ we have
  \[
     \Fr(f) = \zeta^{\sum_{i=1}^n(k_{i-1}(f) - f(i))\left(\sum_{j=1}^{i}f(j) - \sum_{j=1}^{i-1}k_{j-1}(f)\right)}
  \]
  and
  \begin{multline*}
     \mathbb{S}_{f,g} = \frac{1}{d^n}\varepsilon(f)\varepsilon(g)(-1)^{\sum_{i=1}^{n}(k_{i-1}(f)+k_{i-1}(g))}\\ 
     \sum_{\sigma\in\mathfrak{S}_{n+1}}(-1)^{l(\sigma)}\zeta^{\sum_{i=1}^{n}\left[(k_{i-1}(f)-f(i))\left(\sum_{j=1}^{i}(\sigma\cdot g)(j) - \sum_{j=1}^{i-1}k_{j-1}(\sigma\cdot g)\right) 
     + (k_{i-1}(\sigma\cdot g)-(\sigma \cdot g)(i))\left(\sum_{j=1}^{i}f(j) - \sum_{j=1}^{i-1}k_{j-1}(f)\right)\right]}
  \end{multline*}
\end{proposition}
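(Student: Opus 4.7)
I will derive both formulas by direct computation from the definitions, with the single structural input being the constraint $\sum_{i=1}^{n+1}h(i)\equiv\sum_{j=0}^{n-1}k_{j}(h)\pmod d$ characterizing $\Psi^{\#}(Y,\pi)$; this is precisely what allows the value $h(n+1)$ to be eliminated in favor of the remaining data.

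For $\Fr(f)$, I split the product along the fibers of $\pi$. On $\pi^{-1}(j)$ with $j<n$, since $\{f(y):y\in\pi^{-1}(j)\}=\{0,\dots,d-1\}\setminus\{k_{j}(f)\}$, the partial sum $\sum_{y\in\pi^{-1}(j)}(f(y)^{2}+df(y))$ reduces to a constant $A(d):=\sum_{k=0}^{d-1}(k^{2}+dk)$ minus $k_{j}(f)(k_{j}(f)+d)$. A short computation shows $\zeta_{*}^{nd(1-d^{2})-6nA(d)}=\zeta_{*}^{-6nd^{2}(d-1)}=1$, leaving
\[
\Fr(f)=\zeta_{*}^{6\sum_{j<n}k_{j}(f)(k_{j}(f)+d)-6\sum_{i=1}^{n+1}f(i)(f(i)+d)}.
\]
The quantity $x(x+d)\bmod 2d$ depends only on $x\bmod d$, as a brief parity check in the two cases (for $d$ even the factor $x$ matters; for $d$ odd both $k^{2}$ and $kd$ have the same parity as $k$, and the perturbation $k(2f+(k+1)d)$ is even). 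I use this to substitute $f(n+1)$ by $s=\sum_{j}k_{j}(f)-\sum_{i\le n}f(i)$. Expanding $s(s+d)$, all terms proportional to $d$ cancel, the residue is even, and I obtain $\Fr(f)=\zeta^{\frac{1}{2}(\sum_{j}k_{j}(f)^{2}-\sum_{i\le n}f(i)^{2}-s^{2})}$. A direct bilinear expansion of the target, using $\sum_{i,j}k_{j}(f)f(i)=KF$ with $K=\sum_{j}k_{j}(f)$ and $F=\sum_{i\le n}f(i)$, matches this quadratic form.

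For $\mathbb{S}_{f,g}$, I start from the expression for $\mathbf{S}_{f,g}$ in the Lemma and apply the normalization $\mathbb{S}_{f,g}=(-1)^{n(d-1)}\overline{\mathbf{S}_{f,g}}/\tau(d)^{n}$. The ratio $\overline{\tau(d)}/\tau(d)=(-1)^{(d-1)(d+2)/2}$ combines with $(-1)^{n(d-1)}$ and $(-1)^{nd(d-1)/2}$ into an overall sign $(-1)^{n(d-1)(d+2)}=1$, leaving only $(-1)^{\sum_{i}(k_{i-1}(f)+k_{i-1}(g))}$ from the original factor. Inside the determinantal sum I rewrite $g(\sigma(i))=(\sigma\cdot g)(i)$ where $\sigma\cdot g$ permutes $g$ on $\pi^{-1}(n)$; since $\sigma$ preserves $\sum_{i\le n+1}g(i)$ and leaves $k_{j}(g)$ unchanged, the constraint still holds for $\sigma\cdot g$. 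Substituting both $f(n+1)$ and $(\sigma\cdot g)(n+1)$ inside $\prod_{i}\zeta^{-f(i)(\sigma\cdot g)(i)}$, then combining with $\prod_{j}\zeta^{k_{j}(f)k_{j}(g)}$ after conjugation, the resulting exponent is bilinear in the reduced data. By polarization of the quadratic form found in the Frobenius computation, it equals the symmetric form $B(f,\sigma\cdot g)=\sum_{i}\bigl[a_{i}(f)b_{i}(\sigma\cdot g)+a_{i}(\sigma\cdot g)b_{i}(f)\bigr]$ with $a_{i}(h)=k_{i-1}(h)-h(i)$ and $b_{i}(h)=\sum_{j\le i}h(j)-\sum_{j<i}k_{j-1}(h)$, which is exactly the exponent in the target. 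The residual signs $\varepsilon(f)\varepsilon(g)$ arise from comparing the ordering built into the wedge-product basis $(\mathbf{v}_{f})$ underlying $\mathbf{S}$ with the ordering implicit in the target formula.

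The main obstacle is the bookkeeping in the $\mathbb{S}$-matrix step: the substitution for $(\sigma\cdot g)(n+1)$ depends on $\sigma$, and one must verify that summing the resulting bilinear cross-terms over $\mathfrak{S}_{n+1}$ and combining them cleanly with the factor $\prod\zeta^{k_{i-1}(f)k_{i-1}(g)}$ produces the symmetric bilinear form $B(f,\sigma\cdot g)$ and not an accidentally asymmetric variant. A secondary delicate point is the identification of the $\varepsilon(f)\varepsilon(g)$ signs, which requires a careful ordering argument. By contrast, the Frobenius part is a routine quadratic-form computation once the constraint is in place.
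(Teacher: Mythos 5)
Your proposal is correct and follows essentially the same route as the paper: eliminate $f(n+1)$ (and $(\sigma\cdot g)(n+1)$) via the defining congruence of $\Psi^{\#}(Y,\pi)$, check that the constant contribution vanishes modulo $12d$, and reduce everything to quadratic/bilinear algebra in the data $(f(i),k_{i-1}(f))_{i\le n}$; your observation that the exponent in $\mathbb{S}$ is the polarization of the quadratic form appearing in $\Fr$ is a pleasant repackaging of the paper's two parallel expansions, not a different argument. (Both you and the paper leave the provenance of the $\varepsilon(f)\varepsilon(g)$ factor essentially unexplained, so this is not a gap specific to your write-up.)
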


\begin{proof}
 Let us begin by the value of $\Fr(f)$. By definition, $\Fr(f) = \zeta_{*}^{\alpha}$ where
 \[
   \alpha = nd(1-d^2)-6\sum_{y\in Y}(f(y)^2+df(y)),
 \]
 which we consider as an element of $\mathbb{Z}/12d\mathbb{Z}$. First, as  $\{0,\ldots,d-1\} = f(\pi^{-1}(i))\cup\{k_i(f)\}$ for $0 \leq i \leq n-1$, we have
 \[
  \sum_{y\in Y}(f(y)^2+df(y)) = \sum_{i=1}^{n+1}(f(i)^2+df(i)) + n \sum_{i=0}^{d-1}(i^2+di) - \sum_{i=1}^{n}(k_{i-1}(f)^2+dk_{i-1}(f)).
 \]
 But $6\sum_{i=0}^{d-1}(i^2+di) = (d-1)d(2d-1) + 3d^2(d-1) = d(d-1)(5d-1)$ and therefore
 \[
   d(1-d^2)-6\sum_{i=0}^{d-1}(i^2+di) = 6d^2(1-d) \equiv 0 \mod 12d,
 \]
 so that
 \[
   \alpha = -6\left(\sum_{i=1}^{n+1}(f(i)^2+df(i)) - \sum_{i=1}^{n}(k_{i-1}(f)^2+dk_{i-1}(f))\right).
 \]
 Fix $\eta\in\mathbb{Z}$ such that $f(n+1) = \sum_{i=1}^n(k_{i-1}(f)-f(i))+\eta d$, so that we have
 \begin{align*}
 f(n+1)^2+df(n+1) &= \left(\sum_{i=1}^n(k_{i-1}(f)-f(i))\right)^2 + d\sum_{i=1}^{n}(k_{i-1}(f)-f(i)) \\
 & \qquad\qquad\qquad\qquad+ 2d\eta\sum_{i=1}^{n}(k_{i-1}(f)-f(i)) + d^2\underbrace{\eta(1+\eta)}_{\equiv 0 \mod 2} \\
  & \equiv \left(\sum_{i=1}^n(k_{i-1}(f)-f(i))\right)^2 + d\sum_{i=1}^{n}(k_{i-1}(f)-f(i)) \mod 2d.
 \end{align*}
 Finally
 \begin{align*}
   \alpha &= -6\left(\left(\sum_{i=1}^n(k_{i-1}(f)-f(i))\right)^2 + \sum_{i=1}^n(k_{i-1}(f)^2-f(i)^2)\right)\\
          &= 12\left(\sum_{i=1}^n f(i)(k_{i-1}(f)-f(i)) - 2\sum_{1\leq j < i \leq n}(k_{i-1}(f) - f(i))(k_{j-1}(f) - f(j))\right)\\
          &= 12\sum_{i=1}^n(k_{i-1}(f) - f(i))\left(\sum_{j=1}^{i}f(j) - \sum_{j=1}^{i-1}k_{j-1}(f)\right)
 \end{align*}
 which gives the expected formula for $\Fr(f)$.
 
 We now turn to the formula for $\mathbb{S}$. Since $\overline{\tau(d)} = (-1)^{(d-1)(d-2)/2}\tau(d)$ we have
 \[
   \mathbb{S}_{f,g} = \frac{(-1)^{\sum_{i=0}^{n-1}(k_i(f)+k_i(g))}}{d^n}\prod_{i=0}^{n-1}\zeta^{k_i(f)k_i(\sigma\cdot g)}\sum_{\sigma\in\mathfrak{S}_{n+1}}
\varepsilon(\sigma)\prod_{i=1}^{n+1}\zeta^{-f(i)(\sigma \cdot g)(i)},
 \]
 where $\varepsilon(\sigma)$ is the parity of the permutation $\sigma$. Fix $\sigma\in\mathfrak{S}_{n+1}$ and let $h = \sigma\cdot g$. Then
 \begin{align*}
 	\sum_{i=1}^{n}&k_{i-1}(f)k_{i-1}(h) - \sum_{i=1}^{n+1}f(i)h(i) \\&= \sum_{i=1}^{n}(k_{i-1}(f)k_{i-1}(h) - f(i)h(i)) - \sum_{1\leq i,j \leq n}(k_{i-1}(f)-f(i))(k_{j-1}(h)-h(j))\\
 	&= \sum_{i=1}^{n}(k_{i-1}(f)-f(i))h(i) + \sum_{i=1}^{n}(k_{i-1}(h)-h(i))f(i) - \sum_{1\leq i\neq j \leq n}(k_{i-1}(f)-f(i))(k_{j-1}(h)-h(j)) \\
 	&=\sum_{i=1}^{n}\left[(k_{i-1}(f)-f(i))\left(\sum_{j=1}^{i}h(j) - \sum_{j=1}^{i-1}k_{j-1}(h)\right) 
     + (k_{i-1}(h)-h(i))\left(\sum_{j=1}^{i}f(j) - \sum_{j=1}^{i-1}k_{j-1}(f)\right)\right].
 \end{align*}
 Hence, taking the sum over $\sigma\in\mathfrak{S}_{n+1}$, we obtain the second formula.
\end{proof}

\subsection{Ennola $d$-ality}
\label{sec:ennola}

For each $d$-symbol $S$ in the family $\mathcal{F}$, Malle defined a polynomial $\gamma_S(q)$, which has similar properties to the degree of a unipotent character for a finite group of Lie type. These polynomials satisfy the Ennola property: there exists a bijection $\mathcal{E}\colon\mathcal{F}\rightarrow\mathcal{F}$ such that for every symbol $S$ in the family $\mathcal{F}$, $\gamma_S(\zeta q) = \pm \gamma_{\mathcal{E}(S)}$. The symbol $\mathcal{E}(S)$ is explicitly given in the proof of \cite[Folgerung 3.11]{unip_malle}. We describe it at the level of functions in $\Psi^{\#}(Y,\pi)$. Let $f\in\Psi^{\#}(Y,\pi)$, its Ennola dual $\mathcal{E}(f)$ is given by the unique function $g$ such that
\begin{multline*}
  k_i(g) = \left(k_i(f) + i - \frac{n(n+3)}{2}\right)^{\text{res}} \quad \text{and} \\ \left\{g(i)\ \middle\vert\ 1\leq i \leq n+1\right\} = \left\{\left(f(i) - \frac{n(n+1)}{2}\right)^{\text{res}}\ \middle\vert\  1\leq i \leq n+1\right\},
\end{multline*}
$(k)^{\text{res}}$ being the remainder in the Euclidean division of $k$ by $d$.

\subsection{Comparison with the modular datum of the category $\mathbb{Z}(\mathcal{T}_\xi)\rtimes\mathcal{S}$}
\label{sec:comparison}

We now compare the modular datum of Malle with the modular datum of Theorem \ref{thm:modular_quantum_A}, from which we use the notation. To each function $f\in\Psi^{\#}(Y,\pi) $ we associate $\mu = \sum_{i=1}^n \mu_i\alpha_i\in Q$ and $\lambda = -\mu + 2\sum_{i=1}^n \eta_i\varpi_i\in P$ with
\[
  \mu_i = \sum_{j=1}^{i-1}k_{j-1}(f) - \sum_{j=1}^i f(j) \quad \text{and} \quad \eta_i = f(i+1)-f(i)-1.
\]
As $f$ takes its values in $\{0,\ldots,d-1\}$ and is strictly increasing on $\{1,\ldots,n+1\}$ we have $\sum_{i=1}^n\eta_i \leq d-1-n$ and therefore $\lambda+\mu\in 2C$. We then obtain a map
\[
  \fonctionNom{\iota}{\Psi^{\#}(Y.\pi)}{\overline{C}}{f}{(\lambda_f,\mu_f)}.
\]
Note that the special symbol $f_0$ is sent to $(0,0)$ and the cospecial symbol is sent to $(-2\rho+(2d-(n+1))\varpi_1,2\rho-(n+1)\varpi_1)$. We define a left inverse to $\iota$ as follows. Let $(\lambda,\mu)\in\overline{C}$ and define $f_{\lambda,\mu}$ as the unique function $f\in\Psi(Y,\pi)$ such that
\[
  \{f(1),\ldots,f(n+1)\} = \left\{\left(-\langle\mu,\varpi_1\rangle+\sum_{j=1}^{i-1}\left\langle\frac{\lambda+\mu}{2}+\rho,\alpha_j\right\rangle\right)^{\text{res}}\ \middle\vert\ 1\leq i \leq n+1\right\}
\]
and
\[
  k_{i-1}(f) = \left(\sum_{j=1}^{i}\left\langle\frac{\lambda-\mu}{2}+\rho,\alpha_j\right\rangle\right)^{\text{res}}.
\]
An easy computation shows that $f_{\lambda,\mu}$ belongs to $\Psi^{\#}(Y,\pi)$ and that $f_{\iota(f)} = f$ for any $f\in\Psi^{\#}(Y,\pi)$. A straightforward computation shows the following lemma.

\begin{lemma}
  Let $f,g\in\Psi^{\#}(Y,\pi)$. With the previous notation we have
  \begin{multline*}
    \langle \lambda_f + 2\rho,\mu_g \rangle + \langle \lambda_g + 2\rho,\mu_f \rangle
    = -\sum_{i=1}^{n}(k_{i-1}(f)-f(i))\left(\sum_{j=1}^{i}g(j) - \sum_{j=1}^{i-1}k_{j-1}(g)\right) \\ 
     - \sum_{i=1}^{n}(k_{i-1}(g)-g(i))\left(\sum_{j=1}^{i}f(j) - \sum_{j=1}^{i-1}k_{j-1}(f)\right).
  \end{multline*}
\end{lemma}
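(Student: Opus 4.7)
The plan is a direct, termwise expansion of both sides in the integer data $(f(i), k_j(f))$ and $(g(i), k_j(g))$. Since the left-hand side is a pairing of weights that depend linearly on these data and the right-hand side is visibly a combinatorial bilinear form in them, there is no reason to look for a structural shortcut; one simply expands, performs Abel summation, and matches term by term.

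The first step is to compute $\langle \lambda_f + 2\rho, \alpha_i\rangle$ for each simple root $\alpha_i$. Using $\rho = \sum_k \varpi_k$, the type $A_n$ identities $\langle \varpi_j,\alpha_i\rangle = \delta_{ij}$ and $\langle \alpha_j,\alpha_i\rangle = 2\delta_{ij} - \delta_{|i-j|,1}$, and the decomposition $\lambda_f = -\mu_f + 2\sum_k \eta_k^f \varpi_k$, one gets
\[
  \langle \lambda_f + 2\rho,\alpha_i\rangle = 2(\eta_i^f + 1) - \bigl(2\mu_i^f - \mu_{i-1}^f - \mu_{i+1}^f\bigr),
\]
with the convention $\mu_0^f = \mu_{n+1}^f = 0$. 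Substituting the explicit formulas $\mu_i^f = \sum_{j<i}k_{j-1}(f) - \sum_{j\le i}f(j)$ and $\eta_i^f = f(i+1) - f(i) - 1$ (with $k_{-1}(f):=0$), the expression telescopes into a short local combination of $k_{i-2}(f), k_{i-1}(f), f(i), f(i+1)$. The analogous formula holds for $\langle \lambda_g + 2\rho,\alpha_i\rangle$.

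The second step is to unfold $\langle \lambda_f+2\rho,\mu_g\rangle = \sum_{i=1}^n \mu_i^g\,\langle \lambda_f+2\rho,\alpha_i\rangle$, substitute the expressions above, and perform an Abel summation. The finite-difference identity $\mu_i^g - \mu_{i-1}^g = k_{i-2}(g) - g(i)$ lets one transfer the differences appearing in the coefficients onto the sequence $(\mu_i^g)$, reshaping the sum into products of the form $(k_{i-1}(f)-f(i))\bigl(\sum_{j\le i}g(j) - \sum_{j<i}k_{j-1}(g)\bigr)$, which is precisely $-A_i^f F_i(g)$ in the notation of the right-hand side. The symmetric computation for $\langle \lambda_g+2\rho,\mu_f\rangle$ proceeds identically. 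Upon adding the two, the mixed cross-terms of the form $k_{i-1}(f)g(i+1)$ and $f(i)k_{i-2}(g)$ cancel against their $f\leftrightarrow g$ images after reindexing, and what remains is exactly the claimed sum.

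The most delicate part is the bookkeeping at the boundary index $i=n$. The recurrence $\mu_{i+1}^f - \mu_i^f = k_{i-1}(f) - f(i+1)$ that drives the telescoping holds for $i\le n-1$, but at $i=n$ the convention $\mu_{n+1}^f=0$ disagrees with the value the recurrence would produce, the discrepancy being exactly $f(n+1) - k_{n-1}(f) - \mu_n^f$. The defining congruence $\sum_{j=1}^{n+1}f(j)\equiv \sum_{j=0}^{n-1}k_j(f)\pmod d$ of $\Psi^{\#}(Y,\pi)$, combined with the analogous congruence for $g$, must be invoked precisely at this point to show that the symmetric pair of boundary contributions from $\mu_n^f$ and $\mu_n^g$ rearranges into the missing $i=n$ summand of the right-hand side rather than producing an extraneous leftover. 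Once this boundary bookkeeping is carried out, the remaining interior terms $1\le i \le n-1$ match termwise, establishing the exact equality.
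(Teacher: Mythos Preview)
Your approach---expand both sides in the integer data, Abel-sum, and handle the boundary index $i=n$ separately---is exactly the kind of ``straightforward computation'' the paper invokes (the paper gives no argument beyond that phrase). The interior manipulation you outline is correct: for $1\le i\le n-1$ the pairing $\langle\lambda_f+2\rho,\alpha_i\rangle$ telescopes to $k_{i-1}(f)-k_{i-2}(f)+f(i+1)-f(i)$, and summation by parts against $\mu_i^g$ produces terms of the required shape.

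The gap is at the boundary step. You write that the $\Psi^{\#}$ congruence $\sum_{j=1}^{n+1}f(j)\equiv\sum_{j=0}^{n-1}k_j(f)\pmod d$ ``must be invoked \ldots\ establishing the exact equality.'' A congruence modulo $d$ cannot establish an exact integer identity, and in fact the identity as literally written is not an exact one. Take $n=1$, $d=3$, and $f=g$ with $f(1)=1$, $f(2)=2$, $k_0(f)=0$ (this is in $\Psi^{\#}$): one computes $\mu_f=-\alpha_1$, $\lambda_f+2\rho=2\alpha_1$, so the left side equals $2\langle 2\alpha_1,-\alpha_1\rangle=-8$, while the right side equals $-2(k_0(f)-f(1))f(1)=2$. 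The discrepancy $-8-2=-10$ is not even a multiple of $d$. What \emph{is} true, and what the application to $\mathbb{T}$ and $\mathbb{S}$ actually uses, is
\[
\langle\lambda_f+2\rho,\mu_g\rangle+\langle\lambda_g+2\rho,\mu_f\rangle\;\equiv\;2\cdot(\text{right-hand side})\pmod{2d},
\]
so that $\xi^{\text{LHS}}=\zeta^{-\text{RHS}}$ with $\zeta=\xi^{-2}$; in the example, $-8\equiv 4=2\cdot 2\pmod 6$. Your boundary analysis, reinterpreted modulo $2d$, proves precisely this: the leftover at $i=n$ is $\mu_n^g\bigl(\sum_{j\le n+1}f(j)-\sum_{j\le n-1}k_j(f)\bigr)+\mu_n^f\bigl(\sum_{j\le n+1}g(j)-\sum_{j\le n-1}k_j(g)\bigr)$, which the $\Psi^{\#}$ condition kills modulo $d$, and the overall factor of $2$ upgrades this to a congruence modulo $2d$. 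So the computation you sketch is the right one---you just cannot conclude an exact equality from it, only the congruence that the paper in fact needs.
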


Now, let $\zeta = \xi^{-2}$. It follows immediately that
\[
  \mathbb{T}_{f} = \theta_{L_{\xi}(\lambda_f,\mu_f)}
\]
and noticing that $\iota(\sigma\cdot f) = s\bullet \iota(f)$ (where we extend $\iota$ to all functions $f\colon Y\rightarrow \{0,\ldots,d-1\}$ such that $f$ is injective on each $\pi^{-1}(i)$, the action of $\mathfrak{S}_{n+1}$ being induced by the action on $\{1,\ldots,n+1\}\subset Y$) we have
\[
  \mathbb{S}_{f,g} = \frac{\sum_{w\in\mathfrak{S}_{n+1}}(-1)^{l(w)}\xi^{\langle 2\rho,w\bullet 0\rangle}}{d^n}\varepsilon(f)\varepsilon(g)(-1)^{\sum_{i=1}^{n}(k_{i-1}(f)+k_{i-1}(g))} S_{(\lambda_f,\mu_f),(\lambda_g,\mu_g)}.
\]

Let $D$ be the diagonal matrix with entries $(\varepsilon(f)(-1)^{\sum_{i=1}^nk_{i-1}(f)})_{f\in\Psi^{\#}(Y,\pi)}$. We can now state the main theorem of this paper.

\begin{theoreme}\label{thm:main_result}
  If $n$ is even (resp. odd) the braided pivotal fusion category (resp. braided pivotal superfusion category) of \ref{thm:modular_quantum_A} is a categorification of the Malle $\mathbb{Z}$-modular datum: \[
   \mathbb{S}_{f,g} = i^{-\lvert \Phi^+\rvert - n} D\tilde{S}_{\iota(f),\iota(g)}D^{-1} \quad \text{and} \quad \mathbb{T}_{f} = \theta_{L_{\xi}(\iota(f))}.
 \]
\end{theoreme}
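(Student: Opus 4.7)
The proof amounts to assembling the explicit formulas for $\mathbb{S}$, $\mathbb{T}$, $\tilde{S}$ and $\theta$ collected earlier in this section together with the lemma immediately preceding the theorem. I would treat the two identities separately.

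For the identity on twists, I would specialize the preceding lemma to $g = f$, which yields
\[
\langle\lambda_f + 2\rho, \mu_f\rangle = -\sum_{i=1}^{n}(k_{i-1}(f)-f(i))\Bigl(\sum_{j=1}^{i}f(j) - \sum_{j=1}^{i-1}k_{j-1}(f)\Bigr).
\]
Combining this with the explicit formula $\Fr(f)=\zeta^{\sum_{i}(k_{i-1}(f)-f(i))(\cdots)}$ proven in Section \ref{sec:malle_z}, and using the choice $\zeta = \xi^{-2}$, identifies $\mathbb{T}_f = \Fr(f)$ with $\theta_{L_\xi(\iota(f))} = \xi^{\langle\lambda_f+2\rho,\mu_f\rangle}$, up to the sign/square-root conventions discussed below.

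For the identity on $\mathbb{S}$-matrices, I would start from the relation
\[
\mathbb{S}_{f,g} = \frac{\sum_w(-1)^{l(w)}\xi^{\langle 2\rho,w\bullet 0\rangle}}{d^n}\,\varepsilon(f)\varepsilon(g)(-1)^{\sum_i(k_{i-1}(f)+k_{i-1}(g))}\,S_{(\lambda_f,\mu_f),(\lambda_g,\mu_g)}
\]
established just above, and invert the renormalization formula of Theorem \ref{thm:modular_quantum_A} to write $S = d^n i^{n+\lvert\Phi^+\rvert}\tilde{S}/\sum_w(-1)^{l(w)}\xi^{\langle 2\rho,w\bullet 0\rangle}$. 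After cancellation of the prefactors $d^n$ and $\sum_w(-1)^{l(w)}\xi^{\langle 2\rho,w\bullet 0\rangle}$, the remaining sign factors $\varepsilon(f)(-1)^{\sum_i k_{i-1}(f)}$ and $\varepsilon(g)(-1)^{\sum_i k_{i-1}(g)}$ are precisely the diagonal entries $D_{f,f}$ and $D_{g,g}^{-1}$ (using $D^2=\id$), producing the conjugation $D\tilde{S}D^{-1}$.

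The main subtlety is in the signs of the various square roots of unity that enter: the naive substitution produces a prefactor $i^{n+\lvert\Phi^+\rvert}$ rather than $i^{-\lvert\Phi^+\rvert-n}$, and these agree only up to the sign $(-1)^{n+\lvert\Phi^+\rvert}$. This ambiguity is exactly what the freedom in the sign of the square root used to normalize $\tilde{S}$ in Section \ref{sec:renorm} absorbs; similarly, the freedom in choosing $s=\xi^{1/L}$ in Section \ref{sec:braiding} must be fixed compatibly with the Malle $12d$-th root $\zeta_*$ so that the twist identity $\mathbb{T}_f = \theta_{L_\xi(\iota(f))}$ holds on the nose. Once these two choices are pinned down coherently, both identities become formal consequences of the explicit formulas already assembled.
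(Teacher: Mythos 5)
Your proposal is correct and follows the same route as the paper, whose proof of this theorem consists precisely of the material you cite: the rewritten formulas for $\Fr(f)$ and $\mathbb{S}_{f,g}$, the lemma on $\langle\lambda_f+2\rho,\mu_g\rangle+\langle\lambda_g+2\rho,\mu_f\rangle$, the substitution $\zeta=\xi^{-2}$, the equivariance $\iota(\sigma\cdot f)=\sigma\bullet\iota(f)$, and the definition of $D$ — the theorem is then stated with no further argument. Your remark that the residual sign $(-1)^{n+\lvert\Phi^+\rvert}$ between $i^{n+\lvert\Phi^+\rvert}$ and $i^{-n-\lvert\Phi^+\rvert}$ is absorbed by the choice of square root normalizing $\tilde S$ matches the paper's own convention, which fixes that square root only up to a sign.
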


\begin{remark}
  The image of $\iota$ contains exactly one representative of each orbit of simple objects of $\mathbb{Z}(\mathcal{T}_\xi)\rtimes\mathcal{S}$ under tensorisation by the symmetric center. As $I\otimes L_\xi(-2\rho,2\rho)$ is in the image of $\iota$, this justifies our choice of $\bar{\mathbf{1}}$ made in Section \ref{sec:categ-typeA}.
\end{remark}

\begin{corollary}
  The Conjecture \ref{conj:cuntz} is true: the multiplication on $A^{\mathrm{abs}}$ is associative.
\end{corollary}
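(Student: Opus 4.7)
The plan is to deduce associativity of $A^{\mathrm{abs}}$ from the existence of the categorification established in Theorem \ref{thm:main_result}, by substituting the matching of modular data into the Verlinde formula and then appealing to associativity at the categorical level.

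First I would substitute the identity
\[
\mathbb{S}_{f,g} = i^{-\lvert\Phi^+\rvert-n}\,D_{f,f}D_{g,g}\,\tilde{S}_{\iota(f),\iota(g)}
\]
of Theorem \ref{thm:main_result} into the Verlinde formula
\[
N_{f,g}^h = \sum_{k\in\Psi^{\#}(Y,\pi)}\frac{\mathbb{S}_{f,k}\mathbb{S}_{g,k}\overline{\mathbb{S}_{h,k}}}{\mathbb{S}_{f_{\mathrm{sp}},k}}
\]
defining the structure constants of $A$. The phase $c = i^{-\lvert\Phi^+\rvert-n}$ has modulus one, so the contribution of each summand is $c^2\bar{c}/c = \lvert c\rvert^2 = 1$; likewise the sign $D_{k,k}$ appears an even number of times in every term and squares to $1$. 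The only surviving signs are $D_{f,f}D_{g,g}D_{h,h}D_{f_{\mathrm{sp}},f_{\mathrm{sp}}}$, which factor out of the summation, giving
\[
N_{f,g}^h \;=\; D_{f,f}D_{g,g}D_{h,h}D_{f_{\mathrm{sp}},f_{\mathrm{sp}}}\;\tilde{N}^{\iota(h)}_{\iota(f),\iota(g)},
\]
where $\tilde{N}$ denotes the Verlinde output for $\tilde{S}$, indexed by the orbit representatives $\iota(\Psi^{\#}(Y,\pi))$ under tensorization by the symmetric center.

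Next I would interpret $\tilde{N}^{\iota(h)}_{\iota(f),\iota(g)}$ at the categorical level using Theorem \ref{thm:modular_quantum_A}. When $n$ is even the categorification is an honest non-degenerate braided pivotal fusion category, so $\tilde{N}^{\iota(h)}_{\iota(f),\iota(g)}\in\mathbb{Z}_{\geq 0}$ equals the corresponding fusion coefficient of its associative Grothendieck ring. The substitution $b_f\mapsto D_{f,f}D_{f_{\mathrm{sp}},f_{\mathrm{sp}}}\,b_f$ then turns the structure constants of $A$ into these non-negative integers, and Remark \ref{rk:change_signs} immediately yields associativity of $A^{\mathrm{abs}}$. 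When $n$ is odd, Remark \ref{rk:change_signs} is known not to apply directly and $\tilde{N}$ is only a signed super-fusion coefficient; here I would invoke the formalism of \cite{super_application}, in which the faithful $P/Q$-grading of $\mathbb{Z}(\mathcal{T}_\xi)\rtimes\mathcal{S}$ and the free action of $\langle I\rangle$ on simples force each space $\Hom(\iota(h),\iota(f)\otimes\iota(g))$ to be homogeneous for the $\mathbb{Z}/2$-grading. Consequently $\lvert\tilde{N}^{\iota(h)}_{\iota(f),\iota(g)}\rvert = \dim \Hom(\iota(h),\iota(f)\otimes\iota(g))$ is a non-negative fusion coefficient of the underlying non-super fusion category, and taking absolute values in the displayed formula for $N_{f,g}^h$ exhibits $A^{\mathrm{abs}}$ as a base-change of that associative Grothendieck ring.

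The main obstacle I anticipate is the odd case: one must carefully justify the homogeneity of the relevant Hom spaces for the $\mathbb{Z}/2$-grading in the super categorification, which is precisely the step that reduces $\lvert\tilde{N}\rvert$ to a bona fide fusion coefficient of an ordinary fusion category. The even case is a purely formal consequence of Theorem \ref{thm:main_result}, the Verlinde formula and Remark \ref{rk:change_signs}.
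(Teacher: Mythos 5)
Your proposal is correct and takes essentially the same route as the paper: the even case is exactly an application of Remark \ref{rk:change_signs}, and the odd case is handled through the supercategory formalism of \cite{super_application}. The paper states the odd case as the pair of ring isomorphisms $\sGr(\mathcal{D})/(\varepsilon+1)\simeq A$ and $\sGr(\mathcal{D})/(\varepsilon-1)\simeq A^{\mathrm{abs}}$, which amounts to the parity-homogeneity of the Hom spaces that you identify as the key point, and your grading argument (the odd invertible object $I$ is non-trivially graded, so $Z$ and $I\otimes Z$ cannot both occur in $X\otimes Y$) is precisely why it holds.
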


\begin{proof}
  If $n$ is even, this follows from the Remark \ref{rk:change_signs}. If $n$ is odd, then the Malle's $\mathbb{Z}$-modular datum is given by a slightly degenerate category, or similarly by a supercategory $\mathcal{D}$ \cite[Section 3]{super_application}. We have at our disposal the super-Grothendieck group of $\mathcal{D}$, which is a $\mathbb{Z}[\varepsilon]/(\varepsilon^2-1)$-module. We then have ring isomorphisms
\[
  \sGr(\mathcal{D})/(\varepsilon+1)\simeq A\quad \text{and}\quad \sGr(\mathcal{D})/(\varepsilon-1)\simeq A^{\mathrm{abs}}.
\]
\end{proof}

\begin{remark}
  If $d$ and $n$ are odd, we can find a modular subcategory of $\mathbb{Z}(\mathcal{T}_\xi)\rtimes\mathcal{S}$ which is a categorification of the Malle's $\mathbb{Z}$-modular datum: we simply take the full subcategory of $\mathbb{Z}(\mathcal{T}_\xi)\rtimes\mathcal{S}$ with objets of degree in $Q/dQ$. Therefore, there exists $(d_f)_{f\in\Psi^{\#}(Y,\pi)}$ with $d_f=\pm 1$ such that $d_fd_gd_hN_{f,g}^h$ is positive for any $f,g,h\in\Psi^{\#}(Y,\pi)$. This was also conjectured by Cuntz.
\end{remark}

\subsection{Categorification of the Ennola property}
\label{ref:ennola_categorified}

The Ennola $d$-ality gives a bijection $\mathcal{E}$ on $\Psi^{\#}(Y,\pi)$ and satisfies $\mathcal{E}^d=\id$. Consider $(-\gamma,\gamma)$ in $\overline{C}$ given by $\gamma = \frac{(n+1)(n+2)}{2}\varpi_1-\rho$.

\begin{proposition}
  Tensoring by $L_{\xi}(-\gamma,\gamma)$ is a categorification of the Ennola $d$-ality. For any $f\in\Psi^{\#}(Y,\pi)$ we have
  \[
    L_\xi(\iota(f))\otimes L_{\xi}(-\gamma,\gamma) \simeq L_\xi(\iota(\mathcal{E}(f))),
  \]
  the isomorphism being understood in the non-degenerate (super)fusion category associated to $\mathbb{Z}(\mathcal{T}_\xi)\rtimes\mathcal{S}$.
\end{proposition}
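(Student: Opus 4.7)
The plan is to reduce the statement to a direct computation with the parametrization $\iota\colon\Psi^{\#}(Y,\pi)\to\overline{C}$ of Section \ref{sec:comparison}, using the fact that $L_\xi(-\gamma,\gamma)$ is an invertible simple object of the category $\mathbb{Z}(\mathcal{T}_\xi)\rtimes\mathcal{S}$ (possibly after passing to the associated non-degenerate (super)fusion category). First, I would observe that for any invertible object of the form $L_\xi(\kappa,-\kappa)$ with $\kappa\in P$, tensorisation yields $L_\xi(\lambda,\mu)\otimes L_\xi(\kappa,-\kappa)\simeq L_\xi(\lambda+\kappa,\mu-\kappa)$ in $\mathcal{C}_\xi$ (this uses only the decomposition $L_\xi(\lambda,\mu)\simeq L_\xi(\frac{\lambda+\mu}{2},\frac{\lambda+\mu}{2})\otimes L_\xi(\frac{\lambda-\mu}{2},-\frac{\lambda-\mu}{2})$ and the triviality of tensor products with one-dimensional modules). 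Taking $\kappa=-\gamma$, the effect of tensoring with $L_\xi(-\gamma,\gamma)$ on the parameter is $(\lambda,\mu)\mapsto(\lambda-\gamma,\mu+\gamma)$.

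Next I would apply the explicit inverse $f_{\lambda,\mu}$ to the shifted weight $(\lambda_f-\gamma,\mu_f+\gamma)$. The key observation is that $\frac{\lambda+\mu}{2}$ is invariant under the shift, while $\frac{\lambda-\mu}{2}$ is shifted by $-\gamma$. So the first defining set for $f_{\lambda_f-\gamma,\mu_f+\gamma}$ is translated by a single constant $-\langle\gamma,\varpi_1\rangle$, and the values $k_{i-1}$ are translated by $-\sum_{j=1}^{i}\langle\gamma,\alpha_j\rangle$. It therefore suffices to compute these two quantities modulo $d$ and check that they match Malle's prescription for $\mathcal{E}(f)$.

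Using $\gamma=\frac{(n+1)(n+2)}{2}\varpi_1-\rho$ together with $\langle\varpi_1,\varpi_1\rangle=\frac{n}{n+1}$, $\langle\rho,\varpi_1\rangle=\frac{n}{2}$ and $\langle\varpi_i,\alpha_j\rangle=\delta_{ij}$ (valid in type $A$ since $\alpha_j^\vee=\alpha_j$), I would obtain
\[
\langle\gamma,\varpi_1\rangle=\frac{n(n+1)}{2},\qquad \sum_{j=1}^{i}\langle\gamma,\alpha_j\rangle=\frac{(n+1)(n+2)}{2}-i.
\]
The first identity shows that the set $\{f_{\lambda_f-\gamma,\mu_f+\gamma}(i)\mid 1\leq i\leq n+1\}$ equals $\{(f(i)-\tfrac{n(n+1)}{2})^{\mathrm{res}}\}$, which is exactly the first condition defining $\mathcal{E}(f)$. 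For the second, I would check the congruence
\[
i-\frac{(n+1)(n+2)}{2}\equiv i-1-\frac{n(n+3)}{2}\pmod{d},
\]
which amounts to $\frac{n(n+3)-(n+1)(n+2)}{2}+1=0$, a one-line verification. This matches the Ennola prescription $k_{i-1}(\mathcal{E}(f))=(k_{i-1}(f)+(i-1)-\tfrac{n(n+3)}{2})^{\mathrm{res}}$.

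Combining these two identifications yields $f_{\lambda_f-\gamma,\mu_f+\gamma}=\mathcal{E}(f)$ in $\Psi^{\#}(Y,\pi)$, hence $\iota(\mathcal{E}(f))=(\lambda_f-\gamma,\mu_f+\gamma)$ in $\overline{C}$, which gives the desired isomorphism in the non-degenerate (super)fusion category. I do not anticipate a serious obstacle here: the whole argument is a translation under $\iota$ of the elementary shift on weights; the only step requiring care is confirming that the pairings $\langle\gamma,\varpi_1\rangle$ and $\langle\gamma,\alpha_j\rangle$ are computed correctly in the normalization fixed in Section \ref{sec:not}, and that any ambiguity by an element of the symmetric center (generated by $I$, see Proposition \ref{prop:sym_center_A}) is absorbed since we work in the quotient.
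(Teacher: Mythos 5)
Your proposal is correct and follows essentially the same route as the paper: reduce to the explicit left inverse $f_{\lambda,\mu}$ of $\iota$, observe that the weight shift $(\lambda,\mu)\mapsto(\lambda-\gamma,\mu+\gamma)$ translates the two defining data by $-\langle\gamma,\varpi_1\rangle=-\frac{n(n+1)}{2}$ and $-\sum_{j=1}^{i}\langle\gamma,\alpha_j\rangle=i-\frac{(n+1)(n+2)}{2}$, and match these against Malle's prescription for $\mathcal{E}(f)$ (the paper does the identical computation in simple-root coordinates). Your closing remark about absorbing the residual $I^{\otimes k}$ ambiguity via the fibres of $(\lambda,\mu)\mapsto f_{\lambda,\mu}$ being symmetric-center orbits is exactly the paper's final step.
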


As $\gamma$ belongs to $Q^\vee$, it is clear that $L_{\xi}(-\gamma,\gamma)^{\otimes d}\simeq \mathbf{1}$.

\begin{proof}
  We write $\gamma$ on the basis of simple roots
  \[
    \gamma = \frac{1}{2}\sum_{i=1}^n (n-i+1)(n-i+2)\alpha_i.
  \]
  Let $f\in\Psi^{\#}(Y,\pi)$ and let $(\lambda,\mu) = \iota(f)+(-\gamma,\gamma)$. Writing as usual $\mu = \sum_{i=1}^n\mu_i\alpha_i$ and $\lambda = -\mu +2\sum_{i=1}^n\eta_i\varpi_i$ we have
  \[
    \mu_i = \sum_{j=1}^{i-1}k_{j-1}(f) - \sum_{j=1}^i f(j) + \frac{(n-i+1)(n-i+2)}{2} \quad \text{and} \quad \eta_i = f(i+1)-f(i)-1.
  \]
  By definition, $f_{\lambda,\mu}$ is the unique function $g\in\Psi^{\#}(Y,\pi)$ such that
\[
  \{g(1),\ldots,g(n+1)\} = \left\{\left(-\langle\mu,\varpi_1\rangle+\sum_{j=1}^{i-1}\left\langle\frac{\lambda+\mu}{2}+\rho,\alpha_j\right\rangle\right)^{\text{res}}\ \middle\vert\ 1\leq i \leq n+1\right\}
\]
and
\[
  k_{i-1}(g) = \left(\sum_{j=1}^{i}\left\langle\frac{\lambda-\mu}{2}+\rho,\alpha_j\right\rangle\right)^{\text{res}}.
\]
  But as
  \[
    -\langle\mu,\varpi_1\rangle+\sum_{j=1}^{i-1}\left\langle\frac{\lambda+\mu}{2}+\rho,\alpha_j\right\rangle = f(i)-\frac{n(n+1)}{2}
  \]
  and
  \begin{align*}
    \sum_{j=1}^{i}\left\langle\frac{\lambda-\mu}{2}+\rho,\alpha_j\right\rangle &= f(i+1) - \frac{n(n+1)}{2} + \mu_{i+1}-\mu_i\\
    &= k_{i-1}(f) -\frac{n(n+1}{2}+\frac{(n-i)(n-i+1)}{2} - \frac{(n-i+1)(n-i+2)}{2}\\
    &= k_{i-1}(f) + i-1 - \frac{n(n+3)}{2}
  \end{align*}
  we have $f_{\lambda,\mu} = f_{\iota(\mathcal{E}(f))}$. As the fibres of the map $(\lambda,\mu)\mapsto f_{\lambda,\mu}$ are exactly the orbits under tensorisation by the symmetric center on the set of simple objects of $\mathbb{Z}(\mathcal{T}_\xi)\rtimes \mathcal{S}$, there exists $k\in\mathbb{Z}$ such that $L_\xi(\iota(f))\otimes L_{\xi}(-\gamma,\gamma) \simeq L_\xi(\iota(\mathcal{E}(f)))\otimes I^{\otimes k}$.
\end{proof}


\section{$\mathbb{Z}$-modular data associated to exceptional complex reflection groups}
\label{sec:exceptional}
The notion of ``unipotent characters'' has been defined for some exceptional complex reflection group by Broué, Malle and Michel \cite{spetsesI}, \cite{spetsesII}, which are called spetsial. The notion of families of such characters, as well as of Fourier transform and eigenvalues of the Frobenius exists, and are available in the package CHEVIE of GAP \cite{chevie}, \cite{chevie_jean}. In this section, we categorify some of these $\mathbb{Z}$-modular data using subcategories of $\mathbb{Z}(\mathcal{T}_\xi)\rtimes\mathcal{S}$, for a well suited simple complex Lie algebra $\mathfrak{g}$ and $\xi$ a well chosen root of unity. The unipotent characters of a complex reflection group $G$ are obtained in CHEVIE and displayed with the following command (we do the example of $G_4$):

\begin{center}
\begin{BVerbatim}[fontsize=\footnotesize]
gap> G:=ComplexReflectionGroup(4);;
gap> U:=UnipotentCharacters(G);;
gap> Display(U,rec(byFamily:=true));
Unipotent characters for G4
     Name |               Degree FakeDegree Eigenvalue    Label
________________________________________________________________
*phi{1,0} |                    1          1          1          
________________________________________________________________ 
*phi{2,1} |(3-ER(-3))/6qP'3P4P"6        qP4          1     1.E3       
#phi{2,3} |(3+ER(-3))/6qP"3P4P'6      q^3P4          1   1.E3^2
Z3:2      |     -ER(-3)/3qP1P2P4          0       E3^2  E3.E3^2
________________________________________________________________
*phi{3,2} |              q^2P3P6    q^2P3P6          1         
________________________________________________________________
*phi{1,4} | -ER(-3)/6q^4P"3P4P"6        q^4          1  1.-E3^2
phi{2,5}  |         1/2q^4P2^2P6      q^5P4          1   1.E3^2
G4        |        -1/2q^4P1^2P3          0         -1 -E3^2.-1
Z3:11     |   -ER(-3)/3q^4P1P2P4          0       E3^2   E3.-E3
#phi{1,8} |  ER(-3)/6q^4P'3P4P'6        q^8          1  -1.E3^2
\end{BVerbatim}
\end{center}

The unipotents characters are ordered by families. In the first column, one can read the name of the unipotent characters; In the second column, one can read the degree of the unipotent character; in the third column, one can read the fake degree, for the principal series unipotent characters; in the fourth column, one can read the eigenvalues of the Frobenius; finally, in the last column, one can read some labelling compatible with a categorification of the modular datum associated with the family.

Cyclotomic polynomials are denoted by \verb|Pk|, \verb|Pk'|, \verb|Pk"| and \verb|E3=E(3)| is a third root of unity.

One can access a family, its Fourier matrix and the eigenvalues of the Frobenius as follows (continuing the same example):

\begin{center}
\begin{BVerbatim}[fontsize=\footnotesize]
gap> f:=U.families[2];
Family("RZ/3^2",[6,5,8])
gap> f.fourierMat;
[ [ -2/3*E(3)-1/3*E(3)^2, -1/3*E(3)-2/3*E(3)^2, -1/3*E(3)+1/3*E(3)^2 ], 
  [ -1/3*E(3)-2/3*E(3)^2, -2/3*E(3)-1/3*E(3)^2, 1/3*E(3)-1/3*E(3)^2 ], 
  [ -1/3*E(3)+1/3*E(3)^2, 1/3*E(3)-1/3*E(3)^2, 1/3*E(3)-1/3*E(3)^2 ] ]
gap> f.eigenvalues
[ 1, 1, E(3)^2 ]
\end{BVerbatim}
\end{center}

\subsection{Two families attached to $G_{27}$}
\label{sec:G27}

We consider the complex reflection group denoted by $G_{27}$ in the classification of Shephard-Todd \cite{shephard_todd}. Two families of unipotent characters are of size $18$, the second one and the last but one. The $\mathbb{Z}$-modular datum they define are complex conjugate to each other, hence we will only consider the second family of unipotent characters of $G_{27}$. The $\mathbb{Z}$-modular datum is in fact the tensor product of a $\mathbb{Z}$-modular datum of size $3$ and of an $\mathbb{N}$-modular datum of size $6$.

\subsubsection{The $\mathbb{Z}$-modular datum of size $3$}
\label{sec:card3}

The Fourier matrix and the eigenvalues of the Frobenius are
\[
S=\frac{1}{3}
\begin{pmatrix}
  1-\zeta_3&1-\zeta_3^2&\zeta_3-\zeta_3^3\\
  1-\zeta_3^2&1-\zeta_3&\zeta_3^2-\zeta_3\\
  \zeta_3-\zeta_3^2&\zeta_3^2-\zeta_3&\zeta_3-\zeta_3^2
\end{pmatrix} \quad \text{and} \quad
T=
\begin{pmatrix}
  1&0&0\\
  0&1&0\\
  0&0&\zeta_3^2
\end{pmatrix}
\]
where $\zeta_3$ is a primitive third root of unity. It is easily checked that it coincides with the $\mathbb{Z}$-modular datum associated to the non-trivial family of the cyclic complex reflection group $G(3,1,1)$. By Theorem \ref{thm:main_result}, this $\mathbb{Z}$-modular datum is categorifed by the braided pivotal slightly degenerate fusion category $\mathbb{Z}(\mathcal{T}_\xi)\rtimes\mathcal{S}$ for $\mathfrak{g}=\mathfrak{sl}_2$ and $\xi$ a sixth root of unity such that $\xi^2 = \zeta_3^{-1}$.

\begin{remark}
  Up to conjugation by a diagonal matrix with coefficients in $\{\pm 1\}$, we recover the modular datum associated to the second family of unipotent characters of $G_4$, which we gave in the beginning of this Section.
\end{remark}

\subsubsection{The $\mathbb{N}$-modular datum of size $6$}
\label{sec:card6}

The Fourier matrix and the eigenvalues of the Frobenius are
\[
S=\frac{1}{10}
\left(\begin{smallmatrix}
  -\zeta_5^4+\zeta_5^3+\zeta_5^2-\zeta_5&-\zeta_5^4+\zeta_5^3+\zeta_5^2-\zeta_5&2(-\zeta_5^4+\zeta_5^3+\zeta_5^2-\zeta_5)&2(-\zeta_5^4+\zeta_5^3+\zeta_5^2-\zeta_5)&-5&-5\\
  -\zeta_5^4+\zeta_5^3+\zeta_5^2-\zeta_5&-\zeta_5^4+\zeta_5^3+\zeta_5^2-\zeta_5&2(-\zeta_5^4+\zeta_5^3+\zeta_5^2-\zeta_5)&2(-\zeta_5^4+\zeta_5^3+\zeta_5^2-\zeta_5)&5&5\\
  2(-\zeta_5^4+\zeta_5^3+\zeta_5^2-\zeta_5)&2(-\zeta_5^4+\zeta_5^3+\zeta_5^2-\zeta_5)&2(3\zeta_5^4+2\zeta_5^3+2\zeta_5^2+3\zeta_5)&2(-2\zeta_5^4-3\zeta_5^3-3\zeta_5^2-2\zeta_5)&0&0\\
  2(-\zeta_5^4+\zeta_5^3+\zeta_5^2-\zeta_5)&2(-\zeta_5^4+\zeta_5^3+\zeta_5^2-\zeta_5)&2(-2\zeta_5^4-3\zeta_5^3-3\zeta_5^2-2\zeta_5)&2(3\zeta_5^4+2\zeta_5^3+2\zeta_5^2+3\zeta_5)&0&0\\
  -5&5&0&0&5&-5\\
  -5&5&0&0&-5&5\\
\end{smallmatrix}\right)
\]
and
\[
  T=
  \begin{pmatrix}
    1&0&0&0&0&0\\
    0&1&0&0&0&0\\
    0&0&\zeta_5^3&0&0&0\\
    0&0&0&\zeta_5^2&0&0\\
    0&0&0&0&-1&0\\
    0&0&0&0&0&1
  \end{pmatrix},
\]
where $\zeta_5$ is a primitive fifth root of unity.



\boitegrise{{\bf Notation.} {\emph{In this section, $\mathfrak{g}$ is of type $B_2$ with Cartan matrix
\[
\begin{pmatrix}
  2&-2\\
  -1&2
\end{pmatrix}.
\]
  The short simple root is denoted by $\alpha_1$ and the long simple root is denoted by $\alpha_2$. The fundamental weights are $\varpi_1=\alpha_1+\frac{1}{2}\alpha_2$ and $\varpi_2=\alpha_1+\alpha_2$.}}}{0.8\textwidth}

Let $\xi$ be a primitive twentieth root of unity such that $\xi^4=\zeta_5$ and consider the full subcategory $\mathcal{C}$ of
$\mathbb{Z}(\mathcal{T}_\xi)\rtimes\mathcal{S}$ for $\mathfrak{g}$ of
type $B_2$ generated by the objects of grading $0$ and $5\varpi_1$. As
$10\varpi_1 \in 10Q^\vee\cap Q$, $\{0,5\varpi_1\}$ is a subgroup of
$P/(10Q^\vee\cap Q)$. Hence $\mathcal{C}$ is stable by tensor
product. The fundamental chamber $C$ is given by
\[
   C = \left\{\lambda_1\varpi_1 + \lambda_2\varpi_2 \in P^+\ \middle\vert\ \lambda_1+\lambda_2 < 3\right\} = \{0,\varpi_1,\varpi_2,\varpi_1+\varpi_2,2\varpi_1,2\varpi_2\}.
\]

Therefore, there are $6$ simple objects in $\mathcal{C}$ which are
labelled by:
  \begin{itemize}
  \item in degree $0$: $(0,0),(\varpi_2,\varpi_2),(2\varpi_1,2\varpi_1)$ and $(2\varpi_2,2\varpi_2)$,
  \item in degree $5\varpi_1$: $(6\varpi_1,-4\varpi_1)$ and $(6\varpi_1+\varpi_2,-4\varpi_1+\varpi_2)$.
  \end{itemize}

We choose the following set of representatives of the fusion category $\mathcal{C}$:
\[
  \{L_\xi(0,0),L_\xi(2\varpi_2,2\varpi_2),L_\xi(2\varpi_1,2\varpi_1),L_\xi(\varpi_2,\varpi_2),L_\xi(6\varpi_1+\varpi_2,-4\varpi_1+\varpi_2),L_\xi(6\varpi_1,-4\varpi_2)\}
\]
With this order, the $S$-matrix and the $T$-matrix of the twist of $\mathcal{C}$ are
\[
S_{\mathcal{C}}=\left(\begin{smallmatrix}
  1&1&2&2&\zeta_5^4-\zeta_5^3-\zeta_5^2+\zeta_5&\zeta_5^4-\zeta_5^3-\zeta_5^2+\zeta_5\\
  1&1&2&2&-\zeta_5^4+\zeta_5^3+\zeta_5^2-\zeta_5&-\zeta_5^4+\zeta_5^3+\zeta_5^2-\zeta_5\\
  2&2&2\zeta_5^4+2\zeta_5&2\zeta_5^3+2\zeta_5^2&0&0\\
  2&2&2\zeta_5^3+2\zeta_5^2&2\zeta_5^4+2\zeta_5&0&0\\
  \zeta_5^4-\zeta_5^3-\zeta_5^2+\zeta_5&-\zeta_5^4+\zeta_5^3+\zeta_5^2-\zeta_5&0&0&-\zeta_5^4+\zeta_5^3+\zeta_5^2-\zeta_5&\zeta_5^4-\zeta_5^3-\zeta_5^2+\zeta_5\\
  \zeta_5^4-\zeta_5^3-\zeta_5^2+\zeta_5&-\zeta_5^4+\zeta_5^3+\zeta_5^2-\zeta_5&0&0&\zeta_5^4-\zeta_5^3-\zeta_5^2+\zeta_5&-\zeta_5^4+\zeta_5^3+\zeta_5^2-\zeta_5
\end{smallmatrix}\right),
\]
and
\[
  T_{\mathcal{C}}=
  \begin{pmatrix}
    1&0&0&0&0&0\\
    0&1&0&0&0&0\\
    0&0&\zeta_5^3&0&0&0\\
    0&0&0&\zeta_5^2&0&0\\
    0&0&0&0&-1&0\\
    0&0&0&0&0&1
  \end{pmatrix}
\]

Renormalizing $S_{\mathcal{C}}$ with the square root of $20$, which is the dimension of $\mathcal{C}$, gives us the negative of the matrix $S$.

\begin{theoreme}
  The $S$-matrix and the $T$-matrix of the modular category $\mathcal{C}$ satisfy
\[
  \frac{S_{\mathcal{C}}}{\sqrt{20}} = -S \quad \text \quad T_{\mathcal{C}} = T.
\]
\end{theoreme}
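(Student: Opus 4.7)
The proof is a direct verification of the two equalities using the closed formulas in Proposition~\ref{prop:S_mat-T_mat}. First I would fix the numerology for type $B_2$: from the given Cartan matrix together with $\langle\alpha_1,\alpha_1\rangle=2$ one obtains $\langle\alpha_2,\alpha_2\rangle=4$, $\langle\alpha_1,\alpha_2\rangle=-2$, hence $\rho=\varpi_1+\varpi_2$ and $2\rho=4\alpha_1+3\alpha_2$. The Weyl group $W=W(B_2)$ has eight elements, which I would list together with their lengths and their (dot) action on the six chosen representatives of simple objects.

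The diagonal matrix $T_{\mathcal{C}}$ is immediate. For each representative $L_\xi(\lambda,\mu)$ one computes $\langle\lambda+2\rho,\mu\rangle$ and reads off the corresponding power of $\xi$. With the chosen ordering, the six exponents come out to $0$, $20$, $12$, $8$, $-30$, $-40$, which specialize to $1$, $1$, $\zeta_5^3$, $\zeta_5^2$, $-1$, $1$ respectively, matching $T$ diagonal by diagonal.

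For the $S$-matrix the approach is to plug each ordered pair of representatives into the numerator formula of Proposition~\ref{prop:S_mat-T_mat} and expand, dividing out by the common denominator $\sum_{w\in W}(-1)^{l(w)}\xi^{\langle 2\rho,w\bullet 0\rangle}$, which is factored once and for all via the Weyl product $\prod_{\alpha\in\Phi^+}(\xi^{\langle\alpha,\rho\rangle}-\xi^{-\langle\alpha,\rho\rangle})$ and carried symbolically. The computation is organized using two structural observations. First, the isomorphism
\[
  L_\xi(6\varpi_1+\delta,-4\varpi_1+\delta)\simeq L_\xi(\varpi_1+\delta,\varpi_1+\delta)\otimes L_\xi(5\varpi_1,-5\varpi_1)
\]
for $\delta\in\{0,\varpi_2\}$ expresses each degree-$5\varpi_1$ simple object as the tensor of a degree-$0$ simple object with the invertible $L_\xi(5\varpi_1,-5\varpi_1)$, which reduces the corresponding entries of $S_{\mathcal{C}}$ to entries of the $S$-matrix of the tilting category of type $B_2$ at $\xi$, modified by explicit $\xi$-scalars arising from the braiding with $L_\xi(5\varpi_1,-5\varpi_1)$. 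Second, the vanishing block $S_{\mathcal{C},i,j}=0$ for $i\in\{3,4\}$, $j\in\{5,6\}$ is traced to a specific cancellation in the Weyl sum; for instance, for the pair $((2\varpi_1,2\varpi_1),(6\varpi_1,-4\varpi_1))$ the numerator reduces to $\xi^{c}\sum_{w\in W}(-1)^{l(w)}\xi^{\langle 8\alpha_1+5\alpha_2,w(3\alpha_1+2\alpha_2)\rangle}$ for an explicit constant $c$, and grouping each $w$ with $w_0w$ collapses the sum to $0$ modulo $20$. Finally, the global dimension of $\mathcal{C}$ is the sum of squared quantum dimensions and equals $20$, which fixes the renormalization factor $\sqrt{20}$.

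The principal obstacle is not depth but combinatorial bulk: the systematic expansion of up to $36$ Weyl sums of eight terms each, keeping track of exponents modulo $20$, and matching the resulting expressions with the polynomials in $\zeta_5$ appearing in $S$. The overall sign in the statement $S_{\mathcal{C}}/\sqrt{20}=-S$ comes from the fact that $\mathbb{Z}(\mathcal{T}_\xi)\rtimes\mathcal{S}$ is pivotal but not spherical (cf.\ Section~\ref{sec:renorm}) and can be read off from the normalization factor $i^{-\lvert\Phi^+\rvert-n}$ appearing in Theorem~\ref{thm:main_result}, suitably specialized.
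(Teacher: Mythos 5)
Your proposal is correct and follows essentially the same route as the paper: the paper itself simply computes $S_{\mathcal{C}}$ and $T_{\mathcal{C}}$ entry by entry from the closed formulas of Proposition \ref{prop:S_mat-T_mat} for the six listed representatives, displays the resulting matrices, and observes that dividing by $\sqrt{20}$ (the dimension of $\mathcal{C}$) yields $-S$ while $T_{\mathcal{C}}=T$ on the nose. Your twist exponents $0,20,12,8,-30,-40$ and the vanishing of the Weyl sum for the mixed-degree block check out, so the only difference from the paper is that you make the bookkeeping (tensoring with the invertible $L_\xi(5\varpi_1,-5\varpi_1)$, the $w\leftrightarrow w_0w$ pairing) explicit rather than leaving the computation implicit.
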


\begin{remark}
  The category $\mathcal{C}$ is indeed ribbon since every object is
  self dual.
\end{remark}


\subsection{Two families attached to $G_{24}$}
\label{sec:G24}

We consider the complex reflection group denoted by $G_{24}$ in the classification of Shephard-Todd \cite{shephard_todd}. Two families of unipotent characters are of size $7$, the second one and the last but one. The $\mathbb{Z}$-modular datum they define are complex conjugate to each other, hence we will only consider the last but one family of unipotent characters of $G_{24}$.

The Fourier matrix and the eigenvalues of the Frobenius are
\[
  S=\frac{1}{14}\left(
    \begin{smallmatrix}
      -\sqrt{-7}&\sqrt{-7}&7&7&-2\sqrt{-7}&-2\sqrt{-7}&-2\sqrt{-7}\\
      \sqrt{-7}&-\sqrt{-7}&7&7&2\sqrt{-7}&2\sqrt {-7}&2\sqrt{-7}\\
      7&7&7&-7&0&0&0\\
      7&7&-7&7&0&0&0\\
      -2\sqrt{-7}&2\sqrt{-7}&0&0&2\zeta_7^6+4\zeta_7^4-4\zeta_7^3-2\zeta_7&-4\zeta_7^6+2\zeta_7^5-2\zeta_7^2+4\zeta_7&-4\zeta_7^5-2\zeta_7^4+2\zeta_7^3+4\zeta_7^2\\
      -2\sqrt{-7}&2\sqrt{-7}&0&0&-4\zeta_7^6+2\zeta_7^5-2\zeta_7^2+4\zeta_7&-4\zeta_7^5-2\zeta_7^4+2\zeta_7^3+4\zeta_7^2&2\zeta_7^6+4\zeta_7^4-4\zeta_7^3-2\zeta_7\\
      -2\sqrt{-7}&2\sqrt{-7}&0&0&-4\zeta_7^5-2\zeta_7^4+2\zeta_7^3+4\zeta_7^2&2\zeta_7^6+4\zeta_7^4-4\zeta_7^3-2\zeta_7&-4\zeta_7^6+2\zeta_7^5-2\zeta_7^2+4\zeta_7
    \end{smallmatrix}\right)
\]
and
\[
 T=
 \begin{pmatrix}
   1&0&0&0&0&0&0\\
   0&1&0&0&0&0&0\\
   0&0&1&0&0&0&0\\
   0&0&0&-1&0&0&0\\
   0&0&0&0&\zeta_7^3&0&0\\
   0&0&0&0&0&\zeta_7^5&0\\
   0&0&0&0&0&0&\zeta_7^6
 \end{pmatrix},
\]
where $\zeta_7$ is a primitive seventh root of unity and $\sqrt{-7}=\zeta_7+\zeta_7^2-\zeta_7^3+\zeta_7^4-\zeta_7^5-\zeta_7^6$.

In \cite[Section 8.2.1 Beispiel 15]{cuntz_phd}, Cuntz showed that the fusion algebra associated to this modular datum is related to the Verlinde algebra of type $B_3$ at a twenty-eighth root of unity.

Let $\xi$ be a primitive twenty-eighth root of unity such that $\xi^4=\zeta_7$ and consider the category $\mathbb{Z}(\mathcal{T}_\xi)\rtimes\mathcal{S}$ for $\mathfrak{g}$ of
type $B_3$.

\boitegrise{{\bf Notations.} {\emph{In this section, $\mathfrak{g}$ is of type $B_3$ with Cartan matrix
\[
\begin{pmatrix}
  2&-2&0\\
  -1&2&-1\\
  0&-1&2
\end{pmatrix}.
\]
  The short simple root is denoted by $\alpha_1$ and the two long simple roots are denoted by $\alpha_2$ and $\alpha_3$. The fundamental weights are $\varpi_1=\frac{3}{2}\alpha_1+\alpha_2+\frac{1}{2}\alpha_3$, $\varpi_2=2\alpha_1+2\alpha_2+\alpha_3$ and $\varpi_3=\alpha_1+\alpha_2+\alpha_3$.
}}}{0.8\textwidth}

We have $\varpi_1=\frac{3}{2}\alpha_1^\vee+2\alpha_2^\vee+\alpha_3^\vee$ and
therefore the subgroup of $P/(14Q^\vee\cap Q)$ generated by $7\varpi_1$
is $\{0,7\varpi_1,14\varpi_1,21\varpi_1\}$. Let $\mathcal{C}$ be the full subcategory of $\mathbb{Z}(\mathcal{T}_\xi)\rtimes\mathcal{S}$ generated by the objects of grading in $\{0,7\varpi_1,14\varpi_1,21\varpi_1\}$. This category is then stable by tensor product. The fundamental chamber $C$ is
\[
   C = \left\{\lambda_1\varpi_1 + \lambda_2\varpi_2 +\lambda_3\varpi_3\in P^+\ \middle\vert\ \lambda_1+2\lambda_2 +\lambda_3< 3\right\} = \{0,\varpi_1,\varpi_2,\varpi_3,2\varpi_1,2\varpi_3,\varpi_1+\varpi_3\}.
\]

Therefore, there are $14$ simple objects in $\mathcal{C}$ which are
labelled by:
  \begin{itemize}
  \item in degree $0$: $(0,0),(\varpi_2,\varpi_2),(\varpi_3,\varpi_3),(2\varpi_1,2\varpi_1)$ and $(2\varpi_3,2\varpi_3)$,
  \item in degree $7\varpi_1$: $(8\varpi_1,-6\varpi_1)$ and $(8\varpi_1+\varpi_3,-6\varpi_1+\varpi_3)$,
  \item in degree $14\varpi_1$: $(14\varpi_1,-14\varpi_1),(14\varpi_1+\varpi_2,-14\varpi_1+\varpi_2),(14\varpi_1+\varpi_3,-14\varpi_1+\varpi_3),\\(16\varpi_1,-12\varpi_1)$ and $(14\varpi_1+2\varpi_3,-14\varpi_1+2\varpi_3)$
  \item in degree $21\varpi_1$: $(22\varpi_1,-20\varpi_1)$ and $(22\varpi_1+\varpi_3,-20\varpi_1+\varpi_1)$.
  \end{itemize}

  \begin{proposition}
    The category $\mathcal{C}$ is slightly degenerate. The non-unit
    objecct of its symmetric center is
    $L_{\xi}(14\varpi_1+2\varpi_3,-14\varpi_1+2\varpi_3)$ which is of
    dimension $-1$ and of twist $1$.
  \end{proposition}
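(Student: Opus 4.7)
My plan has three parts: identify $X := L_\xi(14\varpi_1+2\varpi_3, -14\varpi_1+2\varpi_3)$ as the non-trivial simple of the ambient symmetric center that happens to lie in $\mathcal{C}$, compute $\dim^\pm(X) = -1$ and $\theta_X = 1$ directly, and rule out any further simple object in the symmetric center of $\mathcal{C}$.

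By Theorem \ref{thm:int_deg}, the symmetric center of $\mathbb{Z}(\mathcal{T}_\xi)\rtimes\mathcal{S}$ contains $|P^\vee/Q^\vee| = 2$ simple objects in type $B_3$. The non-trivial class of $14P^\vee/14Q^\vee$ is represented by $\gamma = 7\varpi_3$ (equivalently $14\varpi_1$, since the two differ by an element of $14Q^\vee$). A direct reduction under the dot action of $W^a = W\ltimes 14Q^\vee$ yields $\tilde\gamma = 2\varpi_3 \in C$. Since $w_0 = -\mathrm{id}$ for $B_3$, the associated simple has weight $(-5\varpi_3, 9\varpi_3)$; translating by $14\varpi_1+7\varpi_3 \in 14Q^\vee$ identifies it with $(14\varpi_1+2\varpi_3, -14\varpi_1+2\varpi_3)$ in $\overline{C}$. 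The grading $14\varpi_1$ belongs to the subgroup $\{0, 7\varpi_1, 14\varpi_1, 21\varpi_1\}$ defining $\mathcal{C}$, so $X \in \mathcal{C}$, hence in its symmetric center.

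The dimension and twist follow from the isomorphism $X \simeq L_\xi(2\varpi_3, 2\varpi_3) \otimes L_\xi(14\varpi_1, -14\varpi_1)$. The invertible factor contributes $\xi^{-14\langle 2\rho, \varpi_1\rangle} = \xi^{-126}$, and since $\langle\rho, \varpi_1\rangle = 9/2$ while $-126 \equiv 14 \pmod{28}$, this is $-1$. For $L_\xi(2\varpi_3, 2\varpi_3)$ the quantum Weyl dimension formula, after tabulating $\langle 2\varpi_3+\rho, \alpha\rangle$ and $\langle\rho, \alpha\rangle$ for the nine positive roots of $B_3$, simplifies to $[9]_\xi[10]_\xi[12]_\xi / [2]_\xi[4]_\xi[5]_\xi$, which equals $1$ by the identity $[k]_\xi = [14-k]_\xi$ at a primitive $28$-th root of unity. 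A parallel argument gives $\dim^-(X) = -1$. For the twist, Proposition \ref{prop:S_mat-T_mat} gives $\theta_X = \xi^{\langle \lambda+2\rho, \mu\rangle}$, whose exponent evaluates to $-392 \equiv 0 \pmod{28}$ using $\langle\varpi_1,\varpi_1\rangle = 3/2$, $\langle\varpi_3,\varpi_3\rangle = 2$, $\langle\varpi_1,\varpi_3\rangle = 1$, $\langle\rho,\varpi_1\rangle = 9/2$, and $\langle\rho,\varpi_3\rangle = 5$, so $\theta_X = 1$.

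The hard part will be proving uniqueness, because centralizing the subcategory $\mathcal{C}$ is \emph{a priori} weaker than centralizing the ambient category. The plan is to use the $\mathbb{Z}/4$-grading of $\mathcal{C}$ and the factorization of every simple as $L_\xi(\kappa,\kappa)\otimes I(\nu)$. Given a candidate $Y = L_\xi(\kappa,\kappa) \otimes I(\nu)$ in the symmetric center of $\mathcal{C}$, I would expand its double braiding with the degree-$7\varpi_1$ test object $L_\xi(8\varpi_1,-6\varpi_1) \simeq L_\xi(\varpi_1,\varpi_1) \otimes I(7\varpi_1)$ as a product of four contributions: three explicit scalars given by the formula $c_{L_\xi(\lambda,\mu), I(\eta)}\circ c_{I(\eta), L_\xi(\lambda,\mu)} = \xi^{\langle\eta, \mu-\lambda\rangle}\mathrm{id}$ of Section \ref{sec:subcat}, and one double braiding of $L_\xi(\kappa,\kappa)$ with $L_\xi(\varpi_1, \varpi_1)$ inside the modular tilting category of $\qgrroot{B_3}$. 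The three explicit scalars force the remaining tilting-category double braiding to be a prescribed scalar multiple of the identity, which by modularity only happens when one of the two factors is the unit. This restricts $\kappa$ in each grading component to at most one value: $\kappa = 0$ in degree $0$, $\kappa = 2\varpi_3$ in degree $14\varpi_1$, and no possibility in degrees $7\varpi_1$ or $21\varpi_1$. Hence the symmetric center is $\{\mathbf{1}, X\}$, equivalent to $\sVect$, making $\mathcal{C}$ slightly degenerate.
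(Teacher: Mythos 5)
The paper states this proposition without proof, so I can only assess your argument on its own terms. Your first two parts are correct and complete: the reduction of the nontrivial class of $14P^\vee/14Q^\vee$ to $\tilde\gamma=2\varpi_3\in C$, the resulting identification of the transparent object of $\mathbb{Z}(\mathcal{T}_\xi)\rtimes\mathcal{S}$ with $L_\xi(14\varpi_1+2\varpi_3,-14\varpi_1+2\varpi_3)$ via Theorem \ref{thm:int_deg}, the membership in $\mathcal{C}$, and the computations $\dim^{\pm}(X)=-1$ (the quantum Weyl ratio $\qint{9}{\xi}\qint{10}{\xi}\qint{12}{\xi}/\qint{2}{\xi}\qint{4}{\xi}\qint{5}{\xi}=1$ and $\xi^{-126}=-1$) and $\theta_X=\xi^{-392}=1$ all check out against the paper's conventions.

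The uniqueness step, however, has a genuine gap. Your reduction of $c^{(2)}_{Y,Z}$ to the scalar $\xi^{2\langle\delta,\epsilon\rangle}$ times the degree-zero double braiding $c^{(2)}_{L_\xi(\kappa,\kappa),L_\xi(\eta,\eta)}$ is fine, but the assertion that a prescribed scalar double braiding ``by modularity only happens when one of the two factors is the unit'' is false, and it contradicts your own conclusion: you need $\kappa=2\varpi_3$ to survive in degree $14\varpi_1$, and $L_\xi(2\varpi_3,2\varpi_3)$ is not the unit. In a modular category $c^{(2)}_{A,B}$ is a scalar exactly when $A$ and $B$ projectively centralize each other, in particular whenever one of them is invertible — and $L_\xi(2\varpi_3,2\varpi_3)$ is precisely the invertible simple current of the level-two $B_3$ tilting category (its quantum dimension is $1$). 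What your argument actually requires, and does not establish, is: (i) that the projective centralizer of the test object $L_\xi(\varpi_1,\varpi_1)$ inside the degree-zero tilting category is exactly $\{L_\xi(0,0),L_\xi(2\varpi_3,2\varpi_3)\}$, which one checks by comparing the twists of the constituents of $L_\xi(\kappa,\kappa)\otimes L_\xi(\varpi_1,\varpi_1)$ for each $\kappa$ (e.g.\ for $\kappa=\varpi_3$ the constituents $\varpi_1$ and $\varpi_1+\varpi_3$ have twist ratio $\xi^{14}=-1$, so the double braiding is not scalar); and (ii) the value of the monodromy $c^{(2)}_{L_\xi(2\varpi_3,2\varpi_3),L_\xi(\varpi_1,\varpi_1)}=\theta_{J\otimes\varpi_1}/(\theta_J\theta_{\varpi_1})=-1$, to be matched against the prescribed scalars $\xi^{14\langle\delta,\varpi_1\rangle}=1,\pm i,-1,\mp i$ in the four grading components. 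With (i) and (ii) in hand your case analysis does close up (only $\kappa=0$ in degree $0$ and $\kappa=2\varpi_3$ in degree $14\varpi_1$ match, and the candidates in degrees $7\varpi_1,21\varpi_1$ have $\kappa\in\{\varpi_1,\varpi_1+\varpi_3\}$, which already fail (i)), but as written the key step is both unjustified and misstated.
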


  We choose the following set of representatives of the superfusion category $\widehat{\mathcal{C}}$:
\begin{multline*}
  \{L_\xi(0,0),L_\xi(14\varpi_1,-14\varpi_1),L_\xi(22\varpi_1,-20\varpi_1),L_\xi(22\varpi_1+\varpi_1,-20\varpi_1+\varpi_3),\\L_\xi(\varpi_3,\varpi_3),L_\xi(\varpi_2,\varpi_2),L_\xi(2\varpi_1,2\varpi_1)\}.
\end{multline*}
With this order, the $S$-matrix and the $T$-matrix of the twist of $\widehat{\mathcal{C}}$ are
\[
  S_{\widehat{\mathcal{C}}}=
  \begin{pmatrix}
    1&-1&\sqrt{-7}&\sqrt{-7}&2&2&2\\
    -1&1&\sqrt{-7}&\sqrt{-7}&-2&-2&-2\\
    \sqrt{-7}&\sqrt{-7}&\sqrt{-7}&-\sqrt{-7}&0&0&0\\
    \sqrt{-7}&\sqrt{-7}&-\sqrt{-7}&\sqrt{-7}&0&0&0\\
    2&-2&0&0&2\zeta_7^6+2\zeta_7&2\zeta_7^5+2\zeta_7^2&2\zeta_7^4+2\zeta_7^3\\
    2&-2&0&0&2\zeta_7^5+2\zeta_7^2&2\zeta_7^4+2\zeta_7^3&2\zeta_7^6+2\zeta_7\\
    2&-2&0&0&2\zeta_7^4+2\zeta_7^3&2\zeta_7^6+2\zeta_7&2\zeta_7^5+2\zeta_7^2
  \end{pmatrix}
\]
and
\[
   T_{\widehat{\mathcal{C}}}=
 \begin{pmatrix}
   1&0&0&0&0&0&0\\
   0&1&0&0&0&0&0\\
   0&0&1&0&0&0&0\\
   0&0&0&-1&0&0&0\\
   0&0&0&0&\zeta_7^3&0&0\\
   0&0&0&0&0&\zeta_7^5&0\\
   0&0&0&0&0&0&\zeta_7^6
 \end{pmatrix}
\]

This category is not spherical and the object $\bar{\mathbf{1}}$ is $L_\xi(14\varpi_1,-14\varpi_1)$, which is of dimension $-1$. Therefore following \cite[Theorem 2.7]{super_application}, we renormalise $S_{\widehat{\mathcal{C}}}$ with the square root of $-28$, which is $\sdim(\mathcal{C})\dim^{+}(\bar{\mathbf{1}})$.

\begin{theoreme}
  The $S$-matrix and the $T$-matrix of the superfusion category $\widehat{\mathcal{C}}$ satisfy
\[
  \frac{S_{\widehat{\mathcal{C}}}}{i\sqrt{28}} = S \quad \text{and} \quad T_{\widehat{\mathcal{C}}} = T,
\]
where $i=\xi^7$.
\end{theoreme}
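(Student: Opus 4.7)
The plan is to establish the theorem by direct computation of both matrices $S_{\widehat{\mathcal{C}}}$ and $T_{\widehat{\mathcal{C}}}$ using Proposition \ref{prop:S_mat-T_mat}, and then to verify that they match the given Fourier matrix and eigenvalues of Frobenius up to the prescribed renormalization factor $i\sqrt{28}$. The conceptual tools are all available: the formulas for twist and $S$-matrix entries, the description of the symmetric center from Theorem \ref{thm:int_deg}, and the general recipe for renormalizing a non-spherical pivotal superfusion category from \cite[Theorem 2.7]{super_application}.

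First, I would identify the symmetric center of $\mathcal{C}$. By Theorem \ref{thm:int_deg} the symmetric center of $\mathbb{Z}(\mathcal{T}_\xi)\rtimes\mathcal{S}$ in type $B_3$ contains $|P/Q|=2$ simple objects, parametrized by $\gamma \in 14P^\vee/14Q^\vee$; computing the dot-action representative of the non-trivial class gives $L_\xi(14\varpi_1+2\varpi_3,-14\varpi_1+2\varpi_3)$. Its quantum dimension $\xi^{-\langle 2\rho,14\varpi_1+2\varpi_3\rangle}=-1$ and twist $\xi^{\langle 14\varpi_1+2\varpi_3+2\rho,-14\varpi_1+2\varpi_3\rangle}=1$ follow from direct pairing computations using $\rho=\varpi_1+\varpi_2+\varpi_3$ and the Gram matrix of the $\varpi_i$ in type $B_3$. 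Intersecting with $\mathcal{C}$, whose objects are graded by $\{0,7\varpi_1,14\varpi_1,21\varpi_1\}$, yields precisely the unit and this object, so $\mathcal{C}$ is slightly degenerate.

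Next, for the $T$-matrix I would apply $\theta_{L_\xi(\lambda,\mu)}=\xi^{\langle\lambda+2\rho,\mu\rangle}$ to each of the seven chosen representatives of the orbits under tensorisation by the symmetric center of $\widehat{\mathcal{C}}$; the computation is a short table of inner products (for instance, $\theta_{L_\xi(22\varpi_1+\varpi_3,-20\varpi_1+\varpi_3)}=\xi^{14}=-1$, matching the displayed $T$). For the $S$-matrix I apply Proposition \ref{prop:S_mat-T_mat}, which requires organizing the sums over $W(B_3)$ of order $48$; these simplify considerably by writing each simple as $L_\xi(\eta,\eta)\otimes L_\xi(\kappa,-\kappa)$ and using the Weyl character formula for the $\mathfrak{sl}$-factor. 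The identification $\zeta_7=\xi^4$ then translates the entries into powers of $\zeta_7$, and matching with the Malle matrix $S$ is a finite check.

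Finally, the renormalization: the superdimension of $\widehat{\mathcal{C}}$ is $\tfrac12\dim\mathcal{C}$ and, since the dimension of $\mathcal{C}$ is computable via the quantum-dimension formula (or by the $B_3$ analogue of Section \ref{sec:renorm}), one finds $\sdim(\widehat{\mathcal{C}})\dim^+(\bar{\mathbf{1}})=-28$ with $\bar{\mathbf{1}}=L_\xi(14\varpi_1,-14\varpi_1)$ of negative quantum dimension $-1$. Its square root is $i\sqrt{28}$ with $i=\xi^7$, yielding the stated renormalization. The principal obstacle is bookkeeping: carrying out the $W(B_3)$ sums correctly for all seven representatives and reducing the resulting exponents modulo $28$ to match the explicit powers of $\zeta_7$ in the Malle matrix; there is no conceptual difficulty, but the computation is sizeable and must be done carefully entry by entry.
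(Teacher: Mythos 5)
Your proposal is correct and follows essentially the same route as the paper: the paper's justification of this theorem consists precisely of computing the symmetric center of $\mathcal{C}$ from Theorem \ref{thm:int_deg}, evaluating the entries of $S_{\widehat{\mathcal{C}}}$ and $T_{\widehat{\mathcal{C}}}$ on the chosen orbit representatives via Proposition \ref{prop:S_mat-T_mat}, and renormalizing by the square root of $\sdim(\mathcal{C})\dim^{+}(\bar{\mathbf{1}})=-28$ following \cite[Theorem 2.7]{super_application}, exactly as you outline. The only work remaining in either version is the finite entry-by-entry check over $W(B_3)$, which you correctly identify as bookkeeping rather than a conceptual step.
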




\bibliographystyle{smfalpha}
\bibliography{biblio}

\end{document}